\definecolor{darkergreen}{rgb}{0.0, 0.5, 0.0}
\numberwithin{equation}{section}
\def\theequation{\arabic{section}.\arabic{equation}}
\newcommand{\be}{\begin{eqnarray}}
	\newcommand{\ee}{\end{eqnarray}}
\newcommand{\ce}{\begin{eqnarray*}}
	\newcommand{\de}{\end{eqnarray*}}
\newtheorem{theorem}{Theorem}[section]
\newtheorem{lemma}[theorem]{Lemma}
\newtheorem{proposition}[theorem]{Proposition}
\newtheorem{conjecture}[theorem]{Conjecture}
\newtheorem{corollary}[theorem]{Corollary}
\newtheorem{definition}[theorem]{Definition}
\theoremstyle{definition}
\newtheorem{remark}[theorem]{Remark}
\newcommand{\ve}[0]{\textsf{\textit{v}}}
\newcommand{\te}[0]{\textsf{\textit{t}}}
\newcommand{\pe}[0]{\textsf{\textit{p}}}
\def\${|\!|\!|}
\DeclareMathOperator{\supp}{supp}
\def\T{\mathbb{T}}
\def\R{\mathbb{R}}
\def\Id{\textrm{Id}}
\def\<{{\langle}}
\def\>{{\rangle}}
\def\({{\Big(}}
\def\){{\Big)}}
\def\tr{\mathrm {tr}}
\def\dif{{\mathord{{\rm d}}}}
\def\min{{\mathord{{\rm min}}}}
\def\={&\!\!=\!\!&}
\def\curl{\mathop{\rm curl}\nolimits} 
\DeclareMathOperator*{\esssup}{esssup}
\def\mR{{\mathbb R}}
\def\1{{\mathbf{1}}}
\def\E{\mathbf E}
\def\geq{\geqslant}
\def\leq{\leqslant}
\def\ge{\geqslant}
\def\le{\leqslant}
\def\div{\mathord{{\rm div}}}
\def\iint{\int\!\!\!\int}
\def\<{{\langle}}
\def\>{{\rangle}}
\def\({{\Big(}}
\def\){{\Big)}}
\def\tr{\mathrm {Tr}}
\def\dif{{\mathord{{\rm d}}}}
\def\min{{\mathord{{\rm min}}}}
\def\geq{\geqslant}
\def\leq{\leqslant}
\def\ge{\geqslant}
\def\le{\leqslant}
\def\div{\mathord{{\rm div}}}
\def\iint{\int\!\!\!\int}
 \def\R{\mathbb R}
 \def\R{\mathbb R}    
\def\N{\mathbb N}  
\def\<{\langle} \def\>{\rangle}
\newcommand{\Rmnum}[1]{\expandafter\@slowromancap\romannumeral #1@}
\begin{document}
	\fontsize{10.0pt}{\baselineskip}\selectfont
	
	\title[A proof of Onsager's conjecture for the stochastic 3D Euler equations]{A proof of Onsager's conjecture for the stochastic 3D Euler equations}

	\author{Huaxiang L\"u}
	\address[H. L\"u]{Academy of Mathematics and Systems Science,
		Chinese Academy of Sciences, Beijing 100190, China}
	\email{lvhuaxiang22@mails.ucas.ac.cn }
	
		\author{Lin L\"u}  
	\address[L. L\"u]{School of Mathematics and Statistics, Beijing Institute of Technology, Beijing 100081, China}
	\email{3120235976@bit.edu.cn}

	\author{Rongchan Zhu}
	\address[R. Zhu]{School of Mathematics and Statistics, Beijing Institute of Technology, Beijing 100081, China}
	\email{zhurongchan@126.com}

	\begin{abstract}
		This paper investigates the stochastic 3D Euler equations on a periodic domain $\mathbb{T}^3$, driven by a $GG^*$-Wiener process $B$ of trace class:
		\begin{align*}
			\mathrm{d} u+\div(u\otimes u)\,\mathrm{d} t+\nabla p\,\mathrm{d}t=\mathrm{d}B,  \quad \mathrm{div} u=0.
		\end{align*}
		For any $\vartheta<1/3$, we construct infinitely many global-in-time probabilistically strong and analytically weak solutions $u\in C([0,\infty),C^{\vartheta}(\mathbb{T}^3,\mathbb{R}^3))$. These solutions satisfy a $\textit{pathwise energy inequality}$ up to a stopping time $\mathfrak{t}$, which can be chosen arbitrarily large with high probability, i.e., it holds almost surely
		\begin{align*}
			\|u(t\wedge\mathfrak{t})\|_{L^2}^2< \|u(s\wedge\mathfrak{t})\|_{L^2}^2 +2\int_{s\wedge\mathfrak{t}}^{t\wedge\mathfrak{t}} \big\langle u(r), \dif B(r)  \big\rangle 
			+\tr\big(GG^*\big) (t\wedge\mathfrak{t}-s\wedge\mathfrak{t}),
		\end{align*}
		for any $0\leq s < t<\infty$. We also provide a brief proof of energy conservation for $\vartheta>1/3$, thereby confirming the Onsager theorem for the stochastic 3D Euler equations. The main difficulty of this work lies in deriving pathwise control of the stochastic integral while enhancing the solution’s regularity up to $1/3-$. Our construction is based on the convex integration method, which we adapt to the stochastic context by introducing a novel energy iteration and combining stochastic analysis arguments with a Wong--Zakai type estimate.
	\end{abstract}
	
	\subjclass[2020]{60H15; 35R60; 35Q31}
	\keywords{Stochastic Euler equations, Onsager theorem, Pathwise energy inequality, Convex integration.}
	
	\maketitle
	\tableofcontents
	\section{Introduction}
	
	\addtocontents{toc}{\protect\setcounter{tocdepth}{1}}
		\subsection{Background and Motivation}
	The incompressible Euler equations are a fundamental model in fluid dynamics, governing the time evolution of an incompressible, homogeneous, inviscid fluid. In the deterministic setting, the existence and uniqueness of classical solutions have been extensively studied in both two and three dimensions; see, for instance, \cite{BM02, Tem76, Yud63}. By contrast, irregular solutions may exhibit non-uniqueness and anomalous dissipation. In the seminal work \cite{Sch93}, Scheffer demonstrated the existence of non-trivial weak solutions with compact support in time (see also \cite{Shn97}). Subsequently, Shnirelman constructed non-unique weak solutions in $L^2$ with decreasing energy \cite{Shn00}. It is now well understood that such behavior is generic: non-uniqueness and anomalous dissipation occur in a class of low-regularity solutions to the Euler equations, in connection with Onsager's celebrated conjecture \cite{Ons49}:
	\begin{conjecture}
		Consider the 3D incompressible Euler equations on $[0,T]\times \T^3$.
		\begin{enumerate}[(a)]
			\item Any weak solution $u$  belonging to the H\"older space $C^{\beta}([0,T]\times \T^3)$ for $\beta>1/3$ conserves kinetic
			energy, i.e. $\|u(t)\|_{L^2}$ is conserved in time.
			\item For any $\beta<1/3$, there exist weak solutions $u\in C^{\beta}([0,T]\times \T^3)$ which dissipate kinetic energy.
		\end{enumerate}
	\end{conjecture}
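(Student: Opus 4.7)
The conjecture splits into a positive analytic statement (a) and a negative constructive statement (b), and the two parts require entirely different techniques. My plan is to treat them separately, following the classical mollification argument of Constantin--E--Titi for (a) and Isett's gluing-based convex integration for (b).

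For part (a), I would mollify the equation at scale $\epsilon$, setting $u_\epsilon=u\ast\phi_\epsilon$ with $\phi_\epsilon$ a standard mollifier, and test the resulting equation against $u_\epsilon$. Integration by parts together with $\div u=0$ yields the energy identity
\begin{align*}
\tfrac{1}{2}\|u_\epsilon(t)\|_{L^2}^2-\tfrac{1}{2}\|u_\epsilon(s)\|_{L^2}^2=\int_s^t\int_{\T^3}\nabla u_\epsilon:\big((u\otimes u)_\epsilon-u_\epsilon\otimes u_\epsilon\big)\,\dif x\,\dif r.
\end{align*}
The commutator $(u\otimes u)_\epsilon-u_\epsilon\otimes u_\epsilon$ decomposes into two pieces, each pointwise bounded by $\epsilon^{2\beta}\|u\|_{C^\beta}^2$, while $\|\nabla u_\epsilon\|_{L^\infty}\lesssim \epsilon^{\beta-1}\|u\|_{C^\beta}$. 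Hence the integrand is of order $\epsilon^{3\beta-1}$, which tends uniformly to zero whenever $\beta>1/3$. Passing to the limit and using $\|u_\epsilon-u\|_{L^2}\to 0$ transfers the conservation to $u$.

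For part (b) my plan is to implement the Nash-type iteration of Isett \cite{Ise18}. One constructs a sequence $(u_q,p_q,R_q)$ satisfying the Euler--Reynolds system
\begin{align*}
\partial_t u_q+\div(u_q\otimes u_q)+\nabla p_q=\div R_q,\qquad \div u_q=0,
\end{align*}
with $\|R_q\|_{L^1}\to 0$. At stage $q+1$ one selects a frequency $\lambda_{q+1}=\lambda_q^b$ with $b$ close to $1$ and an amplitude $\delta_{q+1}^{1/2}$ such that $\sum_q\lambda_q^\beta\delta_q^{1/2}<\infty$ for every $\beta<1/3$, which ensures the limit $u=\lim u_q$ lies in $C^\beta([0,T]\times\T^3)$. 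The perturbation $w_{q+1}=u_{q+1}-u_q$ is assembled from Mikado-type stationary Euler flows supported on essentially disjoint transport tubes along a Galilean-boosted background, so that $w_{q+1}\otimes w_{q+1}$ cancels $R_q$ up to a high-frequency remainder which is reabsorbed into $R_{q+1}$ via an inverse-divergence operator. Modulating the amplitudes so that $\int|w_{q+1}|^2\,\dif x$ tracks a prescribed strictly decreasing energy profile $e(t)$ produces a $C^\beta$ weak solution that dissipates energy.

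The principal obstacle is the gluing step, which is the crucial innovation that makes the scheme reach the sharp exponent $1/3$. One solves the exact Euler equation on short intervals with initial data $u_q(t_i,\cdot)$, obtaining local flows $v_i$, and stitches them together through temporal cutoffs to form a glued velocity $\bar u_q$ whose Reynolds stress has temporal support disjoint from stage to stage and enjoys improved spatial bounds. Without this refinement, earlier Mikado-based schemes only attained $\beta<1/5$; with it, the geometric loss in the induction can be made arbitrarily small and $b$ pushed arbitrarily close to $1$. The hardest technical task is to establish sharp stability estimates for the Euler flow on the short gluing intervals, controlling $\|D^k v_i\|_{L^\infty}$ in terms of $\lambda_q$, so that the induction closes for every $\beta<1/3$. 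Once this is achieved, $u_q\to u\in C^\beta$ with the prescribed dissipative energy, yielding the examples required by (b).
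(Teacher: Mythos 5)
Your outline is correct and follows exactly the route the paper takes as its deterministic blueprint: the Constantin--E--Titi mollification/commutator argument for (a) (adapted in Section~\ref{energy:conservation} with It\^o calculus for the stochastic version), and Isett's Euler--Reynolds iteration with Mikado flows and the gluing step for (b) (adapted in Sections~\ref{s:in}--\ref{sec:inductive estimates}). One small conflation worth flagging: Isett's original construction yields solutions that merely fail to conserve energy (compactly supported in time), not solutions tracking a prescribed strictly decreasing profile; the latter requires the ``squiggling'' space-time cutoffs of \cite{BDLSV19}, which is precisely the refinement the paper's iteration is built on.
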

	The first assertion was fully proven in \cite{CET94} using a commutator argument. As for the second assertion, the only known approach is the convex integration technique, first introduced in the context of the Euler equations by De Lellis and Sz\'ekelyhidi in \cite{DelSze13} to demonstrate the existence of infinitely many continuous solutions with dissipative energy. This breakthrough initiated a series of developments \cite{Buc15, BDLIS16, DLS14, DS17}, culminating in Isett's proof of the flexible aspect of Onsager's conjecture \cite{Ise18}. Although the weak solutions constructed by Isett are not strictly dissipative, this technical issue was addressed in \cite{BDLSV19}, allowing for the prescription of arbitrary positive energy profiles. Our overview of the historical developments is highly condensed, we refer the reader to the excellent survey \cite{BV19} for more details.
	For more recent works using convex integration to study the Euler equations, we refer to \cite{GKN23,GR23,NV23}, which establish Onsager's conjecture in  2D and the strong Onsager theorem. 
	
	Over the past few decades, extensive research has been dedicated to justifying the inclusion of stochastic perturbations in the Euler equations; see, e.g., \cite{BF99, BFM16, BP01, CC99, CFH19, GV!4, Kim09, MV00}.  One motivation is the expectation that suitable stochastic forcing may induce a regularization effect. In this direction, Glatt-Holtz and Vicol \cite{GV!4} established the local well-posedness of probabilistically strong solutions to the stochastic 3D Euler equations with nonlinear multiplicative noise, and further showed that the solutions are global with high probability when the multiplicative noise is linear. However, recent developments based on the convex integration method have revealed negative results for the stochastic Euler equations. Early applications of convex integration investigated the isentropic Euler system \cite{BFH20} and the full Euler system with linear multiplicative noise \cite{CFF19}. Hofmanov\'a, Zhu and the third named author studied the ill-posedness of dissipative martingale solutions to the stochastic 3D Euler equations in \cite{HZZ22a}, establishing the existence and non-uniqueness of strong Markov solutions. Later, they \cite{HZZ22b} constructed infinitely many statistically stationary solutions in $H^{\vartheta}$ for some $\vartheta>0$ to the stochastic 3D Euler equations, using a novel stochastic convex integration method. Recently, the second and third named authors \cite{LZ24} improved the regularity of such solutions to $C^{\vartheta}$ for some $\vartheta>0 $  in the case of additive noise; see also \cite{KK24} for further enhancements. Additionally, it is demonstrated in \cite{HLP22} that the 3D Euler equations perturbed by transport noise have more than one probabilistically strong solution in H\"older spaces. Nevertheless, the H\"older exponents achieved in these works remain far below the Onsager critical threshold $1/3$ due to the presence of noise. 
	
	Notably, Hofmanov\'a et. al \cite{HPZZ25} proved that if statistically stationary Leray--Hopf solutions to the stochastic 3D Navier--Stokes equations exhibit $H^{1/3-}$ regularity uniformly in the viscosity $\nu$, then the Kolmogorov 4/5 law holds with the dissipative length scale $\ell_D \sim \nu^{3/4-}$. This scaling coincides with the prediction of Kolmogorov’s 1941 turbulence theory; see \cite[Remark 1.2, Theorem 4.2]{HPZZ25}. Since the stochastic Navier--Stokes equations reduce to the stochastic Euler equations in the inviscid limit $\nu=0$, establishing the critical regularity $1/3-$ for the stochastic 3D Euler equations is not only mathematically interesting but also physically significant, as it is intimately related to the Kolmogorov turbulence theory.
	
	On the other hand, this work focuses on the energy inequality for the stochastic Euler equations. In the deterministic setting, smooth solutions 
	necessarily satisfy the energy equality $\|u(t)\|_{L^2}^2=\|u(0)\|_{L^2}^2$, whereas the energy inequality only holds for solutions with H\"older regularity below $1/3$. In the stochastic setting, if $u$ is a smooth solution (also see Theorem~\ref{rigid part} below for a relaxed regularity assumption), then It\^o's formula implies that it holds almost surely:
	\begin{align}\label{pathwise:energy:equality}
		\|u(t)\|_{L^2}^2= \|u(0)\|_{L^2}^2 +2 \int_{0}^{t} \big\langle u(r), \dif B(r)  \big\rangle +\tr\big(GG^*\big) t.
	\end{align}
	Compared to the deterministic setting, the pathwise energy equality \eqref{pathwise:energy:equality} contains additional terms arising from the stochastic noise, namely the martingale term $\int_{0}^{t} \big\langle u(s), \dif B(s )  \big\rangle$ and the trace term $\tr\big(GG^*\big) t$. It is natural to ask whether low-regularity solutions of the stochastic Euler equations satisfy the following $\textit{pathwise energy inequality}$ almost surely:
	\begin{align}\label{pathwise:energy:inequality}
		\|u(t)\|_{L^2}^2< \|u(0)\|_{L^2}^2 +2 \int_{0}^{t} \big\langle u(r), \dif B(r)  \big\rangle 
		+\tr\big(GG^*\big) t.
	\end{align}
	However, such a pathwise energy inequality involving a martingale term has not been established in the aforementioned‌ works. For instance, the construction in \cite{GV!4} yields smooth solutions, which necessarily conserve energy due to their high regularity. For the irregular solutions constructed in \cite{HLP22,HZZ22b, LZ24}, only an energy inequality in expectation can be proved:
	\begin{align}\label{energy:inequality:expectation}
	\E 	\|u(t)\|_{L^2}^2< \E \|u(0)\|_{L^2}^2 +\tr\big(GG^*\big) t,
	\end{align}
	which can be derived directly from the pathwise inequality \eqref{pathwise:energy:inequality}. Note that establishing \eqref{pathwise:energy:inequality} is highly nontrivial, since the stochastic integral involved is defined in terms of taking expectation. Most tools from stochastic analysis, such as Burkholder-Davis-Gundy's and Doob's inequalities, yield bounds only in expectation rather than pathwise. As a result, obtaining pathwise control of the martingale term in \eqref{pathwise:energy:inequality} poses the main difficulty.
	
	Overall, these developments and unresolved questions naturally lead to the following: \textit{Does the Onsager theorem remain valid for the 3D Euler equations under suitable stochastic perturbations?} The main objective of this work is to provide an affirmative answer to this question. Specifically, we aim to address the following issues:
	\begin{itemize}
		\item The existence and non-uniqueness of $C_tC^{\frac13-}_x$ solutions\footnote{For $\alpha\in (0,1)$, we write $C_x^{\alpha-}=\cup_{\varepsilon>0}C_x^{\alpha-\varepsilon}$.} to the stochastic 3D Euler equations satisfying the $\textit{pathwise energy inequality}$ \eqref{pathwise:energy:inequality}. 
		\item The existence and non-uniqueness of $C_tC^{\bar{\beta}-}_x$-solutions to the stochastic 3D Euler equations with arbitrary initial data in $C^{\bar{\beta}}_x$, for any $0<\bar{\beta}<1/3$.
	\end{itemize}
	This work provides the first resolution of Onsager's conjecture for the stochastic 3D Euler equations. It is also the first to successfully achieve pathwise control of the total energy (including the stochastic integral), among a series of works using convex integration methods.
	As a complement, we also verify that the pathwise energy equality \eqref{pathwise:energy:equality} holds for any solution in $C_tC_x^{\vartheta}$ with $\vartheta>1/3$ to the stochastic 3D Euler equations.
	
	\subsection{Main results}
	In this paper, we are concerned with the stochastic Euler equations on the torus $\mathbb{T}^3=\mathbb{R}^3/\mathbb{Z}^3$ driven by an additive noise
	\begin{equation}\label{eul1}
		\begin{aligned}
			\dif u+\div(u\otimes u)\,\dif t+\nabla p\,\dif t&=\dif B,
			\\	\div u&=0,
		\end{aligned}
	\end{equation}
	where $u\in \R^3$ represents the fluid velocity field and $p\in \R$ denotes the pressure field. Here, $B=\{B_t;0\leq t<\infty\}$ is a $GG^*$-Wiener process with spatial mean zero and divergence-free, on a given filtered probability space $(\Omega,\mathcal{F},(\mathcal{F}_{t})_{t\geq 0},\mathbf{P})$ and $G$ is a Hilbert--Schmidt operator from $U$ to $L^2$ for some Hilbert space $U$.
	Within this study, we focus on probabilistically strong and analytically weak solutions which satisfy the equations in the following sense.
	\begin{definition}\label{d:sol}
		Let $(\Omega,\mathcal{F},(\mathcal{F}_{t})_{t\geq 0},\mathbf{P})$ and $B$ be given as above. An $(\mathcal{F}_{t})_{t\geq 0}$-adapted stochastic process $u \in  C([0,\infty) \times \mathbb{T}^{3},\R^3)$ $\mathbf{P}$-a.s. is a probabilistically strong and analytically weak solution to the stochastic Euler system \eqref{eul1} provided
		\begin{enumerate}[(1)]
			\item $(\mathcal{F}_{t})_{t\geq 0}$ is the normal filtration generated by $B$, that is, the canonical right-continuous filtration augmented by all the $\mathbf{P}$-negligible events;
			\item for any $t\geq 0$ it holds $\mathbf{P}$-a.s.\footnote{$\langle, \rangle$ denotes the inner product in $L^2(\mathbb{T}^3,\mathbb{R}^3)$, i.e. $\langle f,  g\rangle= \int_{\T^3} f(x)g(x) \dif x $ for any $f,g \in L^2(\mathbb{T}^3,\mathbb{R}^3)$.}
			\begin{align*}
				\langle u(t),\psi \rangle  =\langle u(0),\psi \rangle + \int_{0}^{t} \langle   u,u \cdot \nabla \psi \rangle  \dif r +\langle B(t),\psi\rangle
			\end{align*}
			for every $\psi\in C^{\infty}(\mathbb{T}^{3},\R^3)$, $\div \psi=0$;
			\item  it holds $\mathbf{P}$-a.s. $\div u=0$ in the sense of distribution.
		\end{enumerate}
	\end{definition}
	
	We first present the rigid part of the Onsager theorem for system \eqref{eul1}: pathwise energy equality \eqref{pathwise:energy:equality} holds for solutions with H\"older regularity exceeding $1/3$. 
	\begin{theorem}\label{rigid part}
		Let $(\Omega,\mathcal{F},(\mathcal{F}_{t})_{t\geq 0},\mathbf{P},B)$ be a probability space and $p,q\in (1,\infty)$ satisfying $\frac{1}{p}+\frac{1}{q}=1$. Suppose that $\mathfrak{s}$ is a $\mathbf{P}$-a.s. strictly positive stopping time with $\E(\mathfrak{s}^p)<\infty$ and $u\in L^{3q}(\Omega;C([0,\mathfrak{s}],C^{\vartheta}(\T^3,\R^3)))$ is a probabilistically strong and analytically weak solution to \eqref{eul1} for some $\vartheta >\frac13$. Then, it holds $\mathbf{P}$-a.s.
		\begin{align}\label{energy:conserve}
			\|u(t\wedge\mathfrak{s})\|_{L^2}^2= \|u(0)\|_{L^2}^2 +2 \int_{0}^{t\wedge\mathfrak{s}} \big\langle u(s), \dif B(s )  \big\rangle +\tr\big(GG^*\big) (t\wedge\mathfrak{s}),
		\end{align}
		for any $t\in [0,\infty)$.
	\end{theorem}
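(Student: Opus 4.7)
The approach is to adapt the Constantin-E-Titi mollification argument \cite{CET94} to the stochastic setting by applying It\^o's formula to a spatially regularized velocity field.

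\emph{Step 1: Spatial mollification and It\^o formula.} Let $(\rho_\epsilon)_{\epsilon>0}$ be a standard mollifier on $\T^3$ and write $f_\epsilon:=f*_x \rho_\epsilon$ for spatial functions. Mollifying \eqref{eul1} preserves the divergence-free condition and yields
\begin{align*}
\dif u_\epsilon = -\div(u_\epsilon\otimes u_\epsilon)\,\dif t - \div R_\epsilon\,\dif t - \nabla p_\epsilon\,\dif t + \dif B_\epsilon,\qquad R_\epsilon := (u\otimes u)_\epsilon - u_\epsilon\otimes u_\epsilon.
\end{align*}
Since $u_\epsilon$ is smooth in space and a continuous It\^o semimartingale in time with quadratic variation $\tr(G_\epsilon G_\epsilon^*)\,\dif t$, I would apply It\^o's formula (e.g.\ mode-by-mode via the Fourier expansion, then sum) to $\|u_\epsilon\|_{L^2}^2$. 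Testing against the divergence-free $u_\epsilon$ kills the pressure, the identity $\langle u_\epsilon,\div(u_\epsilon\otimes u_\epsilon)\rangle=0$ kills the regularized nonlinearity, and integration by parts turns $-\langle u_\epsilon,\div R_\epsilon\rangle$ into $\langle\nabla u_\epsilon,R_\epsilon\rangle$. Stopping at $t\wedge\mathfrak{s}$ via optional sampling gives
\begin{align*}
\|u_\epsilon(t\wedge\mathfrak{s})\|_{L^2}^2 = \|u_\epsilon(0)\|_{L^2}^2 + 2\int_0^{t\wedge\mathfrak{s}}\langle\nabla u_\epsilon, R_\epsilon\rangle\,\dif s + 2\int_0^{t\wedge\mathfrak{s}}\langle u_\epsilon,\dif B_\epsilon\rangle + \tr(G_\epsilon G_\epsilon^*)(t\wedge\mathfrak{s}).
\end{align*}

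\emph{Step 2: Vanishing of the commutator.} The classical CET estimates give the bounds $\|R_\epsilon\|_{L^\infty}\lesssim \epsilon^{2\vartheta}\|u\|_{C^\vartheta}^2$ and $\|\nabla u_\epsilon\|_{L^\infty}\lesssim \epsilon^{\vartheta-1}\|u\|_{C^\vartheta}$, whence $|\langle\nabla u_\epsilon, R_\epsilon\rangle|\lesssim \epsilon^{3\vartheta-1}\|u\|_{C^\vartheta}^3$. Integrating in time and taking expectation, then applying H\"older's inequality with the conjugate pair $(p,q)$, yields
\begin{align*}
\E\int_0^{t\wedge\mathfrak{s}}|\langle\nabla u_\epsilon,R_\epsilon\rangle|\,\dif s \lesssim \epsilon^{3\vartheta-1}\,\E\bigl(\mathfrak{s}\,\|u\|_{C([0,\mathfrak{s}],C^\vartheta)}^3\bigr) \lesssim \epsilon^{3\vartheta-1}(\E\mathfrak{s}^p)^{1/p}\bigl(\E\|u\|_{C([0,\mathfrak{s}],C^\vartheta)}^{3q}\bigr)^{1/q}.
\end{align*}
Since $\vartheta>1/3$, this expectation tends to $0$ as $\epsilon\to 0$, so the commutator integral converges to $0$ in $L^1(\Omega)$, hence in probability.

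\emph{Step 3: Limits in the remaining terms.} The deterministic side: $\|u_\epsilon(t\wedge\mathfrak{s})\|_{L^2}^2\to\|u(t\wedge\mathfrak{s})\|_{L^2}^2$ pathwise by H\"older continuity, and similarly at $t=0$. For the quadratic variation, $\tr(G_\epsilon G_\epsilon^*)\to\tr(GG^*)$ by the Hilbert-Schmidt convergence of $G_\epsilon$ to $G$. For the stochastic integral, splitting
\begin{align*}
\langle u_\epsilon,\dif B_\epsilon\rangle-\langle u,\dif B\rangle = \langle u_\epsilon-u,\dif B_\epsilon\rangle + \langle u,\dif(B_\epsilon-B)\rangle
\end{align*}
and applying the Burkholder-Davis-Gundy inequality on $[0,t\wedge\mathfrak{s}]$ (localized by $\mathfrak{s}\wedge N$ if needed), combined with $u_\epsilon\to u$ in $L^\infty_{[0,\mathfrak{s}]}L^2_x$ and $G_\epsilon\to G$ in Hilbert-Schmidt norm, gives convergence in probability to $2\int_0^{t\wedge\mathfrak{s}}\langle u,\dif B\rangle$. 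Assembling these limits in the identity from Step 1 yields \eqref{energy:conserve}.

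\emph{Main obstacle.} The central point is the scaling match between the H\"older regularity and the moment hypotheses: the exponent $\epsilon^{3\vartheta-1}$ from the commutator precisely requires $\vartheta>1/3$ to vanish, while the time-integral carries a factor of $\mathfrak{s}$ and a cubic power of $\|u\|_{C^\vartheta}$, which is exactly why the H\"older pair $(p,q)$ with $\E\mathfrak{s}^p<\infty$ and $u\in L^{3q}(\Omega;C([0,\mathfrak{s}],C^\vartheta))$ is needed. A secondary subtlety, absent in the deterministic case, is that stopping at the random time $t\wedge\mathfrak{s}$ prevents a direct use of the It\^o isometry for the stochastic-integral limit, forcing the BDG-based argument above.
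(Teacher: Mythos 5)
Your proposal follows essentially the same approach as the paper: spatial mollification, It\^o's formula applied to $\|u_\varepsilon\|_{L^2}^2$, the Constantin--E--Titi commutator estimate to dispatch the nonlinearity, and a limit argument balanced by H\"older's inequality with exponents $(p,q)$. One small correction to your ``main obstacle'' remark: the paper bounds the stopped stochastic integral by the It\^o isometry directly (optional stopping preserves it), so the BDG inequality is a valid alternative but is not forced by the random time $t\wedge\mathfrak{s}$.
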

	We provide a proof of Theorem~\ref{rigid part} in Section~\ref{energy:conservation} based on commutator estimates from \cite{CET94} and Itô's calculus. The presence of the noise also leads to two distinct types of energy equalities. By taking expectations on both sides of \eqref{energy:conserve}, we have for any $t\in [0,\infty)$
	\begin{align*}
		\E	\|u(t\wedge\mathfrak{s})\|_{L^2}^2= \E \|u(0)\|_{L^2}^2  +\tr\big(GG^*\big) \E (t\wedge\mathfrak{s}),
	\end{align*}
	which means that the noise introduces additional energy into the system \eqref{eul1}.
	
	The first main objective of this paper is to establish the existence of 		
	$1/3-$ H\"older continuous solutions to \eqref{eul1} satisfying the $\textit{pathwise energy inequality}$ \eqref{pathwise:energy:inequality}, thereby demonstrating the flexible side of the Onsager theorem for the stochastic 3D Euler equations.

	\begin{theorem}\label{Onsager:theorem}
		Suppose that $\tr\big((\mathrm{I}-\Delta)^{7/2+\gamma}GG^*\big)<\infty$ for some $\gamma>0$. For any given $T\in(0,\infty)$ and $\varkappa\in (0,1)$, there exists a $\mathbf{P}$-a.s. strictly positive stopping time $\mathfrak{t}$ satisfying $\mathbf{P}(\mathfrak{t}>T)\geq \varkappa$ such that the following holds true: For any $\vartheta\in(0,1/3)$, there exist infinitely many smooth functions $e$ on $[0,\infty)$ and a corresponding probabilistically strong and analytically weak solution $u\in C([0,\mathfrak{t}],C^\vartheta(\T^3,\R^3))   \cap C^\vartheta([0,\mathfrak{t}],C(\T^3,\R^3))$ $\mathbf{P}$-a.s. to \eqref{eul1} such that it holds $\mathbf{P}$-a.s. 
		\begin{align}\label{energy profile}
			e(t\wedge \mathfrak{t})+2\int_{0}^{t\wedge \mathfrak{t}} \big\langle u(s), \dif B(s) \big\rangle +\tr(GG^*)(t\wedge \mathfrak{t}) 	=\|u(t\wedge \mathfrak{t})\|_{L^2}^2,
		\end{align}
		for any $t\in[0,\infty)$. In particular, there exist infinitely many solutions $u$ such that the following pathwise energy inequality holds $\mathbf{P}$-a.s.  for any $0\leq s<t<\infty$:
		\begin{align}\label{energy:inequality}
			\|u(t\wedge\mathfrak{t})\|_{L^2}^2< \|u(s\wedge\mathfrak{t})\|_{L^2}^2 +2 \int_{s\wedge\mathfrak{t}}^{t\wedge\mathfrak{t}} \big\langle u(r), \dif B(r)  \big\rangle +\tr\big(GG^*\big) (t\wedge\mathfrak{t}-s\wedge\mathfrak{t}).
		\end{align}
	\end{theorem}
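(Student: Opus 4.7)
The plan is to adapt the deterministic Isett--type convex integration scheme to the stochastic setting via three innovations: (i) a pathwise reduction removing the additive noise, (ii) the insertion of an auxiliary energy inequality into the iteration so that a prescribed random energy profile is matched in the limit, and (iii) a Wong--Zakai regularization of $B$ to preserve adaptedness and identify the It\^o integral appearing in \eqref{energy profile}.

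First I would set $z:=B$, which under the trace assumption $\tr((\mathrm I-\Delta)^{7/2+\gamma}GG^*)<\infty$ admits a pathwise version in $C_tC^{3+\gamma'}_x$ for every $\gamma'<\gamma$, and write $v:=u-z$. Then $v$ satisfies pathwise the forced Euler system
\begin{align*}
\partial_t v+\div\bigl((v+z)\otimes(v+z)\bigr)+\nabla p=0,\qquad \div v=0,
\end{align*}
a deterministic problem with random but smooth coefficients. I would define the stopping time
\begin{align*}
\mathfrak{t}:=T\wedge\inf\bigl\{t\ge 0:\|z\|_{C([0,t];C^3)}+[z]_{C^{1/2}([0,t];C^2)}\ge L\bigr\},
\end{align*}
and choose $L=L(T,\varkappa)$ large enough so that $\mathbf{P}(\mathfrak{t}>T)\ge\varkappa$ by Gaussian tail bounds. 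Since $z(0)=0$, $\mathfrak{t}>0$ a.s.; on $[0,\mathfrak{t}]$ the pathwise norms of $z$ required by the convex integration estimates are controlled by $L$.

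The heart of the proof is an iteration producing smooth $(v_q,\mathring R_q)_{q\ge 0}$ solving the Euler--Reynolds system
\begin{align*}
\partial_t v_q+\div\bigl((v_q+z)\otimes(v_q+z)\bigr)+\nabla p_q=\div\mathring R_q,\qquad \div v_q=0,
\end{align*}
with $\mathring R_q\to 0$ and $v_q\to v$ at a geometric rate in $C^\vartheta_tC^0_x\cap C^0_tC^\vartheta_x$. The perturbation $w_{q+1}=v_{q+1}-v_q$ is built from Mikado flows in the spirit of \cite{Ise18}, with amplitudes tuned so that $\tfrac12\|w_{q+1}(t)\|^2_{L^2}$ absorbs both the trace of the previous stress and the gap with the target. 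The novel element is that the quantity tracked by the scheme is not the kinetic energy of $v_q$ but a shifted energy accounting for the noise; concretely the iteration would enforce
\begin{align*}
\bigl|\tfrac12\|v_q(t)\|^2_{L^2}+\langle v_q(t),z(t)\rangle+\tfrac12\|z(t)\|^2_{L^2}-\tfrac12 e(t)-\tfrac12\tr(GG^*)t\bigr|\le\delta_q,
\end{align*}
with $\delta_q\to 0$. The additional cross term $\div(v_q\otimes z+z\otimes v_q)$ arising in the Euler--Reynolds equation is absorbed into a modified stress; since $z\in C^{3+\gamma'}$ is strictly smoother than $v_q$, the corresponding antidivergences remain subcritical and do not affect the threshold $\vartheta<1/3$.

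The final step is to promote this pathwise construction to a probabilistically strong solution. Running the iteration directly at each $\omega$ would employ time-mollifications of $z$ on all of $[0,\mathfrak{t}]$, destroying adaptedness. I would therefore regularize $B$ by a Wong--Zakai approximation $B^\ell$ which is adapted by construction, run the convex integration with $z^\ell=B^\ell$ to obtain $u^\ell=v^\ell+B^\ell$, and pass $\ell\to 0$; a Wong--Zakai type estimate converts the pathwise energy identity for $v^\ell$ into the It\^o form \eqref{energy profile} for $u$, the trace term arising as the usual Stratonovich--It\^o correction. Strict inequality \eqref{energy:inequality} then follows by choosing $e$ strictly decreasing on $[0,T]$, and infinitely many solutions are obtained by varying $e$. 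The principal obstacle I anticipate is precisely the third step: designing a scheme that \emph{simultaneously} matches the prescribed energy profile, controls the noise--velocity cross term uniformly in $\ell$, and delivers $C^\vartheta$ estimates strong enough that the Wong--Zakai limit preserves both the Hölder regularity of $u$ and the exact It\^o form of the energy identity while still reaching any $\vartheta<1/3$.
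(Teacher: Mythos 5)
Your high-level skeleton matches the paper's: decompose $u=v+z$ with $z=B$, use a stopping time to control the pathwise norms of the noise, run an Isett-type scheme, and close with a Wong--Zakai argument. But the central new ingredient --- the inductive energy functional --- is chosen incorrectly in a way that would make the scheme fail, and a second issue (time differentiability of $z$) means the iteration cannot be run against $z=B$ directly.

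The energy functional you propose to control is
$\tfrac12\|v_q(t)\|^2_{L^2}+\langle v_q(t),z(t)\rangle+\tfrac12\|z(t)\|^2_{L^2}-\tfrac12 e(t)-\tfrac12\tr(GG^*)t = \tfrac12\|u_q(t)\|_{L^2}^2-\tfrac12 e(t)-\tfrac12\tr(GG^*)t$,
and you require this to be $\le\delta_q\to 0$ with $\delta_q$ deterministic. If the scheme succeeds and $u_q\to u$ satisfying \eqref{energy profile}, this quantity converges to $\int_0^{t\wedge\mathfrak{t}}\langle u(s),\dif B(s)\rangle$, i.e.\ to a genuine martingale with nonvanishing, infinite-variation fluctuations. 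It cannot be bounded by a deterministic $\delta_q$ that shrinks to zero, and it cannot be absorbed into $e$ since $e$ must be a deterministic smooth function. The paper's key modification is to subtract the classical approximation of the stochastic integral inside the iteration: the controlled quantity is
$e(t)-\|(v_q+z_q)(t)\|^2_{L^2}+2\int_0^t\int_{\T^3}(v_q+z_q)\cdot\partial_t z_q\,\dif x\,\dif s$,
as in \eqref{estimate:energy}. Using the Euler--Reynolds system one rewrites $\|u_q(t)\|^2 - 2\int_0^t\langle u_q,\partial_t z_q\rangle\,\dif s = \|u_q(0)\|^2 - 2\int_0^t \int \mathring R_q:\nabla u_q^T$, a finite-variation quantity controllable by the Reynolds stress bounds; only after the iteration is closed does a Wong--Zakai estimate identify the integral against $\partial_t z_q$ with the It\^o integral plus the $\tfrac12\tr(GG^*)t$ correction. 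Without this subtraction the energy iteration cannot close.

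The second gap concerns the temporal mollification. You note adaptedness as the obstruction to using $z=B$ in the iteration and propose an outer Wong--Zakai loop $\ell\to 0$ performed after the $q$-iteration. But the more basic obstruction is that $B$ is nowhere differentiable in time, so $\partial_t z$ appears throughout the gluing and amplitude estimates (and in the energy functional above) and has no meaning for $z=B$. The paper therefore mollifies \emph{inside} the iteration with a one-sided kernel, $z_q = B*\psi_{\iota_q}$, $\iota_q=\lambda_q^{-4/3}$, tuned so that $\|\partial_t z_q\|\approx\lambda_q\delta_q^{1/2}$ matches the scale of $\|\nabla v_q\|$; adaptedness is preserved because the kernel is one-sided, and there is a single limit $q\to\infty$ rather than a double limit. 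An outer Wong--Zakai loop as you propose would in addition have to show that the limit $u^\ell$ solves the right equation (not the one with $z^\ell$) and would require estimates uniform in $\ell$ that the single-limit scheme obtains automatically by tying $\iota_q$ to $q$. Finally, a minor point: under $\tr((\mathrm I-\Delta)^{7/2+\gamma}GG^*)<\infty$ one gets $z(t)\in H^{7/2+\gamma}\hookrightarrow C^{2+\gamma'}_x$, not $C^{3+\gamma'}_x$.
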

	
	\begin{remark}
		\textit{In Theorem~\ref{Onsager:theorem}, the energy function $e$ is only required to have suitable strictly positive lower and upper bounds, along with a finite derivative. Since these bounds for $e$ depend on the parameters in the convex integration scheme (see Proposition~\ref{p:iteration} below), we state our result Theorem~\ref{Onsager:theorem} as suitable functions $e$ exist, rather than arbitrary $e$.}
	\end{remark}
	
	From a probabilistic perspective, the energy profile \eqref{energy profile} naturally arises since subtracting the stochastic integral (a martingale) from the kinetic energy $\|u(t)\|_{L^2}^2$ yields the finite variation term $e$ on the left-hand side of \eqref{energy profile}.
	As discussed above, the $\textit{pathwise energy inequality}$ \eqref{energy:inequality} also implies for any $0\leq s<t<\infty$:
	\begin{align*}
		\E	\|u(t\wedge\mathfrak{t})\|_{L^2}^2< \E \|u(s\wedge\mathfrak{t})\|_{L^2}^2  +\tr\big(GG^*\big) \E (t\wedge\mathfrak{t}-s\wedge\mathfrak{t}).
	\end{align*}
	Our result shows that, despite the noise introducing additional energy, the Euler system \eqref{eul1} still exhibits energy dissipation (up to a stopping time) below the Onsager threshold.

	The proof of Theorem~\ref{Onsager:theorem} is given in Sections~\ref{s:in}-\ref{sec:inductive estimates}. Note that in Theorem~\ref{Onsager:theorem}, the initial data is part of the construction and not predetermined. Our second main result addresses the Cauchy problem for the stochastic Euler system \eqref{eul1} and establishes global existence.
	
	\begin{theorem}\label{Onsager:theorem:cauchy:problem}
		Suppose that $\tr\big((\mathrm{I}-\Delta)^{5/2+\gamma}GG^*\big)<\infty$, for some $\gamma>0$. Given $\bar{\beta}\in (0,1/3)$, let $u^{in}\in C^{\bar{\beta}}(\T^3,\R^3)$ $\mathbf{P}$-a.s. be a divergence-free initial condition independent of the Wiener process $B$. For any $\vartheta \in (0,\bar{\beta})$, 
		there exist infinitely many probabilistically strong and analytically weak solutions $u\in C([0,\infty),C^\vartheta(\T^3,\R^3)) \cap C^\vartheta([0,\infty),C(\T^3,\R^3)) $ $\mathbf{P}$-a.s. to \eqref{eul1} with initial condition $u|_{t=0}=u^{in}$.
	\end{theorem}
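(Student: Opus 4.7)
The plan is to adapt the convex integration machinery of Theorem~\ref{Onsager:theorem} so that the iterative scheme begins from (and preserves) the prescribed initial datum $u^{in}$, and then to concatenate the resulting constructions across stopping times to obtain a global-in-time solution. The key structural modification, compared with the construction used for Theorem~\ref{Onsager:theorem} where the initial datum is an output of the scheme rather than an input, is a temporal cutoff that forces every perturbation to vanish at $t=0$.

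The first step is a regularization: since each iterate in the scheme is smooth in space but $u^{in}$ lies only in $C^{\bar\beta}$, I would set $u^{in}_\ell := u^{in} * \rho_\ell$ with $\rho_\ell$ a divergence-free-preserving spatial mollifier, invoking the standard bounds $\|u^{in}_\ell - u^{in}\|_{C^\vartheta} \lesssim \ell^{\bar\beta-\vartheta}\|u^{in}\|_{C^{\bar\beta}}$ and $\|u^{in}_\ell\|_{C^N} \lesssim \ell^{\bar\beta - N}\|u^{in}\|_{C^{\bar\beta}}$. The initial iterate $u_0$ is then defined as a smooth spacetime extension of $u^{in}_\ell$ combined with the Wong--Zakai-type smooth noise approximation from the proof of Theorem~\ref{Onsager:theorem}, and the corresponding Reynolds stress $\mathring R_0$ is computed from the equation. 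To preserve the initial condition through the iteration, each perturbation $w_{q+1}$ is multiplied by a temporal cutoff $\chi_q(t)$ which vanishes on $[0,\sigma_q]$ with $\sigma_q \downarrow 0$, so that $u_{q+1}(0,\cdot) = u_q(0,\cdot) = u^{in}_\ell$ at every level. Matching the true initial datum is then achieved either by taking $\ell$ small enough from the outset that $\|u^{in}_\ell - u^{in}\|_{C^\vartheta}$ is absorbed in the final $C^\vartheta$ convergence, or by refining $\ell = \ell_q \to 0$ along the iteration and absorbing $u^{in}_{\ell_q} - u^{in}_{\ell_{q-1}}$ into $\|w_{q+1}\|_{L^2}$.

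Global existence is then obtained by iterating on a sequence of stopping times. Applying Theorem~\ref{Onsager:theorem} (suitably modified as above to accept random initial data) on intervals $[\mathfrak{t}_{k-1},\mathfrak{t}_k]$ with $\mathbf{P}(\mathfrak{t}_k - \mathfrak{t}_{k-1} > 1 \mid \mathcal{F}_{\mathfrak{t}_{k-1}}) \geq 1 - 2^{-k}$, and invoking Borel--Cantelli together with the strong Markov property of $B$, produces a solution defined on $[0,\infty)$ almost surely. At each restart time, the initial condition is the terminal value of the previously constructed segment, which lies in $C^\vartheta \subset C^{\vartheta}$ and hence fits the hypotheses of the modified scheme. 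Non-uniqueness is inherited from the freedom to choose different smooth energy profiles $e$ on each segment, as in Theorem~\ref{Onsager:theorem}.

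The main obstacle I expect is the compatibility between the rough initial datum and the inductive Reynolds stress estimates. The derivatives of $u^{in}_\ell$ blow up like $\ell^{\bar\beta - N}$, so $\ell$ must be tied to the frequency parameter $\lambda_q$ in a delicate way; the assumption $\bar\beta < 1/3$ is exactly what makes this reconciliation possible. Simultaneously, the temporal cutoff produces a secondary Reynolds stress of the form $(\partial_t \chi_q)(u_{q+1}-u_q)$ concentrated near $t=\sigma_q$, which must be dominated by $\|\mathring R_{q+1}\|_0$ at the next level; this requires $\sigma_q$ to shrink in a controlled way relative to the inductive gaps $\delta_{q+2}$. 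Ensuring these two constraints close uniformly in $q$, while remaining consistent with the pathwise bounds on the Wong--Zakai noise approximation and with adaptedness to $(\mathcal{F}_t)_{t\geq 0}$, is the core technical hurdle.
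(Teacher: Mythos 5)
Your high-level outline (mollify $u^{in}$, kill perturbations near $t=0$, restart at stopping times) is in the right family, but your core mechanism for recovering the exact initial datum has a gap that the paper resolves by a genuinely different device.

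The problem: if every perturbation $w_{q+1}$ is truncated by a cutoff $\chi_q$ vanishing on $[0,\sigma_q]$, then $w_{q+1}(0)=0$ and therefore $v_{q+1}(0)=v_q(0)$ at every step. The $t=0$ slice is frozen at whatever $v_0(0)$ you picked, so to obtain $u(0)=u^{in}$ exactly you would need $v_0(0)=u^{in}$ on the nose; but $u^{in}$ is only $C^{\bar\beta}$, while the iterates must be smooth for the scheme to run. Your first fallback (choose $\ell$ small once and for all) therefore yields $u(0)=u^{in}_\ell\neq u^{in}$, not equality. Your second fallback (refine $\ell=\ell_q\downarrow 0$ and "absorb $u^{in}_{\ell_q}-u^{in}_{\ell_{q-1}}$ into $\|w_{q+1}\|_{L^2}$") is internally contradictory with the cutoff: if $w_{q+1}(0)=0$, the perturbation \emph{cannot} update the $t=0$ slice, so the absorption you propose has nowhere to enter. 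You would also inherit a secondary Reynolds stress $(\partial_t\chi_q)(v_{q+1}-v_q)$ concentrated near $t=\sigma_q$; controlling it forces $\sigma_q$ to stay of order $\tau_q$, so the cutoffs never reach $t=0$ in a way that lets you refresh the initial datum. This is exactly why the earlier truncation-based constructions cited in the paper only produce $L^p$-in-time solutions.

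What the paper does instead is move the refinement into the \emph{gluing} step rather than the perturbation step. At level $q$, the iteration tracks $v_q(0)=u^{in}*\varphi_{\ell_{q-1}}$ (equation \eqref{euler1:new}). In the gluing procedure the exact Euler solution $\ve_0$ on the first window is launched from the \emph{fresher} mollification $u^{in}*\varphi_{\ell_q}$ (equation \eqref{eq:euler exact2}), deliberately creating a mismatch $\ve_0(0)\neq v_{\ell_q}(0)$ whose size is controlled in Proposition~\ref{esti: v,p,D i-l:new} using $\|u^{in}\|_{C^{\bar\beta}}$ and the parameter regime $\bar\beta>\beta$. Because the gluing cutoffs satisfy $\chi_0\equiv 1$ on $J_0\ni\{0\}$, one gets $\overline v_q(0)=\ve_0(0)=u^{in}*\varphi_{\ell_q}$ automatically. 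The perturbation then vanishes at $t=0$ not by a multiplicative cutoff but because the amplitude cutoff $\tilde\eta_0$ is replaced by a straight cutoff $\bar\eta_0$ supported strictly inside $I_0$ and away from $t=0$; this produces no secondary commutator stress, and $v_{q+1}(0)=\overline v_q(0)=u^{in}*\varphi_{\ell_q}$, so the initial slice converges to $u^{in}$ in $C^\vartheta$ as $q\to\infty$ while the solution stays continuous in time. Two further mismatches with your sketch: the restart procedure must reapply the \emph{modified} iterative proposition (Proposition~\ref{p:iteration1}) with initial datum $u(\mathfrak t_L)$, not Theorem~\ref{Onsager:theorem}, since the latter does not accept prescribed initial data; and the non-uniqueness in the paper's proof comes from distinct values of the constant $K$ in $\varsigma_2=K\delta_2$ yielding distinct $L^2$-norms at a fixed positive time, rather than from distinct energy profiles $e$ (which do not appear in the Cauchy-problem iteration at all).
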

	
	Finally, we consider the solution $u$ constructed in Theorem~\ref{Onsager:theorem} and treat its value at the stopping time $\mathfrak{t}$ as a new initial condition $u(\mathfrak{t})$. By applying the arguments for the Cauchy problem outlined above and gluing convex integration solutions, we establish the following global existence result with pathwise energy dissipation before the stopping time $\mathfrak{t}$. 
	
	\begin{corollary}
		Suppose that $\tr\big((\mathrm{I}-\Delta)^{7/2+\gamma}GG^*\big)<\infty$ for some $\gamma>0$. For any given $T\in(0,\infty)$ and $\varkappa\in (0,1)$, there exists a $\mathbf{P}$-a.s. strictly positive stopping time $\mathfrak{t}$ satisfying $\mathbf{P}(\mathfrak{t}>T)\geq \varkappa$ with the following property.
		For any $\vartheta\in(0,1/3)$, there exist infinitely many probabilistically strong and analytically weak solutions $u\in C([0,\infty),C^\vartheta(\T^3,\R^3))   \cap C^\vartheta([0,\infty),C(\T^3,\R^3))$ $\mathbf{P}$-a.s. to \eqref{eul1}, such that $u$ satisfies $\mathbf{P}$-a.s. 
		\begin{align*}
			\|u(t\wedge \mathfrak{t})\|_{L^2}^2< 	\|u(s\wedge \mathfrak{t})\|_{L^2}^2+2\int_{s\wedge \mathfrak{t}}^{t\wedge \mathfrak{t}} \big\langle u(r), \dif B(r) \big\rangle +\tr(GG^*)(t\wedge \mathfrak{t}-s\wedge \mathfrak{t}) 	,
		\end{align*}
		for any $0\leq s <t<\infty$.
	\end{corollary}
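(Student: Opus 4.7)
The plan is to produce the desired global-in-time solution by gluing a solution of Theorem~\ref{Onsager:theorem} on $[0,\mathfrak{t}]$ with a solution of Theorem~\ref{Onsager:theorem:cauchy:problem} started from $u(\mathfrak{t})$. Fix $\vartheta\in(0,1/3)$ and pick intermediate exponents $\vartheta<\bar{\vartheta}<\bar{\beta}<1/3$. Applying Theorem~\ref{Onsager:theorem} at regularity $\bar{\beta}$ delivers the stopping time $\mathfrak{t}$ with $\mathbf{P}(\mathfrak{t}>T)\ge \varkappa$ together with infinitely many probabilistically strong, analytically weak solutions $u_1\in C([0,\mathfrak{t}],C^{\bar{\beta}}(\T^3,\R^3))\cap C^{\bar{\beta}}([0,\mathfrak{t}],C(\T^3,\R^3))$ satisfying the pathwise energy dissipation inequality \eqref{energy:inequality}. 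In particular $u_1(\mathfrak{t})\in C^{\bar{\beta}}(\T^3,\R^3)$ $\mathbf{P}$-a.s.

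Next I would use the strong Markov property to restart: set $\tilde{B}(t):=B(\mathfrak{t}+t)-B(\mathfrak{t})$ and $\tilde{\mathcal{F}}_t:=\mathcal{F}_{\mathfrak{t}+t}$. Then $\tilde{B}$ is a $GG^*$-Wiener process on $(\Omega,\mathcal{F},(\tilde{\mathcal{F}}_t)_{t\ge 0},\mathbf{P})$ and is independent of $\mathcal{F}_{\mathfrak{t}}$, while $u_1(\mathfrak{t})$ is $\mathcal{F}_{\mathfrak{t}}$-measurable. Thus $u_1(\mathfrak{t})$ qualifies as a divergence-free initial datum in $C^{\bar{\beta}}$ independent of $\tilde{B}$, and Theorem~\ref{Onsager:theorem:cauchy:problem}, applied with the noise $\tilde{B}$ and the filtration $(\tilde{\mathcal{F}}_t)_{t\ge 0}$ at regularity level $\bar{\vartheta}\in(\vartheta,\bar{\beta})$, produces infinitely many probabilistically strong, analytically weak solutions $u_2\in C([0,\infty),C^{\bar{\vartheta}}(\T^3,\R^3))\cap C^{\bar{\vartheta}}([0,\infty),C(\T^3,\R^3))$ with $u_2(0)=u_1(\mathfrak{t})$.

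Then I would define the concatenation
\begin{equation*}
u(t):=\begin{cases} u_1(t), & 0\le t\le \mathfrak{t},\\ u_2(t-\mathfrak{t}), & t\ge \mathfrak{t},\end{cases}
\end{equation*}
and verify it meets Definition~\ref{d:sol}. Adaptedness to $(\mathcal{F}_t)_{t\ge 0}$ is automatic since $u_2(t-\mathfrak{t})$ is $\tilde{\mathcal{F}}_{t-\mathfrak{t}}=\mathcal{F}_{t}$-measurable for $t\ge \mathfrak{t}$ on $\{\mathfrak{t}\le t\}$ and can be made adapted via a standard localization argument on $\{\mathfrak{t}>t\}$. The weak formulation in Definition~\ref{d:sol}(2) passes through the junction because it is valid separately on $[0,\mathfrak{t}]$ and on $[\mathfrak{t},\infty)$ with matching value $u(\mathfrak{t})$, and the temporal integrals are additive. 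The continuity $u\in C([0,\infty),C^{\vartheta})$ follows from the continuity of $u_1$ and $u_2$ together with $u_2(0)=u_1(\mathfrak{t})$, and the Hölder-in-time regularity at the exponent $\vartheta<\min(\bar{\beta},\bar{\vartheta})$ holds on each side and thereby across $\mathfrak{t}$. The pathwise energy inequality in the statement involves only $t\wedge\mathfrak{t}$ and $s\wedge\mathfrak{t}$, so it reduces to \eqref{energy:inequality} for $u_1$ on $[0,\mathfrak{t}]$ and becomes a trivial identity $0<0+0$ once $s,t\ge \mathfrak{t}$; the infinitude of solutions follows from the infinitude of either $u_1$ or $u_2$.

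The only genuine issue is the independence hypothesis required by Theorem~\ref{Onsager:theorem:cauchy:problem}, which is not satisfied by the pair $(u_1(\mathfrak{t}),B)$ on the original probability space. The strong Markov property of $B$ at the stopping time $\mathfrak{t}$ is precisely what fixes this: after switching to the shifted filtration $\tilde{\mathcal{F}}_t$ and the shifted noise $\tilde{B}$, the datum $u_1(\mathfrak{t})$ is $\tilde{\mathcal{F}}_0$-measurable and independent of $\tilde{B}$, and this is the step where one must be most careful, since the Cauchy construction in Section~\ref{cauchy problem} is an $\omega$-wise convex integration scheme that must be checked to be measurable with respect to $u_1(\mathfrak{t})\in \mathcal{F}_{\mathfrak{t}}$.
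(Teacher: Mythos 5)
Your proposal is correct and takes essentially the same approach the paper sketches: apply Theorem~\ref{Onsager:theorem} on $[0,\mathfrak{t}]$, then restart the Cauchy construction of Theorem~\ref{Onsager:theorem:cauchy:problem} from $u(\mathfrak{t})$ using the shifted noise $B(\mathfrak{t}+\cdot)-B(\mathfrak{t})$ and shifted filtration (which is precisely the construction carried out in Step 3 of the proof of Theorem~\ref{Onsager:theorem:cauchy:problem}), and concatenate. One small slip: for $s,t\ge\mathfrak{t}$ the claimed strict inequality degenerates to $\|u(\mathfrak{t})\|_{L^2}^2<\|u(\mathfrak{t})\|_{L^2}^2$, which is false rather than a ``trivial identity $0<0+0$''; this is a wrinkle in the corollary's statement inherited from \eqref{energy:inequality} (the inequality should be read as being asserted only when $s\wedge\mathfrak{t}<t\wedge\mathfrak{t}$), not a defect of your argument.
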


	\subsection{Ideas of the proofs}
	Both our main results, Theorem~\ref{Onsager:theorem} and Theorem~\ref{Onsager:theorem:cauchy:problem} make use of the convex integration method. To illustrate the innovation of our ideas more clearly, we decompose the solution to \eqref{eul1} into two parts $u=v+z$ with $z:=B$, and $v$ solves the nonlinear and random PDE
	\begin{equation}\label{nonlinear1}
		\aligned
		\partial_t v+\div((v+z)\otimes (v+z))+\nabla p&=0,
		\\\div v&=0.
		\endaligned
	\end{equation}
	To apply the convex integration method and perform a pathwise analysis of the random PDE \eqref{nonlinear1}, we employ stopping times to control the growth of the noise. At each step $q\in \N$, we construct a pair $(v_q,\mathring{R}_q)$ satisfying the following system:
	\begin{align}\label{euler system1}
		\partial_t v_q +\div((v_q+z_q)\otimes (v_q+z_q))+\nabla p_q=\div
		\mathring{R}_q, \qquad 
		\div v_q=0,
	\end{align}
	where $v_q$ serves as an approximate solution to \eqref{nonlinear1}, $\mathring{R}_q$ is a trace-free symmetric matrix, and $z_q$ is a temporal mollification of the Wiener process $B$, introduced to address its time singularity.
	To reach the critical H\"older regularity $1/3-$, we adopt the iteration scheme developed in \cite{Ise18}. However, the presence of stochastic forcing introduces substantial new difficulties, making the analysis fundamentally different from the deterministic case. In particular, constructing the glued solutions to \eqref{euler system1} requires precise tracking of how the noise affects the pathwise estimates. To overcome this difficulty, we develop refined pathwise estimates that effectively absorb the influence of the noise and require a more delicate choice of parameters (see Subsection~\ref{sec:parameters} below) to guarantee convergence of the iteration scheme. These adjustments are essential and preclude a direct application of the deterministic results.
	
	Another novelty of this work lies in the derivation of the energy profile \eqref{energy profile}. To this end, we introduce a new energy iteration into the convex integration scheme, as stated in Proposition~\ref{p:iteration} (see \eqref{estimate:energy} below), where we use
	\begin{align}\label{stochatisc integral1}
		\int_{0}^{\cdot } \int_{\T^3} (v_{q}+z_q) \cdot \partial_t z_{q} \, \dif x \dif s 
	\end{align}
	as an approximation to the stochastic integral $\int_{0}^{\cdot} \big\langle u(s), \dif B(s) \big\rangle $. A central difficulty in our scheme is to control the energy iteration \eqref{estimate:energy}, which involves the time derivative of $ z_q$. To address this, we first carefully choose the temporal mollification parameters so that $\|\partial_t \nabla z_q\|_{C^0_{t,x}} \approx \|\nabla v_q\|_{C^0_{t,x}}$. When performing pathwise estimates, the introduction of \eqref{stochatisc integral1} leads to additional terms requiring special treatment, such as $ \int_{0}^{\cdot } \int_{\T^3} v_{q}\cdot \partial_t (z_{q+1}- z_q) \, \dif x \dif s $. By applying integration by parts twice and substituting $\partial_t v_q$ via the system \eqref{euler system1}, we rewrite this term as $ \int_{0}^{\cdot } \int_{\T^3} \tr ((\mathring{R}_q- (v_q+z_q)\otimes (v_q+z_q)) \nabla (z_{q+1}- z_q)^T )\,  \dif x \dif s $, where the temporal derivative $\partial_t (z_{q+1}- z_q) $ is transformed into the spatial derivative $\nabla (z_{q+1} - z_q)$. Thanks to the boundedness of $\mathring{R}_q$, $v_q$ and $z_q$, and the enhanced spatial regularity on the Wiener process $B$, this term can then be absorbed. Similar arguments will be employed repeatedly throughout the proof.
	
	After establishing the inductive estimates, we combine It\^o's calculus with pathwise bounds to prove the key convergence result, a Wong--Zakai type estimate (see \eqref{claim1} below):
	\begin{align*}
		\lim_{q\to 0}\E  \left| \int_{0}^{\cdot \wedge \mathfrak{t}} \int_{\mathbb{T}^3} (v_q+z_q) \cdot \partial_t z_q \dif x \dif s-  	 \int_{0}^{\cdot \wedge \mathfrak{t}} \big\langle u(s), \dif B(s) \big\rangle -\frac12 \tr(GG^*)(\cdot \wedge \mathfrak{t})  \right|^2 =0,
	\end{align*}
	which ultimately implies the energy profile \eqref{energy profile} by combining the inductive estimates in Proposition~\ref{p:iteration}.

	The proof of Theorem~\ref{Onsager:theorem:cauchy:problem} builds on the aforementioned convex integration scheme with necessary adjustments.
	In earlier works using convex integration to address Cauchy problems for the stochastic Euler equations (e.g., \cite{CDZ22, Lu24}), initial values were typically fixed outside the iteration by truncating perturbations near $t=0$, often resulting in solutions that are only $L^p$-integrable in time. Here, we use the idea from \cite{KMY22} to incorporate the initial data into the iteration via convolution.  The convex integration scheme is then adjusted to refine the initial condition during the gluing step (see Subsection~\ref{sec:esact solutions:new} below), ultimately recovering the prescribed initial data. The gluing procedure also guarantees that the perturbations vanish near $t=0$, thus removing the need for additional cutoffs as in \cite{ CDZ22, HZZ21markov, Lu24} and ensuring that the solutions are continuous in time. We then extend the convex integration solution by connecting it with another strong solution, ultimately constructing global-in-time solutions through countably many extensions.
	We also note that the methods in \cite{HLP22} may be difficult to apply directly to establish global solutions, as repeating the gluing step over larger time intervals of the form $[\mathfrak{t}_L,\mathfrak{t}_{L+1}]$ may cause the glued solutions to lose adaptedness.

	\subsection{Further relevant literature}
	We conclude this introductory section with a non-exhaustive list of papers where convex integration has been adapted to the stochastic setting for various equations. For instance, there are results for the stochastic Navier--Stokes equations \cite{CDZ22, CZZ24, HZZ21markov, HZZ23b, HZZ19, HZZ22b, LRS24,  LZ24a, LZ23b, Umb23, RS23, Yam22b, Yam22c, Yam24a}, stochastic SQG equations  \cite{BLW24,  HLZZ24, HZZ23c, WY24, Yam23, Yam22d}, stochastic power law fluids \cite{Ber24, LZ23}, stochastic Boussinesq system \cite{Yam22a}, stochastic MHD system \cite{CLZ24, Yam24b}. Among these, some works also apply convex integration to singular stochastic PDEs, see \cite{HLZZ24, HZZ23b, HZZ23c, LZ24a, LZ23b}. In particular, Hofmanov\'a, Zhu and the third named author studied the stochastic 3D Navier--Stokes equations perturbed by additive, linear multiplicative and nonlinear noise of cylindrical type in \cite{HZZ19}, establishing that  non-uniqueness in law holds in a class of analytically weak solutions. Furthermore, the existence of infinitely many global-in-time probabilistically strong solutions to the stochastic 3D Navier--Stokes equations driven by different types of noise has been established in \cite{HZZ21markov, HZZ23b, Umb23}. Additionally, sharp nonuniqueness for the stochastic $d$-dimensional ($d \geq  2$) Navier--Stokes equations, as well as  stationary solutions for the stochastic 3D Navier--Stokes equations, were respectively established in \cite{CDZ22} and \cite{HZZ22b}, where the authors developed a new stochastic version of the convex integration method to derive global solutions without using stopping times. These developments have greatly enhanced the understanding of the interplay between
	randomness and fluid dynamics.
	
	\subsection{Organization of the paper} 
	The paper is structured as follows: in Section~\ref{notation}, we introduce our notational conventions and formulate parameters used throughout the whole iteration.
	Sections~\ref{s:in}-\ref{sec:inductive esti} are devoted to the proof of our first main result, Theorem~\ref{Onsager:theorem}.
	First, in Section~\ref{s:in}, we state the main iterative proposition and demonstrate how Theorem~\ref{Onsager:theorem} follows from it. In
	Section~\ref{sec:begin}, we initiate the proof of the iterative proposition by implementing the mollification step and the gluing procedure. Section~\ref{sec:perturbation} focuses on constructing the new perturbation and analyzing the stress error term, preparing for the inductive estimates. In Section~\ref{sec:inductive esti}, we explain how the perturbation, stress error, and energy are controlled to finalize the proof of the iterative proposition.
	In Section~\ref{cauchy problem}, we adjust the aforementioned convex integration scheme to complete the proof of our second main result, Theorem~\ref{Onsager:theorem:cauchy:problem}. 
	For the reader's convenience, we prove that the analytically weak solutions to \eqref{eul1} with more than 1/3 H\"older regularity preserve energy balance in Section~\ref{energy:conservation}. Some technical tools for convex integration are gathered in Appendix~\ref{sec:appendix c}. In Appendix~\ref{sec:Mikado}, we recall the construction of Mikado flows needed for convex integration. Appendix~\ref{sec:esti:transport} provides standard estimates for transport equations, while finally, Appendix~\ref{appendix:proof:lemma 4.2} includes a proof of the local theory necessary for executing the gluing process for the Euler system.

	\section{Preliminaries}\label{notation}
	\subsection{Notations}\label{notation 1}
	Throughout the paper, we employ the notation $a\lesssim b$ if there exists a constant $c > 0$ such that $a\leq cb$. We let $\N_0:=\N \cup\{0\}$. We denote $L^p$ as the set of  standard $L^p$-integrable functions from $\mathbb{T}^3$ to $\mathbb{R}^3$. For $s>0$, $p>1$ the Sobolev space $W^{s,p}:=\{f\in L^p; \|f\|_{W^{s,p}}:= \|(\mathrm{I}-\Delta)^{{s}/{2}}f\|_{L^p}<\infty\}$. We set $L^{2}_{\sigma}:=\{f\in L^2; \int_{\mathbb{T}^{3}} f\,\dif x=0,\div f=0\}$. For $s>0$, we also denote $H^s:=W^{s,2}\cap L^2_\sigma$. 
	Given a Banach space $\left( Y, \|\cdot \|_{Y}\right) $ and $I\subset \R$, we write $C_I Y:=C(I;Y)$ as the space of continuous functions from $I$ to $Y$, equipped with the supremum norm $\|f\|_{C_IY}:=\sup_{s\in I}\|f(s)\|_{Y}$.  
	For $\kappa\in(0,1)$, we use $C^\kappa_I Y$ to denote the space of $\kappa$-H\"{o}lder continuous functions from $I$ to $Y$, endowed with the norm $$\|f\|_{C^\kappa_I Y}:=\sup_{s,r\in I,s\neq r}\frac{\|f(s)-f(r)\|_Y}{|r-s|^\kappa}+\|f\|_{C_I Y}.$$ 
	Whenever $I=[0,T]$, we simply write $C_TY:= C_{[0,T]}Y$ and $C_T^\kappa Y:= C_{[0,T]}^\kappa Y$.
	
	For $N\in \N_0 $, let $C^N( \T^3,\R^3)$ denote the space of $N$-times differentiable functions from $ \T^3$ to $\R^3$ equipped with the norm
	$$
	\|f\|_{C^N_{x}}:=\sum_{\substack{|\alpha|\leq N, \alpha\in\N^{3}_{0} }}\| D^\alpha f\|_{ L^\infty_x}.$$
	For $N\in \mathbb{N}_0$ and $\kappa \in (0,1)$, let $C^{N+\kappa}( \T^3,\R^3)$ denote the subspace of $C^N( \T^3,\R^3)$ whose $N$-th derivatives are $\kappa$-H\"{o}lder continuous, with the norm
	\begin{align*}
		\|f\|_{C^{N+\kappa}_{x}}:=	\|f\|_{C^N_{x}}+\sum_{\substack{|\alpha|= N,  \alpha\in\N^{3}_{0} }}[D^\alpha f]_{C^\kappa_{x}},
	\end{align*}
	where  $	[f]_{C^\kappa_{x}} :=\sup_{\substack{x\neq y, x,y\in \T^3 }} \frac{ \left|  f(x)- f(y)\right| }{|x-y|^\kappa}$ is the H\"{o}lder seminorm. We will write $C^N_{x}$ and $C^{N+\kappa}_{x}$ as shorthand for $C^N( \T^3,\R^3)$ and $C^{N+\kappa}( \T^3,\R^3)$.
	Moreover, we write $\|f(t)\|_{C^{N}_x}$ and $\|f(t)\|_{C^{N+\kappa}_x}$ when the time $t$ is fixed and the norms are computed for the restriction of $f$ to $t$-time slice. We may omit time $t$ if there is no danger of confusion. 
	
	We fix for the remainder of the paper two standard mollification kernels $ \psi  \in C_c^\infty( (0,1); [0,\infty))$ and $\varphi \in C_c^\infty( \T^3; [0,\infty))$
	and define for each $\varepsilon>0$ the rescaled kernel 
	\begin{align}\label{def:mollifiers}
		\psi_{\varepsilon}(t):=\frac{1}{\varepsilon}\psi \left( \frac{t}{\varepsilon} \right),\qquad 	\varphi_{\varepsilon}(x):=\frac{1}{\varepsilon^3}\varphi\left( \frac{x}{\varepsilon} \right). 
	\end{align}
	For any vector field $f$, we write $f*_t\psi_{\varepsilon}$ as the convolution over time and $f*_x\psi_{\varepsilon}$ as the convolution over space, and some useful mollification estimates are collected in Appendix~\ref{sec:appendix c}. 
	
	We also use $\mathring{\otimes}$ to denote the trace-free part of the tensor product. For a tensor $T$, we denote its traceless part by $\mathring{T}:=T-\frac13\tr(T)\rm{Id}$. By $\mathbb{P}_{\leq N}$ we denote the Fourier multiplier operator, which projects a function onto its Fourier frequencies $\leq N$ in absolute value. We write $\mathcal{S}^{3\times3}$ for the set of symmetric $3\times3$ matrices and $\mathcal{S}_0^{3\times3}$ for the set of symmetric trace-free $3\times3$ matrices. 
	
	Concerning the driving noise, we assume that $B$ is an $\mathbb{R}^3$-valued $GG^*$-Wiener process with zero spatial mean and divergence-free, defined on some probability space $(\Omega,\mathcal{F},\mathbf{P})$ and $G$ is a Hilbert-Schmidt operator from $U$ to $L^2_{\sigma}$ for some Hilbert space $U$. For a given probability measure $\mathbf{P}$ we denote by $\E$ the expectation under $\mathbf{P}$.

	\subsection{Parameters and their restrictions}\label{sec:parameters}
	Before we explain how the convex integration is set up, we would like to introduce some parameters commonly used in the iteration procedure.
	Given $0<\beta<1/3$, $b\in (1,2)$, $\alpha \in (0,1)$, and $a\gg1$, for all $q\in \N_0$ we define the frequency $\{\lambda_q  \}_{q\in \mathbb{N}_0} $ which diverges to $\infty$ given by ($\lceil x\rceil$ denotes the ceiling function)
	\begin{align*}
		\lambda_q=\lceil{a^{(b^{q})}}\rceil \, ,
	\end{align*}
	and a bounded amplitude sequence $\{\delta_q  \}_{q\in \mathbb{N}_0} $ which is decreasing to $0$ given by
	\begin{align*}
		\delta_{0}=16\lambda_{1}^{3\alpha},\qquad \delta_{1}=4\lambda_{1}^{3\alpha},\qquad \delta_q=\lambda_2^{2\beta}\lambda_{q}^{-2\beta}\lambda_{1}^{3\alpha}, 	\quad q\geq 2.
	\end{align*}
	
	In the Sections~\ref{s:in}--\ref{sec:inductive esti}, we always assume $0<\beta<1/3$, $b>1$ and close to $1$ such that 
	\begin{align}\label{choice:b}
		0<b-1<\min\left\{\frac{1-3\beta}{2\beta},\sqrt{\frac{1}{3\beta}}-1,\frac{1}{6\beta}-\frac12, 1 \right\}.
	\end{align}
	In addition, we require $\alpha>0$ to be sufficiently small in terms of $b,\beta$ satisfying 
	\begin{align}\label{def alpha}
		20b\alpha<\min \left\{(b-1)(1-2b\beta-\beta), \beta(b-1), \frac23-2b^2\beta, \frac13+\beta-2b\beta  \right\} .
	\end{align}
	Finally, we choose $a$ large enough to have $	2\leq a^{(b-1)\beta}
	\leq  a^{(b-1)(1-\beta)}$.
	In the sequel, we increase $a$ in order to absorb various implicit and universal constants. 
	
	In particular, we also define the space mollification parameters for all $q\in \N_0$ by
	\begin{align}\label{def l}
		\ell_q:=\frac{\delta_{q+1}^{\sfrac{1}{2}}}
		{\delta_q^{\sfrac12}\lambda_{q}^{1+6\alpha}}\in \left(\frac12 \lambda_{q}^{-1-(b-1)\beta-6\alpha}, \lambda_{q}^{-1-6\alpha}\right),
	\end{align}
	and the temporal mollification parameters for all $q\in \N_0$ by
	\begin{align}\label{def iotaq}
		 \iota_q :=\lambda_{q}^{-\sfrac43}.
	\end{align}
	If only a rough bound on $\ell_q$ is needed, then we will use 
	\begin{align}\label{ell:lambdaq}
		\lambda_{q}^{-\sfrac32}\leq \ell_q \leq 	\lambda_{q}^{-1}.
	\end{align}
	It follows from the definition \eqref{def l} that 
	\begin{align*}
		\frac{	\ell_{q+1}}{\ell_{q}}\leq 2\lambda_{q}^{-(b-1)(1-\beta+6\alpha)}\leq 2 a^{-(b-1)(1-\beta)}\leq 1.
	\end{align*}
	Hence, $\ell_{q}$ is decreasing.

	\section{Main iterative proposition and proof of Theorem~\ref{Onsager:theorem}}\label{s:in}
	This section provides an overview of the convex integration scheme and introduces our main iterative proposition, which serves to prove Theorem~\ref{Onsager:theorem}. More precisely, for a given stopping time $\mathfrak{t}$ (which can be chosen arbitrarily large) and a suitable smooth function $e$ (which depends on the parameters in Subsection~\ref{sec:parameters}), we construct a corresponding analytically weak and probabilistically strong solution to the Euler system \eqref{eul1} up to the stopping time $\mathfrak{t}$. This solution exhibits $1/3-$ H\"older regularity and satisfies $\mathbf{P}$-a.s. for any $t\in[0,\infty)$
	\begin{align}\label{energy profile1}
		e(t\wedge \mathfrak{t})+2\int_{0}^{t\wedge \mathfrak{t}} \big\langle u(s), \dif B(s) \big\rangle + \tr(GG^*)(t\wedge \mathfrak{t})	=\|u(t\wedge \mathfrak{t})\|_{L^2}^2.
	\end{align}
	 The proof for the main iteration relies on the convex integration scheme developed in \cite{BDLSV19} (see also \cite{Ise18}). One challenge in the stochastic setting is ensuring the effectiveness of convex integration in the presence of noise. To address this, we employ stopping times to control the growth of the noise term and perform pathwise analysis. Consequently, precise control of the interaction between the convex integration scheme and the noise term becomes crucial, particularly during gluing and perturbation procedures, to establish the required pathwise estimates as shown in \cite{BDLSV19}. Another challenge involves establishing the energy profile \eqref{energy profile1}. We address this by introducing a novel energy iteration and employing stochastic analysis methods to derive energy estimates, which cannot be achieved through purely pathwise analysis.
	
	In this and the following sections, we fix a probability space $(\Omega, \mathcal{F} ,\mathbf{P})$ with a $GG^*$-Wiener process $B$. Let $(\mathcal{F}_{t})_{t\geq0}$ be the normal filtration on $(\Omega,\mathcal{F})$ generated by $B$, namely, the canonical right-continuous filtration augmented by all the P-negligible sets (c.f. \cite[Section 2.1]{LR15}). In order to verify that the solution we construct is a probabilistically strong solution, it is essential that the solution is adapted to this filtration. 
	
	As the first step, we decompose a solution to the Euler system \eqref{eul1} into two parts. Let $u$ be any solution of \eqref{eul1}, and define $z:=B$. Then, the difference $v:=u-z$ satisfies the nonlinear equation
		\begin{equation}\label{nonlinear}
		\aligned
		\partial_t v+\div((v+z)\otimes (v+z))+\nabla p&=0,
		\\\div v&=0.
		\endaligned
	\end{equation}
	Here, $z$ is divergence-free and satisfies $z(0)=0$ by the assumptions on the noise, while $p$ denotes the pressure term associated with $v$. 
	In particular, applying \cite[Theorem 5.16]{DPZ92} together with the Kolmogorov continuity criterion yields the following result:
	\begin{proposition}\label{estimate for z}
		Suppose that $\tr((\mathrm{I}-\Delta)^{7/2+\gamma} GG^*)<\infty$ for some $\gamma>0$. Then for any $\delta \in (0,\frac{1}{2})$ and $T>0$
		$$
		\mathbf{E}\left[\|B\|_{C_T^{1/2-\delta}H^{7/2+\gamma}}\right]<\infty.
		$$
	\end{proposition}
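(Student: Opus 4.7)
The statement is a standard regularity result for a trace-class Wiener process, and the plan is to combine a second-moment computation in the space $H^{7/2+\gamma}$ with Gaussian hypercontractivity and then invoke the Kolmogorov continuity criterion in time.

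First, I would observe that the assumption $\tr((\mathrm{I}-\Delta)^{7/2+\gamma}GG^*)<\infty$ is precisely the statement that $(\mathrm{I}-\Delta)^{(7/2+\gamma)/2}B$ is a Wiener process in $L^2_\sigma$ with trace-class covariance $(\mathrm{I}-\Delta)^{(7/2+\gamma)/2}GG^*(\mathrm{I}-\Delta)^{(7/2+\gamma)/2}$. Equivalently, $B$ is a Wiener process taking values in $H^{7/2+\gamma}$. By the elementary second-moment identity for $Q$-Wiener processes, for any $0\leq s\leq t\leq T$,
\begin{equation*}
\mathbf{E}\|B(t)-B(s)\|_{H^{7/2+\gamma}}^2 = (t-s)\tr\bigl((\mathrm{I}-\Delta)^{7/2+\gamma}GG^*\bigr).
\end{equation*}

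Next, since $B(t)-B(s)$ is a centered Gaussian random variable valued in the Hilbert space $H^{7/2+\gamma}$, Fernique's theorem (or equivalent Gaussian moment comparison in Hilbert spaces, e.g.\ \cite[Corollary 2.17]{DPZ92}) implies that for every $p\geq 1$ there is a constant $C_p$ with
\begin{equation*}
\mathbf{E}\|B(t)-B(s)\|_{H^{7/2+\gamma}}^{2p} \leq C_p\bigl(\mathbf{E}\|B(t)-B(s)\|_{H^{7/2+\gamma}}^2\bigr)^p \leq C_p'\, (t-s)^p.
\end{equation*}
This is the pointwise-in-time moment bound that \cite[Theorem 5.16]{DPZ92} provides.

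Finally, to pass from moments of increments to a Hölder norm, I would apply the Kolmogorov continuity criterion in the Banach space $H^{7/2+\gamma}$. Given $\delta\in(0,\tfrac12)$, I would pick $p$ large enough so that $\tfrac{p-1}{2p}>\tfrac12-\delta$, equivalently $p>\tfrac{1}{2\delta}$. For this $p$ the above moment bound, viewed as $\mathbf{E}\|B(t)-B(s)\|_{H^{7/2+\gamma}}^{2p}\lesssim (t-s)^{1+(p-1)}$, delivers a continuous modification of $B$ in $C_T^{\kappa}H^{7/2+\gamma}$ for every $\kappa<\tfrac{p-1}{2p}$, and in particular for $\kappa=\tfrac12-\delta$, together with the moment estimate
\begin{equation*}
\mathbf{E}\|B\|_{C_T^{1/2-\delta}H^{7/2+\gamma}}^{2p} \lesssim_{p,T,\delta} 1,
\end{equation*}
which implies the desired bound on $\mathbf{E}\|B\|_{C_T^{1/2-\delta}H^{7/2+\gamma}}$. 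There is essentially no serious obstacle here: the only mild care needed is to verify the Gaussian moment comparison in the Hilbert-space-valued setting, so that the Kolmogorov criterion (stated for Banach-space valued stochastic processes) can be applied; both facts are standard and are exactly what \cite[Theorem 5.16]{DPZ92} packages together.
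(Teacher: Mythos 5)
Your proof is correct and follows essentially the same route the paper takes: the paper simply cites \cite[Theorem 5.16]{DPZ92} together with the Kolmogorov continuity criterion, and your proposal unpacks precisely what those references deliver — the Gaussian second-moment identity for the $H^{7/2+\gamma}$-valued increments, Gaussian moment comparison to upgrade to $2p$-th moments, and Kolmogorov's criterion with $p>1/(2\delta)$ to obtain H\"older regularity of order $1/2-\delta$ along with integrability of the H\"older norm.
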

	
	By the Sobolev embedding, we have that $\|\nabla^j f\|_{L^\infty}\leq C_S\|f\|_{H^{3/2+j+\gamma}}$ for $\gamma>0$, $j\in \{0,1,2\}$, and some constant $C_{S}\geq1$. For the sufficiently small $\alpha\in (0,1)$ given in \eqref{def alpha} and $L\in \N$, we define the following stopping time
	\begin{equation}\label{stopping time ps}
		\aligned
		\mathfrak{t}_L:=& \inf\{t\geq0,\|B\|_{C_t^{1/2-\alpha}H^{7/2+\gamma}}\geq L/C_S\}
		\wedge  L.
		\endaligned
	\end{equation}
	According to Proposition~\ref{estimate for z}, the stopping time $\mathfrak{t}_L$ is $\mathbf{P}$-a.s. strictly positive such that $\mathfrak{t}_L\to \infty$ almost surely as $L\to \infty$.

	\subsection{Outline of the convex integration scheme and main iterative proposition}\label{sec:main iteration}
	As is standard in convex integration schemes, we consider a modified version of \eqref{nonlinear} that includes a stress tensor error term $\mathring{R}_q$, which converges to 0. Specifically, at each step $q\in \mathbb{N}$, a pair $(v_q ,\mathring{R}_q)$ is constructed to solve the following system:
	\begin{equation}\label{euler1}
		\aligned
		\partial_t v_q +\div((v_q+z_q)\otimes (v_q+z_q))+\nabla p_q&=\div
		\mathring{R}_q,
		\\\div v_q&=0,
		\endaligned
	\end{equation}
	where $\mathring{R}_q\in \mathcal{S}_0^{3\times3}$, and we incorporate its trace part into the pressure. The term $z_q$ in \eqref{euler1} is obtained via the temporal mollification of the Wiener process $B$. To ensure its definition remains valid around $t=0$, we extend $B(t)=0$ for $t<0$ and define $z_q$ as
	\begin{align}\label{def zq}
		 z_q(t):= B*\psi_{\iota_q}(t) = \int_{0}^{\iota_q} \psi_{\iota_q}(s) B(t-s) \dif s.
	\end{align}
	Here, $\psi_{\iota_q}:= \frac{1}{\iota_q} \psi (\frac{\cdot}{\iota_q})$ is the one-sided temporal mollifier defined in \eqref{def:mollifiers} with support in $(0,\iota_q)$ to preserve the adaptedness of $z_q$ to the filtration $(\mathcal{F}_t)_{t\geq 0}$. This approximation enhances the temporal regularity of the noise $B$. By applying \eqref{stopping time ps} and Sobolev embedding, the approximation $z_q$ satisfies the following bounds for any $t\in[0,\mathfrak{t}_L]$:
	\begin{equation}\label{z ps}
		\aligned
		\| z_q(t)\|_{C_x^0} &\leq L, \qquad   \| z_q(t)\|_{C^1_x}\leq L, \qquad  \|z_q(t)\|_{C^2_x} \leq  L,
		\\    \| z_q\|_{C^{1/2-\alpha}_tC^0_x} & \leq  L,\qquad  \| z_q\|_{C^{1/2-\alpha}_tC^1_x} \leq  L, \qquad \|z_q\|_{C^{1/2-\alpha}_tC^2_x} \leq  L.
		\endaligned
	\end{equation}
	 Furthermore, we apply \eqref{stopping time ps} and mollification estimate \eqref{estimate:molli3} to derive for any $t\in [0,\mathfrak{t}_L]$
	\begin{equation}\label{zq+1-zq}
		\aligned
		 \|z_{q+1}(t)-z_q(t)\|_{C^1_x} &\leq \|B*\psi_{\iota_{q+1}}(t)-B(t)\|_{C^1_x}+ \|B*\psi_{\iota_q}(t)-B(t)\|_{C^1_x}
		 \\ &\lesssim \iota_{q+1}^{\sfrac12-\alpha} \|B\|_{C^{1/2-\alpha}_{[0,\mathfrak{t}_L]}C^1_x} + \iota_{q}^{\sfrac12-\alpha} \|B\|_{C^{1/2-\alpha}_{[0,\mathfrak{t}_L]}C^1_x} \lesssim L\lambda_{q}^{-\sfrac23+2\alpha}.
		 \endaligned
	\end{equation}
	In a similar manner, by utilizing \eqref{stopping time ps} and \eqref{estimate:molli3} again, 
	we obtain for any $t\in [0,\mathfrak{t}_L]$
	\begin{align}\label{zq-B C1}
		\|z_q(t)-B(t)\|_{C^1_x} \leq  \|B*\psi_{\iota_q}(t) -B(t)\|_{C^1_x}
		\lesssim  \iota_q^{\sfrac12-\alpha}\|B\|_{C^{1/2-\alpha}_{[0,\mathfrak{t}_L]}C^1_x} \leq L  \lambda_{q}^{-\sfrac23+2\alpha},
	\end{align}
	which further implies for any $t\in [0,\mathfrak{t}_L]$
	\begin{align}\label{zq-B C0}
		\|z_q(t)-B(t)\|_{C^0_x} \leq L  \lambda_{q}^{-\sfrac23+2\alpha}.
	\end{align}

	Under the above assumptions, the main ingredient in the proof of Theorem~\ref{Onsager:theorem} is the following iterative proposition.
	\begin{proposition}\label{p:iteration} 
		Assume $0<\beta<1/3$ and $\tr ((\mathrm{I}-\Delta)^{7/2+\gamma}GG^*)<\infty$ for some small $\gamma>0$. For any $L\in \N_0$ and the corresponding stopping time $\mathfrak{t}_L$ defined in \eqref{stopping time ps}, there exists a choice of parameters $a, b, \alpha$ depending on $\beta, L $ such that the following holds:
		
		Given a smooth function $e:[0,\infty)\to (0,\infty)$ such that $\lambda_{1}^{\sfrac{5\alpha}{2}}\leq \underline{e}\leq e(t)\leq \overline{e}\leq \lambda_{1}^{3\alpha}$ with $\|e\|_{C^1_t}\leq \tilde{e}$ for some constant $\tilde{e}>0$. Let $(v_{q},\mathring{R}_{q})$ for some $q\in\N$ be an $(\mathcal{F}_{t})_{t\geq 0}$-adapted solution to \eqref{euler1} on $[0,\mathfrak{t}_L]$ satisfying the inductive estimates
		\begin{align}
			\|v_q\|_{C_{[0,\mathfrak{t}_L]}C^0_x} &\leq 3\bar{M}L\lambda_{1}^{\sfrac{3\alpha}{2}} -\bar{M}L\delta_q^{\sfrac12},\label{itera:a}
			\\  \|v_q\|_{C_{[0,\mathfrak{t}_L]}C^1_x} &\leq  \bar{M} L  \lambda_{q}\delta_q^{\sfrac12},\label{itera:b}
			\\	\|\mathring{R}_q\|_{C_{[0,\mathfrak{t}_L]}C^0_{x}}  &\leq \bar{M}L^2 \delta_{q+1}\lambda_{q}^{-3\alpha},\label{itera:c}
		\end{align}
		where $\bar{M}$ is a universal constant defined in \eqref{def barM} below. Moreover, it holds for any $t\in [0,\mathfrak{t}_L]$
		\begin{align}\label{estimate:energy}
			L^2 \delta_{q+1} \lambda_{q}^{-\alpha}\leq e(t) - \| (v_q+z_q) (t)\|_{L^2}^2 +2 \int_{0}^{t} \int_{\T^3} (v_{q}+z_q) \cdot \partial_t z_{q}  \dif x \dif s  \leq L^2 \delta_{q+1}.
		\end{align}
		Then there exists an $(\mathcal{F}_t)_{t\geq 0}$-adapted solution $(v_{q+1}, \mathring{R}_{q+1})$ of \eqref{euler1} on $[0,\mathfrak{t}_L]$ satisfying \eqref{itera:a}, \eqref{itera:b}, \eqref{itera:c} and \eqref{estimate:energy} at the level $q+1$ and for $t\in [0,\mathfrak{t}_L]$
		\begin{equation}\label{vq+1-vq}
			\|v_{q+1}(t)-v_q(t)\|_{C_x^0}\leq \bar{M} L\delta_{q+1}^{\sfrac12}.
		\end{equation}
	\end{proposition}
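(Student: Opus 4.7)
The plan is to implement the three-step convex integration scheme of Isett \cite{Ise18} adapted to the stochastic setting (mollification, gluing, perturbation), while simultaneously propagating the new energy iteration \eqref{estimate:energy}. Adaptedness of the output $(v_{q+1},\mathring R_{q+1})$ to $(\mathcal F_t)_{t\geq 0}$ is automatic, since $z_q$ is adapted by virtue of the one-sided mollifier in \eqref{def zq} and every subsequent operation is pathwise and deterministic given $z_q$.

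First, I would mollify in space by setting $v_\ell := v_q *_x \varphi_{\ell_q}$ and $\mathring R_\ell := \mathring R_q *_x \varphi_{\ell_q}$ with $\ell_q$ as in \eqref{def l}. The pair $(v_\ell, \mathring R_\ell)$ solves \eqref{euler1} with a commutator-type stress error in the spirit of \cite{CET94}, estimated by the $C^1_x$ bound \eqref{itera:b}. The parameter $\ell_q$ is tuned so that every spatial derivative of $v_\ell$ costs $\ell_q^{-1}$ while $v_\ell - v_q$ is $o(\delta_{q+1}^{1/2})$ in $C^0_x$, keeping the $C^0$ estimate \eqref{itera:a} and the energy identity \eqref{estimate:energy} intact up to negligible errors. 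Second, following \cite{Ise18,BDLSV19}, on a time grid of scale $\tau_q\sim \ell_q^{2/3}\delta_q^{-1/3}$ I would solve, on each interval $[t_i,t_{i+2}]$, the exact system
\begin{equation*}
\partial_t v_i+\div((v_i+z_q)\otimes(v_i+z_q))+\nabla p_i=0,\qquad\div v_i=0,
\end{equation*}
with $v_i(t_i)=v_\ell(t_i)$. The pathwise local theory (Appendix~\ref{appendix:proof:lemma 4.2}) produces $v_i$ along with the vector potentials needed to bound the gluing stress; the noise enters only through \eqref{z ps}. Gluing with a partition of unity $\{\chi_i\}$ yields $\bar v_q=\sum_i \chi_i v_i$ and a trace-free stress $\mathring R_q^{\mathrm{glue}}$ supported on the overlap strips, gaining a factor $\lambda_{q+1}^{-1}$ that is essential to reach the Onsager threshold.

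Third, I would build the perturbation $w_{q+1}$ as a sum of Mikado flows (Appendix~\ref{sec:Mikado}) with amplitude dictated by $\mathring R_q^{\mathrm{glue}}$ and by the defect in \eqref{estimate:energy}, together with an incompressibility corrector $w_{q+1}^{(c)}$. Setting $v_{q+1}:=\bar v_q+w_{q+1}+w_{q+1}^{(c)}$, the new stress $\mathring R_{q+1}$ is obtained by applying the anti-divergence to the transport, Nash, and oscillation errors, plus the residual gluing and mollification errors. Standard stationary-phase estimates combined with the parameter restrictions in Subsection~\ref{sec:parameters} give \eqref{itera:c} at level $q+1$, and the Mikado amplitude bounds yield \eqref{itera:a}, \eqref{itera:b}, and \eqref{vq+1-vq}.

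The hard part, and the main novelty of the scheme, is closing the energy iteration \eqref{estimate:energy}. The Mikado amplitudes must be calibrated so that, after exploiting the orthogonality of the principal Mikado block, $\|w_{q+1}(t)\|_{L^2}^2$ matches
\begin{equation*}
e(t)-\|(v_q+z_q)(t)\|_{L^2}^2+2\int_0^t\!\!\int_{\T^3}(v_q+z_q)\cdot\partial_t z_q\,dx\,ds
\end{equation*}
up to an error of size $L^2\delta_{q+2}\lambda_{q+1}^{-\alpha}$. Passing from level $q$ to level $q+1$ produces the dangerous term $\int_0^t\!\int_{\T^3} v_q\cdot\partial_t(z_{q+1}-z_q)\,dx\,ds$, which naively would cost a time derivative of the noise. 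The key trick, already flagged in the introduction, is to integrate by parts in time, substitute $\partial_t v_q$ from \eqref{euler1}, and integrate by parts in space, rewriting the term as
\begin{equation*}
\int_0^t\!\!\int_{\T^3}\tr\bigl((\mathring R_q-(v_q+z_q)\otimes(v_q+z_q))\,\nabla(z_{q+1}-z_q)^T\bigr)\,dx\,ds,
\end{equation*}
which is absorbed using \eqref{itera:a}, \eqref{itera:c}, the pathwise bounds \eqref{z ps}, and the spatial regularity gain \eqref{zq+1-zq}. The same integration-by-parts manoeuvre handles the analogous terms arising from the gluing and perturbation steps. Once the energy defect is brought to size $L^2\delta_{q+2}\lambda_{q+1}^{-\alpha}$, \eqref{estimate:energy} at level $q+1$ follows, completing the induction.
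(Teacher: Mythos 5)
Your plan follows the paper's three-step scheme (spatial mollification, Isett-style gluing, Mikado perturbation) and correctly isolates the central new ingredient for the energy iteration: integrating by parts in time and then in space to trade $\partial_t(z_{q+1}-z_q)$ for $\nabla(z_{q+1}-z_q)$, absorbing the result with \eqref{itera:a}, \eqref{itera:c}, \eqref{z ps}, and \eqref{zq+1-zq}.

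A few points to tighten. First, the exact Euler systems in the gluing step must be driven by the spatially mollified noise $z_{\ell_q}$, not $z_q$: under the trace hypothesis, $z_q$ is only uniformly bounded in $C^2_x$, whereas the stability estimates for $\ve_i$ require $\|z\|_{C^{N+1+\alpha}_x}\lesssim L\ell_q^{-N-\alpha}$ at every order $N$, which only the convolution with $\varphi_{\ell_q}$ supplies (cf.\ \eqref{estimate:vellzell}). Using raw $z_q$ would break down at $N\geq 2$. Second, the gluing time scale is $\tau_q=(L\lambda_q^{1+6\alpha}\delta_q^{1/2})^{-1}$, i.e.\ essentially the inverse Lipschitz norm of $v_{\ell_q}+z_{\ell_q}$, not $\ell_q^{2/3}\delta_q^{-1/3}$; the exponents you wrote do not match what the local existence lemma (Lemma~\ref{lem:local_existence}) requires. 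Third, and most substantively for your amplitude-calibration step: the paper's energy gap $\rho_q$ in \eqref{def:energy gap} is built from the glued and mollified fields $\overline{v}_q$, $z_{\ell_q}$ and carries the additional term $2\int_0^t\!\int_{\T^3}(\overline{v}_q+z_{\ell_q})\cdot\partial_t z_{\ell_q}\,\dif x\,\dif s$. This term is precisely what cancels the otherwise uncontrolled time derivative of $\|\overline{v}_q+z_{\ell_q}\|_{L^2}^2$ when bounding $\partial_t\rho_q$ in \eqref{estimate:rho3} (see Remark~\ref{prop:rho}-adjacent discussion), and without it the inductive bound on $\|D_{t,q}a_{(\xi,i)}\|$ — and hence the transport and corrector error estimates — would not close. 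Your formulation of the energy defect in terms of $v_q$, $z_q$ misses this shift to the mollified/glued quantities.
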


	The proof of this result is detailed in Sections~\ref{sec:begin}, \ref{sec:perturbation} and \ref{sec:inductive estimates}. Based on Proposition~\ref{p:iteration} we may proceed with the proof of Theorem~\ref{Onsager:theorem}.
	
	\subsection{Proof of Theorem~\ref{Onsager:theorem}}\label{Proof:The:1.3}
	In this subsection, we employ Proposition~\ref{p:iteration} to complete the proof of Theorem~\ref{Onsager:theorem}.
	
	\emph{\underline{Step 1}.}	 
	We start the initial iteration with $(v_1,\mathring{R}_1):= (0,z_1 \mathring{\otimes}z_1)$.
	It is easy to check that they solve \eqref{euler1} and
	since $v_1=0$, \eqref{itera:a} and \eqref{itera:b} automatically hold. From the estimate \eqref{z ps}, it follows
	\begin{align*}
		\|\mathring{R}_1\|_{C_{ [0,\mathfrak{t}_L]}C^0_{x}}\leq \|z_1\|^2_{C_{ [0,\mathfrak{t}_L]}C^0_{x}}\leq L^2=L^2 \delta_2\lambda_{1}^{-3\alpha}.
	\end{align*}
	Moreover, it follows from $z_q(0)=0$ for every $q\in \N$ that
	\begin{align*}
	L^2	\delta_{2} \lambda_{1}^{-\alpha}
	\leq \lambda_{1}^{\sfrac{5\alpha}{2}}\leq 
		\underline{e}\leq  e(t) - \| z_1 (t)\|_{L^2}^2 +2 \int_{0}^{t} \int_{\T^3} z_1 \cdot \partial_t z_1  \dif x \dif s 
		=e(t)   \leq \overline{e}\leq \lambda_{1}^{3\alpha}\leq L^2\delta_{2},
	\end{align*}
	where we chose $a$ sufficiently large to have $L^2\leq a^{\sfrac{b\alpha}{2}}$ in the first inequality. Since the first iteration is established, using Proposition~\ref{p:iteration} yields a sequence $(v_q,\mathring{R}_q)$ satisfying \eqref{itera:a}--\eqref{vq+1-vq}. 
	
	Now, we assume there exists $q_0\in \N$ such that $b^q> bq$ holds for any $q\geq q_0$. Then we have for any $\vartheta<\beta$
	\begin{align*}
		\sum_{1\leq q}\lambda_q^{\vartheta-\beta}\leq \sum_{1\leq q<q_0}\lambda_q^{\vartheta-\beta}+\sum_{q_0 \leq q }\lambda_q^{\vartheta-\beta}\leq q_0-1 +\frac{a^{bq_0(\vartheta-\beta) }}{1-a^{b(\vartheta-\beta)}}\leq q_0,
	\end{align*}
	which boils down to $	2a^{b(\vartheta-\beta)}\leq 1$ by choosing $a$ large enough. With this choice of $q_0$, we use \eqref{itera:b}, \eqref{vq+1-vq} and interpolation to deduce for any $\vartheta<\beta$ and $t\in [0,\mathfrak{t}_L]$
  \begin{equation}\label{vq:convergence}
	 	\begin{aligned}
	 	\sum_{q=1}^{\infty}	\|v_{q+1}(t)-v_q(t)\|_{C^\vartheta_x}\lesssim& \sum_{q=1}^{\infty}	\|v_{q+1}(t)-v_q(t)\|_{C^0_x}^{1-\vartheta}\|v_{q+1}(t)-v_q(t)\|_{C^1_x}^{\vartheta}
	 	\\ \lesssim& \sum_{q=1}^{\infty} \bar{M} L  \delta_{q+1}^{\frac{1-\vartheta}{2}}\lambda_{q+1}^{\vartheta}\delta_{q+1}^{\frac{\vartheta}{2}}
	 	=\bar{M} L\lambda_{1}^{2\alpha} \lambda_{2}^{\beta} \sum_{q=2}^{\infty}\lambda_{q}^{\vartheta-\beta} \leq \bar{M}Lq_0 a^{2b\alpha+b^2\beta}.
	 \end{aligned}
  \end{equation}
	As a consequence, a limit $v=\lim_{q\to \infty} v_q$ exists and lies in $v\in C([0,\mathfrak{t}_L],C^\vartheta(\T^3,\R^3))$. Since $v_q$ is $(\mathcal{F}_{t})_{t\geq 0}$-adapted for every $q\in \mathbb{N}_0$, the limit $v$ is $(\mathcal{F}_{t})_{t\geq 0}$-adapted as well.
	By using mollification estimate \eqref{estimate:molli3}, we deduce for the same $\vartheta$ as above:
	\begin{align}\label{zq-B time}
		 \|z_q-B\|_{C^{\vartheta}_{[0,\mathfrak{t}_L]}C^0_x}\lesssim \iota_{q}^{\sfrac12-\vartheta-\alpha} \|B\|_{C^{\sfrac12-\alpha}_{[0,\mathfrak{t}_L]}C^0_x}\leq L\iota_{q}^{\sfrac12-\vartheta-\alpha}\to 0, \quad \mathrm{as} \quad q\to \infty.
	\end{align}
	Combining \eqref{zq-B C1} and \eqref{zq-B time} implies $\lim_{q\to \infty} z_q=B$ in $C([0,\mathfrak{t}_L],C^{\vartheta}(\T^3,\R^3)) \cap C^{\vartheta}([0,\mathfrak{t}_L],C(\T^3,\R^3))$. Furthermore, it follows from \eqref{itera:c} that $\lim_{q\to \infty} \mathring{R}_q=0$ in $C([0,\mathfrak{t}_L],C(\T^3,\R^3))$. 
	Thus, $v$ is an analytically weak solution to \eqref{nonlinear}. By the same argument as in \cite[page 234]{BDLSV19}, we can recover the temporal regularity of the solutions, namely, $v\in C^\vartheta([0,\mathfrak{t}_L],C(\T^3,\R^3))$. Letting $u=v+z$, we obtain an $(\mathcal{F}_{t})_{t\geq 0}$-adapted analytically weak solution to \eqref{eul1} of class $u\in C([0,\mathfrak{t}_L],C^\vartheta(\T^3,\R^3))\cap C^\vartheta([0,\mathfrak{t}_L],C(\T^3,\R^3))$ for any $\vartheta<1/3$. In addition, it follows from \eqref{vq:convergence} that there exists a deterministic constant $c_L$ dependent on $L$ such that $\|u(t)\|_{C^{\vartheta}_x}\leq c_L$ holds true for all $t\in [0,\mathfrak{t}_L]$.
	
	\emph{\underline{Step 2}.}		
	To prove the energy profile \eqref{energy profile}, we first verify the following claim:
	\begin{align}\label{claim1}
		\lim_{q\to 0}\E  \left|  \int_{0}^{t\wedge \mathfrak{t}_L} \int_{\mathbb{T}^3} (v_q+z_q) \cdot \partial_t z_q \dif x \dif s-	 \int_{0}^{t\wedge \mathfrak{t}_L} \big\langle u(s), \dif B(s) \big\rangle  -\frac12 \tr(GG^*)(t\wedge \mathfrak{t}_L)    \right|^2 =0,
	\end{align}
	which can be further simplified to estimate the following term
	\begin{align}\label{claim part1}
		 \E  \left|  \int_{0}^{t\wedge \mathfrak{t}_L} \int_{\mathbb{T}^3} v_q \cdot \partial_t z_q \dif x \dif s-	 \int_{0}^{t\wedge \mathfrak{t}_L} \big\langle v(s), \dif B(s) \big\rangle   \right|^2 .
	\end{align}
	We will mainly focus on the estimate of \eqref{claim part1}, since the remaining terms obtained by subtracting \eqref{claim part1} from \eqref{claim1} is easy to control. Indeed, by the facts $z_q(0)=B(0)=0$, $\lim_{q\to \infty}\|z_q-B\|_{C^0_{[0,\mathfrak{t}_L],x}}=0$ and $\|z_q\|_{C^0_{[0,\mathfrak{t}_L],x}}+\|B\|_{C^0_{[0,\mathfrak{t}_L],x}}\leq 2L$, using the integration by parts formula, we can bound the remaining parts in \eqref{claim1} as
	\begin{equation}\label{claim part2}
		\aligned
		&	\lim_{q\to \infty}  \E  \left|  \int_{0}^{t\wedge \mathfrak{t}_L} \int_{\mathbb{T}^3} z_q \cdot \partial_t z_q \dif x \dif s-	 \int_{0}^{t\wedge \mathfrak{t}_L} \big\langle B(s), \dif B(s) \big\rangle -\frac12 \tr(GG^*)(t\wedge \mathfrak{t}_L)  \right|^2 
			\\ &\quad  \lesssim \lim_{q\to \infty}  \E  \left| \frac12 \int_{0}^{t\wedge \mathfrak{t}_L} \left( \frac{\dif}{\dif s}\int_{\mathbb{T}^3}  |z_q|^2 \dif x \right) \dif s -\frac12 \left( \|B(t\wedge \mathfrak{t}_L)\|^2_{L^2}-\|B(0)\|^2_{L^2}\right) \right|^2 
			\\ & \quad  \lesssim  \lim_{q\to \infty}  \E \left|\|z_q(t\wedge \mathfrak{t}_L)\|^2_{L^2}-\|B(t\wedge \mathfrak{t}_L)\|^2_{L^2}\right|^2
				\\ & \quad  \lesssim  \lim_{q\to \infty}  \E \left| \|z_q-B\|_{C^0_{[0,\mathfrak{t}_L],x}} \left(\|z_q\|_{C^0_{[0,\mathfrak{t}_L],x}}+\|B\|_{C^0_{[0,\mathfrak{t}_L],x}}\right)\right|^2 =0,
			\endaligned
	\end{equation}
	where the last equality is justified by the dominated convergence theorem.
	
	  We next control \eqref{claim part1}. As the first step, we require to rewrite the two terms in \eqref{claim part1}. Through integrating by parts and replacing $\partial_t v_q$ with \eqref{euler1}, we can derive
	 \begin{equation}\label{claim2}
	 	\begin{aligned}
	 		  \int_{0}^{t\wedge \mathfrak{t}_L}& \int_{\mathbb{T}^3} v_q \cdot \partial_t z_q \dif x \dif s = \int_{\mathbb{T}^3} \big((v_q \cdot z_q )(t\wedge \mathfrak{t}_L) - (v_q \cdot z_q) (0) \big) \dif x - \int_{0}^{t\wedge \mathfrak{t}_L} \int_{\mathbb{T}^3} z_q \cdot \partial_t v_q \dif x \dif s
	 		  \\ & = \int_{\mathbb{T}^3} (v_q \cdot z_q )(t\wedge \mathfrak{t}_L)  \dif x 
	 		 - \int_{0}^{t\wedge \mathfrak{t}_L} \int_{\mathbb{T}^3} z_q \cdot \left(\div \left(\mathring{R}_q-(v_q+z_q)\otimes (v_q+z_q)\right)-\nabla p_q \right) \dif x \dif s 
	 		  \\ & = \int_{\mathbb{T}^3} (v_q \cdot z_q )(t\wedge \mathfrak{t}_L)  \dif x 
	 		+ \int_{0}^{t\wedge \mathfrak{t}_L} \int_{\mathbb{T}^3} \left(\mathring{R}_q-(v_q+z_q)\otimes (v_q+z_q)\right) : (\nabla z_q)^T \dif x \dif s,
	 	\end{aligned}
	 \end{equation}
	 where we use $:$ to denote the Frobenius inner product of two matrices $A,B$, defined by $A:B= \sum_{i,j}A_{i,j}B_{j,i}$ and we use $A^T$ to denote the transpose of $A$.
    In a similar manner, we note $\dif v_q =\div \big( \mathring{R}_q-(v_q+z_q)\otimes (v_q+z_q) \big) \dif t -\nabla p \dif t$ and employ the integration by parts formula again to derive
     \begin{equation}\label{claim3}
    	\begin{aligned}
    		 \int_{0}^{t\wedge \mathfrak{t}_L}  \big\langle &v_q(s), \dif B(s) \big\rangle = \int_{\mathbb{T}^3} \big((v_q \cdot B )(t\wedge \mathfrak{t}_L) - (v_q \cdot B) (0) \big) \dif x - \int_{\mathbb{T}^3}  \int_{0}^{t\wedge \mathfrak{t}_L} B (s)\cdot \dif v_q(s) \dif x 
    		\\ & = \int_{\mathbb{T}^3} (v_q \cdot B )(t\wedge \mathfrak{t}_L) \dif x 
    	+ \int_{0}^{t\wedge \mathfrak{t}_L} \int_{\mathbb{T}^3} \left(\mathring{R}_q-(v_q+z_q)\otimes (v_q+z_q)\right) :(\nabla B)^T \dif x \dif s.
    	\end{aligned}
    \end{equation}
    According to \cite[Proposition 2.1.10]{LR15}, the $GG^*$-Wiener process $B$ can be written as $B=\sum_k \sqrt{c_k}\beta_k e_k$ for an orthonormal basis $\{e_k\}_{k\in \N}$ of $L^2_\sigma$ consisting of eigenvectors of $GG^*$ with corresponding eigenvalues $c_k$ and the coefficients satisfy $\sum_k c_k<\infty$. Here, $\{\beta_k\}_{k\in \N}$  denotes a sequence of mutually independent standard real-valued Brownian motions. Then, by substituting \eqref{claim2} and \eqref{claim3} into \eqref{claim part1}, and using the estimates \eqref{z ps}, \eqref{zq-B C1}, \eqref{itera:a} and \eqref{itera:b}, we obtain for any $t\in [0,\infty)$ 
    \begin{equation*}
    	\begin{aligned}
    	&	\lim_{q\to \infty}	\E  \left|  \int_{0}^{t\wedge \mathfrak{t}_L} \int_{\mathbb{T}^3} v_q \cdot \partial_t z_q \dif x \dif s-	 \int_{0}^{t\wedge \mathfrak{t}_L} \big\langle v(s), \dif B(s) \big\rangle   \right|^2 
    	\\& \lesssim 	\lim_{q\to \infty} \E	\left|  \int_{0}^{t\wedge \mathfrak{t}_L} \int_{\mathbb{T}^3} v_q \cdot \partial_t z_q \dif x \dif s-	 \int_{0}^{t\wedge \mathfrak{t}_L} \big\langle v_q(s), \dif B(s) \big\rangle   \right|^2 + 		\lim_{q\to \infty} \E \left| 	 \int_{0}^{t\wedge \mathfrak{t}_L} \big\langle v_q(s)- v(s), \dif B(s) \big\rangle   \right|^2 
    		\\&  \lesssim	\lim_{q\to \infty}\E \left|  \int_{0}^{t\wedge \mathfrak{t}_L } \int_{\mathbb{T}^3} \left(\mathring{R}_q-(v_q+z_q)\otimes (v_q+z_q)\right) :\nabla (z_q-B)^T \dif x \dif s  \right|^2
    			\\ &\qquad+	\lim_{q\to \infty} \E	\left|  \int_{\mathbb{T}^3} \big((v_q\cdot z_q )(t\wedge \mathfrak{t}_L)-(v_q \cdot B )(t\wedge \mathfrak{t}_L) \big) \dif x   \right|^2 
    +	\lim_{q\to \infty} \sum_{k}c_k \E \left(\int_{0}^{t\wedge \mathfrak{t}_L}  \left\langle v_q(s)-v(s),e_k\right\rangle ^2 \dif s \right)
  		\end{aligned}
  \end{equation*}
\begin{equation*}
\begin{aligned}
    &\lesssim 	\lim_{q\to \infty} \E \left(  L^2\left(\|\mathring{R}_q \|_{C^0_{[0, \mathfrak{t}_L],x}}+\|v_q \|^2_{C^0_{[0, \mathfrak{t}_L],x}} + \|z_q \|^2_{C^0_{[0, \mathfrak{t}_L],x}}  \right)^2 \|z_q-B\|^2_{C_{[0, \mathfrak{t}_L]}^0C^1_x}\right)
    		\\ &\qquad + \lim_{q\to \infty} \E \left(  \|v_q \|^2_{C^0_{[0, \mathfrak{t}_L],x}}  \|z_q-B\|^2_{C_{[0, \mathfrak{t}_L],x}^0}\right) + 	\lim_{q\to \infty}\tr(GG^*) \E \left(\int_{0}^{t\wedge \mathfrak{t}_L}  \big\|v_q(s)-v(s)\big\|_{L^2}^2 \dif s \right)
    		\\ &\lesssim 	\lim_{q\to \infty} L^8\lambda_{1}^{9\alpha}\lambda_{q}^{-\sfrac43+4\alpha}+	\lim_{q\to \infty}  \tr(GG^*)\E \left(L \|v_q-v\|^2_{C^0_{[0, \mathfrak{t}_L],x}}\right)=0,
    	\end{aligned}
    \end{equation*}
    which establishes the claim \eqref{claim1}.

		\emph{\underline{Step 3}.}		
		Finally, we demonstrate that the energy profile \eqref{energy profile} holds. By combining \eqref{estimate:energy}, \eqref{claim1} and preceding discussion, we derive for any $t\in [0,\infty)$
	\begin{equation}\label{E:e-u}
		\aligned
		\E \bigg| & e(t\wedge \mathfrak{t}_L) -\|u(t\wedge \mathfrak{t}_L)\|^2_{L^2} +2\int_{0}^{t\wedge \mathfrak{t}_L} \big\langle u(s) ,\dif B(s)\big\rangle +  \tr(GG^*)(t\wedge \mathfrak{t}_L)  \bigg|
		\\ &\leq \lim_{q\to \infty}  \E \bigg| e(t\wedge \mathfrak{t}_L) -\|(v_q+z_q)(t\wedge \mathfrak{t}_L)\|^2_{L^2} +2\int_{0}^{t\wedge \mathfrak{t}_L} \int_{\mathbb{T}^3} (v_q+z_q)\cdot \partial_t z_q \dif x \dif s  \bigg|
		\\ &  + \lim_{q\to \infty}  \E   \bigg| \|(v_q+z_q)(t\wedge \mathfrak{t}_L)\|^2_{L^2} -  \|u(t\wedge \mathfrak{t}_L)\|^2_{L^2} \bigg|
		\\ &+ \lim_{q\to \infty}  \E   \bigg| 2\int_{0}^{t\wedge \mathfrak{t}_L} \big\langle u(s) ,\dif B(s)\big\rangle +  \tr(GG^*)(t\wedge \mathfrak{t}_L) -  2\int_{0}^{t\wedge \mathfrak{t}_L} \int_{\mathbb{T}^3} (v_q+z_q) \cdot \partial_t z_q \dif x \dif s   \bigg|=0.
		\endaligned
	\end{equation}
	By combining the continuity argument and \eqref{E:e-u}, we deduce that it holds $\mathbf{P}$-a.s.
	\begin{align}\label{eq:energy profile}
		e(t\wedge \mathfrak{t}_L) +2\int_{0}^{t\wedge \mathfrak{t}_L} \big\langle u(s), \dif B(s) \big\rangle	+\tr(GG^*)(t\wedge \mathfrak{t}_L) =\|u(t\wedge \mathfrak{t}_L)\|_{L^2}^2,
			\end{align} 
	for any $t\in[0,\infty)$.
		
		 In view of the definition \eqref{stopping time ps} of $\mathfrak{t}_L$ and Proposition~\ref{estimate for z}, we note that for a given $T>0$ and $\varkappa\in (0,1)$, we may possibly increase $L$ so that the set $\{\mathfrak{t}_L>T\}$ satisfies $\mathbf{P}(\mathfrak{t}_L>T)\geq \varkappa$. Therefore, by setting $\mathfrak{t}:=\mathfrak{t}_L$ on both sides of \eqref{eq:energy profile}, we obtain the desired energy profile \eqref{energy profile}. Moreover, there exists a deterministic constant $\bar{c}$ such that $\esssup_{\omega \in \Omega} \sup_{t\in [0,\mathfrak{t}]}\|u(t)\|_{C^{\vartheta}_x}\leq \bar{c}$. If we further require $e$ to be strictly decreasing, then \eqref{energy profile} immediately implies \eqref{energy:inequality}, thereby completing the proof of Theorem~\ref{Onsager:theorem}.  \qed

	\section{Proof of Proposition~\ref{p:iteration}---Step 1: Mollification and gluing}\label{sec:begin}
	In this section, we begin the proof of Proposition~\ref{p:iteration}. We conduct a pathwise analysis of the mollified random equation \eqref{euler1} to implement the convex integration procedure. Drawing inspiration from \cite{Ise18}, before adding the convex integration perturbation, it is useful to replace the approximate solution $(v_q,\mathring{R}_q)$ with another smooth solution $(\overline{v}_q,\mathring{\overline{R}}_q)$, such that $\overline{v}_q$  remains close to $v_q$, while ensuring that $\mathring{\overline{R}}_q$ vanishes on alternating intervals of size $\tau_q\approx (\lambda_{q}\delta_{q}^{\sfrac12})^{-1}$ within $[0,\mathfrak{t}_L]$. This gluing procedure is crucial for improving the regularity of the solutions, as discussed in Subsection~\ref{sec:esact solutions} and Subsection~\ref{sec:gluing}. 
	To achieve the desired pathwise estimates for $(\overline{v}_q,\mathring{\overline{R}}_q)$ while effectively controlling the noise term $z_{q}$, careful parameter adjustments are required. 
	
	The primary challenge in establishing energy inductive estimates arises from the presence of the integral $ \int_{0}^{\cdot} \int_{\T^3} (v_{q}+z_q) \cdot \partial_t z_{q}  \dif x \dif s $ in \eqref{estimate:energy}, which serves to approximate the stochastic integral $ \int_{0}^{\cdot}\big\langle u(s), \dif B(s) \big\rangle  $.
	This approximation necessitates a temporal mollification of the Wiener process  $B$, along with careful fine-tuning of the mollification parameter $\iota_{q}$ to ensure that $\|\partial_t z_{q}\|_{C^0_x}\approx \lambda_{q}\delta_{q}^{\sfrac12}$. As a result, more involved computations for $\partial_t z_q$ are necessary compared to the deterministic case, see Proposition~\ref{esti:mollification}, Proposition~\ref{estimate:energy2} and Proposition~\ref{Prop:addition energy} below. We begin this section by outlining the mollification procedure, adhering to standard techniques.

	\subsection{Mollification and preliminary estimates}\label{sec:mollification}
	To handle the loss of derivative problem, it is typical for convex integration schemes to replace $v_q$ with a mollified velocity field. To this end, we consider the spatial mollifier $\varphi_{\ell_q}$ given in Section~\ref{notation} with parameter $\ell_{q}$ defined in \eqref{def l}. Then, we define the mollified fields $(v_{\ell_q},z_{\ell_q},\mathring{R}_{\ell_q})$ as the convolution over space of the fields $(v_q,z_q,\mathring{R}_q)$ at the $q$ step:
	\begin{equation}\label{equation:mollified}
		\aligned
		v_{\ell_q}:&=v_q * \varphi_{\ell_q}, \qquad
		z_{\ell_q}:=z_q*\varphi_{\ell_q},
		\\ \mathring{R}_{\ell_q}:&=\mathring{R}_q*\varphi_{\ell_q}-((v_q+z_q)\mathring{\otimes}(v_q+z_q))*\varphi_{\ell_q}+(v_{\ell_q}+z_{\ell_q})\mathring{\otimes}(v_{\ell_q}+z_{\ell_q}).
		\endaligned
	\end{equation}
	Since $\varphi_{\ell_q}$ is a spatial mollifier, $v_{\ell_q}$, $z_{\ell_q}$ and $\mathring{R}_{\ell_q}$ are still $(\mathcal{F}_{t})_{t\geq 0}$-adapted. 
	In addition, we observe from \eqref{euler1} that $(v_{\ell_q} ,\mathring{R}_{\ell_q})$ satisfies on $[0,\mathfrak{t}_L]$:
	\begin{equation}\label{mollification}
		\aligned
		\partial_t v_{\ell_q} +\div((v_{\ell_q}+z_{\ell_q})\otimes (v_{\ell_q}+z_{\ell_q}))+\nabla p_{\ell_q}&=\div \mathring{R}_{\ell_q},
		\\\div v_{\ell_q} &=0.
		\\ 
		\endaligned
	\end{equation}
	Here, $p_{\ell_{q}}$ is the related pressure term.
	In addition, the standard mollification estimates, as outlined in Proposition~\ref{commutator esti}, yield the following results.
	\begin{proposition}\label{esti:mollification}
	The mollified fields obey the following bounds for any $ N\in \N_0$ and $t\in [0,\mathfrak{t}_L]$
		\begin{subequations}
			\begin{align}
				 	\|(v_{\ell_q}-v_q)(t)\|_{C^0_{x}}&\lesssim  \bar{M}L \delta_{q+1}^{\sfrac12}\ell_{q}^{\alpha}\ ,\label{vq-vl}
				 \\      \|\mathring{R}_{{\ell_q}}(t)\|_{C_x^{N+\alpha}}&\lesssim  \bar{M}L^2 \delta_{q+1}\ell_q^{-N+\alpha}\, ,\label{estimate Rl}
				 \\ \left| \int_{\mathbb{T}^3} |v_q+z_q|^2- |v_{\ell_q}+z_{\ell_q}|^2 \dif x \right|  &\lesssim  \frac14 L^2 \delta_{q+1} \ell_{q}^{\alpha} \, , \label{estimate energy vq-vl}
				 \\  \left| \int_{0}^{t} \int_{\T^3} ( v_{\ell_{q}}+z_{\ell_{q}})\cdot \partial_t z_{\ell_{q}} - (v_{q}+z_q) \cdot \partial_t z_{q}  \dif x \dif s  \right| &\lesssim \frac14 L^2 \delta_{q+1} \ell_{q}^{\alpha}\, ,  \label{estimate energy vq-vl cdot zlq}
			\end{align}
		\end{subequations}
		where the implicit constant may depend on $N$ and $\alpha$.
	\end{proposition}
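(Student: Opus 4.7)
\medskip

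\textbf{Proof plan for Proposition~\ref{esti:mollification}.} The four bounds are of different natures but share a common engine: the standard spatial mollification lemma and the Constantin--E--Titi type commutator estimate from Appendix~\ref{sec:appendix c}. Throughout the plan we will repeatedly use the identity
\begin{equation*}
\int_{\T^3} f\,\dif x=\int_{\T^3} f*\varphi_{\ell_q}\,\dif x
\end{equation*}
since $\int_{\T^3}\varphi_{\ell_q}\,\dif x=1$, and the inductive bounds \eqref{itera:a}--\eqref{itera:c} together with the noise bounds \eqref{z ps}. Throughout we will absorb constants into $a$ by means of the parameter inequalities \eqref{choice:b}--\eqref{def alpha} and the rough comparison \eqref{ell:lambdaq}.

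For \eqref{vq-vl}, I apply the first-order mollification estimate $\|f*\varphi_{\ell_q}-f\|_{C^0_x}\lesssim \ell_q\|f\|_{C^1_x}$ to $f=v_q$, combined with \eqref{itera:b}. Using the defining relation $\ell_q\lambda_q\delta_q^{\sfrac12}=\delta_{q+1}^{\sfrac12}\lambda_q^{-6\alpha}$ and the rough lower bound $\ell_q\ge\lambda_q^{-\sfrac32}$, the exponent $\lambda_q^{-6\alpha}$ is easily absorbed into $\ell_q^{\alpha}$. For \eqref{estimate Rl}, I split
\begin{equation*}
\mathring{R}_{\ell_q}=\mathring{R}_q*\varphi_{\ell_q}+\Bigl[(v_{\ell_q}+z_{\ell_q})\mathring{\otimes}(v_{\ell_q}+z_{\ell_q})-\bigl((v_q+z_q)\mathring{\otimes}(v_q+z_q)\bigr)*\varphi_{\ell_q}\Bigr].
\end{equation*}
The first summand is controlled by $\|f*\varphi_{\ell_q}\|_{C^{N+\alpha}_x}\lesssim\ell_q^{-N-\alpha}\|f\|_{C^0_x}$ applied to $f=\mathring{R}_q$, using \eqref{itera:c} and the fact that $\lambda_q^{-3\alpha}\le\ell_q^{2\alpha}$ (an immediate consequence of $\ell_q\ge\lambda_q^{-\sfrac32}$). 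The second summand is a Constantin--E--Titi commutator: Proposition~\ref{commutator esti} gives a bound proportional to $\ell_q^{2-N-\alpha}\|v_q+z_q\|_{C^1_x}^2$, and then $\ell_q^2\lambda_q^2\delta_q=\delta_{q+1}\lambda_q^{-12\alpha}$ converts this into the claimed $\bar M L^2\delta_{q+1}\ell_q^{-N+\alpha}$ after absorbing $\bar M\lambda_q^{-12\alpha}$ into $\ell_q^{2\alpha}$ for large $a$.

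For \eqref{estimate energy vq-vl}, the key observation is that $\int_{\T^3}|v_q+z_q|^2\,\dif x=\int_{\T^3}|v_q+z_q|^2*\varphi_{\ell_q}\,\dif x$, so the integral in question equals
\begin{equation*}
\int_{\T^3}\Bigl[(v_q+z_q)\otimes(v_q+z_q)*\varphi_{\ell_q}-(v_{\ell_q}+z_{\ell_q})\otimes(v_{\ell_q}+z_{\ell_q})\Bigr]:\mathrm{Id}\,\dif x,
\end{equation*}
which is a pure commutator again estimated by $\ell_q^2\|v_q+z_q\|_{C^1_x}^2\lesssim\bar M^2 L^2\delta_{q+1}\lambda_q^{-12\alpha}$. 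Absorbing $\bar M^2\lambda_q^{-12\alpha}$ into $\tfrac14\ell_q^\alpha$ (again using $\ell_q\ge\lambda_q^{-\sfrac32}$) closes the bound. Estimate \eqref{estimate energy vq-vl cdot zlq} follows the same template: using $\int f*\varphi_{\ell_q}=\int f$ I rewrite the integrand as the commutator $\bigl((v_q+z_q)\cdot\partial_t z_q\bigr)*\varphi_{\ell_q}-(v_{\ell_q}+z_{\ell_q})\cdot\partial_t z_{\ell_q}$ and control it pointwise by $\ell_q^2\|v_q+z_q\|_{C^1_x}\|\partial_t z_q\|_{C^1_x}$.

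The main obstacle is the $\partial_t z_q$ factor in \eqref{estimate energy vq-vl cdot zlq}. I will establish
\begin{equation*}
\|\partial_t z_q\|_{C_{[0,\mathfrak{t}_L]}^{0}C^1_x}\lesssim L\,\iota_q^{-\sfrac12-\alpha}=L\lambda_q^{\sfrac23+\sfrac{4\alpha}{3}}
\end{equation*}
by writing $\partial_t z_q(t)=\int_0^{\iota_q}\psi_{\iota_q}'(s)\bigl[B(t-s)-B(t)\bigr]\,\dif s$ (using $\int\psi_{\iota_q}'=0$) and inserting the H\"older bound \eqref{stopping time ps}. Combining everything, the integrated commutator is bounded by
\begin{equation*}
L\cdot\ell_q^2\cdot\bar M L\lambda_q\delta_q^{\sfrac12}\cdot L\iota_q^{-\sfrac12-\alpha}=\bar M L^3\,\ell_q\delta_{q+1}^{\sfrac12}\lambda_q^{-\sfrac13+O(\alpha)},
\end{equation*}
after using $\ell_q\lambda_q\delta_q^{\sfrac12}=\delta_{q+1}^{\sfrac12}\lambda_q^{-6\alpha}$ and $\ell_q\le\lambda_q^{-1-6\alpha}$. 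The target $\tfrac14 L^2\delta_{q+1}\ell_q^{\alpha}$ carries the factor $\delta_{q+1}^{\sfrac12}\approx\lambda_q^{-b\beta}$ up to lower-order $\lambda_1,\lambda_2$ contributions; since $b\beta<\sfrac13$ by \eqref{choice:b} with margin at least $O(\alpha)$ by \eqref{def alpha}, the ratio $\lambda_q^{-\sfrac13+b\beta+O(\alpha)}$ is a negative power of $\lambda_q$, so the inequality closes once $a$ is chosen large enough to absorb $\bar M L$. This tight interaction between the temporal mollification scale $\iota_q=\lambda_q^{-\sfrac43}$ and the Onsager threshold $\sfrac13$ is the delicate point; all other bounds are essentially routine once the commutator trick is in place.
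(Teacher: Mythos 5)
Your plan is correct and follows the same overall strategy as the paper: first-order mollification for \eqref{vq-vl}, the commutator estimate \eqref{estimate:molli2} plus a straight mollification bound for \eqref{estimate Rl}, the identity $\int f = \int f*\varphi_{\ell_q}$ plus the commutator estimate for \eqref{estimate energy vq-vl}, and the $\int\psi'=0$ trick to bound $\|\partial_t z_q\|_{C^1_x}\lesssim L\iota_q^{-1/2-\alpha}$ for \eqref{estimate energy vq-vl cdot zlq}. One small remark on \eqref{estimate energy vq-vl cdot zlq}: the paper closes that bound by first replacing $\iota_q^{-1/2}=\lambda_q^{2/3}\le\lambda_q^{1-\beta}\le\lambda_q\delta_q^{1/2}$ (only $\beta<1/3$ is needed), which turns $\ell_q^2\lambda_q\delta_q^{1/2}\iota_q^{-1/2-\alpha}$ into $\ell_q^2(\lambda_q\delta_q^{1/2})^2\lambda_q^{2\alpha}=\delta_{q+1}\lambda_q^{-10\alpha}$, so the target's full $\delta_{q+1}$ cancels and the remaining comparison is just $\bar M L\lambda_q^{-10\alpha}\le\frac14\ell_q^\alpha$. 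Your route instead keeps $\iota_q^{-1/2-\alpha}$ explicit, spends the $\ell_q\le\lambda_q^{-1-6\alpha}$ gain to produce $\lambda_q^{-1/3+O(\alpha)}$, and ends with only a $\delta_{q+1}^{1/2}$, so you must additionally verify $b\beta<1/3$ (which indeed follows from $b<\sqrt{1/(3\beta)}$ in \eqref{choice:b}). Both close, but the paper's cancellation of $\delta_{q+1}$ is cleaner and avoids the extra parameter check. Also note a small typo in your displayed bound: after using $\ell_q\le\lambda_q^{-1-6\alpha}$ the factor $\ell_q$ should have been absorbed, so the final expression should read $\bar M L^3\delta_{q+1}^{1/2}\lambda_q^{-1/3+O(\alpha)}$ without the stray $\ell_q$.
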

	
	\begin{proof}
		The bound \eqref{vq-vl} directly follows from \eqref{def l}, \eqref{ell:lambdaq}, \eqref{itera:b} and \eqref{estimate:molli3}:
		\begin{align*}
			\|(v_{\ell_q}-v_q)(t)\|_{C^0_{x}} \lesssim {\ell_q} \|v_q\|_{C^0_{t}C^1_x}\lesssim \bar{M}L \ell_q  \delta_q^{\sfrac12}\lambda_{q} \leq\bar{M}L \delta_{q+1}^{\sfrac12}\ell_{q}^{\alpha}.
		\end{align*}
		Keeping the mollfication estimate \eqref{estimate:molli2} in mind, together with \eqref{z ps}, \eqref{itera:b} and \eqref{itera:c} permit to deduce for any $ N\in \N_0$ and $t\in [0,\mathfrak{t}_L]$
		\begin{align*}
			\|\mathring{R}_{{\ell_q}}(t)\|_{C_x^{N+\alpha}} &
			\lesssim  \|((v_q+z_q)\mathring{\otimes}  (v_q+z_q) )*\varphi_{\ell_q}-(v_{\ell_q}+z_{\ell_q})\mathring{\otimes}(v_{\ell_q}+z_{\ell_q})\|_{C_{t}^0C^{N+\alpha}_x} 
			+\ell_q^{-N-\alpha}\|\mathring{R}_q\|_{C^0_{t,x}}\nonumber
			\\  &\lesssim   \ell_q^{2-N-\alpha} \|v_q+z_q\|_{C^0_tC^1_x}^2  +\ell_q^{-N-\alpha}\|\mathring{R}_q\|_{C^0_{t,x}}
		   \lesssim \bar{M}^2L^2 \ell_q^{2-N-\alpha}\lambda_{q}^2\delta_q+ \bar{M}L^2\lambda_{q}^{-3\alpha}\delta_{q+1} \ell_q^{-N-\alpha}\nonumber
			\\ &\lesssim  (\bar{M}^2L^2 \ell_{q}^{-2\alpha}\lambda_{q}^{-12\alpha}+ \bar{M}L^2\lambda_{q}^{-3\alpha}\ell_{q}^{-2\alpha})\delta_{q+1} \ell_q^{-N+\alpha} \leq  \bar{M}L^2 \delta_{q+1} \ell_q^{-N+\alpha},\nonumber
		\end{align*}
		which implies \eqref{estimate Rl}. The last inequality is justified by $\lambda_{q}^{-3\alpha}\ell_{q}^{-2\alpha}\leq 1$ and choosing $a$ sufficiently large to have $\bar{M}\lambda_{q}^{-\alpha}\ll1$. Moving to \eqref{estimate energy vq-vl}, we use \eqref{def l}, \eqref{ell:lambdaq}, \eqref{z ps}, \eqref{itera:b} and mollification estimate \eqref{estimate:molli2} to obtain for any $t\in [0,\mathfrak{t}_L]$
		\begin{equation}\label{energy:comm}
		\begin{aligned}
			\left| \int_{\mathbb{T}^3} |v_q+z_q|^2- |v_{\ell_q}+z_{\ell_q}|^2 \dif x \right| &=\left| \int_{\mathbb{T}^3} |v_q+z_q|^2 *\varphi_{{\ell_q}}- |v_{\ell_q}+z_{\ell_q}|^2 \dif x \right| 
			\\ &\lesssim \big\||v_q+z_q|^2 *\varphi_{{\ell_q}}- |v_{\ell_q}+z_{\ell_q}|^2 \big\|_{C_{[0,\mathfrak{t}_L]}C^0_x} 
			\\ & \lesssim \ell_q^{2} \|v_q+z_q\|_{C_{[0,\mathfrak{t}_L]}C^1_x} ^2 
			\lesssim \bar{M}^2 L^2 \lambda_{q}^{-12\alpha}\delta_{q+1}\leq \frac14 L^2 \delta_{q+1}\ell_{q}^{\alpha}, 
		\end{aligned}
			\end{equation}
		which thanks to $\bar{M}\lambda_{q}^{-\alpha}\ll 1$. In contrast to the deterministic setting (see \cite[Proposition 2.2]{BDLSV19}), an additional estimate \eqref{estimate energy vq-vl cdot zlq} is required in our approach, which is used to derive the energy estimate \eqref{estimate:energy}. Following a similar procedure as in \eqref{energy:comm}, and using \eqref{z ps}, \eqref{itera:b}, \eqref{estimate:molli1} and \eqref{estimate:molli2} we obtain for any $t\in [0,\mathfrak{t}_L]$
		\begin{align*}
			 & \left| \int_{0}^{t} \int_{\T^3} ( v_{\ell_{q}}+z_{\ell_{q}})\cdot \partial_t z_{\ell_{q}} - (v_{q}+z_q) \cdot \partial_t z_{q} \, \dif x \dif s  \right| 
			 \\ &\quad =  \left| \int_{0}^{t} \int_{\T^3} ( v_{\ell_{q}}+z_{\ell_{q}})\cdot \partial_t z_{\ell_{q}} - \left((v_{q}+z_q) \cdot \partial_t z_{q} \right)*\varphi_{{\ell_q}} \dif x \dif s  \right| 
			 \\ & \quad \lesssim L \big\| (v_{\ell_{q}}+z_{\ell_{q}})\cdot \partial_t z_{\ell_{q}} - \left((v_{q}+z_q) \cdot \partial_t z_{q} \right)*\varphi_{{\ell_q}} \big\|_{C_{[0,\mathfrak{t}_L]}C^0_x}
			 \lesssim L \ell_{q}^2 \|v_{q}+z_q\|_{C_{[0,\mathfrak{t}_L]}C^1_x} \| \partial_t z_{q}\|_{C_{[0,\mathfrak{t}_L]}C^1_x} 
			 \\ & \quad \lesssim \bar{M}L^2 \ell_{q}^2  \lambda_{q}\delta_{q}^{\sfrac12} \iota_q^{-(\sfrac12+\alpha)} \|B_q\|_{C^{\sfrac12-\alpha}_{[0,\mathfrak{t}_L ]}C^1_x} 
		\lesssim \bar{M}L^3 \ell_{q}^2(\lambda_{q}\delta_{q}^{\sfrac12})^2\lambda_{q}^{2\alpha} \leq  \frac14 L^2 \delta_{q+1} \ell_{q}^{\alpha},
		\end{align*}
		where we used $\beta<1/3$ to have $\iota_{q}^{-\sfrac12}=\lambda_{q}^{\sfrac23}\leq \lambda_{q}^{1-\beta} \leq \lambda_{q}\delta_q^{\sfrac12}$ and chose $a$ sufficiently large to have $\bar{M}L\lambda_{q}^{-\alpha}\ll 1$ in the last line.
	\end{proof}

	\subsection{Exact solutions of the Euler system and their stability}\label{sec:esact solutions} 
	To execute the gluing procedure, we first construct a family of exact solutions $(\ve_i,\pe_i)$ to the Euler system \eqref{eq:euler exact1}, with initial conditions matching $v_{\ell_q}$ at specific times $\te_i$. Given the presence of $z_{\ell_{q}}$, it is necessary to carefully track the pathwise estimates for the exact solutions $\ve_i$ by adjusting the parameters to effectively control the noise component. Once the desired pathwise estimates for $\ve_i$ are obtained, the same method as in \cite[Proposition 3.3]{BDLSV19} can be applied to analyze their stability relative to $v_{\ell_{q}}$. Let us begin by constructing exact solutions to the Euler system.
	
	\subsubsection{Exact solutions}
	We first recall the following classical local existence result for the Euler system, which is an adaptation from \cite[Section 3.2]{BM02}.
	
	\begin{lemma}	\label{lem:local_existence}
		Let $\alpha \in (0,1)$ be given as in Subsection~\ref{sec:parameters} and $T>0 $. Let $v_0\in C^{\infty}(\T^3,\R^3)$ be a divergence-free initial data and $Z\in C([0,T], C^{\infty}(\T^3,\R^3))$.
		For $\tau \leq \min \left\{\frac{1}{4} \big(\|v_0\|_{C^{1+\alpha}_x}+ \|Z\|_{C_{ T}C^{2+\alpha}_x} \big)^{-1} ,T\right\} $,
		there exists a unique solution $v\in C([0,\tau ],C^{\infty}(\T^3,\R^3))$  to the Euler system
		\begin{equation*}
			\aligned
			\partial_t v+\div((v+Z)\otimes (v+Z))+\nabla p&=0,
			\\ \div v&=0,
			\\ v(0,\cdot)&=v_0.
			\endaligned
		\end{equation*}
		Moreover, $v$ obeys the following bounds for any $t \in [0,\tau ]$
		\begin{equation}\label{estimate: smooth u}
			\aligned
			\|v(t)\|_{C^{1+\alpha}_x}  &\lesssim_{\alpha}  	\|v_0\|_{C^{1+\alpha}_x}+\|Z\|_{C_T^0C^{2+\alpha}_x},
			\\ 
			\|v(t)\|_{C^{N+\alpha}_x} &\lesssim_{N,\alpha}   \|v_0\|_{C^{N+\alpha}_x} +  \tau  \|Z\|_{C_T^0C^{N+1+\alpha}_x}\big(\|v_0\|_{C^{1+\alpha}_x}+\|Z\|_{C_T^0C^{2+\alpha}_x}\big), \quad N\geq 2,
			\endaligned
		\end{equation}
		where the implict constant may depend on $N$ and $\alpha$.
	\end{lemma}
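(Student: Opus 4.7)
The plan is to prove this by a Picard iteration combined with a priori Hölder estimates, following the vorticity-and-velocity approach of Bertozzi--Majda \cite{BM02}. Rewriting the equation as
\begin{equation*}
\partial_t v + (v+Z)\cdot\nabla v + v\cdot\nabla Z + \div(Z\otimes Z) + \nabla p = 0,
\end{equation*}
the pressure is recovered by $\nabla p = (\mathrm{I}-\mathbb{P})\div((v+Z)\otimes(v+Z))$, where $\mathbb{P}$ denotes the Leray projector, which is bounded on $C^{k+\alpha}_x$ for $\alpha\in(0,1)$ and any integer $k\geq 0$. Hence pressure contributions are controlled by the same norms as the nonlinearity.

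First I would set up the iteration with $v^{(0)}=v_0$ and
\begin{equation*}
\partial_t v^{(n+1)} + (v^{(n)}+Z)\cdot\nabla v^{(n+1)} + v^{(n+1)}\cdot\nabla Z + \div(Z\otimes Z) + \nabla p^{(n+1)} = 0
\end{equation*}
on $[0,\tau]$; this is a linear transport problem with a divergence-free drift, solvable via its characteristic flow using the Hölder estimates collected in Appendix~\ref{sec:esti:transport}. These give $\|v^{(n+1)}(t)\|_{C^{1+\alpha}_x}$ in terms of the initial data and source; under the stated smallness of $\tau$, an induction preserves the uniform bound $\|v^{(n)}(t)\|_{C^{1+\alpha}_x}\leq 2(\|v_0\|_{C^{1+\alpha}_x}+\|Z\|_{C^0_TC^{2+\alpha}_x})$. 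A contraction estimate in a weaker norm (e.g.\ $L^2$ or $C^0$) then extracts a unique limit $v\in C([0,\tau],C^{1+\alpha}_x)$.

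Second, to obtain the stated bounds I would differentiate the equation, commute spatial derivatives past the transport operator $\partial_t+(v+Z)\cdot\nabla$, and apply standard Hölder product/commutator estimates to derive
\begin{equation*}
\frac{\dif}{\dif t}\|v(t)\|_{C^{N+\alpha}_x} \lesssim_{N,\alpha} \bigl(\|v(t)\|_{C^{1+\alpha}_x}+\|Z\|_{C^0_TC^{2+\alpha}_x}\bigr)\|v(t)\|_{C^{N+\alpha}_x} + \|Z\|_{C^0_TC^{N+1+\alpha}_x}\bigl(\|v(t)\|_{C^{1+\alpha}_x}+\|Z\|_{C^0_TC^{2+\alpha}_x}\bigr).
\end{equation*}
For $N=1$, Grönwall together with the smallness $\tau(\|v_0\|_{C^{1+\alpha}_x}+\|Z\|_{C^0_TC^{2+\alpha}_x})\leq 1/4$ yields the first line of \eqref{estimate: smooth u}. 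For $N\geq 2$, inserting the resulting $C^{1+\alpha}_x$ bound into the same inequality and integrating on $[0,\tau]$ produces the second line with the extra $\tau$ factor on the $Z$-dependent term, since that forcing contribution accumulates linearly in time while the exponential Grönwall factor stays bounded by $e^{1/4}$.

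The main obstacle is the bookkeeping of the product and commutator estimates in $C^{N+\alpha}_x$ when the cross terms $v\cdot\nabla Z$ and $\div(Z\otimes Z)$ are differentiated, so that the small-time threshold ultimately depends only on $\|v_0\|_{C^{1+\alpha}_x}+\|Z\|_{C^0_TC^{2+\alpha}_x}$ and never on higher-order norms of $v_0$ or $Z$; crucially, at each differentiation one must move the top-order derivative onto $Z$ (not $v$) to keep the coefficient linear in $\|v\|_{C^{N+\alpha}_x}$. Once these estimates close, $C^\infty$-smoothness of $v$ on $[0,\tau]$ follows by applying the bound for every $N$, and uniqueness is a standard energy argument on the difference of two solutions.
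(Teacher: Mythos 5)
Your approach is correct but follows a genuinely different route than the paper. The paper does \emph{not} set up a Picard iteration: it appeals to the classical local well-posedness theory of Bertozzi--Majda for the advected system (to get a maximal smooth solution on $[0,T^*)$), then proves a Beale--Kato--Majda-type continuation criterion (Lemma~\ref{lemma:e1}: finiteness of $\|v(t)\|_{C^{1+\kappa}_x}$ on $[0,T)$ implies $T^*>T$), and finally controls the lifespan via a \emph{nonlinear} (Riccati-type) Gr\"onwall inequality (Lemma~\ref{gronwall lemma}), since the $N=1$ a priori bound has the form $\|v(t)\|_{C^{1+\alpha}_x}\lesssim \|v_0\|_{C^{1+\alpha}_x}+\int_0^t(\|v(s)\|_{C^{1+\alpha}_x}+\|Z\|_{C_TC^{2+\alpha}_x})^2\,\dif s$. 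You propose instead to rebuild the existence theory via a Picard scheme on the velocity, propagating a uniform $C^{1+\alpha}_x$ bound inductively through linear transport and Schauder estimates for the pressure, with a contraction in a weaker norm. Both are legitimate; the paper's route is shorter because it leverages the existing theory, while yours is more self-contained and makes the construction explicit.

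One point to state more carefully: in your second paragraph, the $N=1$ differential inequality you write is quadratic in $\|v(t)\|_{C^{1+\alpha}_x}$ (it collapses to $f'\lesssim (f+C)^2$), so ``Gr\"onwall'' there has to mean a Riccati comparison, not the linear version --- this is precisely what the paper isolates as Lemma~\ref{gronwall lemma}. In your Picard scheme this difficulty is sidestepped because $v^{(n)}$ is frozen when estimating $v^{(n+1)}$ (the coefficient $\|v^{(n)}+Z\|_{C^{1+\alpha}_x}$ is a constant under the inductive hypothesis, so linear Gr\"onwall closes), but you should say which of the two mechanisms you are invoking, since as written the $N=1$ claim reads as though linear Gr\"onwall applies directly to the Riccati inequality. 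With that clarification the argument is sound and yields the stated bounds, including the extra factor of $\tau$ in the $N\ge 2$ line, for exactly the reason you give.
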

	The proof of Lemma~\ref{lem:local_existence} follows from standard techniques and we provide the details in Appendix~\ref{appendix:proof:lemma 4.2}.
	To construct the exact solutions to the Euler system and derive the related estimates by Lemma~\ref{lem:local_existence}, we first define the parameter $\tau_q$ and initial times $\te_i \, (i\in   [-1,\infty) \cap \mathbb{Z})$ as
	\begin{align}\label{def tauq}
		\tau_q:=\frac{1}{L\lambda_{q}^{1+6\alpha}\delta_q^{\sfrac12}}\, ,\qquad \te_i:=i\tau_q\,(i\geq 0),\qquad \te_{-1}:=0.
	\end{align}
	Invoking Lemma~\ref{lem:local_existence} with $T=\mathfrak{t}_L$ and $\tau= \tau_q$, we define $(\ve_i, \pe_i)$ for $i\geq 0$ to be the unique smooth solution to the Euler system with initial data $v_{\ell_q}(\te_{i-1},\cdot)$:
	\begin{equation}\label{eq:euler exact1}
		\aligned
		\partial_t \ve_i+\div ( (\ve_i+z_{\ell_q}) \otimes (\ve_i+z_{\ell_q}) )+\nabla \pe_i& =0 \, ,
		\\ 	\div \ve_i& =0\, ,
		\\  \ve_i(\te_{i-1},\cdot)&= v_{\ell_q}(\te_{i-1},\cdot)\, ,
		\endaligned
	\end{equation}
	on $\big([\te_{i-1},\te_{i+1}]\cap [0,\mathfrak{t}_L]\big)\times \mathbb{T}^3$. In fact, the local well-posedness of $\ve_i$ on this time scale is permissible because by combining \eqref{ell:lambdaq}, \eqref{z ps} and \eqref{itera:b} with mollification estimate \eqref{estimate:molli1}, we deduce 
	\begin{align*}
		(\te_{i+1}-\te_{i-1})  \|v_{\ell_q}\|_{C^0_{\mathfrak{t}_L}C_x^{1+\alpha}}&\lesssim  \bar{M} L \tau_q \lambda_{q}\delta_q^{\sfrac12} \ell_q^{-\alpha}  \leq \bar{M} \lambda_{q}^{-\alpha} \leq \frac18 \, ,
		\\ 	(\te_{i+1}-\te_{i-1})   \|z_{\ell_q}\|_{C^0_{\mathfrak{t}_L}C_x^{2+\alpha}}&\lesssim L\tau_q  \ell_{q}^{-\alpha}\leq  \lambda_{q}^{\beta-1} \leq \frac18 \, ,
	\end{align*}
	which verifies the conditions in Lemma~\ref{lem:local_existence}, and we used $\bar{M} \lambda_{q}^{-\alpha} \ll 1$ to absorb other constants. Since $z_{\ell_q}$ and $v_{\ell_q}$ are $(\mathcal{F}_t)_{t\geq 0}$-adapted, so are $\ve_i$ and $\pe_i$. Furthermore, by applying \eqref{z ps}, \eqref{itera:b} and \eqref{estimate: smooth u}, we deduce for any $t\in [\te_{i-1},\te_{i+1}]\cap [0,\mathfrak{t}_L]$ and $N\geq2$
	\begin{equation}\label{estimate:vi:N}
		\begin{aligned}
			\|\ve_i(t)\|_{C_x^{N+\alpha}} &\lesssim 	\|v_{\ell_q}\|_{C^0_{t}C_x^{N+\alpha}}
			+\tau_q \|z_{\ell_q}\|_{C_t^0C^{N+1+\alpha}_x}\big(\|v_{\ell_q}\|_{
				C^0_tC^{1+\alpha}_x}+\|z_{\ell_q}\|_{C_t^0C^{2+\alpha}_x}\big)
			\\&\lesssim  \bar{M}L \ell_q^{1-N-\alpha}\lambda_{q}\delta_q^{\sfrac12}+\bar{M}L^2\tau_q \ell_q^{1-N-\alpha}(\ell_{q}^{-\alpha}\lambda_{q}\delta_q^{\sfrac12}+\ell_q^{-\alpha})
			\\ &\lesssim (\ell_q^{2\alpha}\tau_q^{-1}+L\ell_q^{\alpha})\bar{M} \ell_q^{1-N+\alpha} \lesssim \bar{M} \tau_q^{-1} \ell_q^{1-N+\alpha} \, ,
		\end{aligned}
	\end{equation}
	where we used \eqref{ell:lambdaq} to have $L\tau_q\lambda_{q}\delta_{q}^{\sfrac12} =\lambda_{q}^{-6\alpha} \leq  \ell_{q}^{4\alpha}$ in the last line. For $N=1$ we have
	\begin{align}\label{estimate:vi:1}
		\|\ve_i(t)\|_{C^{1+\alpha }_x}  &\lesssim	\|v_{\ell_q}\|_{C^{1+\alpha }_x} +\|z_{\ell_{q}}\|_{C_t^0C^{2+\alpha }_x}
		\lesssim \bar{M}L \ell_q^{-\alpha}\lambda_{q}\delta_q^{\sfrac12}+L \ell_q^{-\alpha} \lesssim \bar{M} \tau_q^{-1} \ell_q^{\alpha} \, .
	\end{align}
	By the above discussion, we obtain that for any $t\in [\te_{i-1},\te_{i+1}]\cap [0,\mathfrak{t}_L]$ and $i\geq 0$, the exact solution $\ve_i$ to \eqref{eq:euler exact1} satisfies the following bounds for any $N\geq 1$
	\begin{align}\label{estiamte:vi}
		\|\ve_i(t)\|_{C_x^{N+\alpha}}
		\leq \bar{M} \tau_q^{-1} \ell_q^{1-N+\alpha}\, .
	\end{align}
	
	\subsubsection{Stability and estimates on $\ve_i-v_{\ell_q}$}
	We next show that for $t\in [\te_{i-1},\te_{i+1}]\cap [0,\mathfrak{t}_L]$, $\ve_i$ is close to $v_{\ell_q}$, and by the identity
	$$\ve_i-\ve_{i+1}=(\ve_i-v_{\ell_q})-(\ve_{i+1}-v_{\ell_q}),$$
	the vector field $ \ve_i$ is also close to $\ve_{i+1}$.
	\begin{proposition}\label{esti: v,p,D i-l}
		For any $t\in  [\te_{i-1},\te_{i+1}]\cap [0,\mathfrak{t}_L]$ and $N\geq 0$, we have
		\begin{subequations}
			 	\begin{align}
			 	\|v_{\ell_q}-\ve_i\|_{C^{N+\alpha}_x}&\lesssim \bar{M}L^2 \tau_q \delta_{q+1}\ell_q^{-1-N+\alpha} \ ,\label{v i- v l}
			 	\\
			 	\|D_{t,{\ell_q}}(v_{\ell_q}-\ve_i)\|_{C^{N+\alpha}_x}&\lesssim  \bar{M} L^2 \delta_{q+1}\ell_q^{-1-N+\alpha} \ ,  \label{Dtl v i- v l}
			 \end{align}
		\end{subequations}
		where we write $D_{t,{\ell_q}}:=\partial_t+(v_{\ell_q}+z_{\ell_q}) \cdot \nabla_x$ for the transport derivative and the implicit constant may depend on $N$ and $\alpha$.
	\end{proposition}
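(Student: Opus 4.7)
The plan is to derive both estimates from the PDE satisfied by the difference $w := v_{\ell_q} - \ve_i$. Subtracting \eqref{eq:euler exact1} from \eqref{mollification} and using the algebraic identity
\begin{equation*}
(v_{\ell_q}+z_{\ell_q})\otimes(v_{\ell_q}+z_{\ell_q}) - (\ve_i+z_{\ell_q})\otimes(\ve_i+z_{\ell_q}) = w\otimes(v_{\ell_q}+z_{\ell_q}) + (\ve_i+z_{\ell_q})\otimes w
\end{equation*}
together with $\div w = 0$, one obtains
\begin{equation*}
D_{t,\ell_q}w + w\cdot\nabla(\ve_i+z_{\ell_q}) + \nabla\pi = \div\mathring{R}_{\ell_q},\qquad w(\te_{i-1},\cdot)=0,
\end{equation*}
where $\pi := p_{\ell_q}-\pe_i$. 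Applying the Leray projection $\mathbb{P}$ removes $\nabla\pi$ and leaves a transport-type evolution with forcing $\mathbb{P}\div\mathring{R}_{\ell_q}$ and a lower-order linear drift $\mathbb{P}(w\cdot\nabla(\ve_i+z_{\ell_q}))$, to which the estimates collected in Appendix~\ref{sec:esti:transport} apply.

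To establish \eqref{v i- v l}, I invoke the $C^{N+\alpha}$ transport estimates along the flow of $v_{\ell_q}+z_{\ell_q}$ on the interval $[\te_{i-1},t]$ of length at most $2\tau_q$. The smallness
\begin{equation*}
\tau_q\bigl(\|v_{\ell_q}\|_{C^1_x}+\|z_{\ell_q}\|_{C^1_x}\bigr) \lesssim \tau_q\bigl(\bar{M}L\lambda_q\delta_q^{1/2}+L\bigr) \lesssim \bar{M}\lambda_q^{-6\alpha}\ll 1,
\end{equation*}
which follows from \eqref{def tauq}, \eqref{z ps}, and \eqref{itera:b}, guarantees that the associated Gr\"onwall exponentials and the flow Jacobians stay bounded in $C^{N+\alpha}$. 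Since $w(\te_{i-1})=0$ and $\|\mathbb{P}\div\mathring{R}_{\ell_q}\|_{C^{N+\alpha}_x}\lesssim \|\mathring{R}_{\ell_q}\|_{C^{N+1+\alpha}_x}\lesssim \bar{M}L^2\delta_{q+1}\ell_q^{-1-N+\alpha}$ by \eqref{estimate Rl}, integration from $\te_{i-1}$ yields
\begin{equation*}
\|w(t)\|_{C^{N+\alpha}_x}\lesssim \tau_q\cdot\bar{M}L^2\delta_{q+1}\ell_q^{-1-N+\alpha},
\end{equation*}
the contribution of $w\cdot\nabla(\ve_i+z_{\ell_q})$ being reabsorbed on the left via Gr\"onwall thanks to the smallness above.

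The bound \eqref{Dtl v i- v l} then follows by reading $D_{t,\ell_q}w = \mathbb{P}\div\mathring{R}_{\ell_q} - \mathbb{P}(w\cdot\nabla(\ve_i+z_{\ell_q}))$ off the equation. The first summand is $O(\bar{M}L^2\delta_{q+1}\ell_q^{-1-N+\alpha})$ in $C^{N+\alpha}$ by \eqref{estimate Rl}. For the second I apply the H\"older product rule together with \eqref{v i- v l}, \eqref{estiamte:vi}, and \eqref{z ps}; the dominant term is
\begin{equation*}
\|w\|_{C^{N+\alpha}_x}\,\|\nabla(\ve_i+z_{\ell_q})\|_{C^0_x}\lesssim \bar{M}L^2\tau_q\delta_{q+1}\ell_q^{-1-N+\alpha}\cdot\bar{M}\tau_q^{-1}\ell_q^{\alpha}\lesssim \bar{M}L^2\delta_{q+1}\ell_q^{-1-N+\alpha},
\end{equation*}
after absorbing universal constants by taking $a$ large. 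The main technical subtlety throughout is the control of $\mathbb{P}$ (equivalently, the pressure $\nabla\pi$) in non-integer H\"older norms, which is handled via standard Schauder/Riesz-type bounds on $\T^3$; once this is in place, the computation above closes the induction step.
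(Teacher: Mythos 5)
Your plan is in the same spirit as the paper's proof: derive the PDE for $w := v_{\ell_q}-\ve_i$, estimate the source, and close via the transport estimates of Appendix~\ref{sec:esti:transport} and Gr\"onwall, with the $N=0$ estimate feeding the $N\ge 1$ case. However, the Leray-projection framing contains a concrete error that hides the main analytic step, so the argument as written does not close.

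The problem is that $\mathbb{P}$ does \emph{not} commute with the advective derivative. After projecting, the equation is $\partial_t w+\mathbb{P}\bigl[(v_{\ell_q}+z_{\ell_q})\cdot\nabla w\bigr]+\mathbb{P}\bigl[w\cdot\nabla(\ve_i+z_{\ell_q})\bigr]=\mathbb{P}\div\mathring{R}_{\ell_q}$. Since $(v_{\ell_q}+z_{\ell_q})\cdot\nabla w$ is not divergence-free (its divergence equals $\tr[\nabla(v_{\ell_q}+z_{\ell_q})\nabla w]\neq 0$), one has $\mathbb{P}\bigl[(v_{\ell_q}+z_{\ell_q})\cdot\nabla w\bigr]\neq(v_{\ell_q}+z_{\ell_q})\cdot\nabla w$, so this is not a transport equation in the sense of \eqref{eq:phi}, and your identity $D_{t,\ell_q}w=\mathbb{P}\div\mathring{R}_{\ell_q}-\mathbb{P}(w\cdot\nabla(\ve_i+z_{\ell_q}))$ used to prove \eqref{Dtl v i- v l} is false. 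The missing term is $(\mathrm{Id}-\mathbb{P})\bigl[(v_{\ell_q}+z_{\ell_q})\cdot\nabla w\bigr]$, i.e.\ exactly the contribution of the convective term to the pressure $\nabla\pi$. To bound it by $\tau_q^{-1}\|w\|_{C^{N+\alpha}_x}$ one must observe that $\div\bigl[(v_{\ell_q}+z_{\ell_q})\cdot\nabla w\bigr]=\div\bigl[w\cdot\nabla(v_{\ell_q}+z_{\ell_q})\bigr]$ (both fields being divergence-free), so that $\nabla\Delta^{-1}\div$ acts as a zero-order operator on an expression linear in $w$; this is precisely the Schauder estimate for the pressure that the paper records in \eqref{laplace pl-pi}--\eqref{esti:pl-pi} and \eqref{pl-iN}, which your sketch declares ``handled by standard bounds'' without writing it down. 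Once you restore this term, the bound is correct, but the argument as stated omits it. In addition, for $N\ge 1$ you would still need to commute $\partial^\theta$ with $D_{t,\ell_q}$ and interpolate as in \eqref{Dt,lN}--\eqref{NDt,l}; the phrase ``closes the induction step'' is doing a lot of unearned work.
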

	\begin{proof}
		The proof follows essentially from \cite[Proposition 3.3]{BDLSV19}, but we need to treat the noise part $z_{\ell_q}$ more carefully. By using \eqref{z ps}, \eqref{itera:b} and mollification estimate \eqref{estimate:molli1}, we obtain for any $N\geq0$, $t\in[0,\mathfrak{t}_L]$
		\begin{equation}\label{estimate:vellzell}
			\begin{aligned}
				\|v_{\ell_{q}}(t)\|_{C^{N+1+\alpha}_x} &\lesssim \bar{M}\tau_q^{-1} \ell_q^{-N-\alpha} \lambda_{q}^{-4\alpha} ,\\ \|z_{\ell_{q}}(t)\|_{C^{N+1+\alpha}_x}&\lesssim L \ell_{q}^{-N-\alpha}\leq  \tau_q^{-1} \ell_q^{-N-\alpha} \lambda_{q}^{-4\alpha}.
			\end{aligned}
		\end{equation}
		We first consider \eqref{v i- v l} with $N=0$. Combining \eqref{mollification} with \eqref{eq:euler exact1} we have 
		\begin{align} \label{equation: vl-vi}
			\partial_t(v_{\ell_q}- \ve_i)+ (v_{\ell_q}+z_{\ell_q})\cdot \nabla(v_{\ell_q}- \ve_i)=( \ve_i-v_{\ell_q})\cdot \nabla(\ve_i+z_{\ell_q} )-\nabla(p_{\ell_q}- \pe_i)+\div \mathring{R}_{{\ell_q}}.
		\end{align}
		Taking divergence on both sides of \eqref{equation: vl-vi}, we obtain the equation for the pressure difference
		\begin{align}\label{laplace pl-pi}
			\Delta(p_{\ell_q}- \pe_i)=\div (( \ve_i-v_{\ell_q})\cdot \nabla(\ve_i+z_{\ell_q} )+( \ve_i-v_{\ell_q})\cdot \nabla(v_{\ell_q}+z_{\ell_q} ))+\div \div  \mathring{R}_{{\ell_q}}.
		\end{align}
		Using \eqref{estimate Rl}, \eqref{estiamte:vi}, \eqref{estimate:vellzell} and Schauder estimates, we deduce for any $t\in  [\te_{i-1},\te_{i+1}]\cap [0,\mathfrak{t}_L]$ 
		\begin{equation}\label{esti:pl-pi}
			\begin{aligned}
				\|\nabla(p_{\ell_q}-\pe_i)\|_{C^\alpha_x}&\lesssim (\|v_{\ell_q}\|_{C^{1+\alpha}_x}+\|\ve_i\|_{C^{1+\alpha}_x}+\|z_{\ell_q}\|_{C^{1+\alpha}_x})\|\ve_i-v_{\ell_q}\|_{C^\alpha_x}+\|\mathring{R}_{\ell_q}\|_{C^{1+\alpha}_x}
				\\&\lesssim (\bar{M}L\lambda_{q}\delta_q^{\sfrac12}\ell_q^{-\alpha}+\bar{M}\tau_q^{-1}\ell_q^{\alpha})\|\ve_i-v_{\ell_q}\|_{C^\alpha_x}+\|\mathring{R}_{\ell_q}\|_{C^{1+\alpha}_x}
				\\&\lesssim   \tau_q^{-1}\|\ve_i-v_{\ell_q}\|_{C^\alpha}+\bar{M}L^2\delta_{q+1}\ell_q^{-1+\alpha}.
			\end{aligned}
		\end{equation}
		The last line makes use of \eqref{ell:lambdaq} to have $L\tau_q\lambda_{q}\delta_{q}^{\sfrac12} =\lambda_{q}^{-6\alpha} \leq  \ell_{q}^{4\alpha}$ and $a$ large enough such that $\bar{M}\ell_{q}^{\alpha}\ll 1$.
		Hence, we use \eqref{estimate Rl} and \eqref{estiamte:vi} again to obtain 
		\begin{equation}\label{estimate: dtl alpha}
			\begin{aligned}
				\|D_{t,{\ell_q}}(v_{\ell_q}-\ve_i)\|_{C^\alpha_x}\lesssim   \tau_q^{-1}\|(\ve_i-v_{\ell_q})(t)\|_{C^\alpha_x}+\bar{M}L^2\delta_{q+1}\ell_q^{-1+\alpha}.
			\end{aligned}
		\end{equation}
		From $v_{\ell_q}(t_{i-1})=\ve_{i}(t_{i-1})$ and the estimate \eqref{eq:Gronwall1} for transport equations, it follows that
		\begin{align*}
			\|(v_{\ell_q}-\ve_i)(t)\|_{C^\alpha_x}\lesssim 	\bar{M}L^2 |t-\te_{i-1}|\delta_{q+1}\ell_q^{-1+\alpha}+\tau_q^{-1}\int_{\te_{i-1}}^{t}	\|(v_{\ell_q}-\ve_i)(s)\|_{C^\alpha_x} \dif s.
		\end{align*}
		Applying Gr\"{o}nwall's inequality and by the assumption $|t-\te_{i-1}|\leq 2\tau_q$, we obtain 
		\begin{align}\label{vl-vi alpha}
			\|(v_{\ell_q}-\ve_i)(t)\|_{C^\alpha_x}\lesssim \bar{M}L^2\tau_q \delta_{q+1}\ell_q^{-1+\alpha},
		\end{align}
		which proves \eqref{v i- v l} for the case $N=0$. 
		Then, we obtain \eqref{Dtl v i- v l} for the case $N=0$ as a consequence of  \eqref{estimate: dtl alpha}.
		
		Next, consider the case $N\geq1$ and let $\theta$ be a multi-index with $|\theta|=N$. We commute the derivative $\partial^\theta$ with the transport derivative $D_{t,{\ell_q}}=\partial_t+(v_{\ell_q}+z_{\ell_q})\cdot \nabla $ and use \eqref{estimate:vellzell}, \eqref{vl-vi alpha} to have
		\begin{equation}\label{Dt,lN}
			\begin{aligned}
				\|D_{t,{\ell_q}}&\partial^{\theta}(v_{\ell_q}-\ve_i)\|_{C^\alpha_x} 
				\\ &\lesssim \|\partial^{\theta} D_{t,{\ell_q}} (v_{\ell_q}-\ve_i)\|_{C^\alpha_x} + \sum_{0\leq j \leq N-1} \|v_{\ell_q}+z_{\ell_q}\|_{C^{j+1+\alpha}_x}  \|v_{\ell_q}-\ve_i\|_{C^{N-j+\alpha}_x}
				\\& \lesssim   \|\partial^{\theta} D_{t,{\ell_q}} (v_{\ell_q}-\ve_i)\|_{C^\alpha_x} +\|v_{\ell_q}+z_{\ell_q}\|_{C^{1+\alpha}_x} \|v_{\ell_q}-\ve_i\|_{C^{N+\alpha}_x}
				\\&\qquad \qquad \qquad \qquad\qquad \quad + \|v_{\ell_q}+z_{\ell_q}\|_{C^{N+1+\alpha}_x}  \|v_{\ell_q}-\ve_i\|_{C^{\alpha}_x}
				\\ &\lesssim \|\partial^{\theta} D_{t,{\ell_q}} (v_{\ell_q}-\ve_i)\|_{C^\alpha_x} +\tau_q^{-1} \|\ve_i-v_{\ell_q}\|_{C^{N+\alpha}_x}+\bar{M}L^2\delta_{q+1} \ell_q^{-1-N+\alpha},
			\end{aligned}
		\end{equation}
		where we used interpolation in the second inequality and the last line follows from \eqref{vl-vi alpha} and $\bar{M}\lambda_{q}^{-\alpha}\ll 1$. On the other hand, differentiating \eqref{equation: vl-vi} together with \eqref{estimate Rl}, \eqref{estiamte:vi}, \eqref{estimate:vellzell} and \eqref{vl-vi alpha} yields to 
		\begin{equation}\label{NDt,l}
			\begin{aligned}
				\|\partial^{\theta} D_{t,{\ell_q}} (v_{\ell_q}-\ve_i)\|_{C^\alpha_x}\lesssim& \|\ve_i-v_{\ell_q}\|_{C^{N+\alpha}_x}\|\ve_i+z_{\ell_q}\|_{C^{1+\alpha}_x}+ \|\ve_i-v_{\ell_q}\|_{C^{\alpha}_x}\| \ve_i+z_{\ell_q}\|_{C^{N+1+\alpha}_x}
				\\&+\|p_{\ell_q}-\pe_i\|_{C^{N+1+\alpha}_x}+\|\mathring{R}_{{\ell_q}}\|_{C^{N+1+\alpha}_x}
				\\\lesssim&\tau_q^{-1} \| \ve_i-v_{\ell_q}\|_{C^{N+\alpha}_x}+\|\nabla(p_{\ell_q}-\pe_i)\|_{C^{N+\alpha}_x}+  \bar{M}L^2 \delta_{q+1} \ell_q^{-1-N+\alpha}.
			\end{aligned}
		\end{equation}
		Similar to \eqref{esti:pl-pi}, by using again \eqref{estimate Rl}, \eqref{estiamte:vi}, \eqref{estimate:vellzell}, \eqref{vl-vi alpha} and Schauder estimates, it holds
		\begin{equation}\label{pl-iN}
			\begin{aligned}
				\|\nabla(p_{\ell_q}-\pe_i)\|_{C^{N+\alpha}_x}\lesssim& \|\ve_i-v_{\ell_q}\|_{C^{N+\alpha}_x}(\|\ve_i\|_{C^{1+\alpha}_x}+\|v_{\ell_q}\|_{C^{1+\alpha}_x}+\|z_{\ell_q}\|_{C^{1+\alpha}_x})+\|\mathring{R}_{{\ell_q}}\|_{C^{N+1+\alpha}_x}
				\\&+\|\ve_i-v_{\ell_q}\|_{C^{\alpha}_x}(\|\ve_i\|_{C^{N+1+\alpha}_x}+\|v_{\ell_q}\|_{C^{N+1+\alpha}_x}+\|z_{\ell_q}\|_{C^{N+1+\alpha}_x})
				\\ \lesssim& \tau_q^{-1}\|\ve_i-v_{\ell_q}\|_{C^{N+\alpha}_x}+ \bar{M}L^2 \delta_{q+1}\ell_q^{-1-N+\alpha}\, .
			\end{aligned}
		\end{equation}
		Substituting \eqref{NDt,l} and \eqref{pl-iN} into \eqref{Dt,lN}, we then obtain
		\begin{align*}
			\| D_{t,{\ell_q}} \partial^{\theta}(v_{\ell_q}-\ve_i)\|_{C^\alpha_x}\lesssim \tau_q^{-1}\|\ve_i-v_{\ell_q}\|_{C^{N+\alpha}_x}+ \bar{M}L^2 \delta_{q+1}\ell_q^{-1-N+\alpha}\, .
		\end{align*}
		Therefore, invoking once more \eqref{eq:Gronwall1} with Gr\"{o}nwall's inequality, we obtain for $t\in  [\te_{i-1},\te_{i+1}]\cap [0,\mathfrak{t}_L]$ 
		\begin{align}\label{vl-vi N+alpha}
			\|(v_{\ell_q}-\ve_i)(t)\|_{C^{N+\alpha}_x}&\lesssim \bar{M}L^2 \tau_q \delta_{q+1}\ell_q^{-1-N+\alpha},
		\end{align}
		which implies \eqref{v i- v l} for the case $N\geq 1$. As a consequence of \eqref{NDt,l}, \eqref{pl-iN} and \eqref{vl-vi N+alpha}, the estimate \eqref{Dtl v i- v l} also follows for the case $N\geq 1$.
	\end{proof}
	
	\subsubsection{Estimates on vector potentials}
As shown in Proposition~\ref{p:Rq} below, sharp estimates for $\ve_i-v_{\ell_q}$ in negative-order H\"older spaces are required. To achieve this, we recall the vector potentials associated with the exact solutions $\ve_i$. By the Helmholtz decomposition, a divergence-free field $V$ with mean zero can be written as a curl, i.e., $V=\nabla \times Z$ for an incompressible field $Z=\mathcal{B}V:=(-\Delta)^{-1}\mathrm{curl} V$ called the vector potential of $V$. Here, the operator $\mathcal{B}=(-\Delta)^{-1}\mathrm{curl}$ is known as the Biot-Savart operator. Define $b_i= \mathcal{B} \ve_{i}$. We aim to obtain estimates for the differences $b_i-b_{i+1}$, which will be established in Proposition~\ref{p:S_est} below.
	
	\begin{proposition}\label{p:S_est}
		For $t\in  [\te_{i},\te_{i+1}]\cap [0,\mathfrak{t}_L]$ and $N\geq 0$, we have
		\begin{subequations}
			 	\begin{align}
			 	\|b_i-b_{i+1}\|_{C^{N+\alpha}_x} &\lesssim  \bar{M}L^2 \tau_q\delta_{q+1}\ell_q^{-N+\alpha}\,,   \label{e:b_diff} \\
			 	\| D_{t,{\ell_q}} (b_i-b_{i+1})\|_{C^{N+\alpha}_x} &\lesssim  \bar{M}L^2 \delta_{q+1}\ell_q^{-N+\alpha}\,, \label{e:b_diff_Dt}
			 \end{align}
		\end{subequations}
		where $D_{t,{\ell_q}}$ is given as in Proposition~\ref{esti: v,p,D i-l} and the implicit constant may depend on $N$ and $\alpha$.
	\end{proposition}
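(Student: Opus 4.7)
The plan is to derive an evolution equation for $b_i = \mathcal{B}\ve_i$ and exploit the fact that the Biot--Savart operator $\mathcal{B} = (-\Delta)^{-1}\mathrm{curl}$ gains one derivative in H\"older spaces. Applying $\mathcal{B}$ to \eqref{eq:euler exact1} and using $\mathcal{B}\nabla = 0$, I would obtain
\begin{equation*}
    \partial_t b_i + \mathcal{B}\div\bigl((\ve_i + z_{\ell_q}) \otimes (\ve_i + z_{\ell_q})\bigr) = 0,
\end{equation*}
where $\mathcal{B}\div$ is zeroth order on H\"older spaces. Adding $(v_{\ell_q} + z_{\ell_q})\cdot \nabla b_i$ on both sides and subtracting the corresponding equation for $b_{i+1}$ yields a transport-type identity
\begin{equation*}
    D_{t,\ell_q}(b_i - b_{i+1}) = \mathcal{H}_i,
\end{equation*}
in which $\mathcal{H}_i$ is a zeroth-order Biot--Savart/divergence applied to bilinear expressions in $\ve_i - \ve_{i+1}$, $\ve_i$, $\ve_{i+1}$ and $z_{\ell_q}$.

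Next I would use the fact that at $t = \te_i$, by construction $\ve_{i+1}(\te_i) = v_{\ell_q}(\te_i)$, so the initial datum is $(b_i - b_{i+1})(\te_i) = \mathcal{B}(\ve_i - v_{\ell_q})(\te_i)$. The crucial gain of one derivative from $\mathcal{B}$, combined with the estimate \eqref{v i- v l} at level $N - 1$, gives
\begin{equation*}
    \|(b_i - b_{i+1})(\te_i)\|_{C^{N+\alpha}_x} \lesssim \bar{M}L^2 \tau_q \delta_{q+1} \ell_q^{-N+\alpha},
\end{equation*}
which matches \eqref{e:b_diff}. For the forcing $\mathcal{H}_i$, I would use the H\"older product estimate together with Proposition~\ref{esti: v,p,D i-l}, the bound \eqref{z ps} for $z_{\ell_q}$, \eqref{itera:b} for $v_{\ell_q}$ and \eqref{estiamte:vi} for $\ve_i, \ve_{i+1}$ to obtain $\|\mathcal{H}_i\|_{C^{N+\alpha}_x} \lesssim \bar{M}L^2 \delta_{q+1} \ell_q^{-N+\alpha}$. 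The transport estimate \eqref{eq:Gronwall1} combined with Gr\"onwall on $[\te_i, \te_{i+1}]$ (of length $\leq \tau_q$) then yields \eqref{e:b_diff}. The time-derivative estimate \eqref{e:b_diff_Dt} follows by plugging \eqref{e:b_diff} back into the evolution equation for $b_i - b_{i+1}$ and bounding the transport contribution $\|(v_{\ell_q} + z_{\ell_q})\cdot\nabla(b_i-b_{i+1})\|_{C^{N+\alpha}_x}$ by the already established spatial bound.

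The main obstacle is the $N = 0$ case of the initial datum estimate: the naive bound $\|\mathcal{B}f\|_{C^\alpha_x} \lesssim \|f\|_{C^\alpha_x}$ combined with \eqref{v i- v l} at $N = 0$ only furnishes the factor $\ell_q^{-1+\alpha}$ rather than the required $\ell_q^{\alpha}$. To recover the extra factor of $\ell_q$ one must exploit that $\mathcal{B}$ is a Calder\'on--Zygmund operator of order $-1$, either by working with a negative H\"older (or Besov) norm $C^{\alpha - 1}_x$ --- noting that $\ve_i - v_{\ell_q}$ is divergence-free and mean-zero, hence can be represented as the curl of a $C^{\alpha}_x$-potential --- or directly via the Fourier-multiplier representation of $\mathcal{B}$ on $\mathbb{T}^3$. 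Once this technical gain is secured, the remaining work is bookkeeping: tracking every occurrence of $z_{\ell_q}$ through \eqref{z ps} and \eqref{estimate:vellzell}, and carrying out the Gr\"onwall argument as in \cite[Proposition 3.4]{BDLSV19}.
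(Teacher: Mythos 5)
Your outline is essentially correct for $N\geq1$: since $\nabla\mathcal{B}$ is bounded on H\"older spaces and \eqref{v i- v l} holds pointwise in time on the relevant interval, $\|b_i-b_{i+1}\|_{C^{N+\alpha}_x}\lesssim\|\ve_i-v_{\ell_q}\|_{C^{N-1+\alpha}_x}+\|\ve_{i+1}-v_{\ell_q}\|_{C^{N-1+\alpha}_x}$ already gives \eqref{e:b_diff} for $N\geq1$ directly, and the Gr\"onwall scaffold you set up is unnecessary there. The difficulty is entirely the $N=0$ case, and your argument has two gaps there, only one of which you flag. First, the source bound $\|\mathcal{H}_i\|_{C^{N+\alpha}_x}\lesssim\bar M L^2\delta_{q+1}\ell_q^{-N+\alpha}$ is not achievable as you set it up. Both $(v_{\ell_q}+z_{\ell_q})\cdot\nabla(b_i-b_{i+1})$ and $\mathcal{B}\div[\cdots]$ see $\ve_i-\ve_{i+1}$ at regularity $N+\alpha$, i.e.\ one full derivative above $b_i-b_{i+1}$; inserting \eqref{v i- v l} and the background bounds from \eqref{itera:a}, \eqref{z ps} yields $\|\mathcal{H}_i\|_{C^{N+\alpha}_x}\lesssim\bar M^2 L^3\tau_q\delta_{q+1}\ell_q^{-1-N+\alpha}\lambda_1^{3\alpha/2}$, and after integrating over time $\tau_q$ the result overshoots the target $\bar M L^2\tau_q\delta_{q+1}\ell_q^{-N+\alpha}$ by the factor $\bar M\lambda_1^{3\alpha/2}L\tau_q\ell_q^{-1}=\bar M\lambda_1^{3\alpha/2}\delta_{q+1}^{-1/2}\to\infty$. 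The same loss defeats your route to \eqref{e:b_diff_Dt}. Second, as you correctly note, the initial datum at $\te_i$ cannot be estimated at $N=0$ by the naive Calder\'on--Zygmund bound; but the negative-H\"older or Fourier-multiplier devices you propose do not repair it either, since $\|\ve_i-v_{\ell_q}\|_{C^{\alpha-1}_x}$ is not a priori $\ell_q$ times $\|\ve_i-v_{\ell_q}\|_{C^{\alpha}_x}$ --- the missing factor of $\ell_q$ has to come from the dynamics and the smallness of $\mathring R_{\ell_q}$, not from a linear inequality.

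The paper fixes both at once by working with $\tilde b_i = \mathcal{B}(\ve_i-v_{\ell_q})$ rather than directly with $b_i-b_{i+1}$. Then $\tilde b_i(\te_{i-1})=0$, so there is no nontrivial initial datum, and $b_i-b_{i+1}=\tilde b_i-\tilde b_{i+1}$ delivers \eqref{e:b_diff} once each $\tilde b_i$ is controlled. The crucial ingredient is the Poisson equation \eqref{eq:poission}: starting from the difference of the $\ve_i$- and $v_{\ell_q}$-equations, one substitutes $\ve_i-v_{\ell_q}=\curl\tilde b_i$ and applies vector-calculus identities to absorb the $\curl$ into $\nabla\div$ and $\curl\div$ operators, so that the right-hand side contains $\tilde b_i$ itself (not $\curl\tilde b_i$) in products with first derivatives of the backgrounds, plus $\curl\div\mathring R_{\ell_q}$. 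That is the one-derivative gain your plan lacks. After inverting $-\Delta$, the reaction terms are $\mathcal{O}(\tau_q^{-1}\|\tilde b_i\|_{C^{\alpha}_x})$ --- absorbable by Gr\"onwall over an interval of length $\lesssim\tau_q$ --- and the genuine forcing is $\mathcal{O}(\bar M L^2\delta_{q+1}\ell_q^{\alpha})$ from $\mathring R_{\ell_q}$, which integrates exactly to $\bar M L^2\tau_q\delta_{q+1}\ell_q^{\alpha}$, closing the $N=0$ estimate.
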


	\begin{proof}
		We first denote $\tilde b_i := \mathcal{B} (\ve_i-v_{{\ell_q}})$ and observe that
		$b_{i}-b_{i+1}=\tilde b_i-\tilde b_{i+1}$. Hence, it suffices to estimate $\tilde b_i$ in place of $b_i-b_{i+1}$. Since $\nabla \mathcal{B}$ is a bounded operator on H\"older spaces, then it follows  directly from \eqref{v i- v l} that for $N\geq 1$ and $ [\te_{i},\te_{i+1}]\cap [0,\mathfrak{t}_L]$
		\begin{align}\label{e:N>=1}
			\|\nabla \tilde b_i\|_{C^{N-1+\alpha}_x}\lesssim  \|{\ve_i-v_{{\ell_q}}}\|_{C^{N-1+\alpha}_x}
			\lesssim  \bar{M}L^2\tau_q\delta_{q+1}\ell_q^{-N+\alpha} \, ,
		\end{align}
		which implies \eqref{e:b_diff} for the case $N\geq 1$.
		Let us move to the case $N=0$. First, we can derive the following Poission equation as in \cite[Page 244]{BDLSV19} 
		\begin{equation}\label{eq:poission}
			\begin{aligned}
				-\Delta\bigl(\partial_t\tilde b_i+(v_{\ell_q}+z_{\ell_q})\cdot\nabla\tilde b_i\bigr)=&-\nabla\div\big(\tilde b_i\cdot \nabla (v_{\ell_q}+z_{\ell_q})\big) -\curl\div\mathring{R}_{\ell_q}
				\\ &\quad -\curl\div \big(\tilde b_i\times\nabla(v_{\ell_q}+z_{\ell_q})
				+(\tilde b_i\times\nabla)(\ve_i+z_{\ell_q})^T\big).
			\end{aligned}
		\end{equation}	
		Since $ \Delta^{-1}\curl \div$, $\Delta^{-1} \nabla \div$, and $\Delta^{-1} \curl \div$ are bounded operators on H\"{o}lder space, we consider \eqref{eq:poission} and use \eqref{estimate Rl}, \eqref{estiamte:vi} and \eqref{estimate:vellzell} to obtain
		\begin{equation}\label{D(t,l )b N}
			\begin{aligned}
				\|\partial_t \tilde b_i+(v_{\ell_q}+z_{\ell_q})\cdot\nabla\tilde b_i\|_{C^{N+\alpha}_x}
			 \lesssim
				&(\|\ve_i\|_{C^{1+\alpha}_x}+\|v_{\ell_q}\|_{C^{1+\alpha}_x}+\|z_{\ell_q}\|_{C^{1+\alpha}_x})\|\tilde b_i\|_{C^{N+\alpha}_x}+\|\mathring{R}_{\ell_q}\|_{C^{N+\alpha}_x}
				\\&+(\|\ve_i\|_{C^{N+1+\alpha}}+\|v_{\ell_q}\|_{C^{N+1+\alpha}_x}+\|z_{\ell_q}\|_{C^{N+1+\alpha}_x})\|\tilde b_i\|_{C^{\alpha}_x}
				\\ \lesssim & \tau_q^{-1}\|\tilde b_i\|_{C^{N+\alpha}_x}+\tau_q^{-1}\ell_q^{-N}\|\tilde b_i\|_{C^{\alpha}_x}+\bar{M}L^2\delta_{q+1}\ell_q^{-N+\alpha}.
			\end{aligned}
		\end{equation}
		Since $\tilde{b}_i(\te_{i-1})=\mathcal{B}(\ve_{i}(\te_{i-1})-v_{\ell_q}(\te_{i-1}))=0$, taking $N=0$ in \eqref{D(t,l )b N} and using the estimate \eqref{eq:Gronwall1} we deduce for $t\in  [\te_{i-1},\te_{i+1}]\cap [0,\mathfrak{t}_L]$
		\begin{align*}
			\|\tilde{b}_i(t)\|_{C^{\alpha}_x}  \lesssim  \bar{M}L^2\tau_q \delta_{q+1}\ell_q^{\alpha} +  \tau_q^{-1} \int_{\te_{i-1}}^t \|\tilde b_i(s)\|_{C^{\alpha}_x}\, \dif s.
		\end{align*}
		Gr\"{o}nwall's inequality then gives for $t\in  [\te_{i},\te_{i+1}]\cap [0,\mathfrak{t}_L]$
		\begin{align}\label{e:b N=0}
			\|\tilde{b}_i(t)\|_{C^{\alpha}_x}\lesssim \bar{M}L^2\tau_q\delta_{q+1}\ell_q^{\alpha}.
		\end{align}
		The same estimate also holds for $\tilde{b}_{i+1}$. Hence, we achieve \eqref{e:b_diff} for any $N\geq 0$.
		Lastly, \eqref{e:b_diff_Dt} again follows from \eqref{e:N>=1}, \eqref{D(t,l )b N} and \eqref{e:b N=0}.
	\end{proof}
	\subsection{Gluing Procedure}\label{sec:gluing} In this subsection, we glue the exact solutions $\ve_{i}$ together to construct a new velocity field $\overline{v}_q$, and utilize the usual inverse divergence operator to construct the glued stress $\mathring{\overline{R}}_q$, whose temporal support is confined within pairwise disjoint intervals $I_i$ of length $\sfrac{\tau_q}{3}$. Building on the stability of $\ve_{i}$ established in Subsection~\ref{sec:esact solutions}, we then derive pathwise H\"{o}lder norms for the glued velocity field $\overline{v}_q$ and stress $\mathring{\overline{R}}_q$, demonstrating that $\overline{v}_q$ remains an approximate solution to the Euler system \eqref{euler1}. Lastly, we compute the energy difference between the original and glued velocity fields, with particular attention to the noise term $z_{\ell_{q}}$.
	
	\subsubsection{Definition of $\overline{v}_q$ and $\mathring{\overline{R}}_q$} Define the intervals $I_i, J_i$ $(i\geq 0)$ by
	\begin{align*}
		I_i:=[\te_i+\sfrac{\tau_q}{3},\te_i+\sfrac{2\tau_q}{3}]\cap[0,\mathfrak{t}_L], \qquad
		J_i:=(\te_i-\sfrac{\tau_q}{3},\te_i+\sfrac{\tau_q}{3})\cap  [0,\mathfrak{t}_L].
	\end{align*}
	Note that $\{I_i,J_i\}_{i\geq 0}$ is a decomposition of $[0,\mathfrak{t}_L]$ into pairwise disjoint intervals. Next, as in \cite[Section 4]{BDLSV19}, we define a partition of unity $\{\chi_i\}_{i\geq 0}$ in time with $\chi_i\in C^\infty_c(\R)$ and $0\leq \chi_i\leq1$ such that
	\begin{itemize}
		\item The cutoffs form a partition of unity $	\sum_i \chi_i \equiv 1$ on $[0,\mathfrak{t}_L]$;
		\item $	\supp \chi_i\subset I_{i-1} \cup J_i \cup I_i $, in particular $\supp \chi_i\cap \supp \chi_{i+2}=\emptyset$;
		\item $	\chi_i(t)=1$ for $t \in J_i $;
		\item $	\|{\partial_t^N \chi_i}\|_{C^0_t} \lesssim \tau_q^{-N} \label{e:dt:chi}$ for any $i$ and $N$.
	\end{itemize}
	Then we define glued velocity and pressure $(\overline v_q,\overline p_q^{(1)})$ by
	\begin{align}
		\label{eq:bar:v_q:def}
		\overline v_q(t,x):= \sum_i \chi_i(t) \ve_i(t,x)\,,  \qquad \overline p_q^{(1)}(t,x) =\sum_i \chi_i(t) \pe_i(t,x)
		\, .
	\end{align}
	Since the cutoffs $\chi_i$ only depend on time, the vector field $\overline v_q$ defined in \eqref{eq:bar:v_q:def} is divergence-free and $(\mathcal{F}_t)_{t\geq 0}$-adapted. Furthermore, if $t\in I_i$, then $\chi_i+\chi_{i+1}=1$ and $\chi_j=0$ for $j\neq i,i+1$, therefore on every $I_i$ interval we have
	\begin{align*}
		\overline v_q = \chi_i \ve_i + (1-\chi_i) \ve_{i+1} , \qquad  \overline p_q^{(1)} = \chi_i \pe_i + (1-\chi_i) \pe_{i+1} \,,
	\end{align*} 
	which also leads to
	\begin{equation}\label{computation:Rq}
		\aligned
		\partial_t \overline v_q&+\div((\overline v_q+z_{\ell_q}) \otimes(\overline v_q+z_{\ell_q}))+\nabla \overline p_q^{(1)}  
		\\=&\partial_t \chi_i(\ve_i- \ve_{i+1})  -\chi_i(1-\chi_i)\div((\ve_i -\ve_{i+1})\otimes (\ve_i -\ve_{i+1}))
		\\&+ \chi_i(\partial_t \ve_i+\div((\ve_i+z_{\ell_q})\otimes (\ve_i+z_{\ell_q}))+\nabla \pe_i)
		\\&+(1-\chi_i)(\partial_t \ve_{i+1}+\div((\ve_{i+1}+z_{\ell_q})\otimes (\ve_{i+1}+z_{\ell_q}))+\nabla \pe_{i+1})
		\\=&\partial_t \chi_i(\ve_i- \ve_{i+1})  -\chi_i(1-\chi_i)\div((\ve_i -\ve_{i+1})\otimes (\ve_i -\ve_{i+1})).
		\endaligned
	\end{equation}
	On the other hand, by the definition of the cutoff functions, on every $J_i$ interval we have 
	\begin{align*}
		\overline v_q = \ve_i , \qquad \overline p_q^{(1)} = \pe_i,
	\end{align*}
	therefore, on $\cup_i J_i$ we have 
	\begin{align*}
		\partial_t \overline v_q+\div((\overline v_q+z_{\ell_q}) \otimes(\overline v_q+z_{\ell_q}))+\nabla \overline p_q^{(1)}  =0.
	\end{align*}
	Using the inverse divergence operator $\mathcal{R}$ from Appendix~\ref{sec:appendix c}, for all $t \in I_i$ we thus define
	\begin{subequations}
		\begin{align}
			\mathring{\overline{R}}_q&=\partial_t\chi_i\mathcal{R}(\ve_i-\ve_{i+1})-\chi_i(1-\chi_i)(\ve_i-\ve_{i+1})\mathring{\otimes} (\ve_i-\ve_{i+1}) \, , \label{eq:bar:R_q:def}\\
			\overline{p}_q^{(2)}&=-\frac13 \chi_i(1-\chi_i)\left(|\ve_i-\ve_{i+1}|^2-\int_{\mathbb{T}^3}|\ve_i-\ve_{i+1}|^2\,\dif x\right) \, ,
		\end{align}
	\end{subequations}
	and $\mathring{\overline{R}}_q=0$, $\overline{p}_q^{(2)}=0$ for $t\notin \cup_i I_i$. Hence, $\mathring{\overline{R}}_q \in \mathcal{S}^{3\times 3}_0$ and $\supp \mathring{\overline{R}}_q \subset \cup_iI_i\times \mathbb{T}^3$.   
	We set $\overline{p}_q=\overline{p}_q^{(1)}+\overline{p}_q^{(2)}$, it follows from the preceding discussion that for all $(t,x)\in [0,\mathfrak{t}_L]\times \mathbb{T}^3$,
	\begin{equation}\label{gluing euler q}
		\aligned
		\partial_t \overline v_q+\div((\overline v_q+z_{\ell_q}) \otimes(\overline v_q+z_{\ell_q}))+\nabla \overline{p}_q&=\div \mathring{\overline{R}}_q,
		\\ \div \overline v_q &=0.
		\endaligned
	\end{equation}
	In addition, $\mathring{\overline{R}}_q$ is $(\mathcal{F}_t)_{t\geq 0}$-adapted.
	
	\subsubsection{Estimates on $\overline{v}_q$ and $ \mathring{\overline{R}}_q$} 
	We are now in a position to estimate the various H\"{o}lder norms of the glued velocity field $\overline{v}_q$ and stress $\mathring{\overline{R}}_q$.
	\begin{proposition}\label{p:vq:vell}
		The glued velocity field $\overline{v}_q$ enjoys the following bounds for any $N \geq 0$ and $t\in[0,\mathfrak{t}_L]$
		\begin{subequations}
			 	\begin{align}
			 	\|\overline v_q - v_{{\ell_q}}\|_{C^\alpha_x} &\lesssim \bar{M}L \delta_{q+1}^{\sfrac12}\ell_q^{\alpha} \label{e:vq:vell}, \\
			 	\|\overline{v}_q-v_{\ell_q}\|_{C^{1+N+\alpha}_x} &\lesssim \bar{M}\tau_q^{-1}\ell_q^{-N+\alpha}\, ,  \label{e:vq:vell:additional} 
			 	\\	\|\overline v_q\|_{C^{1+N}_x} &\lesssim \bar{M} \tau_q^{-1}\ell_q^{-N+\alpha},\label{e:vq:1+N} 
			 	\\ \|\overline{v}_q\|_{C^0_x}& \lesssim \bar{M}L \lambda_{1}^{\sfrac{3\alpha}{2}},\label{e:vq:0}
			 \end{align}
		\end{subequations}
		where the implicit constant may depend on $N$ and $\alpha$.
	\end{proposition}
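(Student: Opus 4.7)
The plan is to exploit the fact that the cutoffs $\chi_i$ form a partition of unity, so that
\[
\overline{v}_q - v_{\ell_q} \;=\; \sum_i \chi_i \,(\ve_i - v_{\ell_q}),
\]
which converts bounds on $\overline{v}_q - v_{\ell_q}$ into bounds on the already-estimated differences $\ve_i - v_{\ell_q}$. At any fixed time $t$, at most two of the $\chi_i$ are nonzero and each satisfies $0\le \chi_i \le 1$, so no time-derivatives of $\chi_i$ appear in these spatial estimates; this reduces the whole proposition to applying Proposition~\ref{esti: v,p,D i-l} and doing bookkeeping in terms of $\tau_q$, $\ell_q$ and $\delta_{q+1}$.

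For \eqref{e:vq:vell}, I would apply \eqref{v i- v l} with $N=0$ to get $\|\overline{v}_q - v_{\ell_q}\|_{C^\alpha_x} \lesssim \bar{M}L^2\tau_q \delta_{q+1}\ell_q^{-1+\alpha}$, and then absorb constants using the algebraic identity $\tau_q\delta_{q+1}\ell_q^{-1} = L^{-1}\delta_{q+1}^{1/2}$ that follows directly from the definitions \eqref{def l} and \eqref{def tauq}. The exact same argument, this time applying \eqref{v i- v l} with the index $N+1$, yields \eqref{e:vq:vell:additional}, after observing that $L\delta_{q+1}^{1/2}\ell_q^{-1} = \tau_q^{-1}$ by the same identity.

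For \eqref{e:vq:1+N} I would split $\|\overline{v}_q\|_{C^{1+N}_x} \le \|\overline{v}_q - v_{\ell_q}\|_{C^{1+N+\alpha}_x} + \|v_{\ell_q}\|_{C^{1+N+\alpha}_x}$. The first term is controlled by \eqref{e:vq:vell:additional}. For the second, mollification and \eqref{itera:b} give $\|v_{\ell_q}\|_{C^{1+N+\alpha}_x} \lesssim \bar{M}L\lambda_q\delta_q^{1/2}\ell_q^{-N-\alpha}$, and comparing with $\tau_q^{-1}\ell_q^{-N+\alpha} = L\lambda_q^{1+6\alpha}\delta_q^{1/2}\ell_q^{-N+\alpha}$ one sees the mollified term is dominated since $\lambda_q^{6\alpha}\ell_q^{2\alpha}\ge 1$ (using $\ell_q\ge \lambda_q^{-3/2}$ from \eqref{ell:lambdaq} and smallness of $\alpha$ via \eqref{def alpha}).

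Finally, \eqref{e:vq:0} uses a slightly different route: since the spatial mollifier $\varphi_{\ell_q}$ has $L^1$ mass $1$, we have $\|v_{\ell_q}\|_{C^0_x}\le \|v_q\|_{C^0_x}$, and then the inductive bound \eqref{itera:a} gives $\|v_{\ell_q}\|_{C^0_x}\le 3\bar{M}L\lambda_1^{3\alpha/2}-\bar{M}L\delta_q^{1/2}$. Adding the small correction from \eqref{e:vq:vell} (which costs only $\bar{M}L\delta_{q+1}^{1/2}\ell_q^\alpha\ll \bar{M}L\delta_q^{1/2}$, since $\delta_{q+1}\le \delta_q$ and $\ell_q^\alpha\ll 1$ for $a$ large) yields the desired bound. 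The only real subtlety is verifying that all the implicit constants coming from the partition of unity, Proposition~\ref{esti: v,p,D i-l}, and the mollification estimate can be absorbed using $\bar{M}\lambda_q^{-\alpha}\ll 1$ for $a$ sufficiently large, a mechanism already used repeatedly in the previous subsection.
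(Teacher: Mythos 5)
Your proof is correct and follows essentially the same route as the paper: the decomposition $\overline{v}_q - v_{\ell_q} = \sum_i \chi_i(\ve_i - v_{\ell_q})$, the bound \eqref{v i- v l} from Proposition~\ref{esti: v,p,D i-l}, and the same algebraic bookkeeping via the identity $L\tau_q\delta_{q+1}^{1/2}\ell_q^{-1}=1$. The only cosmetic difference is in \eqref{e:vq:0}, where you absorb the correction into the negative term $-\bar{M}L\delta_q^{1/2}$ of \eqref{itera:a} rather than invoking $\delta_{q+1}^{1/2}\leq \lambda_1^{3\alpha/2}$ directly as the paper does; both work.
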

	\begin{proof}
		Recalling the definition of $\overline{v}_q$ we have
		$$
		\overline{v}_q-v_{\ell_q}=\sum_i\chi_i(\ve_i-v_{\ell_q}).
		$$
		Then using \eqref{v i- v l} with $N=0$ implies 
		\begin{equation*}
			\|\overline{v}_q-v_{\ell_q}\|_{C^{\alpha}_x}\lesssim \bar{M}L^2 (\tau_q\delta_{q+1}^{\sfrac12}\ell_q^{-1})\delta_{q+1}^{\sfrac12}\ell_q^{\alpha}\leq \bar{M}L\delta_{q+1}^{\sfrac12}\ell_q^{\alpha}.
		\end{equation*}
		Furthermore, from \eqref{v i- v l} we find for any $N\geq 0$
		\begin{align*}
			\|\overline{v}_q-v_{\ell_q}\|_{C^{1+N+\alpha}_x}&\lesssim \bar{M}L^2 (\tau_q^2\delta_{q+1}\ell_q^{-2})\tau_q^{-1} \ell_q^{-N+\alpha}= \bar{M}\tau_q^{-1} \ell_q^{-N+\alpha},
		\end{align*}
		which implies \eqref{e:vq:vell:additional}. Thus, we combine \eqref{itera:b} and \eqref{e:vq:vell:additional} to obtain for any $N\geq0$
		\begin{align*}
			\|\overline{v}_q\|_{C^{1+N}_x}\lesssim \|v_{\ell_q}\|_{C^{1+N}_x}+\|v_{\ell_q}-\overline{v}_q\|_{C^{1+N+\alpha}_x}
			\lesssim \bar{M}\tau_q^{-1}\ell_q^{-N+\alpha}.
		\end{align*}
		Finally, we use \eqref{itera:a} and \eqref{e:vq:vell} to derive 
		\begin{align*}
			\|\overline{v}_q\|_{C^0_x}\leq  \|v_{\ell_q}\|_{C^0_x}+\|\overline{v}_q-v_{\ell_q}\|_{C^\alpha_x} \lesssim \bar{M}L\lambda_{1}^{\sfrac{3\alpha}{2}}+\bar{M}L\delta_{q+1}^{\sfrac12} \ell_q^{\alpha}\lesssim \bar{M}L \lambda_{1}^{\sfrac{3\alpha}{2}},
		\end{align*} 
		where the last inequality relies on $\delta_{q+1}^{\sfrac12}\leq \lambda_{1}^{\sfrac{3\alpha}{2}}$.
	\end{proof}
	
	\begin{proposition}\label{p:Rq}
		The glued stress $\mathring{\overline R}_q$ obeys the following bounds for any $t\in[0,\mathfrak{t}_L]$ and $N\in \N_0$
	\begin{subequations}
			\begin{align} 
			\|\mathring{\overline R}_q\|_{C^{N+\alpha}_x} &\lesssim \bar{M}L^2 \delta_{q+1}\ell_q^{-N+\alpha} \label{e:Rq:1}\, ,
			\\ \|(\partial_t +( \overline v_q+z_{\ell_q})\cdot \nabla) \mathring{\overline R}_q\|_{C^{N+\alpha}_x} &\lesssim \bar{M}L^2 \tau_q^{-1}\delta_{q+1}\ell_q^{-N+\alpha}\, , \label{e:Rq:Dt}
		\end{align}
	\end{subequations}
		 where the implicit constant may depend on $N$ and $\alpha$.
	\end{proposition}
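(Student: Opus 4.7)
\textbf{Plan for Proposition~\ref{p:Rq}.}
The glued stress $\mathring{\overline R}_q$ is supported on $\cup_i I_i$, and on each $I_i$ it decomposes into
$$ A_i := \partial_t \chi_i \,\mathcal{R}(\ve_i - \ve_{i+1}), \qquad B_i := -\chi_i(1-\chi_i)\,(\ve_i - \ve_{i+1})\mathring{\otimes}(\ve_i - \ve_{i+1}). $$
I will bound each piece in $C^{N+\alpha}_x$ and then apply the transport derivative $D_q := \partial_t + (\overline v_q + z_{\ell_q})\cdot\nabla$, splitting
$$ D_q = D_{t,\ell_q} + (\overline v_q - v_{\ell_q})\cdot \nabla $$
so that the main calculation is routed through the mollified transport derivative $D_{t,\ell_q}$, for which Propositions~\ref{esti: v,p,D i-l} and~\ref{p:S_est} already provide sharp bounds.

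For \eqref{e:Rq:1}, the first step is to rewrite $\mathcal{R}(\ve_i - \ve_{i+1}) = \mathcal{R}\,\curl(b_i - b_{i+1})$ using the Helmholtz identity $\ve_j - v_{\ell_q} = \curl \mathcal{B}(\ve_j - v_{\ell_q})$; since $\mathcal{R}\,\curl$ is a zero-order operator on H\"older spaces, \eqref{e:b_diff} yields $\|\mathcal{R}(\ve_i-\ve_{i+1})\|_{C^{N+\alpha}_x} \lesssim \bar M L^2 \tau_q \delta_{q+1} \ell_q^{-N+\alpha}$, and combined with $\|\partial_t\chi_i\|_{C^0_t} \lesssim \tau_q^{-1}$ this controls $A_i$. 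For $B_i$, the H\"older product rule together with \eqref{v i- v l} gives $\|(\ve_i-\ve_{i+1})\otimes(\ve_i-\ve_{i+1})\|_{C^{N+\alpha}_x} \lesssim (\bar M L^2 \tau_q \delta_{q+1})^2 \ell_q^{-2-N+2\alpha}$; the parameter identity $\tau_q \delta_{q+1}^{1/2} = \ell_q / L$, which follows directly from the definitions \eqref{def l} and \eqref{def tauq}, then collapses this to $\bar M^2 L^2 \delta_{q+1} \ell_q^{-N+2\alpha}$, absorbed by choosing $a$ large enough that $\bar M \ell_q^\alpha \ll 1$.

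For \eqref{e:Rq:Dt}, applying $D_{t,\ell_q}$ to $A_i$ produces $\partial_t^2\chi_i\,\mathcal{R}(\ve_i - \ve_{i+1})$ (which costs an extra $\tau_q^{-1}$) plus $\partial_t\chi_i\, D_{t,\ell_q}\mathcal{R}(\ve_i - \ve_{i+1})$. The latter I treat by commuting: $D_{t,\ell_q}\mathcal{R}\,\curl = \mathcal{R}\,\curl\, D_{t,\ell_q} + [D_{t,\ell_q},\mathcal{R}\,\curl]$, where the commutator costs one spatial derivative of $v_{\ell_q} + z_{\ell_q}$ (controlled by \eqref{estimate:vellzell}), and the principal part is bounded via \eqref{e:b_diff_Dt}. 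For $B_i$, the Leibniz rule, the bound $|\partial_t(\chi_i(1-\chi_i))| \lesssim \tau_q^{-1}$, and \eqref{Dtl v i- v l} (which yields $\|D_{t,\ell_q}(\ve_i - \ve_{i+1})\|_{C^{N+\alpha}_x} \lesssim \bar M L^2 \delta_{q+1}\ell_q^{-1-N+\alpha}$) give the claim. Finally, the lower-order correction $(\overline v_q - v_{\ell_q})\cdot \nabla \mathring{\overline R}_q$ is handled by combining \eqref{e:vq:vell} with the $N\mapsto N+1$ version of \eqref{e:Rq:1} proved in the previous step.

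The main obstacle is the parameter bookkeeping: every loss of a power of $\ell_q$, $\tau_q$ or $\delta_{q+1}^{1/2}$ must be matched exactly by the identity $\tau_q\delta_{q+1}^{1/2} = \ell_q / L$ stemming from \eqref{def l}--\eqref{def tauq}. In the stochastic setting, the additional subtlety is tracking the dependence on $L$ introduced by the noise bounds \eqref{z ps} and by the use of $\|z_{\ell_q}\|_{C^{N+1+\alpha}_x} \lesssim L\ell_q^{-N-\alpha}$, so that the right-hand sides of \eqref{e:Rq:1}--\eqref{e:Rq:Dt} end up proportional to $L^2$ (and not a higher power of $L$); this is precisely the quota allowed by the inductive estimate \eqref{itera:c}.
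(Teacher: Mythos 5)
Your plan matches the paper's proof essentially line by line: the same decomposition into $\partial_t\chi_i\,\mathcal{R}\curl(b_i-b_{i+1})$ plus the quadratic term, the same use of the vector potentials from Proposition~\ref{p:S_est} to exploit that $\mathcal{R}\curl$ is zero-order, the same commutator $[(v_{\ell_q}+z_{\ell_q})\cdot\nabla,\mathcal{R}\curl]$ handled via the Calder\'on--Zygmund commutator estimate of Proposition~\ref{CZ_comm} together with \eqref{estimate:vellzell}, and the same passage from $D_{t,\ell_q}$ to $D_{t,q}$ via $(\overline v_q-v_{\ell_q})\cdot\nabla\mathring{\overline R}_q$ and the already-proved \eqref{e:Rq:1}. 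One tiny bookkeeping point: for the correction term at order $N\geq 1$ you also need \eqref{e:vq:vell:additional} (not only \eqref{e:vq:vell}) to control $\|\overline v_q-v_{\ell_q}\|_{C^{N+\alpha}_x}$, but this is exactly what the paper does as well.
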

	
	\begin{proof}
		Note that 
		$\curl (b_{i+1}-b_i)=\ve_{i+1}-\ve_i$  for $t\in I_i$, so that we may write:
		$$
		\mathring{\overline{R}}_q=\partial_t\chi_i(\mathcal{R}\curl)(b_i-b_{i+1})-\chi_i(1-\chi_i)(\ve_i-\ve_{i+1})\mathring\otimes (\ve_i-\ve_{i+1}).
		$$
		Since $\mathcal{R}\curl$ is a zero-order operator and bounded on H\"{o}lder space, it follows from \eqref{v i- v l} and \eqref{e:b_diff} that for any $N\in \N_0$ with $t\in I_i$
		\begin{align*}
			\| \mathring{\overline{R}}_q\|_{C^{N+\alpha}_x}&\lesssim  \tau_q^{-1} \|b_i-b_{i+1}\|_{C^{N+\alpha}_x}+\|\ve_i-\ve_{i+1}\|_{C^{N+\alpha}_x}\|\ve_i-\ve_{i+1}\|_{C^\alpha_x}
			\\&\lesssim \bar{M}L^2 \delta_{q+1}\ell_q^{-N+\alpha}+(\bar{M}^2L^4 \tau_q^2\delta_{q+1}\ell_q^{-2+\alpha})\delta_{q+1} \ell_q^{-N+\alpha} \lesssim \bar{M}L^2 \delta_{q+1}\ell_q^{-N+\alpha},
		\end{align*}
		where the last line is justified by taking $a$ large enough so that $\bar{M}^2L^4\tau_q^2\delta_{q+1}\ell_q^{-2+\alpha}= \bar{M}^2L^2 \ell_q^{\alpha}\leq L^2$. By computing explicitly the material derivative $D_{t,\ell_q}$ to $\mathring{\overline{R}}_q$, we arrive at
		\begin{align*}
			D_{t,\ell_q}\mathring{\overline{R}}_q
			&=\partial_t^2\chi_i(\mathcal{R}\curl)(b_i-b_{i+1})\\
			&\quad +\partial_t\chi_i(\mathcal{R}\curl)D_{t,{\ell_q}}(b_i-b_{i+1})+\partial_t \chi_i[(v_{\ell_q}+z_{\ell_q})\cdot\nabla,\mathcal{R}\curl](b_i-b_{i+1})\\
			&\quad -\partial_t(\chi_i(1-\chi_i))(\ve_i-\ve_{i+1})\mathring\otimes (\ve_i-\ve_{i+1})\\
			&\quad -\chi_i(1-\chi_i)\Bigl((D_{t,{\ell_q}}(\ve_i-\ve_{i+1}))\mathring\otimes (\ve_i-\ve_{i+1})+(\ve_i-\ve_{i+1})\mathring\otimes (D_{t,{\ell_q}}(\ve_i-\ve_{i+1}))\Bigr),
		\end{align*}
		where $[v\cdot\nabla,\mathcal{R}\curl]$ denotes the commutator. 
		We then apply the commutator estimate \eqref{estimate:molli2} with \eqref{v i- v l}, \eqref{Dtl v i- v l}, \eqref{estimate:vellzell}, \eqref{e:b_diff}, \eqref{e:b_diff_Dt} to obtain for any $N\geq0$ and $t\in I_i$
		\begin{align*}
			\|D_{t,{\ell_q}}\mathring{\overline{R}}_q\|_{C^{N+\alpha}_x}&\lesssim \tau_q^{-2}\|b_i-b_{i+1}\|_{C^{N+\alpha}_x}+ \tau_q^{-1}\|D_{t,{\ell_q}}(b_i-b_{i+1})\|_{C^{N+\alpha}_x}
			\\&\quad +\tau_q^{-1}\|v_{\ell_q}+z_{\ell_q}\|_{C^{1+\alpha}_x}\|b_i-b_{i+1}\|_{C^{N+\alpha}_x}+\tau_q^{-1} \|v_{\ell_q}+z_{\ell_q}\|_{C^{N+1+\alpha}_x}\|b_i-b_{i+1}\|_{C^{\alpha}_x}
			\\&\quad + \tau_q^{-1} \|\ve_i-\ve_{i+1}\|_{C^{N+\alpha}_x}\|\ve_i-\ve_{i+1}\|_{C^{\alpha}_x}
			\\&\quad+ \|D_{t,{\ell_q}}(\ve_i-\ve_{i+1})\|_{C^{N+\alpha}_x}\|\ve_i-\ve_{i+1}\|_{C^{\alpha}_x}+\|D_{t,{\ell_q}}(\ve_i-\ve_{i+1})\|_{C^{\alpha}_x}\|\ve_i-\ve_{i+1}\|_{C^{N+\alpha}_x}
			\\ & \lesssim \bar{M}L^2 \tau_q^{-1}\delta_{q+1}\ell_q^{-N+\alpha}+\bar{M}^2 L^2\tau_q^{-1}\delta_{q+1}\ell_q^{-N+2\alpha}
			+ (\bar{M}^2L^4\tau_q^2\delta_{q+1}\ell_{q}^{-2+\alpha}) \tau_q^{-1}\delta_{q+1}\ell_{q}^{-N+\alpha} 
			\\ &
		\lesssim  \bar{M}L^2 \tau_q^{-1}\delta_{q+1}\ell_q^{-N+\alpha},
		\end{align*}
		where we used again $\bar{M}^2L^4\tau_q^2\delta_{q+1}\ell_q^{-2+\alpha}\leq L^2$ in the last line.
		Finally, combining \eqref{e:vq:vell:additional}, \eqref{e:Rq:1} and the above estimate yields
		\begin{align*}
			\|(\partial_t +( \overline v_q+z_{\ell_q})\cdot \nabla) \mathring{\overline R}_q\|_{C^{N+\alpha}_x} &\lesssim \|D_{t,{\ell_q}}\mathring{\overline{R}}_q\|_{C^{N+\alpha}_x}+\|(\overline v_q-v_{\ell_q})\cdot \nabla \mathring{\overline R}_q\|_{C^{N+\alpha}_x}
			\\ & \lesssim  \|D_{t,{\ell_q}}\mathring{\overline{R}}_q\|_{C^{N+\alpha}_x}+ \|\overline{v}_q-v_{\ell_q}\|_{C^{N+\alpha}_x}\|\mathring{\overline R}_q\|_{C^{1+\alpha}_x}
			\\&\qquad +\|\overline{v}_q-v_{\ell_q}\|_{C^{\alpha}_x}\|\mathring{\overline R}_q\|_{C^{N+1+\alpha}_x}
			\\  & \lesssim  \bar{M}L^2 \tau_q^{-1}\delta_{q+1}\ell_q^{-N+\alpha}+ ( \bar{M}^2L^4\tau_q^2\delta_{q+1}\ell_q^{-2+\alpha}) \tau_q^{-1}\delta_{q+1}\ell_q^{-N+\alpha}
			\\ & \lesssim  \bar{M}L^2 \tau_q^{-1}\delta_{q+1}\ell_q^{-N+\alpha},
		\end{align*}
		which gives \eqref{e:Rq:Dt}.
	\end{proof}
	
	\subsubsection{Estimates on the energy of glued velocity} 
	To finish this section, let us show that the energy of $\overline{v}_q+z_{\ell_q}$ approximates that of $v_{\ell_{q}}+z_{\ell_q}$.
	\begin{proposition}\label{estimate:energy2}
		For any $t
		\in  [0,\mathfrak{t}_{L}]$, the difference of the energies between $\overline{v}_q+z_{\ell_q}$ and $v_{\ell_q}+z_{\ell_q}$ enjoys the following bound
		\begin{align}\label{energy difference1}
			\left| \int_{\T^3} 	\left| \overline{v}_{q}+z_{\ell_q}\right|^2 -  \left|v_{\ell_q} +z_{\ell_q}\right|^2  \dif x\right| \leq \frac12 L^2 \delta_{q+1}\ell_{q}^{\alpha}.
		\end{align}
	\end{proposition}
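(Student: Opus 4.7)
The plan is to expand the square, reducing the energy difference to terms of the form $\int(\overline{v}_q - v_{\ell_q}) \cdot (v_{\ell_q} + z_{\ell_q}) \, dx$ plus a harmless quadratic remainder. Specifically, I will use the algebraic identity
\begin{align*}
|\overline{v}_q + z_{\ell_q}|^2 - |v_{\ell_q} + z_{\ell_q}|^2 = |\overline{v}_q - v_{\ell_q}|^2 + 2(\overline{v}_q - v_{\ell_q}) \cdot (v_{\ell_q} + z_{\ell_q}).
\end{align*}
The quadratic term is bounded directly by \eqref{e:vq:vell}: $\int_{\T^3} |\overline{v}_q - v_{\ell_q}|^2 \, dx \lesssim \bar{M}^2 L^2 \delta_{q+1} \ell_q^{2\alpha}$, which is much smaller than the desired bound $\tfrac12 L^2 \delta_{q+1} \ell_q^\alpha$ provided $a$ is chosen large enough so that $\bar{M}^2 \ell_q^\alpha \ll 1$.

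The crucial term is the linear one, and here the plan is to exploit the vector-potential structure from Subsection~\ref{sec:esact solutions}. Since $\ve_i$ and $v_{\ell_q}$ are both divergence-free and share the same spatial mean (which is preserved by the Euler dynamics and by mollification), the difference $\ve_i - v_{\ell_q}$ is a mean-zero divergence-free field. Writing $\tilde{b}_i := \mathcal{B}(\ve_i - v_{\ell_q})$ as in the proof of Proposition~\ref{p:S_est}, we have $\ve_i - v_{\ell_q} = \curl \tilde{b}_i$, and using the partition of unity representation $\overline{v}_q - v_{\ell_q} = \sum_i \chi_i(\ve_i - v_{\ell_q})$ together with integration by parts yields
\begin{align*}
\int_{\T^3}(\overline{v}_q - v_{\ell_q}) \cdot (v_{\ell_q} + z_{\ell_q}) \, dx = \sum_i \chi_i \int_{\T^3} \tilde{b}_i \cdot \curl(v_{\ell_q} + z_{\ell_q}) \, dx.
\end{align*}
Applying the $C^0$-estimate $\|\tilde{b}_i\|_{C^0_x} \lesssim \bar{M} L^2 \tau_q \delta_{q+1} \ell_q^\alpha$ obtained in \eqref{e:b N=0}, together with $\|\curl v_{\ell_q}\|_{C^0_x} \lesssim \bar{M} L \lambda_q \delta_q^{1/2}$ from \eqref{itera:b} and $\|\curl z_{\ell_q}\|_{C^0_x} \leq L$ from \eqref{z ps}, and recalling that $\chi_i + \chi_{i+1} \equiv 1$, I obtain
\begin{align*}
\left| 2\int_{\T^3}(\overline{v}_q - v_{\ell_q}) \cdot (v_{\ell_q} + z_{\ell_q}) \, dx \right| \lesssim \bar{M}^2 L^3 \tau_q \delta_{q+1} \ell_q^\alpha \bigl(\lambda_q \delta_q^{1/2} + 1\bigr).
\end{align*}

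Substituting the definition $\tau_q = (L \lambda_q^{1+6\alpha} \delta_q^{1/2})^{-1}$ from \eqref{def tauq}, the dominant factor collapses to $\bar{M}^2 L^2 \lambda_q^{-6\alpha} \delta_{q+1} \ell_q^\alpha$, which is strictly smaller than $\tfrac{1}{2} L^2 \delta_{q+1} \ell_q^\alpha$ provided $a$ is sufficiently large to absorb $\bar{M}^2 \lambda_q^{-6\alpha}$. Combining this with the $|\overline{v}_q - v_{\ell_q}|^2$ estimate yields \eqref{energy difference1}. The main subtlety, absent from the deterministic proof in \cite{BDLSV19}, is the noise contribution $\int (\overline{v}_q - v_{\ell_q}) \cdot z_{\ell_q} \, dx$: one must verify that it still fits into the budget $\tfrac12 L^2 \delta_{q+1} \ell_q^\alpha$, and this is precisely what forces the choice of $\tau_q$ in \eqref{def tauq} to carry the extra factor of $L^{-1}$ in the denominator, allowing the parameter $L$ (which is tied to the size of the noise via the stopping time $\mathfrak{t}_L$) to be absorbed on the right-hand side.
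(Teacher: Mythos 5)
Your decomposition via the identity $|a|^2-|b|^2 = |a-b|^2 + 2(a-b)\cdot b$ with $a=\overline{v}_q+z_{\ell_q}$, $b=v_{\ell_q}+z_{\ell_q}$ is correct and genuinely shorter than the paper's argument. The paper instead expands the square through the partition of unity on each $[\te_i,\te_{i+1}]$ as in \eqref{energy:barvq+zl-vl-zl} and then estimates $\int|\ve_i|^2-|v_{\ell_q}|^2$ by multiplying the two systems \eqref{mollification} and \eqref{eq:euler exact1} by the respective velocities, integrating in time from $\te_{i-1}$ (where the two solutions agree), and landing on the time-integral identity \eqref{eq:vl-zl}, which makes the Reynolds stress $\mathring{R}_{\ell_q}$ appear explicitly; the vector-potential bound \eqref{e:b N=0} is then used only for the cross term $\int(\ve_i-v_{\ell_q})\cdot z_{\ell_q}$. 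You bypass the PDE-in-time argument altogether and apply \eqref{e:b N=0} to the entire linear piece $2\int(\overline{v}_q-v_{\ell_q})\cdot(v_{\ell_q}+z_{\ell_q})\,dx = 2\sum_i\chi_i\int\tilde b_i\cdot\curl(v_{\ell_q}+z_{\ell_q})\,dx$; the dependence on $\mathring{R}_{\ell_q}$ then enters only implicitly through the proof of Proposition~\ref{p:S_est}. The arithmetic checks out: the quadratic piece is $O(\bar M^2 L^2\delta_{q+1}\ell_q^{2\alpha})$ and the linear piece is $O(\bar M^2 L^2\lambda_q^{-6\alpha}\delta_{q+1}\ell_q^\alpha)$ after substituting $\tau_q$, both absorbed into $\tfrac12 L^2\delta_{q+1}\ell_q^\alpha$ for $a$ large. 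Two small points worth tightening if you wrote this out in full: you should invoke \eqref{estimate:molli1} so that $\|v_{\ell_q}\|_{C^1_x}\lesssim\|v_q\|_{C^1_x}$ (you cited \eqref{itera:b} for $v_q$, not $v_{\ell_q}$), and you are using the $C^\alpha$ bound on $\tilde b_i$ from \eqref{e:b N=0} on all of $\supp\chi_i\subset[\te_{i-1},\te_{i+1}]$, whereas the statement is given for $t\in[\te_i,\te_{i+1}]$ — the Gr\"onwall argument does supply it on the larger interval, but one should say so. Your closing remark that this estimate is what forces the $L^{-1}$ in $\tau_q$ is overstated: the factor is also needed elsewhere (e.g.\ to guarantee $\tau_q\|v_{\ell_q}+z_{\ell_q}\|_{C^{1+\alpha}_x}\ll1$ for the local theory of $\ve_i$), and the present estimate alone would close for other choices; treat that sentence as motivation rather than a logical inference.
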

		\begin{proof}
		We first observe on $[\te_{i},\te_{i+1}]\cap [0,\mathfrak{t}_{L}]$,
		\begin{equation}\label{energy:barvq+zl-vl-zl}
			\aligned
			\left|\overline{v}_{q}+z_{\ell_q} \right|^2-& \left|v_{\ell_q} +z_{\ell_q} \right|^2 = \chi_i (\left|\ve_i \right|^2-\left|v_{\ell_q} \right|^2 )
			+ (1-\chi_i) (\left|\ve_{i+1} \right|^2-\left|v_{\ell_q}  \right|^2 )
			\\ &- \chi_i(1-\chi_i)\left| \ve_i-\ve_{i+1}\right|^2+ 2\chi_i(\ve_{i}-v_{\ell_{q}})\cdot z_{\ell_{q}} +2(1-\chi_i)(\ve_{i+1}-v_{\ell_{q}})\cdot z_{\ell_{q}} .
			\endaligned
		\end{equation}
	We will estimate each term in \eqref{energy:barvq+zl-vl-zl} separately. First, by multiplying both sides of equation \eqref{mollification} by $v_{\ell_{q}}$, and subsequently computing their inner products, we obtain
			\begin{align}\label{inner product1}
			\frac12  \frac{\dif}{\dif t} \int_{\T^3}  |v_{\ell_{q}}|^2 \dif x  = & - \int_{\T^3} v_{\ell_{q}} \cdot  \left( (v_{\ell_{q}}+z_{{\ell_q}}) \cdot \nabla (v_{\ell_{q}}+z_{{\ell_q}})\right)   \dif x  -  \int_{\T^3}  v_{\ell_{q}} \cdot \nabla \pe_{\ell_{q}}  \dif x  +  \int_{\T^3}   v_{\ell_{q}} \cdot \div \mathring{R}_{\ell_{q}} \dif x\nonumber
			\\ =&- \int_{\T^3}  (v_{\ell_{q}}+z_{{\ell_q}})  \cdot  \left( (v_{\ell_{q}}+z_{{\ell_q}}) \cdot \nabla (v_{\ell_{q}}+z_{{\ell_q}})\right)   \dif x    -  \int_{\T^3}  v_{\ell_{q}} \cdot \nabla \pe_{\ell_{q}}  \dif x  
			\\ &+ \int_{\T^3} z_{{\ell_q}} \cdot  \left( (v_{\ell_{q}}+z_{{\ell_q}}) \cdot \nabla (v_{\ell_{q}}+z_{{\ell_q}})\right)   \dif x +\int_{\T^3}   v_{\ell_{q}} \cdot \div \mathring{R}_{\ell_{q}} \dif x.\nonumber
		\end{align} 
		 Continuing in this manner to equation \eqref{eq:euler exact1}, we reach
		\begin{align}\label{inner product2}
			 \frac12  \frac{\dif}{\dif t} \int_{\T^3}  |\ve_i|^2 \dif x  = & - \int_{\T^3} \ve_i \cdot  \left( (\ve_i+z_{{\ell_q}}) \cdot \nabla (\ve_i+z_{{\ell_q}})\right)   \dif x  -  \int_{\T^3}  \ve_i \cdot \nabla \pe_i   \dif x  \nonumber
			 \\ =&- \int_{\T^3}  (\ve_i+z_{{\ell_q}})  \cdot  \left( (\ve_i+z_{{\ell_q}}) \cdot \nabla (\ve_i+z_{{\ell_q}})\right)   \dif x    -  \int_{\T^3}  \ve_i \cdot \nabla \pe_i   \dif x  
			 \\ &+ \int_{\T^3} z_{{\ell_q}} \cdot  \left( (\ve_i+z_{{\ell_q}}) \cdot \nabla (\ve_i+z_{{\ell_q}})\right)   \dif x. \nonumber
		\end{align} 
			Observe that the nonlinear term in the second line of \eqref{inner product1} and \eqref{inner product2} vanishes due to the divergence-free property of $v_{{\ell_q}}$, $ \ve_{i}$ and $z_{{\ell_q}}$, combined with integration by parts. We then combine  \eqref{inner product1}, \eqref{inner product2}, integration by parts and $v_{{\ell_q}}(\te_{i-1})=  \ve_{i}(\te_{i-1})$ to derive for any $t\in [\te_{i-1},\te_{i+1}]\cap [0,\mathfrak{t}_{L}]$
		\begin{equation}\label{eq:vl-zl}
			\aligned
			\int_{\T^3} \left| \ve_i(t) \right|^2 &  \dif x  -   \int_{\T^3}  \left| v_{\ell_{q}}(t) \right|^2 \dif x =2 \int_{\te_{i-1}}^{t } \int_{\T^3} \mathring{R}_{{\ell_q}} : \nabla (v_{\ell_q})^T \dif x \dif s
			\\ &+
			2 \int_{\te_{i-1}}^{t } \int_{\T^3} (v_{{\ell_q}}+z_{{\ell_q}}) \cdot \left( (v_{{\ell_q}}+z_{{\ell_q}})\cdot \nabla z_{{\ell_q}} \right) - (\ve_{i}+z_{{\ell_q}}) \cdot \left( (\ve_{i}+z_{{\ell_q}})\cdot \nabla z_{{\ell_q}} \right) \dif x \dif s.
			\endaligned
		\end{equation}
		To control \eqref{eq:vl-zl}, we first employ \eqref{itera:b}, \eqref{estimate Rl} and \eqref{e:Rq:1} to deduce for any $t\in [\te_{i-1},\te_{i+1}]\cap [0,\mathfrak{t}_{L}]$
		\begin{align*}
			2 \left| \int_{\te_{i-1}}^{t } \int_{\T^3} \mathring{R}_{{\ell_q}} : (\nabla v_{\ell_q})^T  \dif x \dif s\right| & \lesssim (t-\te_{i-1}) \|\mathring{R}_{{\ell_q}} \|_{C_{[0,\mathfrak{t}_L]}C^0_x} \|  v_{\ell_q}\|_{C_{[0,\mathfrak{t}_L]}C^1_x}
			\\ &\lesssim \bar{M}^2L^3 \tau_{q}\delta_{q+1} \lambda_{q}\delta_{q}^{\sfrac12} \lambda_{q}^{\alpha}\leq \frac{1}{18}L^2 \delta_{q+1}\ell_{q}^{\alpha},
		\end{align*} 
		which requires $\bar{M}^2\lambda_{q}^{-\alpha}\leq 1$ in the last inequality.
		We then combine \eqref{v i- v l}, \eqref{estimate:vellzell} and \eqref{e:vq:0} to estimate the second term in \eqref{eq:vl-zl} as
		\begin{align*}
			&2 \left| \int_{\te_{i-1}}^{t } \int_{\T^3} (v_{{\ell_q}}+z_{{\ell_q}}) \cdot \left( (v_{{\ell_q}}+z_{{\ell_q}})\cdot \nabla z_{{\ell_q}} \right) - (\ve_{i}+z_{{\ell_q}}) \cdot \left( (\ve_{i}+z_{{\ell_q}})\cdot \nabla z_{{\ell_q}} \right) \dif x \dif s\right|
			\\ & \quad \lesssim \int_{\te_{i-1}}^{t } \int_{\T^3} \left| (v_{{\ell_q}}-\ve_i) \cdot \left( (v_{{\ell_q}}+z_{{\ell_q}})\cdot \nabla z_{{\ell_q}} \right) \right| +  \left|     (\ve_{i}+z_{{\ell_q}}) \cdot \left( (v_{{\ell_q}}- \ve_{i})\cdot \nabla z_{{\ell_q}} \right)  \right|   \dif x\dif s
			\\ & \quad \lesssim (t-\te_i) \| v_{{\ell_q}}- \ve_{i}\|_{C_{[0,\mathfrak{t}_L]}C^0_x} \| \nabla z_{{\ell_q}} \|_{C_{[0,\mathfrak{t}_L]}C^0_x} \left( \| v_{{\ell_q}}+ z_{{\ell_q}}\|_{C_{[0,\mathfrak{t}_L]}C^0_x} + \| \ve_{i}+ z_{{\ell_q}}\|_{C_{[0,\mathfrak{t}_L]}C^0_x}  \right)
			\\ & \quad \lesssim \bar{M}^2 L^4  \tau_{q}^2 \delta_{q+1}\ell_{q}^{-1} \lambda_{1}^{2\alpha} \leq  \bar{M}^2 L^2 (\lambda_{q}\delta_{q}^{\sfrac12} )^{-1} \lambda_{q}^{-2\alpha} \delta_{q+1}^{\sfrac12}   \leq \frac{1}{18}L^2 \delta_{q+1}\ell_{q}^{\alpha},
		\end{align*}
		where we used $b\beta+\beta<1$ to have $(\lambda_{q}\delta_{q}^{\sfrac12})^{-1}\leq \delta_{q+1}^{\sfrac12} $ in the last inequality. The same estimate also applies to the second term on the right-hand side of \eqref{energy:barvq+zl-vl-zl} .

		Moving to the third term of \eqref{energy:barvq+zl-vl-zl}, we use \eqref{v i- v l} to obtain for $t\in  [\te_{i},\te_{i+1}]\cap [0,\mathfrak{t}_{L}]$
		\begin{align*}
			\| \ve_{i}(t)-\ve_{i+1}(t)\|_{L^2}^2 
			\lesssim	\| \ve_{i}(t)-v_{\ell_{q}}(t)\|_{C^0_x}^2  +	\| \ve_{i+1}(t)-v_{\ell_{q}}(t)\|_{C^0_x}^2 
				\leq  (\bar{M}L^2 \tau_q \delta_{q+1}\ell_{q}^{-1+\alpha})^2 \leq \frac{1}{18} L^2 \delta_{q+1}\ell_{q}^{\alpha}.
		\end{align*}
		Considering the remaining terms in \eqref{energy:barvq+zl-vl-zl}, it follows from $\ve_{i}-v_{\ell_{q}}= \curl \tilde{b}_i $, integration by parts, \eqref{estimate:vellzell} and \eqref{e:b N=0} that for any $t \in [\te_{i},\te_{i+1}]\cap [0,\mathfrak{t}_L]$
		\begin{align*}
			\left|  \int_{\T^3} (\ve_{i}-v_{\ell_{q}})\cdot z_{\ell_{q}} \dif x   \right|=
		 \left|\int_{\T^3} \curl \tilde{b}_i  \cdot z_{\ell_{q}} \dif x\right| 
			=\left|\int_{\T^3} \tilde{b}_i \cdot  \curl  z_{\ell_{q}} \dif x\right| \lesssim \bar{M} L^3 \tau_{q}\delta_{q+1}\ell_{q}^{\alpha} \leq\frac{1}{18}L^2 \delta_{q+1}\ell_{q}^{\alpha}.
		\end{align*}
		The same estimate also applies to $ \int_{\T^3} (\ve_{i+1}-v_{\ell_{q}})\cdot z_{\ell_{q}} \dif x $.		
		Let us recollect the above estimates to obtain for any $t \in [\te_{i},\te_{i+1}]\cap [0,\mathfrak{t}_L]$
		\begin{align*}
		&\left|\int_{\T^3} 	\left|\overline{v}_{q}+z_{\ell_q} \right|^2- \left|v_{\ell_q} +z_{\ell_q} \right|^2 \dif x\right| 
		\leq  \left| \int_{\T^3} \left|\ve_i \right|^2-\left|v_{\ell_q} \right|^2 \dif x \right|
		+ \left| \int_{\T^3} \left|\ve_{i+1} \right|^2-\left|v_{\ell_q}  \right|^2 \dif x \right|
		\\ &\qquad \qquad +  \int_{\T^3} 	\left| \ve_i-\ve_{i+1}\right|^2 \dif x+  \left| \int_{\T^3} 2\chi_i(\ve_{i}-v_{\ell_{q}})\cdot z_{\ell_{q}} +2(1-\chi_i)(\ve_{i+1}-v_{\ell_{q}})\cdot z_{\ell_{q}} \dif x \right| \leq \frac12 L^2 \delta_{q+1}\ell_{q}^{\alpha},
		\end{align*}
		which further implies \eqref{energy difference1} for any $t\in [0,\mathfrak{t}_L]$. Hence, we conclude the proof of Proposition~\ref{estimate:energy2}. 
	\end{proof}
	Finally, we need to control $\int_{0}^{t} \int_{\T^3} (\overline{v}_q +z_{\ell_{q}}) \cdot \partial_t z_{\ell_{q}} - (v_{\ell_{q}}+z_{\ell_{q}}) \cdot \partial_t z_{\ell_{q}}  \dif x \dif s$, which will be used in Subsection~\ref{sec:cutoffs flow map} to estimate the energy gaps.
	\begin{proposition}\label{Prop:addition energy}
		 For any $t
		 \in  [0,\mathfrak{t}_{L}]$, we have:
		 \begin{align} \label{energy difference2}
		 	\left| \int_{0}^{t} \int_{\T^3} (\overline{v}_q +z_{\ell_{q}}) \cdot \partial_t z_{\ell_{q}} \dif x \dif s - \int_{0}^{t} \int_{\T^3} (v_{\ell_{q}}+z_{\ell_{q}}) \cdot \partial_t z_{\ell_{q}}  \dif x \dif s \right|\leq \frac14 L^2\delta_{q+1}\ell_{q}^{\alpha}.
		 \end{align}
	\end{proposition}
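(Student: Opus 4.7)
The plan is to exploit the vector-potential structure from Proposition~\ref{p:S_est} in order to trade the singular temporal derivative $\partial_s z_{\ell_q}$ (which scales like $\iota_q^{-\sfrac12-\alpha}=\lambda_q^{\sfrac23+\sfrac{4\alpha}{3}}$) for a spatial derivative of $z_{\ell_q}$, which is uniformly bounded by $L$ via \eqref{z ps}. I start from the identity $\overline{v}_q - v_{\ell_q} = \sum_i \chi_i(\ve_i - v_{\ell_q}) = \mathrm{curl}\,W$, where $W:=\sum_i\chi_i\tilde b_i$ with $\tilde b_i=\mathcal{B}(\ve_i - v_{\ell_q})$ the vector potential introduced just above Proposition~\ref{p:S_est}. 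Using the self-adjointness of $\mathrm{curl}$ on $\T^3$, I rewrite
\[
\int_0^t\!\!\int_{\T^3}(\overline{v}_q - v_{\ell_q})\cdot\partial_s z_{\ell_q}\,\dif x\,\dif s = \int_0^t\!\!\int_{\T^3} W\cdot\mathrm{curl}(\partial_s z_{\ell_q})\,\dif x\,\dif s.
\]

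Integrating by parts in time is to be avoided at this stage: that would transfer $\partial_s$ onto $\chi_i$, producing a factor $\tau_q^{-1}$ that, when combined with the length $\leq L$ of the time interval and factors of size $\simeq L$ from the velocities, would yield an inadmissible bound of order $L^4$. Instead I estimate both factors of the rewritten integral pointwise. Since at any fixed time at most two cutoffs $\chi_i$ are nonzero, \eqref{e:b N=0} yields $\|W(s)\|_{C^0_x}\lesssim \bar M L^2\tau_q\delta_{q+1}\ell_q^\alpha$; this is the decisive step, as it replaces the naive bound $\|\overline v_q-v_{\ell_q}\|_{C^0_x}\lesssim \bar M L\delta_{q+1}^{\sfrac12}\ell_q^\alpha$ by one proportional to the full $\delta_{q+1}$, at the price of an extra factor $\tau_q$. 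For the noise factor, $\|\mathrm{curl}(\partial_s z_{\ell_q})\|_{C^0_x}\leq \|\partial_s z_{\ell_q}\|_{C^1_x}\lesssim L\iota_q^{-\sfrac12-\alpha}$, by the mollification estimate \eqref{estimate:molli1} combined with the stopping-time bound \eqref{stopping time ps}. Multiplying these and using $\int_0^t\dif s\leq \mathfrak{t}_L\leq L$ produces
\[
\bigg|\int_0^t\!\!\int_{\T^3}(\overline{v}_q - v_{\ell_q})\cdot\partial_s z_{\ell_q}\,\dif x\,\dif s\bigg|\lesssim \bar M L^4\,\tau_q\lambda_q^{\sfrac23+\sfrac{4\alpha}{3}}\,\delta_{q+1}\ell_q^\alpha.
\]

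The main (and only) remaining obstacle is the parameter verification that $\bar M L^2\tau_q\lambda_q^{\sfrac23+\sfrac{4\alpha}{3}}\leq \sfrac14$. Substituting $\tau_q=(L\lambda_q^{1+6\alpha}\delta_q^{\sfrac12})^{-1}$ and using $\delta_q^{\sfrac12}\gtrsim \lambda_q^{-\beta}\lambda_1^{\sfrac{3\alpha}{2}}$ (with the value at $q=0$ handled by $\delta_0^{\sfrac12}\simeq \lambda_1^{\sfrac{3\alpha}{2}}$), this reduces to $\bar M L\,\lambda_q^{-(\sfrac13-\beta+\sfrac{14\alpha}{3})}\lesssim 1$ uniformly in $q\in\N_0$. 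Since $\beta<\sfrac13$ and $\alpha$ satisfies \eqref{def alpha}, the exponent of $\lambda_q$ in the denominator is strictly positive, so the inequality holds for $a=a(L,\beta,\alpha)$ taken sufficiently large. This absorption of large constants by $a$ is consistent with its role throughout Section~\ref{sec:begin}, and delivers the desired bound \eqref{energy difference2}.
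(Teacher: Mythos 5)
Your proof is correct and takes essentially the same route as the paper. Both pass the singular time derivative off $\partial_s z_{\ell_q}$ onto space via the vector-potential identity $\overline{v}_q - v_{\ell_q} = \curl\big(\sum_i\chi_i\tilde b_i\big)$, invoke the $C^0_x$-bound \eqref{e:b N=0} on $\tilde b_i$ (the crucial trade of $\delta_{q+1}^{1/2}$ for $\tau_q\delta_{q+1}$), bound the spatial integral uniformly in time, and then multiply by $\mathfrak{t}_L\leq L$; the paper even records the identical intermediate pointwise bound $\bar M L^3\iota_q^{-1/2-\alpha}\tau_q\delta_{q+1}\ell_q^\alpha$ before the parameter check, which you rephrase as $\bar M L\lambda_q^{-(1/3-\beta+14\alpha/3)}\lesssim 1$ while the paper absorbs it under $\bar M L\lambda_q^{-\alpha}\ll 1$. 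Your explicit note on why one must not integrate by parts in $s$ (a $\tau_q^{-1}$ loss would give $L^4$) is a useful reading of the same calculation that the paper leaves implicit.
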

	\begin{proof}
		 	Recall that $\ve_{i}-v_{\ell_{q}}= \curl \tilde{b}_i $ on $[\te_{i},\te_{i+1}]\cap [0,\mathfrak{t}_L]$. By combining \eqref{zq+1-zq}, \eqref{e:b N=0} and mollification estimate \eqref{estimate:molli2}, we obtain for any $t \in [\te_{i},\te_{i+1}]\cap [0,\mathfrak{t}_L]$	
		 \begin{equation}\label{energy:add1}
		 	\aligned
		 	&\left|  \int_{\T^3} (\overline{v}_q +z_{\ell_{q}}) \cdot \partial_t z_{\ell_{q}} \dif x \dif s - \int_{\T^3} (v_{\ell_{q}}+z_{\ell_{q}}) \cdot \partial_t z_{\ell_{q}}  \dif x \right| 
		 	\\ &=\left|  \int_{\T^3}  \chi_i(\ve_{i}-v_{\ell_{q}}) \cdot \partial_t  z_{\ell_{q}} + (1- \chi_i)(\ve_{i+1}-v_{\ell_{q}}) \cdot \partial_t  z_{\ell_{q}}  \dif x \right|
		 	\\ &= \left|  \int_{\T^3}  \chi_i \tilde{b}_i \cdot \partial_t \curl  z_{\ell_{q}} + (1- \chi_i)\tilde{b}_{i+1} \cdot \partial_t  \curl  z_{\ell_{q}}  \dif x  \right|
		 	\\ & \lesssim \iota_q^{-(\sfrac12+\alpha)} \|B\|_{C^{\sfrac12-\alpha}_{[0,\mathfrak{t}_L ]}C^1_x}\left(\|\tilde{b}_i\|_{C_{[0,\mathfrak{t}_L]}C^0_x} +  \|\tilde{b}_{i+1}\|_{C_{[0,\mathfrak{t}_L]}C^0_x}\right) \lesssim \bar{M} L^3 \lambda_{q}^{\sfrac23+2\alpha} \tau_{q} \delta_{q+1}\ell_{q}^{\alpha} \leq \frac14 L\delta_{q+1}\ell_{q}^{\alpha},
		 	\endaligned
		 \end{equation}
		 which requires the condition $\bar{M}L\lambda_{q}^{-\alpha}\ll1$ in the last inequality. Indeed, the estimate \eqref{energy:add1} holds for any $ t\in  [0,\mathfrak{t}_L]$. Therefore, the estimate \eqref{energy difference2} follows from multiplying \eqref{energy:add1} by $L$.
	\end{proof}

	\section{Proof of Proposition~\ref{p:iteration}---Step 2: Construction of the perturbation}\label{sec:perturbation}
	In this section, we construct the new perturbation and the associated stress error term. Before delving into the details, we introduce the flow maps $\Phi_i$ and key cutoff functions $\eta_i$, which play a crucial role in canceling the glued stress $\mathring{\overline R}_q$ and controlling the energy, as discussed in Subsection~\ref{sec:cutoffs flow map}. Additionally, we define the energy gap decomposition $\rho_{q,i}$ in this subsection to prescribe the energy profile. The amplitudes are introduced in Subsection~\ref{sec:amplitude}, along with relevant pathwise estimates. With these foundations in place, we construct the new velocity $v_{q+1}$ in Subsection~\ref{sec:principal w}. To achieve the optimal regularity, we employ the Mikado flows from \cite{BV19} (see also \cite{DS17}) to construct the perturbation $w_{q+1}$. After defining $v_{q+1}:=\overline{v}_q+w_{q+1} $, we immediately derive the term $\div \mathring{R}_{q+1}$, as $(v_{q+1},\mathring{R}_{q+1})$ must satisfy \eqref{euler1}. Finally, we decompose $\mathring{R}_{q+1}$ into distinct error parts in Subsection~\ref{sec: new Reynolds stress}.

	\subsection{Cutoffs, Energy gap decomposition and Flow maps}\label{sec:cutoffs flow map}
	We start by introducing `squiggling' space-time cutoffs constructed in \cite[Section 5.2]{BDLSV19}, which we recall in Appendix~\ref{sec:appendix c}. It follows from \cite[Section 5.2]{BDLSV19} that the cutoff functions $\eta_i$  satisfy the following properties: 
	\begin{enumerate}[(i)]
		\item \label{eq:eta:i:support1}$\eta_i \in C^{\infty}([0,\mathfrak{t}_{L}]\times \mathbb{T}^3;[0,1])$, and $\eta_i \, \eta_j \equiv 0$ for every  $i \neq j$,
		\item $\supp \eta_i \subset  \tilde{I_i} \times  \T^3 $, where $\tilde I_i := J_i \cup I_i \cup J_{i+1} $, and $\eta_i \equiv 1$ on $I_i  \times  \T^3 $,
		\item \label{eq:eta:i:special} There exists a universal constant $c_\eta>0$ independent of $q$ (one can take $c_\eta=1/5$) such that for all $t\in[0,\mathfrak{t}_L]$, $c_\eta \leq \sum_i \int_{\T^3} \eta_i^2(t,x) \dif x \leq 1$,
		\item  \label{eq:eta:i:derivative}$\|{\partial_t^n \eta_i}\|_{C^m_x} \lesssim_{n,m} \tau_q^{-n}$,  for all $n,m\geq 0$.
	\end{enumerate}
	
	To prescribe the energy profile, we first define the energy gap
	\begin{align}\label{def:energy gap}
		\rho_q(t)=\frac13  \left[e(t)-L^2 \frac{\delta_{q+2}}{2}-  \| (\overline{v}_q+z_{\ell_q}) (t )\|_{L^2}^2 +2 \int_{0}^{t} \int_{\T^3} (\overline{v}_q+z_{\ell_q}) \cdot \partial_t z_{\ell_{q}} \dif x \dif s  \right],
	\end{align}
	and decompose $\rho_q$ by setting
	\begin{align}\label{def:rho q,i}
		\rho_{q,i}(t,x):=\frac{\eta_i^2(t,x)}{\sum \int_{\T^3} \eta_j^2(t,y) \dif y} \rho_q(t) .
	\end{align}
	\begin{remark}
		 	\textit{Here, the energy gap \eqref{def:energy gap} differs from its counterpart in \cite[Page 252]{BDLSV19}. This distinction arises naturally, as evidenced by the proof of \eqref{estimate:rho3} below, where the temporal derivative of the term $\int_{\T^3} |\overline{v}_q+z_{\ell_q}|^2 \dif x$ cannot be absorbed. To address this, we introduce an additional term involving $\partial_t z_{\ell_q}$ in \eqref{def:energy gap} to eliminate it. This modification also necessitates adding the term $\int_{0}^{t} \int_{\T^3} (v_q+z_q) \cdot \partial_t z_q \dif x \dif s $ to the energy iterative estimate \eqref{estimate:energy}, since the bound of \eqref{def:energy gap} essentially depends on \eqref{estimate:energy}.}
	\end{remark}
	From the construction, it follows $\sum_i \int_{\T^3} \rho_{q,i} =\rho_q$ for all $t\in [0,\mathfrak{t}_L]$. Furthermore, we have the following estimates for $\rho_q$ and $\rho_{q,i}$:
	\begin{proposition}\label{prop:rho}
		The energy gap $\rho_q$ and $\rho_{q,i}$ enjoy the following bounds for any $t\in [0,\mathfrak{t}_L]$ and $N\in \N_0$
		\begin{subequations}
				\begin{align}
				\frac{L^2\delta_{q+1}}{6\lambda_{q}^\alpha}\leq \rho_q(t)  &\leq L^2 \delta_{q+1}, \label{estimate:rho1}
				\\ \|\rho_{q,i}(t)\|_{C^N_x} &\lesssim L^2 \delta_{q+1},\label{estimate:rho2}
				\\ \|\partial_t \rho_{q,i}\|_{C^N_x} &\lesssim L^2 \tau_q^{-1}\delta_{q+1} ,\label{estimate:rho3}
			\end{align}
		\end{subequations}
		where the implicit constant may depend on $N$.
	\end{proposition}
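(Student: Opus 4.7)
The plan is to deduce \eqref{estimate:rho1}--\eqref{estimate:rho3} by comparing $\rho_q$ with the quantity controlled by the inductive hypothesis \eqref{estimate:energy}, and then exploiting a key cancellation that is hard-wired into the definition \eqref{def:energy gap}.

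For \eqref{estimate:rho1}, I would add and subtract $\|v_q+z_q\|_{L^2}^2$ and $2\int_0^t\!\int_{\T^3}(v_q+z_q)\cdot\partial_t z_q\,\dif x\,\dif s$ inside the brackets of \eqref{def:energy gap} to obtain a decomposition
\[
3\rho_q(t) \;=\; A_q(t)\;-\;\tfrac{1}{2}L^2\delta_{q+2}\;+\;E_1(t)+E_2(t)+E_3(t)+E_4(t),
\]
where $A_q(t)$ is the bracketed expression appearing in \eqref{estimate:energy} (hence $A_q(t)\in[L^2\delta_{q+1}\lambda_q^{-\alpha},\,L^2\delta_{q+1}]$), and the four remainders $E_i$ account for successively replacing $v_q\mapsto v_{\ell_q}\mapsto\overline{v}_q$ and $z_q\mapsto z_{\ell_q}$ in the $L^2$-norm and in the $\partial_t z$-integral. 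These are precisely the quantities estimated in \eqref{estimate energy vq-vl}, \eqref{estimate energy vq-vl cdot zlq}, \eqref{energy difference1} and \eqref{energy difference2}, each of size $O(L^2\delta_{q+1}\ell_q^{\alpha})$. The upper bound $\rho_q\leq L^2\delta_{q+1}$ follows at once. For the lower bound I use that $\delta_{q+2}/\delta_{q+1}=\lambda_{q+1}^{-2\beta(b-1)}$ decays much faster than $\lambda_q^{-\alpha}$ by \eqref{def alpha}, and that $\ell_q^\alpha\leq\lambda_q^{-\alpha-6\alpha^2}$ by \eqref{ell:lambdaq}, so both the $L^2\delta_{q+2}/2$ term and the errors are negligible compared to the lower bound on $A_q$ once $a$ is taken large, yielding $\rho_q\geq L^2\delta_{q+1}/(6\lambda_q^\alpha)$.

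Estimate \eqref{estimate:rho2} is immediate: since $\rho_q$ depends only on $t$, property (iii) of the cutoffs bounds the denominator in \eqref{def:rho q,i} below by $c_\eta>0$, while property (iv) with $n=0$ gives $\|\eta_i^2\|_{C^N_x}\lesssim 1$, and the claim follows from \eqref{estimate:rho1}. Estimate \eqref{estimate:rho3} is the main step. Differentiating \eqref{def:rho q,i} in $t$ produces three pieces: two in which the time derivative falls on $\eta_i^2$ or on the normalizing integral $\sum_j\int\eta_j^2\,\dif y$, each bounded by $L^2\tau_q^{-1}\delta_{q+1}$ using property (iv) together with \eqref{estimate:rho1}; and a third piece proportional to $\partial_t\rho_q$. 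For the last one I exploit the precise structure of \eqref{def:energy gap}: expanding
\[
\partial_t\|\overline{v}_q+z_{\ell_q}\|_{L^2}^2 \;=\; 2\!\int_{\T^3}(\overline{v}_q+z_{\ell_q})\cdot\partial_t\overline{v}_q\,\dif x \;+\; 2\!\int_{\T^3}(\overline{v}_q+z_{\ell_q})\cdot\partial_t z_{\ell_q}\,\dif x,
\]
the second term cancels exactly against the corresponding piece in \eqref{def:energy gap}, leaving $3\partial_t\rho_q = e'(t) - 2\!\int_{\T^3}(\overline{v}_q+z_{\ell_q})\cdot\partial_t\overline{v}_q\,\dif x$. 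Substituting the glued Euler system \eqref{gluing euler q} for $\partial_t\overline{v}_q$, the nonlinear convective term and the pressure term integrate to zero against the divergence-free field $\overline{v}_q+z_{\ell_q}$, and one integration by parts on the remaining $\div\mathring{\overline R}_q$ contribution yields
\[
3\partial_t\rho_q \;=\; e'(t) \;+\; 2\!\int_{\T^3}\mathring{\overline R}_q:\nabla(\overline{v}_q+z_{\ell_q})^T\,\dif x.
\]
Combining $|e'(t)|\leq\tilde{e}$ with \eqref{e:Rq:1}, \eqref{e:vq:1+N} and \eqref{z ps}, and absorbing the universal constants into a large choice of $a$, gives $|\partial_t\rho_q|\lesssim L^2\tau_q^{-1}\delta_{q+1}$; combined with the two cutoff-derivative pieces this concludes \eqref{estimate:rho3}.

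The main obstacle is the third step. The naive bound $\|\partial_t z_{\ell_q}\|_{C^0_x}\lesssim L\iota_q^{-1/2}\sim L\lambda_q^{2/3}$ coming from the temporal mollifier is far too large to give \eqref{estimate:rho3} directly. The inclusion of the integral $\int_0^\cdot\!\int_{\T^3}(\overline{v}_q+z_{\ell_q})\cdot\partial_t z_{\ell_q}\,\dif x\,\dif s$ in \eqref{def:energy gap} is engineered precisely so that this singular temporal derivative is cancelled and the remaining derivative of $\overline{v}_q$ can be converted into a spatial derivative via the glued Euler system, at which point the pathwise bounds from Proposition~\ref{p:vq:vell} and Proposition~\ref{p:Rq} suffice.
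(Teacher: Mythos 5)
Your proposal is correct and follows essentially the same route as the paper: for \eqref{estimate:rho1} you decompose $3\rho_q$ into the inductive quantity from \eqref{estimate:energy} plus the four correction terms bounded by \eqref{estimate energy vq-vl}, \eqref{estimate energy vq-vl cdot zlq}, \eqref{energy difference1}, \eqref{energy difference2}; for \eqref{estimate:rho2} you invoke the cutoff properties; and for \eqref{estimate:rho3} you exploit precisely the cancellation of the $2\int(\overline{v}_q+z_{\ell_q})\cdot\partial_t z_{\ell_q}$ piece against $\partial_t\|\overline{v}_q+z_{\ell_q}\|_{L^2}^2$ and then substitute the glued Euler system \eqref{gluing euler q}, which is exactly the paper's computation in \eqref{inner product3}--\eqref{partial t rho}. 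Your closing remark about why the naive $\|\partial_t z_{\ell_q}\|_{C^0_x}\sim L\iota_q^{-1/2}$ bound is fatal and why the definition \eqref{def:energy gap} is engineered to avoid it correctly identifies the design principle behind the energy gap and matches the paper's explanatory remark following Proposition~\ref{prop:rho}.
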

	\begin{proof}
		Observe that the estimate \eqref{estimate:rho1} is an apparent consequence of \eqref{estimate:energy}, \eqref{estimate energy vq-vl}, \eqref{estimate energy vq-vl cdot zlq}, \eqref{energy difference2} and Proposition~\ref{estimate:energy2}. More precisely, we have
		\begin{equation}
			\aligned
			\frac{L^2\delta_{q+1}}{2\lambda_{q}^{\alpha}} &\leq L^2 \delta_{q+1} \lambda_{q}^{-\alpha}-3 L^2 \delta_{q+1}\ell_{q}^{\alpha} \leq L^2 \delta_{q+1} \lambda_{q}^{-\alpha}-L^2\frac{\delta_{q+2}}{2}-2L^2\delta_{q+1} \ell_{q}^{\alpha} \leq 3 \rho_q(t)
			\\&= e(t)-L^2 \frac{\delta_{q+2}}{2}- \| (v_q+z_q) (t)\|_{L^2}^2  + 2\int_{0}^{t} \int_{\T^3} (v_q+z_q) \cdot \partial_t z_{q} \dif x \dif s
		\\ &\quad + \left( \| (v_q+z_q) (t)\|_{L^2}^2- \| (v_{\ell_q}+z_{\ell_q}) (t)\|_{L^2}^2\right)
		+\left( \| (v_{\ell_q}+z_{\ell_q}) (t)\|_{L^2}^2- \| (\overline{v}_q+z_{\ell_q}) (t)\|_{L^2}^2 \right)
		\\ &\quad +2\int_{0}^{t} \int_{\T^3} \left(( v_{\ell_{q}}+z_{\ell_{q}})\cdot \partial_t z_{\ell_{q}} - (v_{q}+z_q) \cdot \partial_t z_{q} \right) \dif x \dif s  
		\\ &\quad +2\int_{0}^{t} \int_{\T^3}  \left( (\overline{v}_q +z_{\ell_{q}})\cdot \partial_t z_{\ell_{q}} -  (v_{\ell_{q}}+z_{\ell_{q}})\cdot \partial_t z_{\ell_{q}}\right) \dif x \dif s
			\\ & \leq L^2 \delta_{q+1}+2L^2\delta_{q+1} \ell_{q}^{\alpha}\leq 3L^2 \delta_{q+1}, 
			\endaligned
		\end{equation}
		where we used $\delta_{q+2}\leq 2\delta_{q+1}\ell_{q}^{\alpha}$ and $(\ell_{q}\lambda_{q})^{\alpha}\ll 1/6$ in the first line. By the Leibniz rule and properties \eqref{eq:eta:i:special} and \eqref{eq:eta:i:derivative} for the cutoff  functions $\eta_i$
		\begin{align}\label{property eta}
			\| \eta_i\|_{C^N_x}\lesssim 1, \qquad c_\eta \leq \sum_i \int_{\T^3} \eta_i^2(t,x) \dif x \leq 1\,,
		\end{align} 
		the bound \eqref{estimate:rho2} directly follows. Let us move to \eqref{estimate:rho3} now. Going back to \eqref{gluing euler q} we have
		\begin{align}\label{inner}
		\partial_t(\overline v_q+z_{\ell_{q}})=-(\overline v_q+z_{{\ell_q}}) \cdot \nabla(\overline v_q+z_{{\ell_q}})-\nabla \overline{p}_q +\partial_t z_{{\ell_q}}+\div \mathring{\overline{R}}_q.
		\end{align}
		Multiplying both sides of equation \eqref{inner} by $\overline{v}_q+z_{\ell_{q}}$, then calculating the inner product and integrating by parts, we obtain
		\begin{equation}\label{inner product3}
			\aligned
			\frac12  \frac{\dif}{\dif t} \int_{\T^3}  |\overline{v}_q+z_{\ell_{q}}|^2 \dif x 
			&=- \int_{\T^3}  (\overline{v}_q+z_{{\ell_q}})  \cdot  \left( (\overline{v}_q+z_{{\ell_q}}) \cdot \nabla (\overline{v}_q+z_{{\ell_q}})\right)   \dif x    -  \int_{\T^3} (\overline{v}_q+z_{\ell_{q}})  \cdot \nabla \overline{p}_q  \dif x  
			\\ &\quad + \int_{\T^3} (\overline{v}_q+z_{\ell_{q}}) \cdot  \partial_t z_{{\ell_q}}  \dif x +\int_{\T^3}  (\overline{v}_q+z_{\ell_{q}})  \cdot \div \mathring{\overline{R}}_q \dif x
			\\ &= \int_{\T^3} (\overline{v}_q+z_{\ell_{q}}) \cdot  \partial_t z_{{\ell_q}}  \dif x - \int_{\T^3} \mathring{\overline{R}}_q  : \nabla (\overline{v}_q+z_{\ell_{q}})^T    \dif x.
			\endaligned
		\end{equation}  
		By the definition \eqref{def:energy gap} of $\rho_q$ and \eqref{inner product3}, we have
		\begin{align*}
			\left|   \frac{\dif}{\dif t} \rho_q   \right| &= \frac13 \left|\frac{\dif}{\dif t}  e -   \frac{\dif}{\dif t} \int_{\T^3}  |\overline{v}_q+z_{\ell_{q}}|^2 \dif x + 2  \int_{\T^3} (\overline{v}_q+z_{\ell_{q}}) \cdot  \partial_t z_{{\ell_q}}  \dif x \right|
		\leq \tilde{e}+\left|  \int_{\T^3} \mathring{\overline{R}}_q: \nabla (\overline{v}_q+z_{\ell_{q}})^T  \dif x \right|.
		\end{align*}
	 Thus, by using \eqref{estimate:vellzell}, \eqref{e:vq:1+N}, \eqref{e:Rq:1} and choosing $a$ sufficiently large satisfying $\tilde{e}\leq a^{b(1-\beta-2b\beta)}\leq \delta_{q+1}\lambda_{q}\delta_{q}^{\sfrac12}$,
		we deduce
		\begin{align}\label{partial t rho}
			|\partial_t \rho_q|\leq \tilde{e}+\bar{M}^2 L^2\tau_q^{-1}\delta_{q+1}\ell_{q}^{2\alpha}\lesssim \delta_{q+1}\lambda_{q}\delta_{q}^{\sfrac12}+ L^2 \tau_q^{-1}\delta_{q+1}\ell_{q}^{\alpha} \lesssim L^2 \tau_q^{-1}\delta_{q+1}\ell_{q}^{\alpha}.
		\end{align}
		Finally, owning to \eqref{property eta}, \eqref{partial t rho}, $\|\partial_t \eta_i\|_{C^N_x}\lesssim \tau_q^{-1}$ and Leibniz rule for the derivative of the product, we obtain estimate \eqref{estimate:rho3}.
	\end{proof}

	Similarly to \cite{BDLSV19}, we define the map $\Phi_i: \Omega\times[\te_{i-1},\te_{i+1}] \times \R^3 \to \R^3$ as the $\T^3$ periodic solution to the transport equation
	\begin{subequations}
		\label{eq:Phi:i:def}
		\begin{align}
			(\partial_t + (\overline v_q+z_{\ell_q})  \cdot \nabla) \Phi_i &=0 \, ,\\  
			\Phi_i\left(\te_{i-1},x\right) &= x \, .
		\end{align}
	\end{subequations}
	From the adaptedness of $z_{{\ell_q}}$ and $\overline{v}_q$, it follows that $\Phi_i$ is also $(\mathcal{F}_t)_{t\geq 0}$-adapted.
	For the remainder of this section, it is convenient to denote the material derivative as $D_{t,q}: = \partial_t +( \overline v_q+z_{\ell_q})  \cdot \nabla_x$.
	Since the estimates for the transport equation are standard (cf.~\cite[Proposition D.1]{BDLIS16}, we put further details on the estimates of $\Phi_i$ in Appendix~\ref{sec:esti:transport}. We summarize them as follows:
	\begin{proposition}\label{Lemma:transport}
		For any $t \in  [\te_{i-1},\te_{i+1}]\cap[0,\mathfrak{t}_L]$ and $N\geq 0$, we have
		\begin{subequations}
			\label{eq:Phi:i:bnd}
			\begin{align}
				\|\nabla \Phi_i(t) - \mathrm{Id}\|_{C^0_x} &  \leq \bar{M} \ell_q^{\alpha}\ll \frac{1}{10}
				\label{eq:Phi:i:bnd:a}\, , \\
				\|\nabla \Phi_i\|_{C^N_x} + \|(\nabla \Phi_i)^{-1}\|_{C^N_x} &\lesssim \ell_q^{-N}
				\label{eq:Phi:i:bnd:b} ,\\
				\|D_{t,q} \nabla \Phi_i\|_{C^N_x} &\lesssim \tau_q^{-1} \ell_q^{-N},
				\label{eq:Phi:i:bnd:c}
			\end{align}
		\end{subequations}
		where the implicit constant may depend on $N$.
	\end{proposition}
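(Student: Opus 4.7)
The plan is to derive the three estimates by differentiating the transport equation \eqref{eq:Phi:i:def} and invoking the standard transport--equation estimates from Appendix~\ref{sec:esti:transport}, with the transporting velocity $U_q := \overline v_q + z_{\ell_q}$. First I would record the key bounds on $U_q$: by \eqref{e:vq:1+N}, \eqref{estimate:vellzell} and the choice \eqref{def tauq} of $\tau_q$ together with $L\tau_q\lambda_q\delta_q^{\sfrac12}=\lambda_q^{-6\alpha}$, one obtains, for every $N\ge 0$ and every $t\in[0,\mathfrak t_L]$,
\begin{align*}
\|U_q(t)\|_{C^{1+N}_x}\lesssim \bar M\,\tau_q^{-1}\ell_q^{-N+\alpha},
\end{align*}
after absorbing constants through $\bar M\lambda_q^{-\alpha}\ll 1$. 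This is the only place where the stochastic input $z_{\ell_q}$ enters, and its bound is controlled in exactly the same way as $\overline v_q$.

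Next, writing $\Psi_i:=\nabla\Phi_i-\mathrm{Id}$ and differentiating \eqref{eq:Phi:i:def} in $x$ gives the transport--type equation
\begin{align*}
D_{t,q}\Psi_i=-(\nabla U_q)\,(\mathrm{Id}+\Psi_i),\qquad \Psi_i(\te_{i-1},\cdot)=0.
\end{align*}
For \eqref{eq:Phi:i:bnd:a}, I would apply the $C^0_x$ Gr\"onwall estimate \eqref{eq:Gronwall1} for transport equations on $[\te_{i-1},\te_{i+1}]$ (whose length is $\leq 2\tau_q$), using $\|\nabla U_q\|_{C^0_x}\lesssim \bar M\tau_q^{-1}\ell_q^{\alpha}$; this yields $\|\Psi_i\|_{C^0_x}\lesssim \bar M\ell_q^{\alpha}$, and the smallness $\ll 1/10$ then follows by choosing $a$ sufficiently large so that $\bar M\ell_q^\alpha\leq \bar M a^{-\alpha b^q}$ is tiny. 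For \eqref{eq:Phi:i:bnd:b} with $N\geq 1$, I would differentiate the equation further and proceed inductively: each derivative produces a forcing controlled by $\|U_q\|_{C^{k+1}_x}\|\nabla\Phi_i\|_{C^{N-k}_x}$ with $1\leq k\leq N$, and combining the Schauder/transport estimates from Appendix~\ref{sec:esti:transport} with the inductive hypothesis produces the claimed $\ell_q^{-N}$ bound; the bound for $(\nabla\Phi_i)^{-1}$ then follows from \eqref{eq:Phi:i:bnd:a} by a Neumann series argument (which is precisely where the smallness in (a) is used) together with the same derivative count.

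Finally, for \eqref{eq:Phi:i:bnd:c}, the identity $D_{t,q}\Phi_i=0$ together with the commutator $[D_{t,q},\nabla]f=-(\nabla U_q)^T\nabla f$ yields
\begin{align*}
D_{t,q}\nabla\Phi_i=-(\nabla U_q)^T\,\nabla\Phi_i,
\end{align*}
so the $C^N_x$ bound is obtained by applying Leibniz, the bound on $\|U_q\|_{C^{N+1}_x}$, and part \eqref{eq:Phi:i:bnd:b}, giving $\|D_{t,q}\nabla\Phi_i\|_{C^N_x}\lesssim \tau_q^{-1}\ell_q^{-N}$ after again absorbing $\bar M\ell_q^{\alpha}\ll 1$. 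The main subtlety is the careful bookkeeping in the induction on $N$: the natural loss $\ell_q^{\alpha}$ per derivative of $U_q$ must be absorbed into the universal implicit constant, which is why \eqref{def tauq} and \eqref{def alpha} are arranged so that $L\tau_q\lambda_q\delta_q^{\sfrac12}\leq \ell_q^{4\alpha}$; this is the only place where the stochastic parameter constraints really bite, and it is what guarantees that the estimates have no $L$--dependence in the final constants beyond what is already absorbed in $\bar M$.
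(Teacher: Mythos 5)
Your proposal follows essentially the same route as the paper's proof: both reduce to the bound $\|\overline v_q+z_{\ell_q}\|_{C^{N+1}_x}\lesssim\bar M\,\tau_q^{-1}\ell_q^{-N+\alpha}$ (from \eqref{e:vq:1+N} and \eqref{estimate:vellzell}), then invoke the standard transport estimates \eqref{eq:B5}--\eqref{eq:B6} on the short interval of length $\le 2\tau_q$, a Neumann-series/Leibniz argument on $\nabla\Phi_i(\nabla\Phi_i)^{-1}=\mathrm{Id}$ for the inverse, and the identity $D_{t,q}\nabla\Phi_i=-\nabla\Phi_i\,D(\overline v_q+z_{\ell_q})$ for \eqref{eq:Phi:i:bnd:c}, with the $\bar M\ell_q^{\alpha}\ll 1$ smallness absorbing constants exactly as you indicate. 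The only cosmetic difference is that you derive the $C^0$ bound on $\nabla\Phi_i-\mathrm{Id}$ directly via Gr\"onwall rather than citing the packaged estimate \eqref{eq:B5}, and your matrix identities differ from the paper's by transpose/ordering conventions for $\nabla$ which do not affect the norms.
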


	\subsection{Amplitudes}\label{sec:amplitude}
	Following \cite[Section 6.5]{BDLSV19}, we first define a stress supported in $\supp (\eta_i)\subset \tilde{I}_i\times \T^3$ as
	\begin{equation}\label{e:tildeR_def}
		\tilde R_{q,i} = \frac{\nabla\Phi_i R_{q,i} \nabla\Phi_i^T}{\rho_{q,i}}= \nabla\Phi_i \left( \Id -\frac{\eta_i^2 \mathring{\overline R}_q}{	\rho_{q,i}}   \right) \nabla\Phi_i^T,
	\end{equation}
	where we denote $	R_{q,i}$ as
	\begin{align}\label{R_q,i}
		R_{q,i}=\rho_{q,i}\Id- \eta_i^2 \mathring{\overline R}_q.
	\end{align}
	 Observe that $\supp ({\mathring{\overline R}}_q) \subset   \cup_i I_i\times \T^3 $ and $\eta_i \equiv 1$ on $ I_i$, $\eta_i \eta_j \equiv 0$ for $i\neq j$, we deduce that  the stress $\tilde R_{q,i}$ satisfies the identity
	\begin{align}
		\sum_i \rho_{q,i} (\nabla \Phi_i)^{-1} \tilde R_{q,i} (\nabla \Phi_i)^{-T} = \left( \sum_i \rho_{q,i} \right) \Id - \sum_i \eta_i^2 {\mathring{\overline R}}_q =\left( \sum_i \rho_{q,i} \right) \Id - {\mathring{\overline R}}_q \, ,
		\label{eq:tilde:R:q:i:identity}
	\end{align}
	which is useful in canceling the glued stress. 
	In addition, we give more estimates on $\tilde R_{q,i}$:
	\begin{proposition}\label{estimate:R:q,i}
		For any $t\in \tilde{I}_i$ and $N\in \N_0$, we have
		\begin{subequations}
			\begin{align}
				\|\tilde R_{q,i}\|_{C^N_x}&\lesssim  \ell_q^{-N},\label{es:Rq,i}
				\\ \| D_{t,q}\tilde R_{q,i}\|_{C^N_x}&\lesssim \tau_q^{-1} \ell_q^{-N},\label{es:Dt,q Rq,i}
			\end{align}
		\end{subequations}
		where the implicit constant may depend on $N$.
	\end{proposition}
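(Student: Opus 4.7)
The key observation is that, from the definition \eqref{def:rho q,i} of $\rho_{q,i}$, the ratio $\eta_i^2/\rho_{q,i}$ is \emph{spatially constant}, equal to $h(t):=\sum_j \int_{\T^3}\eta_j^2(t,y)\,\dif y\,/\,\rho_q(t)$. Consequently $\tilde R_{q,i}$ simplifies to
\begin{equation*}
\tilde R_{q,i}=\nabla\Phi_i \bigl(\Id - h(t)\, \mathring{\overline R}_q\bigr)(\nabla\Phi_i)^T,
\end{equation*}
which avoids any apparent singularity on the zero set of $\eta_i$. From \eqref{property eta} and the lower bound in \eqref{estimate:rho1}, we immediately have $\|h\|_{C^0_{t}}\lesssim \lambda_q^\alpha/(L^2\delta_{q+1})$. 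Combining this with \eqref{e:Rq:1} yields, for any $N\geq 0$,
\begin{equation*}
\|h\,\mathring{\overline R}_q\|_{C^N_x}\lesssim \bar M\lambda_q^\alpha \ell_q^{-N+\alpha}\lesssim \ell_q^{-N},
\end{equation*}
where in the last step we have used $\ell_q\leq \lambda_q^{-1-6\alpha}$ from \eqref{def l} and chosen $a$ large enough so that $\bar M \lambda_q^{-6\alpha^2}\leq 1$. Thus $\|\Id-h\mathring{\overline R}_q\|_{C^N_x}\lesssim \ell_q^{-N}$ (with $\lesssim 1$ when $N=0$).

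For \eqref{es:Rq,i}, I would then apply the Leibniz rule to the triple product together with the bounds \eqref{eq:Phi:i:bnd:a} and \eqref{eq:Phi:i:bnd:b}: splitting $N=N_1+N_2+N_3$,
\begin{equation*}
\|\tilde R_{q,i}\|_{C^N_x}\lesssim \sum_{N_1+N_2+N_3=N}\|\nabla\Phi_i\|_{C^{N_1}_x}\|\Id-h\mathring{\overline R}_q\|_{C^{N_2}_x}\|\nabla\Phi_i\|_{C^{N_3}_x}\lesssim \ell_q^{-N_1}\ell_q^{-N_2}\ell_q^{-N_3}=\ell_q^{-N},
\end{equation*}
as desired.

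For \eqref{es:Dt,q Rq,i}, I would expand $D_{t,q}$ by the product rule into three terms, and treat each separately. The outer two terms contain a factor $D_{t,q}\nabla\Phi_i$, estimated by \eqref{eq:Phi:i:bnd:c} to gain the $\tau_q^{-1}$, and the remaining factors are controlled as above. For the middle term, write
\begin{equation*}
D_{t,q}\bigl(h(t)\,\mathring{\overline R}_q\bigr)=h'(t)\,\mathring{\overline R}_q+h(t)\,D_{t,q}\mathring{\overline R}_q.
\end{equation*}
The second piece is directly controlled by \eqref{e:Rq:Dt} combined with $\|h\|_{C^0_t}\lesssim \lambda_q^\alpha/(L^2\delta_{q+1})$. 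For $h'$, I would use $|f'(t)|\lesssim \tau_q^{-1}$ (from property \eqref{eq:eta:i:derivative} of $\eta_i$ and the finite overlap of $\supp\eta_i$), together with the bound $|\partial_t\rho_q|\lesssim L^2\tau_q^{-1}\delta_{q+1}\ell_q^\alpha$ obtained inside the proof of \eqref{estimate:rho3} (see \eqref{partial t rho}), and the lower bound \eqref{estimate:rho1} to get $|h'(t)|\lesssim \tau_q^{-1}\lambda_q^{2\alpha}/(L^2\delta_{q+1})$. Combining with \eqref{e:Rq:1} and again absorbing $\bar M\lambda_q^{2\alpha}\ell_q^\alpha$ by taking $a$ large, both pieces of the middle term are bounded by $\tau_q^{-1}\ell_q^{-N}$, completing the proof.

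\textbf{Anticipated obstacle.} The only conceptual subtlety is the division by $\rho_{q,i}$, which vanishes on $\{\eta_i=0\}$; this is resolved cleanly by the observation that the ratio $\eta_i^2/\rho_{q,i}$ is a function of time alone. After that, the proof is a bookkeeping exercise: the main care must be taken to absorb the factors $\bar M$ and $\lambda_q^\alpha$ arising from the lower bound on $\rho_q$, which is done using $\ell_q\lambda_q^{1+6\alpha}\leq 1$ and the usual convention of taking $a$ sufficiently large depending on $\bar M$ and $\alpha$.
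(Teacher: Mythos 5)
Your approach is essentially the same as the paper's: you rely on the spatial constancy of $\eta_i^2/\rho_{q,i}$ (the paper writes the same ratio as $\sum\int_{\T^3}\eta_j^2/\rho_q$ in \eqref{Dt,q R}), treat $\tilde R_{q,i}$ as $\nabla\Phi_i(\Id-h\mathring{\overline R}_q)\nabla\Phi_i^T$, and use Proposition~\ref{Lemma:transport}, \eqref{estimate:rho1}, \eqref{partial t rho}, \eqref{e:Rq:1}, \eqref{e:Rq:Dt}. The first estimate \eqref{es:Rq,i} is fine.

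There is, however, a genuine arithmetic gap in the second estimate, in your bound on $h'$. Writing $h=g/\rho_q$ with $g(t)=\sum_j\int_{\T^3}\eta_j^2(t,y)\,\dif y$ and using \eqref{partial t rho} and \eqref{estimate:rho1},
\begin{equation*}
|h'|\lesssim\frac{|g'|}{\rho_q}+\frac{|g|\,|\partial_t\rho_q|}{\rho_q^2}
\lesssim\frac{\lambda_q^\alpha\tau_q^{-1}}{L^2\delta_{q+1}}+\frac{\lambda_q^{2\alpha}\ell_q^\alpha\tau_q^{-1}}{L^2\delta_{q+1}}\,,
\end{equation*}
and the second term already contains a factor $\ell_q^\alpha$ which you silently dropped when you wrote $|h'|\lesssim\lambda_q^{2\alpha}\tau_q^{-1}/(L^2\delta_{q+1})$. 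That dropped factor is not cosmetic: with your weaker bound, the middle term of $D_{t,q}\tilde R_{q,i}$ picks up a prefactor $\bar M\lambda_q^{2\alpha}\ell_q^\alpha$, with only \emph{one} $\ell_q^\alpha$ (coming from $\|\mathring{\overline R}_q\|_{C^N_x}\lesssim\bar M L^2\delta_{q+1}\ell_q^{-N+\alpha}$). Since \eqref{ell:lambdaq} gives $\ell_q\geq\lambda_q^{-3/2}$, we have $\lambda_q^{2\alpha}\ell_q^\alpha\geq\lambda_q^{\alpha/2}\to\infty$ as $q\to\infty$; this cannot be absorbed by taking $a$ large, because $a$ large does nothing to a factor that grows in $q$. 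The fix is to keep the $\ell_q^\alpha$ factor from \eqref{partial t rho}: then the bad term carries a total factor $\lambda_q^{2\alpha}\ell_q^{2\alpha}=(\lambda_q\ell_q)^{2\alpha}\leq1$, after which $\bar M(\lambda_q\ell_q)^{2\alpha}\ll1$ does hold once $a$ is large. Equivalently, first simplify $\lambda_q^{2\alpha}\ell_q^\alpha\leq\lambda_q^\alpha$ inside the $h'$ bound, obtaining $|h'|\lesssim\lambda_q^\alpha\tau_q^{-1}/(L^2\delta_{q+1})$ as in \eqref{chain rule rho}, and then the surviving prefactor is $\bar M(\lambda_q\ell_q)^\alpha\ll1$. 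With that correction your argument closes and matches the paper's computation in \eqref{e:Dt_ratio}.
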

	\begin{proof}
		The first estimate \eqref{es:Rq,i} can be derived through direct computation. 
		More precisely, recalling the definition of $\rho_{q,i}$ and applying  \eqref{e:Rq:1}, \eqref{estimate:rho1} and \eqref{R_q,i}, we obtain
		\begin{align}\label{Rq,i/rhoq.i}
			\left\|  \frac{R_{q,i}}{\rho_{q,i}}\right\| _{C^N_x} \lesssim 1+ \frac{1}{\rho_q} \|\mathring{\overline R}_q\|_{C^N_x} \lesssim \bar{M} (\ell_q\lambda_{q})^{\alpha} \ell_q^{-N}\lesssim \ell_q^{-N},
		\end{align}
		where we chose $a$ large enough to have $\bar{M} (\lambda_q\ell_q)^{\alpha}\ll 1$ in the last inequality. 
	We then combine \eqref{eq:Phi:i:bnd:b} with \eqref{Rq,i/rhoq.i} to deduce
		\begin{align*}
			\|\tilde R_{q,i}\|_{C^N_x}\lesssim  \|\nabla\Phi_i \|_{C^N_x} \left\|\frac{R_{q,i}}{\rho_{q,i}} \right\|_{C^0_x} + \|\nabla\Phi_i \|_{C^0_x}\left\|\frac{R_{q,i}}{\rho_{q,i}} \right\|_{C^N_x} \lesssim \ell_q^{-N}.
		\end{align*}
		
		We now proceed to estimate \eqref{es:Dt,q Rq,i}. By applying the material derivative $D_{t,q}$ to $\rho_{q,i}^{-1} R_{q,i}$, we obtain
		\begin{equation}\label{Dt,q R}
			D_{t,q} (\rho_{q,i}^{-1} R_{q,i}) = -\partial_t \left( \frac{ \sum \int_{\T^3}\eta_j^2}{\rho_q}\right) \mathring{\overline R_q} - \left( \frac{ \sum \int_{\T^3}\eta_j^2}{\rho_q}\right) D_{t,q} \mathring{\overline R}_q \, .
		\end{equation}
		Making use of Leibniz rule for the derivative of the product, \eqref{estimate:rho1} and \eqref{partial t rho}, we derive
		\begin{align}\label{chain rule rho}
			\left| \partial_t \left( \frac{ \sum \int_{\T^3}\eta_j^2}{\rho_q}\right)\right| \lesssim \sup_{i} \left| \frac{\partial_t\eta_i}{\rho_q}\right| +\left| \frac{\partial_t\rho_q}{\rho_q^2}\right|\lesssim \frac{\delta_{q+1}^{-1}\lambda_{q}^{\alpha}\tau_q^{-1}}{L^2}+\frac{\delta_{q+1}^{-2}\lambda_{q}^{2\alpha}\tau_q^{-1}\delta_{q+1}\ell_{q}^{\alpha}}{L^2}\leq \frac{\delta_{q+1}^{-1}\lambda_{q}^{\alpha}\tau_q^{-1}}{L^2}.
		\end{align}
		Therefore, by substituting \eqref{chain rule rho} into \eqref{Dt,q R} and utilizing \eqref{e:Rq:1} and \eqref{e:Rq:Dt}, we deduce
		\begin{equation}\label{e:Dt_ratio}
			\begin{aligned}
				\|D_{t,q} (\rho_{q,i}^{-1}R_{q,i})\|_{C^N_x}
				&\lesssim \frac{ \delta_{q+1}^{-1} \lambda_{q}^{\alpha} \tau_q^{-1}}{L^2} \|\mathring{\overline R}_q\|_{C^N_x}  + \frac{\delta_{q+1}^{-1}  \lambda_{q}^{\alpha} }{L^2} \|D_{t,q} \mathring{\overline R}_q\|_{C^N_x}
				\\&\lesssim \bar{M}(\lambda_{q}\ell_{q})^{\alpha}   \delta_{q+1}^{-1}  \tau_q^{-1} \delta_{q+1} \ell_q^{-N} \lesssim
				\tau_q^{-1} \ell_q^{-N}\, .
			\end{aligned}
		\end{equation}
		By applying the material derivative $D_{t,q}$ to \eqref{e:tildeR_def} once more, we derive
		\begin{align*}
			D_{t,q}  \tilde R_{q,i} = D_{t,q} \nabla\Phi_i (\rho_{q,i}^{-1} R_{q,i}) \nabla\Phi_i^T +
			\nabla\Phi_i D_{t,q} (\rho_{q,i}^{-1} R_{q,i}) \nabla\Phi_i^T + \nabla\Phi_i (\rho_{q,i}^{-1} R_{q,i})(D_{t,q} \nabla\Phi_i)^T\, .
		\end{align*}
		Therefore, combining \eqref{eq:Phi:i:bnd:b}, \eqref{eq:Phi:i:bnd:c} and \eqref{e:Dt_ratio} yields
		\begin{align*}
			\|D_{t,q} \tilde R_{q,i}\|_{C^N_x} \lesssim & \|D_{t,q} \nabla\Phi_i\|_{C^N_x}\|(\rho_{q,i}^{-1} R_{q,i})\|_{C^0_x} 
			+\|D_{t,q} \nabla\Phi_i\|_{C^0_x} \|(\rho_{q,i}^{-1} R_{q,i})\|_{C^N_x}+ \|D_{t,q} (\rho_{q,i}^{-1} R_{q,i})\|_{C^N_x} 
			\\ &+ \|D_{t,q} (\rho_{q,i}^{-1} R_{q,i})\|_{C^0_x} \|\nabla\Phi_i\|_{C^N_x}+ \|D_{t,q} \nabla\Phi_i \|_{C^0_x} \|(\rho_{q,i}^{-1} R_{q,i})\|_{C^0_x} \|\nabla\Phi_i\|_{C^N_x}
			\lesssim  \tau_q^{-1} \ell_q^{-N},
		\end{align*}
		which implies \eqref{es:Dt,q Rq,i}.
	\end{proof}
	Moreover, we use \eqref{e:Rq:1} together with \eqref{eq:Phi:i:bnd:a} and \eqref{eq:Phi:i:bnd:b} to have for $t\in \tilde{I}_i$
	\begin{equation}\label{es:Rq,i-Id}
		\aligned
		\|	\tilde R_{q,i}-\mathrm{Id}\|_{C^0_x} &\lesssim \|\nabla\Phi_i \nabla\Phi_i ^T-\Id\|_{C^0_x} +  \|\nabla\Phi_i \|_{C^0_x}^2 \|\mathring{\overline R}_q\|_{C^0_x} \frac{\delta_{q+1}^{-1}  \lambda_q^{\alpha}}{L^2}
		\\ & \lesssim( \|\nabla\Phi_i \|_{C^0_x} +1)  \|\nabla\Phi_i -\Id\|_{C^0_x}  + \bar{M} (\lambda_q\ell_q)^{\alpha} \leq \frac12,
		\endaligned
	\end{equation}
	which requires $a$ large enough to have $\bar{M} (\lambda_q\ell_q)^{\alpha}\ll 1$. This implies that $\tilde R_{q,i}$ restricted to $\supp \eta_i$ obeys the conditions of Lemma~\ref{l:linear_algebra}. Then, we define the amplitude functions
	for $i\geq 0$, $\xi \in \Lambda_{i}$ and $(t,x)\in [0,\mathfrak{t}_L]\times \T^3$ as
	\begin{align}\label{def:amplitude a}
		a_{(\xi,i)}(t,x)=\rho_{q,i}(t,x)^{\sfrac12} \gamma_{\xi}(\tilde R_{q,i}(t,x)),
	\end{align}
	where $\Lambda_{i}\subset \mathbb{S}^2\cap \mathbb{Q}^3$ and $\gamma_\xi \in C^\infty(\overline{B_{\sfrac 12}}(\mathrm{Id}))$ are the functions given in Lemma~\ref{l:linear_algebra}. Since $\rho_{q,i}^{\sfrac12}$ is a multiple of $\eta_i$, this shows that the support sets of $ a_{(\xi,i)}$ are pairwise disjoint. Moreover, the amplitude functions $a_{(\xi,i)}$ inherit the related bounds of $\rho_{q,i}^{\sfrac12}$ and $\tilde R_{q,i}$, and we have the following estimates.
	\begin{proposition}\label{es:amplitude}
		The amplitude functions satisfy the following bounds for any $t\in \tilde{I}_i$ and $N\in \N_0$
	\begin{subequations}
			\begin{align}
			\| a_{(\xi,i)}\|_{C^N_x}&\lesssim \frac{M}{C_{\Lambda}}L \delta_{q+1}^{\sfrac12}\ell_q^{-N}\, , \label{estimate:a CN}
			\\ \| D_{t,q} a_{(\xi,i)}\|_{C^N_x}&\lesssim  \frac{M}{C_{\Lambda}}L \tau_q^{-1}\delta_{q+1}^{\sfrac12} \ell_q^{-N}\, ,\label{estimate:Da CN}
		\end{align}
	\end{subequations}
		where $M$, $C_\Lambda$ are universal constants given in \eqref{eq:Onsager:M:def} and the implicit constant may depend on $N$.
	\end{proposition}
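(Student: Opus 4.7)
My plan is to obtain both bounds by writing $a_{(\xi,i)} = \rho_{q,i}^{\sfrac12}\,\gamma_\xi(\tilde R_{q,i})$ as a product and applying the Leibniz rule, combined with a Faà di Bruno / chain-rule estimate for the composition $\gamma_\xi \circ \tilde R_{q,i}$. The key structural observation is that on $\supp \eta_i$ the matrix $\tilde R_{q,i}$ lies in the compact ball $\overline{B_{\sfrac12}}(\mathrm{Id})$ by \eqref{es:Rq,i-Id}, so $\gamma_\xi$ and all its derivatives evaluated at $\tilde R_{q,i}$ are uniformly bounded by a constant that Lemma~\ref{l:linear_algebra} quantifies as $M/C_\Lambda$.

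First I would observe that
\begin{align*}
\rho_{q,i}^{\sfrac12}(t,x) \;=\; \Bigl(\sum_j \int_{\T^3} \eta_j^2(t,y)\,\dif y\Bigr)^{-\sfrac12}\,\eta_i(t,x)\,\rho_q(t)^{\sfrac12},
\end{align*}
which is manifestly smooth because the denominator is bounded below by $c_\eta^{\sfrac12}$ via \eqref{property eta}. Using \eqref{estimate:rho1} and the $C^N$-bound on $\eta_i$, this immediately yields $\|\rho_{q,i}^{\sfrac12}\|_{C^N_x} \lesssim L \delta_{q+1}^{\sfrac12}$ on $\tilde I_i$, uniformly in $N$. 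Next, the standard composition inequality together with \eqref{es:Rq,i} gives
\begin{align*}
\|\gamma_\xi(\tilde R_{q,i})\|_{C^N_x} \;\lesssim\; \frac{M}{C_\Lambda}\bigl(1+\|\tilde R_{q,i}\|_{C^N_x}\bigr) \;\lesssim\; \frac{M}{C_\Lambda}\,\ell_q^{-N}.
\end{align*}
A Leibniz expansion $\|a_{(\xi,i)}\|_{C^N_x} \lesssim \sum_{k=0}^N \|\rho_{q,i}^{\sfrac12}\|_{C^k_x}\|\gamma_\xi(\tilde R_{q,i})\|_{C^{N-k}_x}$ then delivers \eqref{estimate:a CN}.

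For the material derivative, I would differentiate
\begin{align*}
D_{t,q}a_{(\xi,i)} \;=\; \bigl(D_{t,q}\rho_{q,i}^{\sfrac12}\bigr)\gamma_\xi(\tilde R_{q,i}) \;+\; \rho_{q,i}^{\sfrac12}\,\nabla\gamma_\xi(\tilde R_{q,i}):D_{t,q}\tilde R_{q,i}.
\end{align*}
The first factor is handled via $D_{t,q}\rho_{q,i}^{\sfrac12} = \tfrac12 \rho_{q,i}^{-\sfrac12}(\partial_t \rho_{q,i} + (\overline v_q + z_{\ell_q})\cdot\nabla \rho_{q,i})$, estimating the temporal part using \eqref{estimate:rho3}, and the transport part using the $C^{N+1}_x$ control of $\overline v_q + z_{\ell_q}$ from \eqref{estimate:vellzell} and \eqref{e:vq:1+N} combined with the lower bound on $\rho_{q,i}$. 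This yields $\|D_{t,q}\rho_{q,i}^{\sfrac12}\|_{C^N_x} \lesssim L\tau_q^{-1}\delta_{q+1}^{\sfrac12}\ell_q^{-N}$. The second factor is controlled by the chain rule applied to $\gamma_\xi$, the pointwise bound on $\nabla \gamma_\xi$ from the compactness argument, and the transported stress estimate \eqref{es:Dt,q Rq,i}; combined with \eqref{es:Rq,i} this produces the scaling $\tau_q^{-1}\ell_q^{-N}$. A final Leibniz sum yields \eqref{estimate:Da CN}.

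The main obstacle is the careful bookkeeping of the spatial transport term inside $D_{t,q}$ at higher derivative orders: expanding $(\overline v_q + z_{\ell_q})\cdot \nabla \rho_{q,i}$ in $C^N_x$ generates factors like $\|\overline v_q\|_{C^{N+1}_x} \lesssim \bar M \tau_q^{-1}\ell_q^{-N+\alpha}$ which threaten to produce an excess $\ell_q^{\alpha}$, and one has to verify that these are absorbed because $\rho_{q,i}$ appears only at low derivative order after interpolation. The same subtlety arises when commuting $D_{t,q}$ with higher spatial derivatives in the chain rule for $\gamma_\xi(\tilde R_{q,i})$, where one must redistribute derivatives between $\tilde R_{q,i}$ (controlled by \eqref{es:Rq,i}) and $D_{t,q}\tilde R_{q,i}$ (controlled by \eqref{es:Dt,q Rq,i}) so that every term matches the target scaling $\tau_q^{-1}\ell_q^{-N}$; the implicit constants coming from repeated chain-rule compositions are absorbed by choosing $a$ sufficiently large, as is done throughout the paper.
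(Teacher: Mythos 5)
Your overall strategy — Leibniz for the product $\rho_{q,i}^{\sfrac12}\,\gamma_\xi(\tilde R_{q,i})$ plus a chain-rule/Fa\`a-di-Bruno estimate for the composition, with \eqref{es:Rq,i-Id} guaranteeing that $\tilde R_{q,i}$ stays in the domain of $\gamma_\xi$ — is exactly the paper's route, and your observation that $\rho_{q,i}^{\sfrac12}=\eta_i\,(\sum_j\int\eta_j^2)^{-\sfrac12}\rho_q^{\sfrac12}$ is precisely the factorization the paper exploits. However, your treatment of the material derivative of $\rho_{q,i}^{\sfrac12}$ contains a genuine gap. You write $D_{t,q}\rho_{q,i}^{\sfrac12}=\tfrac12\rho_{q,i}^{-\sfrac12}\bigl(\partial_t\rho_{q,i}+(\overline v_q+z_{\ell_q})\cdot\nabla\rho_{q,i}\bigr)$ and propose to estimate it "combined with the lower bound on $\rho_{q,i}$." There is no such lower bound: $\rho_{q,i}$ vanishes identically on the zero set of $\eta_i$, which includes the entire boundary of $\supp\eta_i$, so $\rho_{q,i}^{-\sfrac12}$ blows up there. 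The lower bound \eqref{estimate:rho1} concerns the spatially constant function $\rho_q(t)$, not $\rho_{q,i}(t,x)$. As stated, your expression is a $0\cdot\infty$ indeterminacy at $\partial\{\eta_i>0\}$ and the proposed estimate does not close.

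The fix is the one you already set up but then abandoned: apply $D_{t,q}$ directly to the factorization $\rho_{q,i}^{\sfrac12}=\eta_i\,(\sum_j\int_{\T^3}\eta_j^2)^{-\sfrac12}\rho_q^{\sfrac12}$ via the product rule rather than via the chain rule on $\rho_{q,i}\mapsto\rho_{q,i}^{\sfrac12}$. Here $\eta_i$ enters linearly (so its derivatives are controlled by the cutoff bounds $\|\partial_t^n\eta_i\|_{C^m_x}\lesssim\tau_q^{-n}$ with no square root), the denominator $(\sum_j\int\eta_j^2)^{\sfrac12}$ is bounded below by $c_\eta^{\sfrac12}$, and $\rho_q^{\sfrac12}$ is both bounded above by $L\delta_{q+1}^{\sfrac12}$ and bounded below (via \eqref{estimate:rho1}) so that $|\partial_t(\rho_q^{\sfrac12})|\lesssim|\partial_t\rho_q|/\rho_q^{\sfrac12}\lesssim L\tau_q^{-1}\delta_{q+1}^{\sfrac12}$ is legitimate. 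This is exactly what the paper does, and it yields $\|D_{t,q}(\rho_{q,i}^{\sfrac12})\|_{C^N_x}\lesssim L\tau_q^{-1}\delta_{q+1}^{\sfrac12}\ell_q^{-N}$ without ever dividing by $\rho_{q,i}$. With that replacement, the rest of your argument — the composition estimate for $\gamma_\xi(\tilde R_{q,i})$ and its transported version, the final Leibniz sum — goes through and matches the paper's proof.
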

	\begin{proof}
		By applying the Leibniz rule for the product derivative, we obtain for all $N\in \N$
		\begin{align}\label{chain:a xi}
			\| a_{(\xi,i)}\|_{C^N_x} \lesssim \sum_{m=0}^{N} \| \rho_{q,i}^{\sfrac12}\|_{C^m_x} \|\gamma_{\xi}(\tilde R_{q,i})\|_{C^{N-m}_x} \, .
		\end{align}
		Keeping \cite[Proposition C.1]{BDLIS16} in mind, and applying \eqref{es:Rq,i}  together with \eqref{eq:Onsager:M:def}, we obtain for $N-m\geq 1$
		\begin{align}\label{chain:a xi:2}
			\|\gamma_{\xi}(\tilde R_{q,i})\|_{C^{N-m}_x}\lesssim \|\gamma_\xi\|_{C^1_x}\|\tilde R_{q,i}\|_{C^{N-m}_x}+  \|\gamma_\xi\|_{C^{N-m}_x}\|\tilde R_{q,i}\|^{N-m}_{C^{1}_x} \lesssim \frac{M}{C_{\Lambda}} \ell_q^{-(N-m)}.
		\end{align}
		From the properties of the cutoff function and energy gap, namely $	\| \eta_i\|_{C^m_x}\lesssim 1$ and $|\rho_q^{\sfrac12}|\leq L \delta_{q+1}^{\sfrac12}$, it follows that $ \|\rho_{q,i}^{\sfrac12}\|_{C^m_x}\lesssim L \delta_{q+1}^{\sfrac12}$. Substituting these estimates into \eqref{chain:a xi}, we obtain for all $N\in \N$
		\begin{align*}
			\| a_{(\xi,i)}\|_{C^N_x} \lesssim \frac{M}{C_{\Lambda}} L \delta_{q+1}^{\sfrac12}\ell_q^{-N}.
		\end{align*}
		Due to $\|\gamma_\xi\|_{C^0}\leq \frac{M}{C_{\Lambda}}$, this estimate also applies when $N=0$.
		Next, we observe that
		\begin{align*}
			D_{t,q} (\rho_{q,i}^{\sfrac12})=\left[  \partial_t \left( \frac{\eta_i}{(\sum \int_{\T^3}\eta_j^2)^\frac12} \right) +
			\frac{ (\overline{v}_q+z_{\ell_q}) \cdot \nabla \eta_i }{(\sum \int_{\T^3}\eta_j^2)^\frac12} \right]  \rho_q^{\sfrac12}+ \frac{\eta_i}{(\sum \int_{\T^3}\eta_j^2)^\frac12} \partial_t( \rho_q^{\sfrac12}).
		\end{align*}
	By employing \eqref{estimate:rho1} and \eqref{partial t rho}, we obtain $|  \partial_t( \rho_q^{\sfrac12})|\lesssim \left| \frac{\partial_t \rho_q}{\rho_q^{\sfrac12}} \right| \lesssim L \tau_q^{-1} \delta_{q+1}^{\sfrac12}$. Combining this bound with
		\eqref{estimate:vellzell}, \eqref{e:vq:1+N} and \eqref{e:vq:0} yields for any $N\in \N$
		\begin{align*}
			\|  D_{t,q} (\rho_{q,i}^{\sfrac12})\|_{C^N_x} \lesssim & (\|\partial_t \eta_i\|_{C^N_x } + \|\partial_t \eta_i\|_{C^0_x } \| \eta_i\|_{C^N_x } )\|\rho_q^{\sfrac12}\|_{C^0_t}+ \|\eta_i\|_{C^N_x}\|\rho_q^{\sfrac12}\|_{C^1_t}
			\\ &+ (\|\overline{v}_q +z_{\ell_q}\|_{C^0_x } \|\nabla \eta_i\|_{C^N_x }+  \|\overline{v}_q+z_{\ell_q} \|_{C^N_x } \|\nabla \eta_i\|_{C^0_x}) \|\rho_q^{\sfrac12}\|_{C^0_t}
			\\ \lesssim & L \tau_q^{-1}\delta_{q+1}^{\sfrac12}+\bar{M} L
			\tau_q^{-1}\ell_q^{-(N-1)+\alpha} \delta_{q+1}^{\sfrac12} \lesssim L \tau_q^{-1} \delta_{q+1}^{\sfrac12} \ell_q^{-N},
		\end{align*}
		where the last line is justified by $\bar{M}\ell_{q}^{\alpha}\ll 1$.
		Since $\|\overline v_q+ z_{\ell_{q}}\|_{C^0_x} \lesssim \bar{M} \lambda_{1}^{\sfrac{3\alpha}{2}}$, we can also estimate $ \|  D_{t,q} (\rho_{q,i}^{\sfrac12})\|_{C^0_x}\lesssim L\tau_q^{-1} \delta_{q+1}^{\sfrac12}$.
		Using the chain rule together with \eqref{es:Rq,i}, \eqref{es:Dt,q Rq,i} yields for any $N\geq 0$
		\begin{equation}\label{Dt,q a}
			\aligned
			\| D_{t,q} a_{(\xi,i)}\|_{C^N_x} &\lesssim  \|  D_{t,q} (\rho_{q,i}^{\sfrac12})\|_{C^N_x} \|\gamma_{\xi}(\tilde R_{q,i})\|_{C^{0}_x}+  \|  D_{t,q} (\rho_{q,i}^{\sfrac12})\|_{C^0_x} \|\gamma_{\xi}(\tilde R_{q,i})\|_{C^{N}_x}
			\\&\, \,+ \|  \rho_{q,i}^{\sfrac12}\|_{C^N_x} \|  D_{t,q} \gamma_{\xi}(\tilde R_{q,i})\|_{C^0_x}+  \|  \rho_{q,i}^{\sfrac12}\|_{C^0_x} \|  D_{t,q} \gamma_{\xi}(\tilde R_{q,i})\|_{C^N_x} 
            \lesssim  \frac{M}{C_{\Lambda}} L \tau_q^{-1}   \delta_{q+1}^{\sfrac12} \ell_q^{-N}, 
           \endaligned
		\end{equation}
		which gives \eqref{estimate:Da CN}.
	\end{proof}

	\subsection{Construction of the velocity increment $w_{q+1}$}\label{sec:principal w}
	With the previous preparation at hand, we proceed with the construction of the new perturbation $w_{q+1}$ in this subsection. Then,	 
	the velocity field at the level $q+1$ is constructed as 
	\begin{align}\label{def:v q+1}
		v_{q+1}:=\overline{v}_q+w_{q+1}.
	\end{align}
	To this end, the building blocks of the perturbation are the Mikado flows constructed in \cite{DS17} and presented \cite[Section 6.4]{BV19}, which we recall in Appendix~\ref{sec:Mikado}. 
	In the sequel, we consider the Mikado building blocks as defined in \eqref{eq:Mikado:def} with $\lambda = \lambda_{q+1}$, $\xi \in \Lambda_{i}$, i.e. 
	\begin{align*}
		W_{(\xi)}(x)= W_{\xi,\lambda_{q+1}}(x) .
	\end{align*}
	For the index sets $\Lambda_i$ of Lemma~\ref{l:linear_algebra}, we overload notation and write $\Lambda_i = \Lambda_{i \, \textrm{mod} \,2}$ for any $i\in \mathbb{Z}$. With this notation, we now define the principal part of the perturbation as 
	\begin{align}\label{eq:Onsager:w:q+1:p}
		w_{q+1}^{(p)}(t,x) = \sum_{i} \sum_{\xi \in \Lambda_i} a_{(\xi,i)}(t,x) (\nabla \Phi_i(t,x))^{-1} W_{(\xi)}(\Phi_i(t,x)) \, .
	\end{align}
	Note that both $\overline{v}_q$ and $z_{\ell_q}$ are divergence-free, it follows from \cite[(6.46)]{BV19} that
	\begin{align}
		D_{t,q} \left( (\nabla \Phi_i)^{-1} W_{(\xi)}(\Phi_i)\right) = \left( \nabla( \overline v_q+z_{\ell_q})\right) ^T (\nabla \Phi_i)^{-1} W_{(\xi)}(\Phi_i) \, .
		\label{eq:Onsager:Lie:advect}
	\end{align}
	
	In order to ensure $w_{q+1}$ is divergence-free, we aim to construct an incompressibility corrector $w_{q+1}^{(c)}$ such that the resulting function $w_{q+1}^{(p)}+w_{q+1}^{(c)}$ is the curl of a vector field. As mentioned in~\cite[Section 5]{DS17},  the following identity holds for any smooth vector field $V$
	\begin{align*}
		(\nabla \Phi_i)^{-1} \left( (\curl V)\circ \Phi_i \right)= \curl\left( (\nabla \Phi_i)^T (V\circ \Phi_i)\right).
	\end{align*}
	By the identity \eqref{eq:Mikado:curl} and definition \eqref{eq:Mikado:curl:2}, we have $W_{(\xi)} = \curl V_{(\xi)}$ and it follows from the above identity that 
	\begin{align}\label{W:V}
		(\nabla \Phi_i)^{-1} (W_{(\xi)} \circ \Phi_i) = \curl\left( (\nabla \Phi_i)^T (V_{(\xi)}\circ \Phi_i)\right) \,.
	\end{align}
	In view of \eqref{eq:Onsager:w:q+1:p} and \eqref{W:V}, it is natural to define the incompressibility corrector as
	\begin{align}
		w_{q+1}^{(c)}(t,x) = \sum_i \sum_{\xi \in \Lambda_i} \nabla a_{(\xi,i)}(t,x) \times \left( (\nabla \Phi_i(t,x))^T (V_{(\xi)} (\Phi_i(t,x) )  \right),
		\label{eq:Onsager:w:q+1:c}
	\end{align}
	and one may check that the new perturbation $w_{q+1}= w_{q+1}^{(p)} + w_{q+1}^{(c)} $ satisfies
	\begin{align}
		w_{q+1} = \curl \left( \sum_i \sum_{\xi \in \Lambda_i} a_{(\xi,i)} \, (\nabla \Phi_i)^T (V_{(\xi)}\circ \Phi_i) \right) ,
		\label{eq:Onsager:w:q+1}
	\end{align}
	so that it is divergence-free. Since the coeﬀicients $a_{(\xi,i)}$ and $\Phi_i$ are $(\mathcal{F}_t)_{t\geq 0}$-adapted, we deduce that $w_{q+1}$ is also $(\mathcal{F}_t)_{t\geq 0}$-adapted.

	\subsection{Definition of Reynolds Stress $\mathring{R}_{q+1}$}
	\label{sec: new Reynolds stress}
	
	Recalling the system \eqref{gluing euler q} and substituting $v_{q+1}=\overline v_q+w_{q+1}$ into $\eqref{euler1}$ at the level $q+1$, we obtain that
	\begin{equation}\label{Rno1}
		\begin{aligned}
			&\div \mathring{R}_{q+1}-\nabla p_{q+1}
			\\& =\underbrace{(\partial_t+(\overline v_q+z_{\ell_q})\cdot \nabla)w_{q+1}^{(p)}}
			_{\div(R^{\textrm{trans}})}+ \underbrace{ \div (w_{q+1}^{(p)} \otimes w_{q+1}^{(p)}+\mathring{\overline{R}}_q)}
			_{\div(R^{\textrm{osc}})+\nabla p^{\textrm{osc}}}-\nabla \overline{p}_{q}
			\\&+\underbrace{w_{q+1}\cdot \nabla(\overline v_q +z_{\ell_q})}_{\div(R^{\textrm{Nash}})}+\underbrace{(\partial_t+(\overline v_q+z_{\ell_q})\cdot \nabla ) w_{q+1}^{(c)}+\div(w_{q+1}^{(c)}\otimes w_{q+1}+w_{q+1}^{(p)} \otimes w_{q+1}^{(c)})}_{\div(R^{\textrm{cor}})+\nabla p^{\textrm{cor}}}
			\\&+\underbrace{\div(v_{q+1} \otimes (z_{q+1}-z_{{\ell_q}} )+(z_{q+1} -z_{{\ell_q}} )\otimes v_{q+1}+z_{q+1}\otimes z_{q+1}-z_{{\ell_q}} \otimes z_{{\ell_q}})}_{\div(R^{\textrm{com}})+\nabla p^{\textrm{com}}} .
		\end{aligned}
	\end{equation}
	By using the inverse divergence operator $\mathcal{R}$ introduced in Section~\ref{sec:appendix c}, we define
	\begin{equation*}
		\aligned
		R^{\textrm{trans}}&:=\mathcal{R}\left((\partial_t+(\overline v_q+z_{\ell_q})\cdot \nabla)w_{q+1}^{(p)}\right),
		\\ R^{\textrm{Nash}}&:=\mathcal{R} ( w_{q+1}\cdot \nabla(\overline v_q +z_{\ell_q})),
		\\ R^{\textrm{cor}}&:=\mathcal{R} \left((\partial_t+(\overline v_q+z_{\ell_q})\cdot \nabla ) w_{q+1}^{(c)}\right)+\left(w_{q+1}^{(c)}\mathring \otimes w_{q+1}+w_{q+1}^{(p)}\mathring \otimes w_{q+1}^{(c)}\right),
		\\ R^{\textrm{com}}&:=v_{q+1}\mathring \otimes (z_{q+1}- z_{{\ell_q}}) +(z_{q+1}-z_{{\ell_q}})\mathring \otimes v_{q+1}+z_{q+1}\mathring \otimes z_{q+1}-z_{{\ell_q}}\mathring \otimes z_{{\ell_q}},
		\\ p^{\textrm{cor}}&:=\frac{1}{3}(2w_{q+1}^{(c)}\cdot w_{q+1}^{(p)}+|w_{q+1}^{(c)}|^2),
		\\ p^{\textrm{com}}&:=\frac{1}{3}(v_{q+1} \cdot z_{q+1}-v_{q+1} \cdot z_{{\ell_q}} +z_{q+1}\cdot v_{q+1}-z_{{\ell_q}} \cdot v_{q+1}+z_{q+1} \cdot z_{q+1}-z_{{\ell_q}}\cdot z_{{\ell_q}}).
		\endaligned
	\end{equation*}
	
	In order to define the remaining oscillation error from the second line in \eqref{Rno1}, we first note that $\eta_i$ have mutually disjoint supports and for $\xi \neq \xi'\in \Lambda_{i}$, $W_{(\xi)}\otimes W_{(\xi')}\equiv0$. Then, using the identity \eqref{eq:tilde:R:q:i:identity} and the spanning property of the Mikado flows \eqref{eq:Mikado:4}, we obtain the following identity from \cite[(6.47)]{BV19}
	\begin{equation}\label{eq:w:q+1:is:good}
		\begin{aligned}
			&w_{q+1}^{(p)} \otimes w_{q+1}^{(p)} 
			\\&= \left( \sum_i \rho_{q,i} \right) \Id - \mathring{\overline{R}}_q + \sum_i \sum_{\xi \in \Lambda_i} a_{(\xi,i)}^2 (\nabla \Phi_i)^{-1} \left( \left(\mathbb{P}_{\geq \sfrac{\lambda_{q+1}}{2}}(W_{(\xi)} \otimes W_{(\xi)}) \right)\circ \Phi_i \right)  (\nabla \Phi_i)^{-T} ,  
		\end{aligned}
	\end{equation}
	where $\mathbb
	{P}_{\neq 0}f$ denotes the projection of $f$ onto its nonzero frequencies, i.e. $\mathbb
	{P}_{\neq 0}  = f- \fint_{\T^3} f$. We have also used that since  $W_{(\xi)}\otimes W_{(\xi)}$ is $(\sfrac{\T}{\lambda_{q+1}})^3$-periodic, the identity $\mathbb
	{P}_{\neq 0} (W_{(\xi)} \otimes W_{(\xi)}) = \mathbb
	{P}_{\geq \sfrac{\lambda_{q+1}}{2}} (W_{(\xi)} \otimes W_{(\xi)})$ holds. Hence, we denote oscillation error by
	\begin{align*}
		R^{\textrm{osc}}:=\mathcal{R}\left( \div \left(    \sum_i \sum_{\xi \in \Lambda_i} a_{(\xi,i)}^2 (\nabla \Phi_i)^{-1} \left( \left(\mathbb{P}_{\geq \sfrac{\lambda_{q+1}}{2}}(W_{(\xi)} \otimes W_{(\xi)}) \right)\circ \Phi_i \right)  (\nabla \Phi_i)^{-T}  \right)  \right),
	\end{align*}
	and the related pressure is given by $	p^{\textrm{osc}} := \sum_i \rho_{q,i}$.
	With the above notation, we define the Reynolds stress at the level $q+1$ by
	\begin{equation}\label{Reynold1}
		\mathring{R}_{q+1}=R^{\textrm{trans}}+R^{\textrm{osc}}+R^{\textrm{Nash}}+R^{\textrm{cor}}+R^{\textrm{com}},
	\end{equation}
	and pressure at the level $q+1$ by
	\begin{equation*}
		p_{q+1}=\overline p_q-p^{\textrm{osc}}-p^{\textrm{cor}}-
		p^{\textrm{com}}.
	\end{equation*}
	
	\section{Proof of Proposition~\ref{p:iteration}---Step 3: Inductive estimates}\label{sec:inductive esti}\label{sec:inductive estimates}
	In the present section we collect all the necessary estimates to complete the proof of Proposition~\ref{p:iteration}. We will verify that $v_{q+1}$ and $\mathring{R}_{q+1}$ satisfy the inductive estimates \eqref{itera:a}, \eqref{itera:b},  \eqref{itera:c}, and \eqref{vq+1-vq} in Subsection~\ref{Estimates on v q+1} and Subsection~\ref{estimate on Rq+1}. Lastly, the energy estimate \eqref{estimate:energy} will be justified in Subsection~\ref{estimate on energy}.
	\subsection{Estimates on $v_{q+1}$}\label{Estimates on v q+1}
	We begin by establishing the bounds on perturbation $w_{q+1}$. Noting that $\eta_i$ has disjoint supports, the same holds for $a_{(\xi,i)}$. Therefore, for any
	fixed $t\in [0,\mathfrak{t}_L]$, the sum over $i$ in the definitions of $w_{q+1}^{(p)}(t)$ in \eqref{eq:Onsager:w:q+1:p} and $w_{q+1}^{(c)}(t)$ in \eqref{eq:Onsager:w:q+1:c} is finite. Thus, we arrive at the following proposition.
	\begin{proposition}\label{Pro:6.1}
		The perturbation $w_{q+1}^{(p)}$ and $w_{q+1}^{(c)}$ satisfy the following bounds for any $t\in [0,\mathfrak{t}_L]$
		\begin{subequations}
				\begin{align}
				\|w_{q+1}^{(p)}\|_{C^0_{t,x}}+	\|w_{q+1}^{(c)}\|_{C^0_{t,x}} &\leq \frac12 \bar{M} L \delta_{q+1}^{\sfrac12},\label{estimate on wp}
				\\	\|w_{q+1}^{(p)}\|_{C^0_tC^1_{x}}+	\|w_{q+1}^{(c)}\|_{C^0_tC^1_{x}} &\leq \frac12 \bar{M} L \lambda_{q+1} \delta_{q+1}^{\sfrac12}.\label{estimate on wc}
			\end{align}
		\end{subequations}
	\end{proposition}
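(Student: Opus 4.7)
The plan is to estimate each term in the definitions \eqref{eq:Onsager:w:q+1:p} and \eqref{eq:Onsager:w:q+1:c} pointwise. Recalling from the properties \eqref{eq:eta:i:support1}--\eqref{eq:eta:i:derivative} of the cutoffs that $\eta_i \eta_j \equiv 0$ for $i\neq j$, and since $a_{(\xi,i)}$ is a multiple of $\eta_i$ via \eqref{def:amplitude a}, the supports of $a_{(\xi,i)}$ are pairwise disjoint in $i$. Hence at every $(t,x)\in [0,\mathfrak{t}_L]\times\T^3$ at most one index $i$ contributes a nonzero term in the sums \eqref{eq:Onsager:w:q+1:p} and \eqref{eq:Onsager:w:q+1:c}, and the remaining sum runs over $\xi\in\Lambda_i$, whose cardinality is bounded by the universal constant $C_\Lambda$.

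For the $C^0$ bound of $w_{q+1}^{(p)}$, I would apply \eqref{estimate:a CN} with $N=0$ together with the flow bound \eqref{eq:Phi:i:bnd:b} and the Mikado estimate $\|W_{(\xi)}\|_{C^0_x}\lesssim M$ from Appendix~\ref{sec:Mikado}, which directly yields a bound of the form $C_\Lambda\cdot(M/C_\Lambda) L\delta_{q+1}^{1/2} \cdot 1 \cdot M \lesssim M^2 L \delta_{q+1}^{1/2}$. The universal constant $\bar M$ is then fixed in \eqref{def barM} to absorb this into $\tfrac14 \bar M L \delta_{q+1}^{1/2}$. For $w_{q+1}^{(c)}$, the dominant term uses $\|\nabla a_{(\xi,i)}\|_{C^0_x}\lesssim (M/C_\Lambda)L\delta_{q+1}^{1/2}\ell_q^{-1}$ together with $\|V_{(\xi)}\|_{C^0_x}\lesssim \lambda_{q+1}^{-1}$ (since $W_{(\xi)}=\curl V_{(\xi)}$, see \eqref{eq:Mikado:curl}--\eqref{eq:Mikado:curl:2}), producing a factor $\ell_q^{-1}\lambda_{q+1}^{-1}\lesssim \lambda_q^{-(b-1)(1-\beta)+6\alpha}\ll 1$, which is forced by the parameter choice \eqref{def alpha}. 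Combined, \eqref{estimate on wp} follows by taking $a$ sufficiently large.

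For the $C^1$ bound of $w_{q+1}^{(p)}$, applying the Leibniz rule to \eqref{eq:Onsager:w:q+1:p} produces three kinds of terms: the derivative hitting $a_{(\xi,i)}$, hitting $(\nabla\Phi_i)^{-1}$, or hitting $W_{(\xi)}(\Phi_i)$. Using \eqref{estimate:a CN}, \eqref{eq:Phi:i:bnd:b} and the Mikado bound $\|\nabla W_{(\xi)}\|_{C^0_x}\lesssim M\lambda_{q+1}$, the first two terms are of size $L\delta_{q+1}^{1/2}\ell_q^{-1}$, while the third contributes the leading $L\delta_{q+1}^{1/2}\lambda_{q+1}$ factor (using $\|\nabla\Phi_i\|_{C^0_x}\lesssim 1$ from \eqref{eq:Phi:i:bnd:a}). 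Since $\ell_q^{-1}\lesssim \lambda_q^{1+(b-1)\beta+6\alpha}$ and by \eqref{def alpha} this is dominated by $\lambda_{q+1}=\lambda_q^{b}$, the ``derivative on amplitude/flow'' contributions are absorbed into the dominant term. The analogous estimate for $w_{q+1}^{(c)}$ proceeds identically: the worst term has a derivative hitting $V_{(\xi)}(\Phi_i)$, giving $\|\nabla a_{(\xi,i)}\|_{C^0_x}\cdot 1 \cdot \|\nabla V_{(\xi)}\|_{C^0_x}\lesssim L\delta_{q+1}^{1/2}\ell_q^{-1}\cdot 1$, which is bounded by $L\lambda_{q+1}\delta_{q+1}^{1/2}$ by the same parameter constraint; auxiliary terms with two derivatives on $a_{(\xi,i)}$ yield $\ell_q^{-2}\lambda_{q+1}^{-1}\lesssim \lambda_{q+1}\cdot \lambda_q^{-2(b-1)(1-\beta)+12\alpha}\ll\lambda_{q+1}$.

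The main obstacle is the bookkeeping of the two competing small parameters: the amplitude and flow derivatives produce $\ell_q^{-1}\approx \lambda_q^{1+O(\alpha,(b-1)\beta)}$, whereas the Mikado building blocks carry factors of $\lambda_{q+1}=\lambda_q^b$. Ensuring that every ``lower-order'' contribution can be absorbed into the leading Mikado-derivative term relies precisely on the inequality $\ell_q^{-1}\ll \lambda_{q+1}$, which is the content of the sharper parameter restriction \eqref{def alpha}. Once this absorbing inequality is invoked, and $a$ is chosen large enough to swallow the fixed universal constants $M$, $C_\Lambda$ and the implicit prefactors into $\tfrac12\bar M$, both \eqref{estimate on wp} and \eqref{estimate on wc} follow directly.
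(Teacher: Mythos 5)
Your proposal is correct and follows essentially the same route as the paper's proof: disjointness of the supports of $a_{(\xi,i)}$, the amplitude/flow-map/Mikado bounds, and the key absorption $\ell_q^{-1}\ll\lambda_{q+1}$ (the paper's \eqref{eq:Onsager:ell:gap}), with constants swallowed into $\bar M$ via \eqref{def barM}. Only a cosmetic slip: the cardinality of $\Lambda_i$ is bounded by $|\Lambda|$, not by $C_\Lambda$, and the $C^0$-estimate of $w_{q+1}^{(p)}$ picks up a single factor of $M$ from $\gamma_\xi$ (Mikado flows themselves obey $\|W_{(\xi)}\|_{C^0}\lesssim 1$), not $M^2$; neither affects the conclusion.
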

	\begin{proof}
		We first use \eqref{eq:Phi:i:bnd:b}, \eqref{estimate:a CN}, \eqref{def:CLambda} and  \eqref{eq:Mikado:bounds} to have for any $t\in [0,\mathfrak{t}_L]$
		\begin{align}\label{esti:principal w 0}
			\|w_{q+1}^{(p)}\|_{C^0_{t,x}}  \lesssim \sup_{i} \sum_{\xi \in \Lambda_i}  \| a_{(\xi,i)}\|_{C^0_{t,x}} \|(\nabla\Phi_i)^{-1}\|_{C^0_{t,x}}  \|W_{(\xi)}( \Phi_i )\|_{C^0_{t,x}} \leq \frac{|\Lambda|ML}{C_\Lambda} \delta_{q+1}^{\sfrac12}\leq \frac14 \bar{M}L \delta_{q+1}^{\sfrac12},
		\end{align}
		where $|\Lambda|$ is the cardinality of the set $\Lambda_0\cup \Lambda_1$ as given in Appendix~\ref{sec:Mikado} and $\bar{M} $ is a universal constant satisfying
		\begin{align}\label{def barM}
			100|\Lambda|M<C_\Lambda\bar{M}.
		\end{align}
		The estimates for $w_{q+1}^{(c)}$ follow a similar pattern to those for $w_{q+1}^{(p)}$. First, we observe that the condition
		$16\alpha<(b-1)(1-\beta)$, together with the choice of $a$ sufficiently large, implies for all $q\geq 2$
		\begin{align}\label{eq:Onsager:ell:gap}
			\frac{\ell_q^{-1-\alpha}}{\lambda_{q+1}}\leq  \frac{\delta_q^{\sfrac 12} \lambda_q^{1+ 8\alpha}}{\delta_{q+1}^{\sfrac 12} \lambda_{q+1}} \leq 2 \lambda_{q}^{8\alpha - (b-1)(1-\beta)} \leq 2\lambda_{q}^{- \sfrac{(b-1)(1-\beta)}{2}}  \ll 1 \, .
		\end{align}
		Since $a$ large enough, we have $\frac{\ell_1^{-1-\alpha}}{\lambda_{2}}\leq 2\lambda_{1}^{1-b+8\alpha}\ll 1$, then the estimate \eqref{eq:Onsager:ell:gap} also holds for $q=1$.
		We then use \eqref{eq:Phi:i:bnd:b}, \eqref{estimate:a CN}, \eqref{def:CLambda} and \eqref{eq:Mikado:bounds} to deduce for any $t\in [0,\mathfrak{t}_L]$
		\begin{equation}\label{esti:corrector w 0}
			\begin{aligned}
				\|w_{q+1}^{(c)}\|_{C^0_{t,x}} \lesssim \sup_{i} \sum_{\xi \in \Lambda_i}  \| \nabla a_{(\xi,i)}\|_{C^0_{t,x}} \|\nabla\Phi_i\|_{C^0_{t,x}}  \|V_{(\xi)}( \Phi_i )\|_{C^0_{t,x}} 
				\lesssim \frac{|\Lambda|ML}{C_\Lambda \lambda_{q+1}} \delta_{q+1}^{\sfrac12}\ell_q^{-1} \leq \frac{1}{4} \bar{M}L \delta_{q+1}^{\sfrac12},
			\end{aligned}
		\end{equation}
		where the last inequality is justified by \eqref{eq:Onsager:ell:gap}. Combining \eqref{esti:principal w 0} with \eqref{esti:corrector w 0}, we derive \eqref{estimate on wp}.

		Turning to the $C^1_x$-norm of $w_{q+1}^{(p)}$, by applying \eqref{eq:Phi:i:bnd:b}, \eqref{estimate:a CN}, \eqref{def:CLambda} and \eqref{eq:Mikado:bounds}, we obtain for any $t\in [0,\mathfrak{t}_L]$
		\begin{equation}\label{esti:principal w 1}
			\begin{aligned}
				\|w_{q+1}^{(p)}\|_{C^0_tC^1_{x}} &\lesssim\sup_{i} \sum_{\xi \in \Lambda_i} \bigg(  \| \nabla a_{(\xi,i)}\|_{C^0_{t,x}} \|(\nabla \Phi_i )^{-1}\|_{C^0_{t,x}} \|W_{(\xi)}( \Phi_i )\|_{C^0_{t,x}} 
				\\ &\qquad \qquad\qquad+\| a_{(\xi,i)}\|_{C^0_{t,x}}  \|( \nabla \Phi_i )^{-1}\|_{C^0_tC^1_{x}} \|W_{(\xi)}( \Phi_i )\|_{C^0_{t,x}} 
				\\& \qquad \qquad \qquad + \| a_{(\xi,i)}\|_{C^0_{t,x}}\|( \nabla \Phi_i )^{-1}\|_{C^0_{t,x}}  \|W_{(\xi)}( \Phi_i )\|_{C^0_tC^1_{x}} \bigg) 
				\\ &\lesssim \frac{|\Lambda|ML}{C_\Lambda}(\ell_q^{-1} +\lambda_{q+1}) \delta_{q+1}^{\sfrac12}
				\lesssim \frac{|\Lambda|M L}{C_\Lambda} \delta_{q+1}^{\sfrac12} \lambda_{q+1} 
				\leq  \frac14 \bar{M}L \delta_{q+1}^{\sfrac12} \lambda_{q+1}, 
			\end{aligned}
		\end{equation}
		where we used \eqref{eq:Onsager:ell:gap} in the third inequality. Lastly, we   use \eqref{eq:Phi:i:bnd:b}, \eqref{estimate:a CN}, \eqref{eq:Onsager:ell:gap}, \eqref{def:CLambda} and \eqref{eq:Mikado:bounds} to have for any $t\in [0,\mathfrak{t}_L]$
		\begin{equation}\label{esti:corrector w 1}
			\begin{aligned}
				\|w_{q+1}^{(c)}\|_{C^0_tC^1_{x}}&\lesssim  \sup_{i} \sum_{\xi \in \Lambda_i} \bigg(\| \nabla a_{(\xi,i)}\|_{C^0_tC^1_{x}} \| \nabla \Phi_i\|_{C^0_{t,x}}  \|V_{(\xi)}( \Phi_i )\|_{C^0_{t,x}}
				\\& \qquad \qquad \qquad +\| \nabla a_{(\xi,i)}\|_{C^0_{t,x}} \| \nabla \Phi_i\|_{C^0_tC^1_{x}}  \|V_{(\xi)}( \Phi_i )\|_{C^0_{t,x}}
				\\& \qquad \qquad \qquad +\| \nabla a_{(\xi,i)}\|_{C^0_{t,x}} \| \nabla \Phi_i\|_{C^0_{t,x}}  \|V_{(\xi)}( \Phi_i )\|_{C^0_tC^1_{x}}\bigg)
				\\&\lesssim \frac{|\Lambda|ML}{C_\Lambda}\left( \frac{\ell_q^{-2}}{\lambda_{q+1}}+\ell_q^{-1}\right) \delta_{q+1}^{\sfrac12}
				= \frac{|\Lambda|ML}{C_\Lambda}  
				\delta_{q+1}^{\sfrac12} \ell_q^{-1}\left( 1+\frac{\ell_q^{-1}}{\lambda_{q+1}}\right) 
				\leq \frac14 \bar{M}L\lambda_{q+1} \delta_{q+1}^{\sfrac12} ,
			\end{aligned}
		\end{equation}
		which combined with \eqref{esti:principal w 1}, yields \eqref{estimate on wc}.
	\end{proof}
	
	With Proposition~\ref{Pro:6.1} in hand, we begin estimating $v_{q+1}$. Recalling the definition of $v_{q+1}$ in \eqref{def:v q+1}, we write
	\begin{align*}
		v_{q+1}=\overline{v}_q+w_{q+1}=v_q+(v_{\ell_q}-v_q)+(\overline v_q-v_{\ell_q})+w_{q+1}.
	\end{align*} 
	We then combine \eqref{vq-vl}, \eqref{e:vq:vell} and \eqref{estimate on wp} to deduce for any $t\in [0,\mathfrak{t}_L]$
	\begin{align*}
		\|v_{q+1}(t)-v_q(t)\|_{C^0_{x}}&\leq \|v_{\ell_q}(t)-v_q(t)\|_{C^0_{x}}+\|\overline{v}_q(t)-v_{\ell_q}(t)\|_{C^0_{x}}+\|w_{q+1}(t)\|_{C^0_{x}}
		\\ &\leq  2\bar{M} L \delta_{q+1}^{\sfrac12}\ell_{q}^{\alpha}+\frac{1}{2} \bar{M} L\delta_{q+1}^{\sfrac12}\leq \bar{M}L\delta_{q+1}^{\sfrac12},
	\end{align*}
	which verifies \eqref{vq+1-vq} at the level $q+1$. Combining \eqref{itera:a} with \eqref{vq+1-vq}, we deduce for any $t\in [0,\mathfrak{t}_L]$
	\begin{align*}
		\|v_{q+1}\|_{C^0_{t,x}} &\leq \|v_q\|_{C^0_{t,x}}+	\|v_{q+1}-v_q\|_{C^0_{t,x}}
		\\& \leq 3\bar{M}L\lambda_{1}^{\sfrac{3\alpha}{2}}-\bar{M}L\delta_{q}^{\sfrac12}+\bar{M}L\delta_{q+1}^{\sfrac12}\leq 3\bar{M}L\lambda_{1}^{\sfrac{3\alpha}{2}}-\bar{M}L\delta_{q+1}^{\sfrac12}.
	\end{align*}
	Here, the last inequality is justified by $2\delta_{q+1}^{\sfrac12}\leq \delta_{q}^{\sfrac12}$, which requires $a$ to be sufficiently large such that  $2<a^{(b-1)\beta}$.
	Hence, \eqref{itera:a} holds at the level $q+1$.
	Using \eqref{e:vq:1+N}, and \eqref{estimate on wc} we have  for any $t\in [0,\mathfrak{t}_L]$
	\begin{align*}
		\|v_{q+1}\|_{C^0_tC^1_{x}} &\leq \|\overline{v}_q\|_{C^0_tC^1_{x}}+\|w_{q+1}\|_{C^0_tC^1_{x}} \leq \bar{M}  \tau_q^{-1} +\frac{1}{2} \bar{M}L \lambda_{q+1}\delta_{q+1}^{\sfrac12}
		\leq  \bar{M}L \lambda_{q+1}\delta_{q+1}^{\sfrac12},
	\end{align*}
	where we used $6\alpha<(b-1)(1-\beta)$ and chose $a$ sufficiently large to have 
	\begin{align*}
		\frac{ \tau_q^{-1} }{\lambda_{q+1}\delta_{q+1}^{\sfrac12}}\lesssim L \lambda_{q}^{6\alpha-(b-1)(1-\beta)} \ll L.
	\end{align*}
	Hence, we verified that \eqref{itera:b} holds at the level $q+1$.

	\subsection{Estimates on $\mathring{R}_{q+1}$}\label{estimate on Rq+1}
	In this section, we demonstrate that the stress $\mathring{R}_{q+1}$ defined in \eqref{Reynold1} satisfies the
	estimate:
	\begin{align*}
		\|\mathring{R}_{q+1}(t)\|_{C^0_{x}} \leq \bar{M}L^2 \delta_{q+2}\lambda_{q+1}^{-3\alpha},
	\end{align*}
	for any $t\in[0,\mathfrak{t}_L]$, which implies \eqref{itera:c} at the level $q+1$. In order to apply Proposition~\ref{prop.inv.div} to each stress given in \eqref{Reynold1} and obtain the desired bounds, we decompose the function $\phi_{(\xi)}$ in \eqref{eq:phi:rotate} as a Fourier series and use the fast decay of the Fourier coefficients $\hat{f}_{\xi}(k)$ to estimate $\mathcal{R}(aW_{(\xi)}(\Phi_i))$. Without providing all the details, we refer to the following estimates from \cite[Page 231]{BV19}
	\begin{equation}	\label{eq:Mikado:stationary:phase:1}
		\begin{aligned}
			\|{\mathcal{R} \left( a \; (W_{(\xi)} \circ \Phi_i ) \right) }\|_{C^\alpha_x}  + \lambda_{q+1} & \|{\mathcal{R} \left( a \; (V_{(\xi)} \circ \Phi_i ) \right) }\|_{C^\alpha_x}
			\lesssim \frac{  \|{a}\|_{C^0_x}}{\lambda_{q+1}^{1-\alpha}} +   \frac{ \|{a}\|_{C^{m+\alpha}_x}+\|{a}\|_{C^0_x} \|\nabla \Phi_i\|_{C^{m+\alpha}_x} }{\lambda_{q+1}^{m-\alpha}} \, ,
		\end{aligned}
	\end{equation}
	\begin{align}
		\left\| {\mathcal{R} \left( a \; \left( \Big( \mathbb{P}_{\geq \sfrac{\lambda_{q+1}}{2}}(\phi_{(\xi)}^2) \Big) \circ\Phi_i \right) \right) }\right\| _{C^\alpha_x}   
		\lesssim \frac{  \|{a}\|_{C^0_x}}{\lambda_{q+1}^{1-\alpha}} +   \frac{ \|{a}\|_{C^{m+\alpha}_x}+\|{a}\|_{C^0_x} \| \nabla \Phi_i\|_{C^{m+\alpha}_x} }{\lambda_{q+1}^{m-\alpha}} \, ,
		\label{eq:Mikado:stationary:phase:2}
	\end{align}
	where the implicit constants are independent of $q$. 
	
	Before estimating each error term given in \eqref{Reynold1} separately, we present two frequently used estimates. First, the condition $20b\alpha <(b-1)(1-2b\beta-\beta)$ implies for any $q\geq 2$
	\begin{align}\label{choice:alpha}
		\frac{\delta_{q+1}^{\sfrac12}\lambda_q\delta_{q}^{\sfrac12}}{\lambda_{q+1}^{1-20\alpha}\delta_{q+2}}=\frac{\lambda_{q}^{1-\beta}\lambda_{q+1}^{-\beta}}{\lambda_{q+1}^{1-20\alpha}\lambda_{q+2}^{-2\beta}}\lesssim \lambda_{q}^{20b\alpha-(b-1)(1-2b\beta-\beta)}\ll 1,
	\end{align}
	which also holds when $q=1$ since 
	\begin{align*}
		\frac{\delta_{2}^{\sfrac12}\lambda_1\delta_{1}^{\sfrac12}}{\lambda_{2}^{1-20\alpha}\delta_{3}}\lesssim \lambda_{1}^{20b\alpha-(b-1)(1-2b\beta)}\ll1.
	\end{align*}
	Another essential estimate required during the proof process is
	\begin{align}\label{Rsmall}
		\frac{\ell_q^{-N-\alpha} }{\lambda_{q+1}^{N-1}}\leq 1,
	\end{align}
	which holds for sufficiently small $\alpha$ and large $N\in \N$ in terms of $b,\beta$ and sufficiently large $a$ but independent of $q$. Such choice is equivalent to $\lambda_{q+1}^{(N-1)-(N+\alpha)\beta}\geq \lambda_{q}^{(N+\alpha)(1-\beta+6\alpha)}$. Since $\lambda_{q}^b=\lceil{a^{(b^{q})}}\rceil^b \lesssim \lceil{a^{(b^{q+1})}}\rceil =\lambda_{q+1}$, it is enough to enforce the condition 
	\begin{align*}
		b((N-1)-(N+\alpha)\beta)>(N+\alpha)(1-\beta+6\alpha).
	\end{align*}
	In order to show that for sufficiently small $\alpha$, we can choose such an $N$, it suffices to verify the existence of $N$ such that $b((N-1)-N\beta)>N(1-\beta)$. This is equivalent to $(b-1)(N-1)(1-\beta)>(1-\beta)+b\beta$, which can be satisfied for some large $N$.
	In the sequel, we only apply \eqref{eq:Mikado:stationary:phase:1} and \eqref{eq:Mikado:stationary:phase:2} with $m=N$-th derivative estimates.
	
	\begin{proposition}[Estimates for transport error] \label{Estimates for transport error}
		The transport error enjoys for any $t\in[0,\mathfrak{t}_L]$:
		\begin{align}\label{esti:transport}
			\|R^{\rm{trans}}(t)\|_{C^\alpha_x}\leq  \frac15 \bar{M}L^2 \delta_{q+2}\lambda_{q+1}^{-3\alpha}.
		\end{align}
	\end{proposition}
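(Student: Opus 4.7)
\medskip

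\noindent\textbf{Proof plan.} The starting point is to expand the transport derivative using the construction \eqref{eq:Onsager:w:q+1:p} and the key Lie-advection identity \eqref{eq:Onsager:Lie:advect}, which is the precise reason for defining $w_{q+1}^{(p)}$ via flow maps: the transport derivative only falls on amplitudes and on the (harmless) Jacobian factor, never on $W_{(\xi)}\circ\Phi_i$. Concretely, applying $D_{t,q}=\partial_t+(\overline v_q+z_{\ell_q})\cdot\nabla$ termwise and using \eqref{eq:Onsager:Lie:advect} yields
\begin{align*}
D_{t,q}w_{q+1}^{(p)}=\sum_{i}\sum_{\xi\in\Lambda_i}\Bigl[(D_{t,q}a_{(\xi,i)})\,\mathrm{Id}+a_{(\xi,i)}\bigl(\nabla(\overline v_q+z_{\ell_q})\bigr)^{T}\Bigr](\nabla\Phi_i)^{-1}\bigl(W_{(\xi)}\circ\Phi_i\bigr).
\end{align*}
In particular, no $\lambda_{q+1}$-factor is created by differentiation, so $\mathcal R$ is expected to provide the full $\lambda_{q+1}^{-1}$ gain after applying \eqref{eq:Mikado:stationary:phase:1}.

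\medskip

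Next, I would apply $\mathcal R$ componentwise and invoke \eqref{eq:Mikado:stationary:phase:1} with the scalar coefficient $a$ taken as each entry of the matrix $[(D_{t,q}a_{(\xi,i)})\,\mathrm{Id}+a_{(\xi,i)}(\nabla(\overline v_q+z_{\ell_q}))^{T}](\nabla\Phi_i)^{-1}$. The $C^0_x$-bound on this coefficient is controlled by the amplitude estimate \eqref{estimate:Da CN}, the bound \eqref{estimate:a CN}, the flow map bound \eqref{eq:Phi:i:bnd:b}, and the gradient estimates \eqref{estimate:vellzell} and \eqref{e:vq:1+N} with $N=0$, giving (after using $\bar M\ell_q^\alpha\ll1$ and $L\lesssim\tau_q^{-1}$)
\begin{align*}
\|a\|_{C^0_x}\lesssim L\tau_q^{-1}\delta_{q+1}^{1/2},\qquad \|a\|_{C^{N+\alpha}_x}\lesssim L\tau_q^{-1}\delta_{q+1}^{1/2}\ell_q^{-N+\alpha},
\end{align*}
where the second bound follows from the Leibniz rule and the analogous higher-order versions of the same four estimates. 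Plugging these into \eqref{eq:Mikado:stationary:phase:1} and using \eqref{Rsmall} with a sufficiently large (but fixed) $N$ to absorb the $\lambda_{q+1}^{-N+\alpha}$-terms, only the leading contribution survives, namely
\begin{align*}
\|R^{\mathrm{trans}}(t)\|_{C^\alpha_x}\lesssim \frac{L\tau_q^{-1}\delta_{q+1}^{1/2}}{\lambda_{q+1}^{1-\alpha}}.
\end{align*}

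\medskip

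Substituting the definition $\tau_q^{-1}=L\lambda_q^{1+6\alpha}\delta_q^{1/2}$, the target bound $\tfrac15\bar M L^2\delta_{q+2}\lambda_{q+1}^{-3\alpha}$ reduces to verifying
\begin{align*}
\frac{\delta_{q+1}^{1/2}\lambda_q\delta_q^{1/2}}{\lambda_{q+1}^{1-20\alpha}\delta_{q+2}}\ll 1,
\end{align*}
which is precisely \eqref{choice:alpha}, with some slack in $\alpha$ available to absorb the universal implicit constants by taking $a$ large enough (and in particular to produce the factor $\tfrac{1}{5}\bar M$). This completes the plan.

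\medskip

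The main obstacle is the bookkeeping: one must confirm that the parameter inequality \eqref{choice:alpha} indeed beats the combined loss coming from $\tau_q^{-1}$ (which contains a factor $\lambda_q\delta_q^{1/2}$, roughly the Lipschitz norm of $v_q$) and the $\lambda_{q+1}^{\alpha}$-loss from the inverse divergence estimate, while leaving an extra $\lambda_{q+1}^{-3\alpha}$ margin. The Lie-advection identity \eqref{eq:Onsager:Lie:advect} is essential here, since a naive differentiation of $W_{(\xi)}\circ\Phi_i$ would cost a factor $\lambda_{q+1}\|\nabla\Phi_i\|_{C^0_x}$ and the constraint \eqref{choice:alpha} would no longer suffice. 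The presence of $z_{\ell_q}$ is handled painlessly because $\|\nabla z_{\ell_q}\|_{C^0_x}\leq L\leq\tau_q^{-1}$, so the noise gradient is subsumed into the same estimate as $\nabla\overline v_q$.
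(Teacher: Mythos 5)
Your proposal is correct and follows essentially the same route as the paper: expand $D_{t,q}w_{q+1}^{(p)}$ via the Lie-advection identity \eqref{eq:Onsager:Lie:advect} so the derivative falls only on amplitudes and on $\nabla(\overline v_q+z_{\ell_q})$, bound the resulting smooth coefficient by $L\tau_q^{-1}\delta_{q+1}^{1/2}\ell_q^{-m}$ using \eqref{estimate:a CN}, \eqref{estimate:Da CN} and \eqref{eq:Phi:i:bnd:b}, apply the stationary-phase estimate \eqref{eq:Mikado:stationary:phase:1}, and close via \eqref{Rsmall} and \eqref{choice:alpha}. The paper merely writes the two contributions as $R^{\textrm{trans}}_1+R^{\textrm{trans}}_2$ and estimates them separately, which is cosmetically different from your combined matrix expression but leads to the identical bound.
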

	\begin{proof}
		Recalling the definition of $w_{q+1}^{(p)}$ in \eqref{eq:Onsager:w:q+1:p} and the Lie-advection identity \eqref{eq:Onsager:Lie:advect}, we write the transport error in \eqref{Reynold1} as 
		\begin{equation}\label{Rtransport}
			\aligned
			R^{\textrm{trans}}=\sum_{i}\sum_{\xi \in \Lambda_i} \mathcal{R}& \left( a_{(\xi,i)} (\nabla (\overline{v}_q+  z_{\ell_q}))^T (\nabla \Phi_i)^{-1} W_{(\xi)}(\Phi_i) \right) 
			\\ &+ \sum_{i}\sum_{\xi \in \Lambda_i} \mathcal{R}\left( D_{t,q} a_{(\xi,i)} (\nabla \Phi_i)^{-1} W_{(\xi)}(\Phi_i) \right) 
			=: R^{\textrm{trans}}_1+R^{\textrm{trans}}_2.
			\endaligned
		\end{equation}
		Let us control each term separately, starting with $R^{\textrm{trans}}_1$. By using \eqref{def tauq}, \eqref{e:vq:1+N}, \eqref{eq:Phi:i:bnd:b} and \eqref{estimate:a CN}, we deduce for any $t\in [0,\mathfrak{t}_L]$ and $m\in \mathbb{N}_0$
		\begin{equation}\label{RT1:part1}
			\aligned
			\|a_{(\xi,i)}\nabla (\overline{v}_q+  z_{\ell_q})(\nabla \Phi_i)^{-1}\|_{C^{m}_x} &\lesssim \| a_{(\xi,i)}\|_{C^{m}_x}\|\nabla (\overline{v}_q+  z_{\ell_q})\|_{C^0_x} 
			+\|a_{(\xi,i)}\|_{C^0_x} \|\nabla (\overline{v}_q+  z_{\ell_q})\|_{C^m_x} 
			\\ &\quad+\|a_{(\xi,i)}\|_{C^0_x} \|\nabla (\overline{v}_q+  z_{\ell_q})\|_{C^0_x}\| (\nabla \Phi_i)^{-1}\|_{C^m_x}   
			\\ &\lesssim \frac{M}{C_{\Lambda}}\bar{M}L\ell_{q}^{\alpha} \delta_{q+1}^{\sfrac12}\tau_q^{-1}\ell_q^{-m}\leq  \frac{M}{C_{\Lambda}}L^2\delta_{q+1}^{\sfrac12}\lambda_{q}^{1+6\alpha}\delta_{q}^{\sfrac12} \ell_q^{-m},
			\endaligned
		\end{equation}
		which requires $\bar{M}\ell_{q}^{\alpha}\ll 1$ in the last inequality. By combining \eqref{RT1:part1} with interpolation, applying \eqref{eq:Mikado:stationary:phase:1} with $m=N$ and $a$ replaced by $a_{(\xi,i)}\nabla (\overline{v}_q+  z_{\ell_q})(\nabla \Phi_i)^{-1}$, we obtain for any $t\in [0,\mathfrak{t}_L]$
		\begin{align}\label{esti:RT1} 
			\|R^{\textrm{trans}}_1(t)\|_{C^\alpha_x} &\lesssim \sup_{i} \sum_{\xi \in \Lambda_i} 	\frac{\|a_{(\xi,i)}\nabla (\overline{v}_q+  z_{\ell_q})(\nabla \Phi_i)^{-1} \|_{C^0_{t,x}}}{\lambda_{q+1}^{1-\alpha}}\nonumber
			\\ &\quad+\sup_{i} \sum_{\xi \in \Lambda_i}\frac{\|a_{(\xi,i)}\nabla (\overline{v}_q+  z_{\ell_q})(\nabla \Phi_i)^{-1}\|_{C^0_tC^{N+\alpha}_x}}{\lambda_{q+1}^{N-\alpha}}
			\\ &\quad+\sup_{i} \sum_{\xi \in \Lambda_i}\frac{\|a_{(\xi,i)}\nabla (\overline{v}_q+  z_{\ell_q})(\nabla \Phi_i)^{-1}\|_{C^0_{t,x}}\ell_q^{-N-\alpha}}{\lambda_{q+1}^{N-\alpha}}\nonumber
			\\ & \lesssim  \frac{\bar{M} L^2\delta_{q+1}^{\sfrac12}\lambda_q\delta_{q}^{\sfrac12}}{\lambda_{q+1}^{1-7\alpha}}+ \frac{\bar{M} L^2\delta_{q+1}^{\sfrac12}\lambda_q\delta_{q}^{\sfrac12}\ell_q^{-N-\alpha}}{\lambda_{q+1}^{N-7\alpha}}
			= \frac{\bar{M}L^2\delta_{q+1}^{\sfrac12}\lambda_q\delta_{q}^{\sfrac12}}{\lambda_{q+1}^{1-7\alpha}}\left(1+\frac{\ell_q^{-N-\alpha} }{\lambda_{q+1}^{N-1}}\right). \nonumber
		\end{align}
		
		Let us move to the term $R^{\textrm{trans}}_2$. First, we use \eqref{eq:Phi:i:bnd:b} and \eqref{estimate:Da CN} to deduce for any $t\in [0,\mathfrak{t}_L]$ and $m\in \mathbb{N}_0$
		\begin{align*}
			\|D_{t,q} a_{(\xi,i)} (\nabla \Phi_i)^{-1} \|_{C^m_x} \lesssim \|&D_{t,q} a_{(\xi,i)} \|_{C^m_x}\|(\nabla \Phi_i)^{-1}\|_{C^0_x} 
			\\ &+  \|D_{t,q} a_{(\xi,i)} \|_{C^0_x}\|(\nabla \Phi_i)^{-1}\|_{C^m_x} \lesssim \frac{M}{C_\Lambda}L\delta_{q+1}^{\sfrac12} \tau_q^{-1}\ell_q^{-m}.
		\end{align*}
		In a similar manner to \eqref{esti:RT1}, we apply \eqref{eq:Mikado:stationary:phase:1} with $m=N$ and use \eqref{eq:Phi:i:bnd:b} along with interpolation to deduce for any $t\in [0,\mathfrak{t}_L]$
		\begin{equation}\label{esti:RT2}
			\begin{aligned}
				\|R^{\textrm{trans}}_2(t)\|_{C^\alpha_x}  &\lesssim \sup_{i} \sum_{\xi \in \Lambda_i} \frac{\|D_{t,q} a_{(\xi,i)} (\nabla \Phi_i)^{-1}  \|_{C^0_{t,x}}}{\lambda_{q+1}^{1-\alpha}}
				+ \sup_{i} \sum_{\xi \in \Lambda_i} \frac{\|D_{t,q} a_{(\xi,i)} (\nabla \Phi_i)^{-1} \|_{C^0_tC^{N+\alpha}_x}}{\lambda_{q+1}^{N-\alpha}} 
				\\ & \quad + \sup_{i} \sum_{\xi \in \Lambda_i} \frac{\|D_{t,q} a_{(\xi,i)} (\nabla \Phi_i)^{-1} \|_{C^0_{t,x}}\ell_q^{-N-\alpha}}{\lambda_{q+1}^{N-\alpha}}
				\\& \lesssim   \frac{\bar{M}L\delta_{q+1}^{\sfrac12}\tau_q^{-1}}{\lambda_{q+1}^{1-\alpha}}+ \frac{\bar{M}L\delta_{q+1}^{\sfrac12}\tau_q^{-1}\ell_q^{-N-\alpha}}{\lambda_{q+1}^{N-\alpha}} \lesssim \frac{\bar{M}L^2\delta_{q+1}^{\sfrac12}\lambda_{q}\delta_{q}^{\sfrac12}}{\lambda_{q+1}^{1-7\alpha}}\left(1+\frac{\ell_q^{-N-\alpha} }{\lambda_{q+1}^{N-1}}\right).
			\end{aligned}
		\end{equation}
		By combining \eqref{esti:RT1} with \eqref{esti:RT2}, we obtain for any $t\in [0,\mathfrak{t}_L]$
		\begin{align}\label{calculation for Rtransport}
			\|	R^{\textrm{trans}}(t)\|_{C^\alpha_x}&\stackrel{\eqref{Rsmall}}{\lesssim} \frac{ \bar{M}L^2\delta_{q+1}^{\sfrac12}\lambda_q\delta_{q}^{\sfrac12}}{\lambda_{q+1}^{1-7\alpha}}
			\stackrel{\eqref{choice:alpha}}{\lesssim} \bar{M}L^2  \delta_{q+2}\lambda_{q+1}^{-4\alpha}\leq \frac15 \bar{M}L^2 \delta_{q+2}\lambda_{q+1}^{-3\alpha},
		\end{align}
		where the extra power $\lambda_{q+1}^{-\alpha}$ is used to absorb the implicit constants.
	\end{proof}
	
	\begin{proposition}[Estimates for oscillation error] \label{Estimates for oscillation error}
	The oscillation error enjoys for any $t\in[0,\mathfrak{t}_L]$:
		\begin{align}\label{esti:oscillation}
			\|R^{\rm{osc}}(t)\|_{C^\alpha_x}\leq  \frac15 \bar{M}L^2 \delta_{q+2}\lambda_{q+1}^{-3\alpha}.
		\end{align}
	\end{proposition}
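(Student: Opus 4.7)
The plan is to adapt the standard oscillation estimate from the deterministic convex integration scheme (cf.~\cite{BDLSV19, BV19}) to the present stochastic setting, where the noise enters only through the flow map $\Phi_i$ via the transport coefficient $\overline v_q+z_{\ell_q}$ and is therefore already encoded in the estimates \eqref{eq:Phi:i:bnd:b}--\eqref{eq:Phi:i:bnd:c}. The structural input is that the Mikado flows are of the form $W_{(\xi)}=\phi_{(\xi)}\xi$, so that $W_{(\xi)}\otimes W_{(\xi)}=\phi_{(\xi)}^2\,\xi\otimes\xi$, and the incompressibility $\div W_{(\xi)}=0$ forces $\xi\cdot\nabla\phi_{(\xi)}^2=0$. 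Consequently, the high-frequency tensor $\mathbb{P}_{\geq\sfrac{\lambda_{q+1}}{2}}(W_{(\xi)}\otimes W_{(\xi)})=F_{(\xi)}\,\xi\otimes\xi$, with $F_{(\xi)}:=\mathbb{P}_{\geq\sfrac{\lambda_{q+1}}{2}}\phi_{(\xi)}^2$, also satisfies $\xi\cdot\nabla F_{(\xi)}=0$.

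First I would apply the Leibniz rule to the summand of $R^{\textrm{osc}}$ to split the divergence into a piece where $\partial_j$ hits the oscillatory factor $F_{(\xi)}\circ\Phi_i$ and a piece where it hits the slowly-varying factors $a_{(\xi,i)}^2(\nabla\Phi_i)^{-1}(\nabla\Phi_i)^{-T}$. Using the chain rule to write $\partial_j(F_{(\xi)}\circ\Phi_i)=(\partial_m F_{(\xi)})(\Phi_i)\,\partial_j\Phi_i^m$ and then contracting with the transported vector $(\nabla\Phi_i)^{-T}\xi$ via the Lie-advection identity \eqref{eq:Onsager:Lie:advect}, the oscillatory contribution collapses to $(\xi\cdot\nabla F_{(\xi)})(\Phi_i)$, which vanishes. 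Only the slowly-varying pieces survive, and to these I would apply the inverse divergence $\mathcal{R}$ together with the stationary-phase bound \eqref{eq:Mikado:stationary:phase:2} at $m=N$, combined with $\|a_{(\xi,i)}^2\|_{C^m_x}\lesssim (M/C_\Lambda)^2L^2\delta_{q+1}\ell_q^{-m}$ from \eqref{estimate:a CN} and $\|(\nabla\Phi_i)^{-1}\|_{C^m_x}\lesssim\ell_q^{-m}$ from \eqref{eq:Phi:i:bnd:b}. This will produce the estimate
\begin{align*}
\|R^{\textrm{osc}}(t)\|_{C^\alpha_x}\lesssim \frac{\bar{M}L^2\delta_{q+1}\ell_q^{-1}}{\lambda_{q+1}^{1-\alpha}}\Big(1+\frac{\ell_q^{-N-\alpha}}{\lambda_{q+1}^{N-1}}\Big),
\end{align*}
in direct parallel with \eqref{esti:RT1}--\eqref{calculation for Rtransport}. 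Invoking \eqref{Rsmall} to discard the bracketed factor and using the identity $\ell_q^{-1}\delta_{q+1}^{\sfrac12}=\lambda_q^{1+6\alpha}\delta_q^{\sfrac12}$ from \eqref{def l}, the right-hand side reduces to $\bar{M}L^2\delta_{q+1}^{\sfrac12}\lambda_q\delta_q^{\sfrac12}/\lambda_{q+1}^{1-7\alpha}$ (up to universal constants), which by \eqref{choice:alpha} is bounded by $\bar{M}L^2\delta_{q+2}\lambda_{q+1}^{-4\alpha}$, leaving one power of $\lambda_{q+1}^{-\alpha}$ to absorb the implicit constants and close at $\tfrac15\bar{M}L^2\delta_{q+2}\lambda_{q+1}^{-3\alpha}$.

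The main obstacle is verifying the cancellation step at the level of the conjugated composed tensor: one must carry out the componentwise computation
\begin{align*}
\partial_j\Big[(\nabla\Phi_i)^{-1}_{ak}\big(F_{(\xi)}\,\xi\otimes\xi\big)_{kl}(\Phi_i)\,(\nabla\Phi_i)^{-1}_{jl}\Big]
\end{align*}
and identify the top-order surviving piece as $\xi\cdot\nabla F_{(\xi)}$ evaluated at $\Phi_i$ (times a smooth factor), which then vanishes. This argument relies on $\det(\nabla\Phi_i)\equiv 1$, available because $\overline v_q+z_{\ell_q}$ is divergence-free. Once this algebraic cancellation is in place, no new difficulty arises from the stochastic setup, since all regularity of $\Phi_i$ is secured by Proposition~\ref{Lemma:transport} via the pathwise bounds \eqref{z ps} on $z_{\ell_q}$.
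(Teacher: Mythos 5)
Your proposal follows essentially the same route as the paper: exploit the vanishing of the oscillatory term under $\xi\cdot\nabla$ to commute the divergence past the high-frequency factor (the identity the paper quotes directly from \cite[p.~232]{BV19}, which you re-derive), bound the slow coefficient $\div\big(a_{(\xi,i)}^2(\nabla\Phi_i)^{-1}\xi\otimes\xi(\nabla\Phi_i)^{-T}\big)$ via \eqref{estimate:a CN} and \eqref{eq:Phi:i:bnd:b}, then apply the stationary-phase estimate \eqref{eq:Mikado:stationary:phase:2} with $m=N$ and close using \eqref{Rsmall} and \eqref{choice:alpha}. The only cosmetic discrepancy is that you parrot the transport-error intermediate exponent $\lambda_{q+1}^{1-7\alpha}$ where the paper records $\lambda_{q+1}^{1-8\alpha}$ (because $\ell_q^{-1}\delta_{q+1}^{\sfrac12}=\delta_q^{\sfrac12}\lambda_q^{1+6\alpha}$ carries $6\alpha$ rather than $5\alpha$ into the prefactor), but \eqref{choice:alpha} provides ample slack for $20\alpha$, so this does not affect the conclusion.
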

	\begin{proof}
		Recalling the definition of $R^{\rm{osc}}$ in \eqref{Reynold1}, it follows from the construction $(\xi \cdot \nabla) \phi_{(\xi)} = 0$ that we also have $(\xi \cdot \nabla) \mathbb{P}_{\geq \sfrac{\lambda_{q+1}}{2}}( \phi_{(\xi)}^2 )  = 0$. We then observe that the divergence of the high-frequency term $\left(\mathbb{P}_{\geq \sfrac{\lambda_{q+1}}{2}}(W_{(\xi)} \otimes W_{(\xi)}) \right)\circ \Phi_i$ may vanish. Similar to the calculations in \cite[Page 232]{BV19}, we have
		\begin{align*}
			\div & \left( a_{(\xi,i)}^2 (\nabla \Phi_i)^{-1} \left( \left(\mathbb{P}_{\geq \sfrac{\lambda_{q+1}}{2}}(W_{(\xi)} \otimes W_{(\xi)}) \right)\circ \Phi_i \right)  (\nabla \Phi_i)^{-T}   \right)  
			\\ &= \left( \left(\mathbb{P}_{\geq \sfrac{\lambda_{q+1}}{2}}(\phi_{(\xi)}^2) \right)\circ \Phi_i \right)    \div \left( a_{(\xi,i)}^2 (\nabla \Phi_i)^{-1}    (\xi \otimes \xi) (\nabla \Phi_i)^{-T}\right) ,
		\end{align*}
		which further leads to
		\begin{align*}
			R^{\rm{osc}} = \sum_i \sum_{\xi \in \Lambda_i} \mathcal{R} \left( \left( \left(\mathbb{P}_{\geq \sfrac{\lambda_{q+1}}{2}}( \phi_{(\xi)}^2) \right)\circ \Phi_i \right) \div \left( a_{(\xi,i)}^2 (\nabla \Phi_i)^{-1} (\xi\otimes \xi) (\nabla \Phi_i)^{-T}  \right)  \right)    \, .
		\end{align*}
		We then combine \eqref{eq:Phi:i:bnd:b} with \eqref{estimate:a CN} to deduce for any $m\in \mathbb{N}_0$
		\begin{align*}
			\left\|  \div \left( a_{(\xi,i)}^2 (\nabla \Phi_i)^{-1} (\xi\otimes \xi) (\nabla \Phi_i)^{-T} \right) \right\|_{C^m_x} & \lesssim \|a_{(\xi,i)}^2 \|_{C^{m+1}_x}+ \|a_{(\xi,i)}^2 \|_{C^0_x}\| (\nabla \Phi_i)^{-1}\|_{C^{m+1}_x}
			\\&\lesssim \|a_{(\xi,i)} \|_{C^{m+1}_x}\|a_{(\xi,i)}\|_{C^0_x}+\ell_q^{-m-1}\|a_{(\xi,i)}\|_{C^0_x}^2
	\lesssim \frac{M^2}{C_\Lambda^2} L^2 \delta_{q+1}\ell_q^{-m-1}.
		\end{align*} 
		Hence, applying \eqref{eq:Mikado:stationary:phase:2} with $m=N$ and using \eqref{eq:Phi:i:bnd:b}, \eqref{Rsmall}, along with interpolation, we derive for any $t\in[0,\mathfrak{t}_L]$
		\begin{equation}\label{calculation for Oscillation}
			\begin{aligned}
				\|R^{\rm{osc}}(t)\|_{C^\alpha_x} &\lesssim  \sup_{i} \sum_{\xi \in \Lambda_i} \frac{1}{\lambda_{q+1}^{1-\alpha}}\left\|  \div \left( a_{(\xi,i)}^2 (\nabla \Phi_i)^{-1} (\xi\otimes \xi) (\nabla \Phi_i)^{-T} \right)\right\|_{C^0_{t,x}}
				\\ &\quad+\sup_{i} \sum_{\xi \in \Lambda_i}\frac{1}{\lambda_{q+1}^{N-\alpha}}\left\|  \div \left( a_{(\xi,i)}^2 (\nabla \Phi_i)^{-1} (\xi\otimes \xi) (\nabla \Phi_i)^{-T} \right)\right\|_{C^0_tC^{N+\alpha}_x} 
				\\&\quad+\sup_{i} \sum_{\xi \in \Lambda_i}\frac{\ell_q^{-N-\alpha}}{\lambda_{q+1}^{N-\alpha}}\left\|  \div \left( a_{(\xi,i)}^2 (\nabla \Phi_i)^{-1} (\xi\otimes \xi) (\nabla \Phi_i)^{-T} \right)\right\|_{C^0_{t,x}}
				\\ & \lesssim  \frac{\bar{M}^2L^2\delta_{q+1}\ell_q^{-1}}{\lambda_{q+1}^{1-\alpha}}+\frac{\bar{M}^2L^2\delta_{q+1}\ell_q^{-N-1-\alpha}}{\lambda_{q+1}^{N-\alpha}} \leq \frac{\bar{M}^2L^2\delta_{q+1}\ell_q^{-1}}{\lambda_{q+1}^{1-2\alpha}}\left( 1+\frac{\ell_q^{-N-\alpha}}{\lambda_{q+1}^{N-1}} \right)
				\\ &\stackrel{\eqref{Rsmall}}{\lesssim} \frac{\bar{M}^2L^2\delta_{q+1}^{\sfrac12}\lambda_{q}\delta_{q}^{\sfrac12}}{\lambda_{q+1}^{1-8\alpha}}
				\stackrel{\eqref{choice:alpha}}{\lesssim} \bar{M}L^2 \delta_{q+2}\lambda_{q+1}^{-4\alpha}\leq \frac15 \bar{M}L^2 \delta_{q+2}\lambda_{q+1}^{-3\alpha},
			\end{aligned}
		\end{equation}
		where the additional factor $\lambda_{q+1}^{-\alpha}$ is utilized to absorb the implicit constants.
	\end{proof}

	\begin{proposition}[Estimates for Nash error] \label{Estimates for Nash error}
		The Nash error enjoys for any $t\in[0,\mathfrak{t}_L]$:
		\begin{align}\label{esti:Nash}
			\|R^{\rm{Nash}}(t)\|_{C^\alpha_x}\leq  \frac15 \bar{M}L^2 \delta_{q+2}\lambda_{q+1}^{-3\alpha}.
		\end{align}
	\end{proposition}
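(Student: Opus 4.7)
The plan is to mimic the argument for the transport error in Proposition~\ref{Estimates for transport error}, since the Nash error has essentially the same high/low frequency structure once we unpack $w_{q+1}$. Concretely, I would decompose
\begin{align*}
R^{\textrm{Nash}}=\mathcal{R}\bigl(w_{q+1}^{(p)}\cdot\nabla(\overline v_q+z_{\ell_q})\bigr)
+\mathcal{R}\bigl(w_{q+1}^{(c)}\cdot\nabla(\overline v_q+z_{\ell_q})\bigr)
=:R^{\textrm{Nash}}_1+R^{\textrm{Nash}}_2,
\end{align*}
and treat each piece by viewing $\nabla(\overline v_q+z_{\ell_q})$ as a low-frequency factor that gets absorbed into the amplitude coefficient, so that the high-frequency Mikado factor $W_{(\xi)}\circ\Phi_i$ (respectively $V_{(\xi)}\circ\Phi_i$) is the only object producing oscillations at frequency $\lambda_{q+1}$. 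This is exactly the setup required to apply \eqref{eq:Mikado:stationary:phase:1} with $m=N$.

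For $R^{\textrm{Nash}}_1$, I substitute \eqref{eq:Onsager:w:q+1:p} and, for each $(i,\xi)$, rewrite the summand as $\mathcal{R}(A_{(\xi,i)}\, W_{(\xi)}(\Phi_i))$ with
\begin{align*}
A_{(\xi,i)}:=a_{(\xi,i)}\,\bigl(\nabla(\overline v_q+z_{\ell_q})\bigr)^T(\nabla\Phi_i)^{-1}.
\end{align*}
The Leibniz rule together with \eqref{estimate:vellzell}, \eqref{e:vq:1+N}, \eqref{eq:Phi:i:bnd:b} and \eqref{estimate:a CN} gives the analogue of \eqref{RT1:part1}, namely
$\|A_{(\xi,i)}\|_{C^m_x}\lesssim \frac{M}{C_\Lambda}L^2\delta_{q+1}^{\sfrac12}\lambda_q^{1+6\alpha}\delta_q^{\sfrac12}\ell_q^{-m}$, since both $\nabla\overline v_q$ and $\nabla z_{\ell_q}$ are controlled by $\tau_q^{-1}\ell_q^\alpha$ (with $\nabla z_{\ell_q}$ strictly smaller). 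Applying \eqref{eq:Mikado:stationary:phase:1} with $m=N$ then yields
\begin{align*}
\|R^{\textrm{Nash}}_1(t)\|_{C^\alpha_x}
\lesssim \frac{\bar M L^2\delta_{q+1}^{\sfrac12}\lambda_q\delta_q^{\sfrac12}}{\lambda_{q+1}^{1-7\alpha}}
\Bigl(1+\tfrac{\ell_q^{-N-\alpha}}{\lambda_{q+1}^{N-1}}\Bigr),
\end{align*}
exactly as in \eqref{esti:RT1}. The corrector piece $R^{\textrm{Nash}}_2$ is handled in the same way using \eqref{eq:Onsager:w:q+1:c} and the $V_{(\xi)}$-version of \eqref{eq:Mikado:stationary:phase:1}, which produces the saving factor $\lambda_{q+1}^{-1}$ that compensates for the extra derivative $\nabla a_{(\xi,i)}$ in $w_{q+1}^{(c)}$; this matches the analysis in \eqref{esti:corrector w 0}, and ultimately gives the same upper bound up to constants.

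Finally I would combine the two bounds and close the estimate with \eqref{Rsmall} (which removes the $\ell_q^{-N-\alpha}/\lambda_{q+1}^{N-1}$ term for $N$ large and $\alpha$ small) and with \eqref{choice:alpha} (which converts $\delta_{q+1}^{\sfrac12}\lambda_q\delta_q^{\sfrac12}/\lambda_{q+1}^{1-C\alpha}$ into $\delta_{q+2}\lambda_{q+1}^{-4\alpha}$), leaving a spare power $\lambda_{q+1}^{-\alpha}$ to absorb the universal constants, exactly as in \eqref{calculation for Rtransport}. I do not anticipate a genuine obstacle: the argument is structurally identical to the transport error, with $\nabla(\overline v_q+z_{\ell_q})$ playing the role previously played in $R^{\textrm{trans}}_1$. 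The only point requiring care is to ensure that the noise contribution $\nabla z_{\ell_q}$ does not dominate the low-frequency factor, but by \eqref{estimate:vellzell} and our choice of $\ell_q,\tau_q$ in \eqref{def l}--\eqref{def tauq} it is controlled by the same $\tau_q^{-1}\ell_q^\alpha$ bound as $\nabla\overline v_q$, so the same arithmetic of parameters closes.
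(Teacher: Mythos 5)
Your decomposition into $R^{\textrm{Nash}}_1$ and $R^{\textrm{Nash}}_2$, the absorption of the low-frequency factor $\nabla(\overline v_q+z_{\ell_q})(\nabla\Phi_i)^{-1}$ into the amplitude so that the stationary-phase estimate \eqref{eq:Mikado:stationary:phase:1} applies, the use of the extra $\lambda_{q+1}^{-1}$ from $V_{(\xi)}$ to offset the $\ell_q^{-1}$ coming from $\nabla a_{(\xi,i)}$, and the final closure via \eqref{Rsmall} and \eqref{choice:alpha} are all exactly the steps in the paper's proof. The argument is correct and follows the paper's route essentially verbatim.
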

	\begin{proof}
		
		Recalling the definitions of $w_{q+1}^{(p)}$ and $w_{q+1}^{(c)}$ provided in \eqref{eq:Onsager:w:q+1:p} and \eqref{eq:Onsager:w:q+1:c}, respectively, we can decompose the Nash error into two components:
		\begin{align*}
			\mathcal{R}	\left( w_{q+1}\cdot \nabla (\overline{v}_q+z_{\ell_q}) \right) &= \mathcal{R}\left( 	w_{q+1}^{(p)}\cdot \nabla (\overline{v}_q+z_{\ell_q})\right)  + \mathcal{R}\left( 	w_{q+1}^{(c)}\cdot \nabla (\overline{v}_q+z_{\ell_q})\right) 
			\\&=  \sum_{i} \sum_{\xi \in \Lambda_i}  \mathcal{R} \left( a_{(\xi,i)} (\nabla \Phi_i)^{-1} W_{(\xi)}(\Phi_i) \cdot \nabla (\overline{v}_q+z_{\ell_q})\right) 
			\\  &\, \, +  \sum_i \sum_{\xi \in \Lambda_i} \mathcal{R} \left( \nabla a_{(\xi,i)} \times \left( (\nabla \Phi_i)^T (V_{(\xi)} (\Phi_i ))  \right) \cdot \nabla (\overline{v}_q+z_{\ell_q}) \right) 
			=: R^{\textrm{Nash}}_1+R^{\textrm{Nash}}_2.
		\end{align*}
		Let us first focus on the term $R^{\textrm{Nash}}_1$. It follows from \eqref{RT1:part1} that for any $m\in \mathbb{N}_0$
		\begin{align*}
			\| a_{(\xi,i)} (\nabla \Phi_i)^{-1}  \cdot \nabla (\overline{v}_q+z_{\ell_q})\|_{C^m_x} \leq \frac{M}{C_{\Lambda}}L^2\delta_{q+1}^{\sfrac12}\lambda_{q}^{1+6\alpha}\delta_{q}^{\sfrac12} \ell_q^{-m}.
		\end{align*}
		Similar to \eqref{esti:RT1}, by using \eqref{def tauq}, \eqref{eq:Phi:i:bnd:b} and \eqref{Rsmall}, and applying  \eqref{eq:Mikado:stationary:phase:1} with $m=N$, we deduce that for any $t\in[0,\mathfrak{t}_L]$
		\begin{equation}\label{calculation for Nash1}
			\begin{aligned}
				\|R^{\textrm{Nash}}_1(t)\|_{C^\alpha_x}
				& \lesssim  \frac{\bar{M} L^2\delta_{q+1}^{\sfrac12}\delta_{q}^{\sfrac12}\lambda_q}{\lambda_{q+1}^{1-7\alpha}}+\frac{\bar{M}L^2\delta_{q+1}^{\sfrac12}\delta_{q}^{\sfrac12}\lambda_q\ell_q^{-N-\alpha}}{\lambda_{q+1}^{N-7\alpha}}
				\\ & = \frac{\bar{M}L^2\delta_{q+1}^{\sfrac12}\lambda_{q}\delta_{q}^{\sfrac12}}{\lambda_{q+1}^{1-7\alpha}}\left( 1+\frac{\ell_q^{-N-\alpha}}{\lambda_{q+1}^{N-1}} \right)
				\stackrel{\eqref{Rsmall}}{\lesssim} \frac{\bar{M}L^2\delta_{q+1}^{\sfrac12}\lambda_{q}\delta_{q}^{\sfrac12}}{\lambda_{q+1}^{1-7\alpha}}
				\\ & \stackrel{\eqref{choice:alpha}}{\lesssim} \bar{M}L^2\delta_{q+2}\lambda_{q+1}^{-4\alpha}\leq \frac{1}{10} \bar{M}L^2 \delta_{q+2}\lambda_{q+1}^{-3\alpha}.  
			\end{aligned}
		\end{equation}
		Turning to the second term $R^{\textrm{Nash}}_2$, we first use \eqref{e:vq:1+N}, \eqref{eq:Phi:i:bnd:b} and \eqref{estimate:a CN} again to deduce that for any $m\in \mathbb{N}_0$
		\begin{align*}
			\| \nabla a_{(\xi,i)} \times (\nabla \Phi_i)^{-1}  \cdot \nabla (\overline{v}_q+z_{\ell_q})\|_{C^m_x} 
			&\lesssim \|\nabla a_{(\xi,i)}\|_{C^m_x} \|\nabla (\overline{v}_q+z_{\ell_q})\|_{C^0_x} 
		  +\| \nabla a_{(\xi,i)}\|_{C^0_x} \|\nabla (\overline{v}_q+z_{\ell_q})\|_{C^m_x} 
			\\  &\quad +\|\nabla a_{(\xi,i)}\|_{C^0_x}\| (\nabla \Phi_i)^{-1} \|_{C^m_x} \|\nabla (\overline{v}_q+z_{\ell_q})\|_{C^0_x} 
			\\ & \lesssim   \frac{M}{C_\Lambda}L\tau_q^{-1} \delta_{q+1}^{\sfrac12}\ell_q^{-m-1}.
		\end{align*}
		Compared to \eqref{calculation for Nash1}, we get additional $\ell_{q}^{-1}$ from $\nabla a_{(\xi,i)} \times (\nabla \Phi_i)^{-1}  \cdot \nabla (\overline{v}_q+z_{\ell_q})$, but we also have an extra $\lambda_{q+1}^{-1}$. Hence, by applying \eqref{eq:Mikado:stationary:phase:1} with $m=N$ and using \eqref{eq:Phi:i:bnd:b}, \eqref{eq:Onsager:ell:gap}, \eqref{choice:alpha}, \eqref{Rsmall} along with interpolation, we deduce for $t\in[0,\mathfrak{t}_L]$
		\begin{equation}\label{calculation for Nash2}
			\begin{aligned}
				\|R^{\textrm{Nash}}_2(t)\|_{C^\alpha_x}
				&	\lesssim  \frac{\bar{M}L\delta_{q+1}^{\sfrac12}\tau_q^{-1}\ell_q^{-1}}{\lambda_{q+1}^{2-\alpha}}+\frac{\bar{M}L\delta_{q+1}^{\sfrac12}\tau_q^{-1}\ell_q^{-N-1-\alpha}}{\lambda_{q+1}^{N+1-\alpha}} 
			\stackrel{\eqref{eq:Onsager:ell:gap}}{\lesssim} \frac{\bar{M}L\tau_q^{-1}\delta_{q+1}^{\sfrac12}}{\lambda_{q+1}^{1-\alpha}}\left(1+ \frac{\ell_q^{-N-\alpha}}{\lambda_{q+1}^{N-1}} \right)
				\\  &  \stackrel{\eqref{Rsmall}}{\lesssim} \frac{\bar{M}L^2\delta_{q+1}^{\sfrac12}\lambda_{q}\delta_{q}^{\sfrac12}}{\lambda_{q+1}^{1-7\alpha}} \stackrel{ \eqref{choice:alpha}}{\lesssim} \bar{M}L^2 \delta_{q+2}\lambda_{q+1}^{-4\alpha}\leq \frac{1}{10} \bar{M}L^2 \delta_{q+2}\lambda_{q+1}^{-3\alpha},
			\end{aligned}
		\end{equation}
		where we used the extra factor $\lambda_{q+1}^{-\alpha}$ to absorb the implicit constants in the last inequality. Combining \eqref{calculation for Nash1} with \eqref{calculation for Nash2} yields \eqref{esti:Nash}.
	\end{proof}

	\begin{proposition}[Estimates for corrector error] \label{Estimates for corrector error}
	The corrector error enjoys for any $t\in[0,\mathfrak{t}_L]$:
		\begin{align}\label{esti:Corrector}
			\|R^{\rm{cor}}(t)\|_{C^\alpha_x}\leq  \frac{3}{10} \bar{M}L^2 \delta_{q+2}\lambda_{q+1}^{-3\alpha}.
		\end{align}
	\end{proposition}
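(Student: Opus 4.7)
\emph{Proposal.} The plan is to split the corrector error into three natural pieces
\[
R^{\textrm{cor}}_1 := \mathcal{R}\bigl((\partial_t+(\overline v_q+z_{\ell_q})\cdot \nabla) w_{q+1}^{(c)}\bigr), \qquad R^{\textrm{cor}}_2 := w_{q+1}^{(c)}\mathring\otimes w_{q+1},\qquad R^{\textrm{cor}}_3 := w_{q+1}^{(p)}\mathring\otimes w_{q+1}^{(c)},
\]
and estimate each in $C^\alpha_x$ by (at most) $\tfrac{1}{10}\bar M L^2\delta_{q+2}\lambda_{q+1}^{-3\alpha}$.

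For $R^{\textrm{cor}}_1$, I would proceed as in the transport-error estimate (Proposition~\ref{Estimates for transport error}): from $D_{t,q}\Phi_i=0$ one obtains $D_{t,q}\nabla\Phi_i=-\nabla\Phi_i\,\nabla(\overline v_q+z_{\ell_q})$, so applying $D_{t,q}$ to the corrector representation \eqref{eq:Onsager:w:q+1:c} yields, up to an inverse-divergence commutator,
\[
D_{t,q}w_{q+1}^{(c)} = \sum_{i,\xi}\!\Bigl(D_{t,q}\nabla a_{(\xi,i)}\times \bigl((\nabla\Phi_i)^T V_{(\xi)}(\Phi_i)\bigr)-\nabla a_{(\xi,i)}\times \bigl((\nabla(\overline v_q+z_{\ell_q}))^T(\nabla\Phi_i)^T V_{(\xi)}(\Phi_i)\bigr)\Bigr).
\]
Then I would apply the stationary-phase bound \eqref{eq:Mikado:stationary:phase:1} \emph{for $V_{(\xi)}$}, which produces an extra gain of $\lambda_{q+1}^{-1}$ compared to the $W_{(\xi)}$ version. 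That extra factor precisely compensates the extra $\ell_q^{-1}$ coming from $\nabla a_{(\xi,i)}$, so that—after using the amplitude estimates \eqref{estimate:a CN}--\eqref{estimate:Da CN}, the flow-map bounds \eqref{eq:Phi:i:bnd:b}--\eqref{eq:Phi:i:bnd:c}, the gradient bound on $\overline v_q+z_{\ell_q}$, together with \eqref{eq:Onsager:ell:gap} and \eqref{Rsmall}—the resulting $C^\alpha_x$-norm is controlled by $\bar M L^2\delta_{q+1}^{\sfrac12}\lambda_q\delta_q^{\sfrac12}\lambda_{q+1}^{-1+O(\alpha)}$. This is absorbed into $\tfrac{1}{10}\bar ML^2\delta_{q+2}\lambda_{q+1}^{-3\alpha}$ by virtue of \eqref{choice:alpha}, exactly as in \eqref{calculation for Nash2}.

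For $R^{\textrm{cor}}_2$ and $R^{\textrm{cor}}_3$, no inverse divergence is needed. The key observation is that the bound \eqref{esti:corrector w 0} can be sharpened to $\|w_{q+1}^{(c)}\|_{C^0_{t,x}}\lesssim \tfrac{M}{C_\Lambda}L\delta_{q+1}^{\sfrac12}\ell_q^{-1}\lambda_{q+1}^{-1}$, so in the $C^\alpha_x$ norm one obtains a small factor $\ell_q^{-1}\lambda_{q+1}^{-1}$ beyond the bound $\bar ML\delta_{q+1}^{\sfrac12}$ on $w_{q+1}^{(p)}$ and $w_{q+1}$. Combining these via the product rule and using \eqref{eq:Phi:i:bnd:b}, \eqref{estimate:a CN} gives
\[
\|R^{\textrm{cor}}_2\|_{C^\alpha_x}+\|R^{\textrm{cor}}_3\|_{C^\alpha_x}\lesssim \bar M^2 L^2\delta_{q+1}\ell_q^{-1}\lambda_{q+1}^{-1+O(\alpha)}\lesssim \bar M L^2\delta_{q+2}\lambda_{q+1}^{-4\alpha},
\]
where the last inequality again invokes \eqref{choice:alpha} (noting $\ell_q^{-1} = \delta_q^{\sfrac12}\lambda_q^{1+6\alpha}\delta_{q+1}^{-\sfrac12}$, so $\delta_{q+1}\ell_q^{-1}\lambda_{q+1}^{-1}\lesssim \delta_{q+1}^{\sfrac12}\delta_q^{\sfrac12}\lambda_q\lambda_{q+1}^{-1+O(\alpha)}$).

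The main obstacle I anticipate is the first piece: handling $D_{t,q}w_{q+1}^{(c)}$ forces the Lie-advection identity and a careful bookkeeping of $D_{t,q}\nabla a_{(\xi,i)}$ (which contains $\nabla D_{t,q}a_{(\xi,i)}$ plus a commutator with $\nabla(\overline v_q+z_{\ell_q})$). One must ensure that the $\ell_q^{-1}$ loss coming from differentiating $a_{(\xi,i)}$ is beaten by the $\lambda_{q+1}^{-1}$ gain from $V_{(\xi)}$ relative to $W_{(\xi)}$, which is exactly what the parameter inequality $20b\alpha<(b-1)(1-2b\beta-\beta)$ in \eqref{def alpha} was designed to guarantee, as reflected in \eqref{choice:alpha}. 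Collecting the three contributions then yields \eqref{esti:Corrector}.
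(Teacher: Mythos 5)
Your decomposition and the estimates match the paper's own proof almost exactly: the paper also splits off $w_{q+1}^{(c)}\mathring\otimes w_{q+1}+w_{q+1}^{(p)}\mathring\otimes w_{q+1}^{(c)}$ (handled via the interpolation bound \eqref{estim:wc:alpha} on $\|w_{q+1}^{(c)}\|_{C^\alpha_x}$) and then expands $D_{t,q}w_{q+1}^{(c)}$ by the Leibniz rule into a term with $D_{t,q}\nabla a_{(\xi,i)}$ and a term with $D_{t,q}(\nabla\Phi_i)^T$ (which you substitute explicitly via $D_{t,q}\nabla\Phi_i=-\nabla\Phi_i\,\nabla(\overline v_q+z_{\ell_q})$, but this is the same thing), both of which are bounded via the $V_{(\xi)}$ stationary-phase estimate in \eqref{eq:Mikado:stationary:phase:1} to harvest the extra $\lambda_{q+1}^{-1}$ against the extra $\ell_q^{-1}$, and then absorbed using \eqref{eq:Onsager:ell:gap}, \eqref{Rsmall}, and \eqref{choice:alpha}. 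One small quibble: your phrase ``up to an inverse-divergence commutator'' is a red herring — there is no commutator to track here, since $\mathcal{R}$ is applied \emph{after} computing $D_{t,q}w_{q+1}^{(c)}$ exactly (the corrector itself contains no $\mathcal{R}$), so the Leibniz expansion is an exact identity; the commutator $[v\cdot\nabla,\mathcal{R}\curl]$ does appear in the paper, but only in the glued-stress estimate of Proposition~\ref{p:Rq}, not here.
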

	\begin{proof}
		
		Recall that the corrector error consists of two components. One is the transport derivative of $w^{(c)}_{q+1}$ by the flow of $\overline{v}_{q}+z_{\ell_q}$. The other is $w_{q+1}^{(c)}\mathring{\otimes} w_{q+1}+w_{q+1}^{(p)} \mathring{\otimes} w_{q+1}^{(c)}$, which is easier to estimate. Therefore, using \eqref{esti:corrector w 0}, \eqref{esti:corrector w 1} and interpolation we first deduce
		\begin{equation}\label{estim:wc:alpha}
			\begin{aligned}
				\|w_{q+1}^{(c)}\|_{C^\alpha_x} \lesssim 		\|w_{q+1}^{(c)}\|_{C^1_x}^{\alpha}\|w_{q+1}^{(c)}\|_{C^0_x}^{1-\alpha}
				 \lesssim \bar{M}L(\lambda_{q+1}\delta_{q+1}^{\sfrac12})^{\alpha}\left( \frac{\delta_{q+1}^{\sfrac{1}{2}}\ell_q^{-1}}{\lambda_{q+1}} \right)^{1-\alpha}\lesssim \frac{\bar{M}L\delta_{q+1}^{\sfrac12}\ell_q^{-1}}{\lambda_{q+1}^{1-2\alpha}}.
			\end{aligned}
		\end{equation}
		Then we combine \eqref{esti:principal w 0}, \eqref{esti:principal w 1} with \eqref{estim:wc:alpha} to deduce for any $t\in[0,\mathfrak{t}_L]$
		\begin{equation}\label{calculation for Corrector}
			\begin{aligned}
				\|w_{q+1}^{(c)}\mathring{\otimes} w_{q+1}+w_{q+1}^{(p)} \mathring{\otimes} w_{q+1}^{(c)}\|_{C^\alpha_x} &\lesssim \|w_{q+1}^{(c)}\|_{C^\alpha_x}^2+ \|w_{q+1}^{(c)}\|_{C^\alpha_x} \|w_{q+1}^{(p)}\|_{C^0_x}+\|w_{q+1}^{(c)}\|_{C^0_x} \|w_{q+1}^{(p)}\|_{C^\alpha_x}
				\\ & \lesssim \frac{\bar{M}^2L^2\delta_{q+1}\ell_q^{-2}}{\lambda_{q+1}^{2-4\alpha}}+\frac{\bar{M}^2L^2\delta_{q+1}\ell_q^{-1}}{\lambda_{q+1}^{1-2\alpha}}\stackrel{\eqref{eq:Onsager:ell:gap}}{\lesssim }  \frac{\bar{M}^2L^2\delta_{q+1}^{\sfrac12}\delta_{q}^{\sfrac12}\lambda_q}{\lambda_{q+1}^{1-10\alpha}} 
				\\ &\stackrel{\eqref{choice:alpha}}{\leq } \bar{M}^2L^2 \delta_{q+2}\lambda_{q+1}^{-4\alpha}\leq  \frac{1}{10} \bar{M}L^2 \delta_{q+2}\lambda_{q+1}^{-3\alpha}.
			\end{aligned}
		\end{equation}
		For the remaining part of the corrector error, we express it as
		\begin{align*}
			\mathcal{R}\left( (\partial_t+(\overline v_q+z_{\ell_q})\cdot \nabla ) w_{q+1}^{(c)}\right)=&\sum_i \sum_{\xi\in \Lambda_i} \mathcal{R} \left( D_{t,q} \nabla a_{(\xi, i)}\times\left(  (\nabla \Phi_i)^{T} V_{\xi}(\Phi_i)\right) \right) 
			\\ &+\sum_i \sum_{\xi\in \Lambda_i} \mathcal{R} \left( \nabla a_{(\xi, i)}\times\left(  D_{t,q} (\nabla \Phi_i)^{T} V_{\xi}(\Phi_i)\right) \right)
			=: R^{\textrm{cor}}_1 + R^{\textrm{cor}}_2 .
		\end{align*}
		We first consider $R^{\textrm{cor}}_1 $ and use \eqref{e:vq:1+N}, \eqref{estimate:a CN} and \eqref{estimate:Da CN} to deduce for any $m\in \mathbb{N}_0$
		\begin{align*}
			\|D_{t,q}\nabla a_{(\xi,i)}\|_{C^m_x} \lesssim  \|D_{t,q} a_{(\xi,i)}\|_{C^{m+1}_x} &+\|\overline{v}_q+z_{\ell_q}\|_{C^{m+1}_x} \|a_{(\xi,i)}\|_{C^1_x} 
			\\ &+ \|\overline{v}_q+z_{\ell_q}\|_{C^1_x} \|a_{(\xi,i)}\|_{C^{m+1}_x}
			  \lesssim  \frac{M}{C_\Lambda} L \tau_q^{-1}\delta_{q+1}^{\sfrac12}\ell_q^{-m-1}.
		\end{align*}
		Then, using \eqref{eq:Phi:i:bnd:b} and the above estimate, we obtain for any $m\in \mathbb{N}_0$
		\begin{align*}
			\|D_{t,q} \nabla a_{(\xi, i)}\times (\nabla \Phi_i)^{T} \|_{C^m_x}  \lesssim 	\|D_{t,q} \nabla a_{(\xi, i)} \|_{C^m_x} +	\|D_{t,q} \nabla a_{(\xi, i)} \|_{C^0_x} \|\nabla \Phi_i\|_{C^m_x}
		 \lesssim \frac{M}{C_\Lambda} L\tau_q^{-1}\delta_{q+1}^{\sfrac12}\ell_q^{-m-1}.
		\end{align*}
		Similar to \eqref{calculation for Nash2}, we apply \eqref{eq:Mikado:stationary:phase:1} with $m=N$ and use \eqref{eq:Phi:i:bnd:b}, \eqref{eq:Onsager:ell:gap}, \eqref{Rsmall} and interpolation to obtain for any $t\in[0,\mathfrak{t}_L]$
		\begin{equation}\label{calculation for Corrector1}
			\begin{aligned}
				\|R^{\textrm{cor}}_1(t) \|_{C^\alpha_x}
				&\lesssim \frac{\bar{M}L\tau_q^{-1}\delta_{q+1}^{\sfrac12}\ell_q^{-1}}{\lambda_{q+1}^{2-\alpha}}+ \frac{\bar{M}L\tau_q^{-1}\delta_{q+1}^{\sfrac12}\ell_q^{-N-1-\alpha}}{\lambda_{q+1}^{N+1-\alpha}}
				\\&   \mathop{\lesssim} \limits_{\eqref{eq:Onsager:ell:gap}}^{\eqref{Rsmall}} \frac{\bar{M}L^2\lambda_{q}\delta_{q}^{\sfrac12}\delta_{q+1}^{\sfrac12}}{\lambda_{q+1}^{1-7\alpha}} \stackrel{\eqref{choice:alpha}}{\lesssim} \bar{M}L^2 \delta_{q+2}\lambda_{q+1}^{-4\alpha}\leq \frac{1}{10} \bar{M}L^2 \delta_{q+2}\lambda_{q+1}^{-3\alpha}.
			\end{aligned}
		\end{equation}
		Moving on to estimate $R^{\textrm{cor}}_2$, it follows from \eqref{eq:Phi:i:bnd:c} and \eqref{estimate:a CN} that for any $m\in \mathbb{N}_0$
		\begin{align*}
			\| \nabla a_{(\xi, i)}\times D_{t,q}(\nabla \Phi_i)^{T} \|_{C^m_x}\lesssim &\| \nabla a_{(\xi, i)}\|_{C^0_x}\| D_{t,q}(\nabla \Phi_i)^{T} \|_{C^m_x}
			\\&+\| \nabla a_{(\xi, i)}\|_{C^m_x}\| D_{t,q}(\nabla \Phi_i)^{T} \|_{C^0_x}\lesssim \frac{M}{C_\Lambda} L \tau_q^{-1} \delta_{q+1}^{\sfrac12}\ell_q^{-m-1}.
		\end{align*}
		In a similar manner to \eqref{calculation for Corrector1}, we apply \eqref{eq:Mikado:stationary:phase:1} with $m=N$ and use \eqref{eq:Phi:i:bnd:b}, \eqref{eq:Onsager:ell:gap}, \eqref{Rsmall} together with interpolation to derive for any $t\in[0,\mathfrak{t}_L]$
		\begin{equation}\label{calculation for Corrector2}
			\begin{aligned}
				\|R^{\textrm{cor}}_2(t)\|_{C^\alpha_x} 
				\lesssim &\frac{\bar{M} L \tau_q^{-1} \delta_{q+1}^{\sfrac12}\ell_q^{-1}}{\lambda_{q+1}^{2-\alpha}}+ \frac{\bar{M}L\tau_q^{-1}\delta_{q+1}^{\sfrac12}\ell_q^{-N-1-\alpha}}{\lambda_{q+1}^{N+1-\alpha}} 
				\leq \frac{1}{10} \bar{M}L^2 \delta_{q+2}\lambda_{q+1}^{-3\alpha},
			\end{aligned}
		\end{equation}
		where we utilized the additional power $\lambda_{q+1}^{-\alpha}$ to absorb the implicit constants. Combining \eqref{calculation for Corrector}, \eqref{calculation for Corrector1} and \eqref{calculation for Corrector2} yields \eqref{esti:Corrector}.
	\end{proof}

	\begin{proposition}[Estimates for commutator  error]\label{Estimates for commutator  error} 
		The commutator error enjoys for any $t\in[0,\mathfrak{t}_L]$:
		\begin{align}\label{esti:Commutator }
			\|R^{\rm{com}}(t)\|_{C^0_x}\leq  \frac{1}{10} \bar{M}L^2 \delta_{q+2}\lambda_{q+1}^{-3\alpha}.
		\end{align}
	\end{proposition}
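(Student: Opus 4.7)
The plan is to bound $R^{\textrm{com}}$ by expanding the trace-free tensor products pointwise and reducing everything to estimates on $z_{q+1}-z_{\ell_q}$ in $C^0_x$. Writing
\begin{align*}
z_{q+1}\mathring\otimes z_{q+1} - z_{\ell_q}\mathring\otimes z_{\ell_q} = (z_{q+1}-z_{\ell_q})\mathring\otimes z_{q+1} + z_{\ell_q}\mathring\otimes (z_{q+1}-z_{\ell_q}),
\end{align*}
all five terms in $R^{\textrm{com}}$ are products of either $v_{q+1}$, $z_{q+1}$, or $z_{\ell_q}$ with the difference $z_{q+1}-z_{\ell_q}$, so it suffices to control
\begin{align*}
\|R^{\textrm{com}}\|_{C^0_x}\lesssim \left(\|v_{q+1}\|_{C^0_x}+\|z_{q+1}\|_{C^0_x}+\|z_{\ell_q}\|_{C^0_x}\right)\|z_{q+1}-z_{\ell_q}\|_{C^0_x}.
\end{align*}

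For the prefactor, I would invoke \eqref{itera:a} at level $q+1$ (which was just verified in Subsection~\ref{Estimates on v q+1}), giving $\|v_{q+1}\|_{C^0_x}\lesssim \bar M L \lambda_1^{3\alpha/2}$, together with $\|z_{q+1}\|_{C^0_x},\|z_{\ell_q}\|_{C^0_x}\leq L$ from \eqref{z ps} and the standard mollification estimate. For the difference, I would split
\begin{align*}
z_{q+1}-z_{\ell_q} = (z_{q+1}-z_q) + (z_q - z_{\ell_q}),
\end{align*}
bound the first piece via \eqref{zq+1-zq} (or its $C^0_x$ analogue), which gives $\lesssim L\lambda_q^{-2/3+2\alpha}$, and the second piece by the spatial mollification estimate \eqref{estimate:molli2} together with \eqref{z ps}, yielding $\lesssim L\ell_q \leq L\lambda_q^{-1}$. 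The first term dominates, so $\|z_{q+1}-z_{\ell_q}\|_{C^0_x}\lesssim L\lambda_q^{-2/3+2\alpha}$.

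Putting these together gives $\|R^{\textrm{com}}\|_{C^0_x}\lesssim \bar M L^2 \lambda_1^{3\alpha/2}\lambda_q^{-2/3+2\alpha}$, and the remaining task is to compare this with the target $\frac{1}{10}\bar M L^2 \delta_{q+2}\lambda_{q+1}^{-3\alpha}$. Since $\delta_{q+2}\lambda_{q+1}^{-3\alpha}\approx \lambda_1^{3\alpha}\lambda_q^{-2b^2\beta-3b\alpha}$, the inequality reduces to $\lambda_q^{2/3-2b^2\beta-(3b+2)\alpha}\gtrsim 1$, which follows from the parameter condition $20b\alpha<\frac{2}{3}-2b^2\beta$ in \eqref{def alpha}. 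There is no serious obstacle here beyond careful bookkeeping; the point I would emphasize is that the key gain $\lambda_q^{-2/3}$ comes precisely from the choice $\iota_q=\lambda_q^{-4/3}$ together with the spatial regularity $H^{7/2+\gamma}$ of $B$ (via $\|B\|_{C_t^{1/2-\alpha}H^{7/2+\gamma}}\lesssim L$), so that $\iota_q^{1/2-\alpha}\approx \lambda_q^{-2/3+2\alpha/3}$, which is exactly what is needed to absorb the new frequency $\lambda_{q+2}^{-2\beta}$ using \eqref{choice:b}--\eqref{def alpha}.
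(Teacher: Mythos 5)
Your proposal is correct and follows essentially the same route as the paper: decompose $R^{\textrm{com}}$ into products of bounded fields with $z_{q+1}-z_{\ell_q}$, split that difference into $(z_{q+1}-z_q)+(z_q-z_{\ell_q})$, bound the first by \eqref{zq+1-zq} and the second by spatial mollification (note: this uses \eqref{estimate:molli3}, not the commutator estimate \eqref{estimate:molli2}), and then absorb $\lambda_q^{-2/3}$ into $\delta_{q+2}\lambda_{q+1}^{-c\alpha}$ via the parameter condition $20b\alpha<\tfrac23-2b^2\beta$ in \eqref{def alpha} (the paper uses the weaker consequence $9b\alpha<\tfrac23-2b^2\beta$ in \eqref{lmabda 2/3 deltaq+2}). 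Apart from the one citation slip this matches the paper's proof.
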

	\begin{proof}
		
		Finally, we estimate $R^{\rm{com}}$ defined in \eqref{Reynold1}. By using \eqref{z ps}, \eqref{zq+1-zq} and mollification estimate \eqref{estimate:molli3}, we obtain the following for any $t\in[0,\mathfrak{t}_L]$
		\begin{equation}\label{estimate:zq-zl}
			\aligned
			\|z_{q+1}(t)-z_{\ell_{q}}(t)\|_{C^0_x}  & \leq  \|z_{q+1}(t)-z_q(t)\|_{C^0_x} +	\|z_{\ell_q}(t)-z_q(t)\|_{C^0_x} 
			\\ &\lesssim L \lambda_{q}^{-\sfrac23+2\alpha} + \ell_q \|z_q\|_{C^0_{t}C^1_x}  \lesssim L( \ell_q +\lambda_{q}^{-\sfrac23+2\alpha}).
			\endaligned
		\end{equation}
		Note that $\ell_{q}\leq \lambda_{q}^{-1} \leq \lambda_{q}^{-\sfrac23}$, it suffices to control $\lambda_{q}^{-\sfrac23}$. 
		By using $b<\sqrt{\frac{1}{3\beta}}$ and $9b\alpha<\sfrac23-2b^2\beta$, we have for $q\geq 1$
		\begin{align}\label{lmabda 2/3 deltaq+2}
			\frac{\lambda_{q}^{-\sfrac23} }{\delta_{q+2}\lambda_{q+1}^{-9\alpha}} \lesssim \lambda_{q}^{-\sfrac23}\lambda_{q+1}^{9\alpha}\lambda_{q+2}^{2\beta} \lesssim \lambda_{q}^{9b\alpha+2b^2\beta-\sfrac23}\leq1,
		\end{align}
		which ensures that \eqref{estimate:zq-zl} can be bounded by $L\delta_{q+2}\lambda_{q+1}^{-5\alpha}$.
		We then use \eqref{estimate:zq-zl} to obtain for any $t\in[0,\mathfrak{t}_L]$
		\begin{equation}\label{calculation for Commutator}
			\begin{aligned}
				\|R^{\rm{com}}(t)\|_{C^0_x} &\lesssim  \|v_{q+1}(t)\|_{C^0_x}	\|z_{q+1}(t)-z_{\ell_q}(t)\|_{C^0_x}+\|z_{q+1}(t)\|_{C^0_x}	\|z_{q+1}(t)-z_{\ell_q}(t)\|_{C^0_x}
				\\ &\quad +\|z_{\ell_q}(t)\|_{C^0_x}	\|z_{q+1}(t)-z_{\ell_q}(t)\|_{C^0_x}
				\\ & \lesssim  (\bar{M}L\lambda_{1}^{\sfrac{3\alpha}{2}}+L) L\delta_{q+2}\lambda_{q+1}^{-5\alpha}\leq \frac{1}{10} \bar{M} L^2\delta_{q+2}\lambda_{q+1}^{-3\alpha},
			\end{aligned}
		\end{equation}
		where we utilized the additional factor $\lambda_{q+1}^{-\alpha}$ to absorb the implicit constants.
	\end{proof}
	Summarizing the estimates \eqref{esti:transport}, \eqref{esti:oscillation}, \eqref{esti:Nash}, \eqref{esti:Corrector} and \eqref{esti:Commutator }, we deduce for any $t\in [0,\mathfrak{t}_L]$
	\begin{align*}
		\|\mathring{R}_{q+1}(t)\|_{C^0_{x}}\leq \bar{M} L^2 \delta_{q+2}\lambda_{q+1}^{-3\alpha}.
	\end{align*}

	\subsection{Estimates on the energy}\label{estimate on energy}
To complete the proof of Proposition~\ref{p:iteration}, it remains to verify that the iterative energy bound \eqref{estimate:energy} holds at the level $q+1$. This is established in the following:
	\begin{proposition}\label{estimate:final:energy}
		It holds for $t\in[0,\mathfrak{t}_L]$ 
		\begin{align}\label{1/4}
			\left|    e(t)-L^2\frac{\delta_{q+2}}{2}-\|(v_{q+1}+{z_{q+1}})(t )\|_{L^2}^2  +2 \int_{0}^{t} \int_{\T^3} (v_{q+1}+z_{q+1}) \cdot \partial_t z_{q+1}  \dif x \dif s  \right|  \leq L^2 \delta_{q+2}\ell_{q}^{\alpha}.
		\end{align}
	\end{proposition}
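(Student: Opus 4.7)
The plan is to estimate the left-hand side of \eqref{1/4} as a small perturbation of $3\rho_q(t)-\|w_{q+1}(t)\|_{L^2}^2$, and to exploit the near-cancellation of these two quantities that is built into the construction via the Mikado identity \eqref{eq:w:q+1:is:good}.

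First, I would subtract $3\rho_q(t)$ from the expression inside the absolute value in \eqref{1/4}, using the defining identity \eqref{def:energy gap}. Writing $v_{q+1}+z_{q+1}=(\overline v_q+z_{\ell_q})+w_{q+1}+(z_{q+1}-z_{\ell_q})$ and expanding the $L^2$ square and the time integral, the whole expression becomes the sum of
\[
\bigl[3\rho_q(t)-\|w_{q+1}(t)\|_{L^2}^2\bigr]-2\langle(\overline v_q+z_{\ell_q})(t),w_{q+1}(t)\rangle+(\text{terms in }z_{q+1}-z_{\ell_q}),
\]
plus the three time integrals
\[
 2\int_0^t\!\!\int_{\T^3}(\overline v_q+z_{\ell_q})\cdot\partial_s(z_{q+1}-z_{\ell_q})\,\dif x\,\dif s,\quad 2\int_0^t\!\!\int_{\T^3}w_{q+1}\cdot\partial_s z_{q+1}\,\dif x\,\dif s,\quad 2\int_0^t\!\!\int_{\T^3}(z_{q+1}-z_{\ell_q})\cdot\partial_s z_{q+1}\,\dif x\,\dif s.
\]
The task is to bound each group by a small constant times $L^2\delta_{q+2}\ell_q^\alpha$.

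The main cancellation is extracted from \eqref{eq:w:q+1:is:good}. Taking the pointwise trace, using that $\mathring{\overline R}_q$ is trace-free, and then integrating over $\T^3$, one obtains $\|w_{q+1}^{(p)}(t)\|_{L^2}^2=3\rho_q(t)+E_{\mathrm{osc}}(t)$, where $E_{\mathrm{osc}}$ has the form $\sum_{i,\xi} \int_{\T^3} a_{(\xi,i)}^2\,(\mathbb{P}_{\geq\lambda_{q+1}/2}\phi_{(\xi)}^2)(\Phi_i)\,|(\nabla\Phi_i)^{-1}\xi|^2\,\dif x$. A stationary-phase integration by parts of the same type as in \eqref{calculation for Oscillation}, using \eqref{eq:Phi:i:bnd:b}, \eqref{estimate:a CN} and the parameter inequalities \eqref{choice:alpha}, \eqref{Rsmall}, gives $|E_{\mathrm{osc}}(t)|\lesssim L^2\delta_{q+2}\lambda_{q+1}^{-4\alpha}$. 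The remaining contributions $\|w_{q+1}^{(c)}\|_{L^2}^2+2\langle w_{q+1}^{(p)},w_{q+1}^{(c)}\rangle$ are controlled directly by the $C^0$ bounds of Proposition~\ref{Pro:6.1} together with \eqref{eq:Onsager:ell:gap}. The cross term $\langle\overline v_q+z_{\ell_q},w_{q+1}\rangle$ is handled via the curl representation $w_{q+1}=\curl\Psi_{q+1}$ in \eqref{eq:Onsager:w:q+1}: integrating by parts and using that $\|\Psi_{q+1}\|_{C^0_x}\lesssim L\delta_{q+1}^{\sfrac12}/\lambda_{q+1}$, while $\|\curl(\overline v_q+z_{\ell_q})\|_{C^0_x}$ is controlled by \eqref{e:vq:1+N} and \eqref{estimate:vellzell}, yields the desired smallness after invoking \eqref{choice:alpha}. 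The purely $z_{q+1}-z_{\ell_q}$ pieces are immediate from \eqref{estimate:zq-zl} and \eqref{lmabda 2/3 deltaq+2}.

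The hardest step is the Wong--Zakai-type time integral involving $\partial_s(z_{q+1}-z_{\ell_q})$. Here I would integrate by parts in $s$; the boundary term at $s=0$ vanishes because $z_{q+1}(0)=z_{\ell_q}(0)=0$, while the boundary at $s=t$ is bounded by $\|\overline v_q+z_{\ell_q}\|_{C^0_x}\|z_{q+1}-z_{\ell_q}\|_{C^0_x}$ and controlled through \eqref{estimate:zq-zl}. In the bulk term I substitute $\partial_s\overline v_q$ from the glued Euler system \eqref{gluing euler q} and then integrate by parts once in $x$ (using that $z_{q+1}-z_{\ell_q}$ is divergence-free to kill the pressure); this converts the time derivative $\partial_s(z_{q+1}-z_{\ell_q})$ into the spatial derivative $\nabla(z_{q+1}-z_{\ell_q})$, producing
\[
-\int_0^t\!\!\int_{\T^3}\bigl[\mathring{\overline R}_q-(\overline v_q+z_{\ell_q})\otimes(\overline v_q+z_{\ell_q})\bigr]:\nabla(z_{q+1}-z_{\ell_q})^T\,\dif x\,\dif s+\int_0^t\!\!\int_{\T^3}\partial_s z_{\ell_q}\cdot(z_{q+1}-z_{\ell_q})\,\dif x\,\dif s,
\]
both of which are estimated using the $C^1_x$ bound \eqref{zq+1-zq} on $z_{q+1}-z_{\ell_q}$, the pathwise bounds on $\mathring{\overline R}_q$, $\overline v_q$ and $z_{\ell_q}$, and \eqref{lmabda 2/3 deltaq+2}. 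The remaining two time integrals are treated analogously: in the $w_{q+1}\cdot\partial_s z_{q+1}$ piece, the curl structure of $w_{q+1}$ in \eqref{eq:Onsager:w:q+1} supplies an additional $\lambda_{q+1}^{-1}$ which, together with the $\iota_{q+1}^{-1/2-\alpha}\lesssim\lambda_{q+1}^{2/3+2\alpha}$ cost of $\partial_s z_{q+1}$, balances by \eqref{choice:alpha}.

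Collecting all contributions and using the parameter chain of Subsection~\ref{sec:parameters} (in particular \eqref{choice:alpha}, \eqref{eq:Onsager:ell:gap}, \eqref{lmabda 2/3 deltaq+2}) to absorb universal constants with extra factors $\lambda_{q+1}^{-\alpha}$, every piece is bounded by a constant multiple of $L^2\delta_{q+2}\ell_q^\alpha$, which establishes \eqref{1/4}. The Wong--Zakai step is the main obstacle, since it is the only place where the poor time regularity of $B$ interacts non-trivially with the convex integration scheme; the integration-by-parts in time coupled with substitution through the gluing equation \eqref{gluing euler q} is precisely what makes the noise term compatible with the iteration.
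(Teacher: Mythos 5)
Your overall scaffolding — subtracting $3\rho_q$, invoking the Mikado trace identity for the near-cancellation $\|w_{q+1}^{(p)}\|_{L^2}^2\approx 3\rho_q$, bounding the corrector contributions via Proposition~\ref{Pro:6.1}, exploiting the curl potential for the cross term $\langle\overline v_q+z_{\ell_q},w_{q+1}\rangle$, and integrating by parts in time on the $\partial_s z$ piece and substituting $\partial_s\overline v_q$ from the glued Euler system \eqref{gluing euler q} — is the same as the paper's (see \eqref{Eq+1} and \eqref{energy:addition}). However, there is a genuine gap in the way you split the time-integral contributions.

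You propose to bound \emph{each} of the three time integrals separately. After you integrate by parts the first one, $\int_0^t\int(\overline v_q+z_{\ell_q})\cdot\partial_s(z_{q+1}-z_{\ell_q})$, the substitution of $\partial_s\overline v_q$ handles the $\overline v_q$ part, but you are left with the residual $-\int_0^t\int\partial_s z_{\ell_q}\cdot(z_{q+1}-z_{\ell_q})\,\dif x\,\dif s$. You claim this is controlled by the $C^1_x$ bound \eqref{zq+1-zq} and \eqref{lmabda 2/3 deltaq+2}, but that bound does not apply: here no spatial gradient falls on $z_{q+1}-z_{\ell_q}$, and the factor $\partial_s z_{\ell_q}$ is \emph{large}, $\|\partial_s z_{\ell_q}\|_{C^0_x}\lesssim L\iota_q^{-1/2-\alpha}\approx L\lambda_q^{2/3+4\alpha/3}$. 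Against $\|z_{q+1}-z_{\ell_q}\|_{C^0_x}\lesssim L\lambda_q^{-2/3+2\alpha}$ this yields $\sim L^2\lambda_q^{10\alpha/3}$, which does not decay. The same problem appears in your third group: $\int_0^t\int(z_{q+1}-z_{\ell_q})\cdot\partial_s z_{q+1}$ has the even larger factor $\|\partial_s z_{q+1}\|_{C^0_x}\approx L\lambda_{q+1}^{2/3+4\alpha/3}$, and you never address it beyond saying it is ``treated analogously.'' Taken separately, neither term is small, so the claim that each group is bounded by $L^2\delta_{q+2}\ell_q^\alpha$ fails.

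The missing observation is that these two problematic terms cancel: combining the residual with the third integral gives
\[
-\int_0^t\!\!\int_{\T^3}\partial_s z_{\ell_q}\cdot(z_{q+1}-z_{\ell_q})\,\dif x\,\dif s+\int_0^t\!\!\int_{\T^3}(z_{q+1}-z_{\ell_q})\cdot\partial_s z_{q+1}\,\dif x\,\dif s=\tfrac12\|z_{q+1}(t)-z_{\ell_q}(t)\|_{L^2}^2,
\]
which is immediately controlled by \eqref{estimate:zq-zl}. The paper avoids having to discover this cancellation after the fact by grouping the purely quadratic-in-$z$ contributions into $\tfrac12\partial_s(\|z_{q+1}\|_{L^2}^2-\|z_{\ell_q}\|_{L^2}^2)$ at the outset (see \eqref{energy:addition}): it pairs $\partial_s(z_{q+1}-z_{\ell_q})$ only with $\overline v_q$, not with $\overline v_q+z_{\ell_q}$, so after the time integration by parts no $\partial_s z_{\ell_q}$ residual ever appears. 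The paper additionally splits $z_{q+1}-z_{\ell_q}=(z_{q+1}-z_q)+(z_q-z_{\ell_q})$ and treats the space-mollification piece $\int\overline v_q\cdot\partial_s(z_q-z_{\ell_q})$ by a direct commutator estimate (\eqref{energy:addition2}) rather than by integration by parts in time, though your approach would also work for that piece. The essential fix your argument needs is the cancellation identity above; as written, two of your three time-integral groups diverge.
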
 
	\begin{proof}
		By the definition of $v_{q+1}=\overline{v}_q+w_{q+1}^{(p)}+w_{q+1}^{(c)}$, we find
		\begin{align}\label{Eq+1}
			\begin{aligned}
				&  e(t)-L^2\frac{\delta_{q+2}}{2}-\|(v_{q+1}+{z_{q+1}})(t)\|_{L^2}^2+2 \int_{0}^{t} \int_{\T^3} (v_{q+1}+z_{q+1}) \cdot \partial_t z_{q+1}  \dif x \dif s
				\\ &=  e(t)-L^2\frac{\delta_{q+2}}{2}-\|(\overline{v}_q+{z_{\ell_q}})(t)\|_{L^2}^2+2 \int_{0}^{t} \int_{\T^3} (\overline{v}_q+z_{\ell_q}) \cdot \partial_t z_{\ell_{q}}  \dif x \dif s -
				\int_{\mathbb{T}^3} |w_{q+1}^{(p)}|^2(t)\dif x 
				\\&\quad  -2 \int_{\mathbb{T}^3}(z_{q+1}-z_{\ell_{q}})\cdot (\overline{v}_q+z_{\ell_{q}}) (t)\dif x -\int_{\mathbb{T}^3}|z_{q+1}-z_{\ell_{q}}|^2 (t)\dif x 
				\\& \quad - \int_{\mathbb{T}^3}\left( |w_{q+1}^{(c)}|^2+2w_{q+1}^{(p)}\cdot w_{q+1}^{(c)}\right)(t) \dif x-  \int_{\mathbb{T}^3}(\overline{v}_q+z_{q+1})\cdot w_{q+1} (t)\dif x   
				\\ &\quad +2 \int_{0}^{t} \int_{\T^3} (v_{q+1}+z_{q+1}) \cdot \partial_t z_{q+1}  \dif x \dif s -2 \int_{0}^{t} \int_{\T^3} (\overline{v}_q+z_{\ell_q}) \cdot \partial_t z_{\ell_{q}}  \dif x \dif s  .
			\end{aligned}
		\end{align}
		Let us first focus on the second line. Taking trace on both sides of \eqref{eq:w:q+1:is:good} and using the fact that $ \mathring{\overline{R}}_q $ is traceless, we deduce for any $t\in [0,\mathfrak{t}_L]$
		\begin{align*}
			|w_{q+1}^{(p)}|^2=3	\sum_i \rho_{q,i} + \sum_i \sum_{\xi \in \Lambda_i} \tr \left[ a_{(\xi,i)}^2 (\nabla \Phi_i)^{-1} \left( \left(\mathbb{P}_{\geq \sfrac{\lambda_{q+1}}{2}}(W_{(\xi)} \otimes W_{(\xi)}) \right)\circ \Phi_i \right)  (\nabla \Phi_i)^{-T} \right].
		\end{align*}
		Recalling $\sum_i  \int_{\T^3} \rho_{q,i} \dif x =\rho_q$ and integrating on both sides, we deduce
		\begin{align*}
		\int_{\T^3}	|w_{q+1}^{(p)}(t)|^2 \dif x&=   e(t)-L^2\frac{\delta_{q+2}}{2}-\|(\overline{v}_{q}+{z_{\ell_q}})(t)\|_{L^2}^2+2 \int_{0}^{t} \int_{\T^3} (\overline{v}_q+z_{\ell_q}) \cdot \partial_t z_{\ell_{q}}  \dif x \dif s 
			\\ &+ \sum_i \sum_{\xi \in \Lambda_i} \int_{\T^3} \tr \left[ a_{(\xi,i)}^2 (\nabla \Phi_i)^{-1} \left( \left(\mathbb{P}_{\geq \sfrac{\lambda_{q+1}}{2}}(W_{(\xi)} \otimes W_{(\xi)}) \right)\circ \Phi_i \right)  (\nabla \Phi_i)^{-T} \right](t) \dif x.
		\end{align*}
		Then we use \eqref{eq:Phi:i:bnd:b}, \eqref{estimate:a CN}, \eqref{choice:alpha} and \eqref{esti:integral} to obtain for any $t\in [0,\mathfrak{t}_L]$
		\begin{equation}\label{energy1}
			\aligned
			\sum_i \sum_{\xi \in \Lambda_i} & \left| \int_{\mathbb{T}^3} a_{(\xi,i)}^2 \tr \left[  (\nabla \Phi_i)^{-1} \xi \otimes \xi  (\nabla \Phi_i)^{-T}  \right]\left( \Big( \mathbb{P}_{\geq \sfrac{\lambda_{q+1}}{2}}(\phi_{(\xi)}^2) \Big) \circ\Phi_i \right) (t ) \dif x\right| 
			\\  &\lesssim 	\sup_i \sum_{\xi \in \Lambda_i}  \frac{\|a_{(\xi,i)}^2 (\nabla \Phi_i)^{-1} (\nabla \Phi_i)^{-T} \|_{C^1_x} }{\lambda_{q+1}}+\sup_i \sum_{\xi \in \Lambda_i}\frac{\ell_q^{-1}\|a_{(\xi,i)}^2 (\nabla \Phi_i)^{-1} (\nabla \Phi_i)^{-T} \|_{C^0_x} }{\lambda_{q+1}}
			\\& \lesssim \bar{M}^2L^2 \delta_{q+1}\frac{\ell_q^{-1}}{\lambda_{q+1}}= \bar{M}^2L^2 \frac{\delta_{q}^{\sfrac12}\delta_{q+1}^{\sfrac12}\lambda_{q}^{1+6\alpha}}{\lambda_{q+1}}\leq \frac15 L^2 \delta_{q+2} \ell_q^{\alpha} ,
			\endaligned
		\end{equation}
		where we used \eqref{choice:alpha} and $\bar{M}^2\lambda_{q}^{-\alpha}\ll 1$ in the last inequality. This completes the bound for the second line of \eqref{Eq+1}. 
		
		Going back to \eqref{Eq+1}, we continue to control the remaining terms. We first use \eqref{e:vq:0} and \eqref{estimate:zq-zl} to derive for any $t\in [0,\mathfrak{t}_L]$
		\begin{equation}\label{energy2}
			\aligned
			&  2    \left| 	 \int_{\mathbb{T}^3}(z_{q+1}-z_{\ell_{q}})(\overline{v}_q+z_{\ell_{q}}) (t)\dif x  \right| +   \int_{\mathbb{T}^3}|z_{q+1}-z_{\ell_{q}}|^2 (t)\dif x 
			\\& \qquad \lesssim \|z_{q+1}-z_{\ell_{q}}\|_{C^0_{t,x}}\|\overline{v}_q+z_{\ell_{q}}\|_{C^0_{t,x}}+\|z_{q+1}-z_{\ell_{q}}\|_{C^0_{t,x}}^2
			\\ &\qquad \lesssim  L ( \ell_q +\lambda_{q}^{-\sfrac23+2\alpha})(\bar{M}L\lambda_{1}^{\sfrac{3\alpha}{2}}+L)
		\lesssim \bar{M} L^2\lambda_{q}^{-\sfrac23+4\alpha}\stackrel{\eqref{lmabda 2/3 deltaq+2}}{\leq}  \bar{M} L^2 \delta_{q+2}\lambda_{q+1}^{-3\alpha}\leq \frac15 L^2 \delta_{q+2}\ell_{q}^{\alpha},
			\endaligned
		\end{equation}
		where we chose $a$ large enough to have $\bar{M}^2\lambda_{q+1}^{-\alpha}\ll1$ in the last inequality.
		Then, the estimates \eqref{esti:principal w 0}, \eqref{esti:corrector w 0} and \eqref{choice:alpha} imply for any $t\in [0,\mathfrak{t}_L]$
		\begin{equation}\label{energy3}
				\aligned
			\left| 	  \int_{\mathbb{T}^3}\left(|w_{q+1}^{(c)}|^2+2w_{q+1}^{(p)}\cdot w_{q+1}^{(c)} \right)(t) \dif x\right| 
			& \lesssim \bar{M}^2 L^2 \delta_{q+1}\frac{\ell_q^{-2}}{\lambda_{q+1}^2}+  \bar{M}^2 L^2  \delta_{q+1} \frac{\ell_q^{-1}}{\lambda_{q+1}}
			\\ &\lesssim \bar{M}^2  L^2 \frac{\delta_{q}^{\sfrac12}\delta_{q+1}^{\sfrac12}\lambda_{q}^{1+6\alpha}}{\lambda_{q+1}} \leq \frac15 L^2 \delta_{q+2}\ell_{q}^{\alpha}.
			\endaligned
		\end{equation}
		
		Moving on to estimate $\int_{\mathbb{T}^3}(\overline{v}_q+z_{q+1})\cdot w_{q+1} \dif x $, we recall that $w_{q+1}$ can be written as the curl of a vector field (c.f. \eqref{eq:Onsager:w:q+1})
		\begin{align*}
			w_{q+1} = \sum_i \sum_{\xi \in \Lambda_i}  \curl \left( a_{(\xi,i)} \, (\nabla \Phi_i)^T (V_{(\xi)}\circ \Phi_i) \right).
		\end{align*}
		Then integrating by parts  and using the estimates \eqref{e:vq:1+N}, \eqref{eq:Phi:i:bnd:b}, \eqref{estimate:a CN}, \eqref{choice:alpha} and \eqref{eq:Mikado:bounds} we obtain for any $t\in [0,\mathfrak{t}_L]$
		\begin{equation}\label{curl wq+1}
			\aligned
			&    \left|\int_{\mathbb{T}^3}(\overline{v}_q+z_{q+1})\cdot w_{q+1} (t)\dif x  \right| 
		 \lesssim  \sup_i \sum_{\xi \in \Lambda_i} \| a_{(\xi,i)} \, (\nabla \Phi_i)^T (V_{(\xi)}\circ \Phi_i)\| _{C_{t,x}^0}\|\overline v_q+z_{q+1}\|_{C^0_tC^1_{x}}
			\\ &  \qquad \qquad \lesssim \frac{\bar{M}^2L \delta_{q+1}^{\sfrac12}\tau_q^{-1}}{\lambda_{q+1}} = \frac{ \bar{M}^2 L^2\delta_{q}^{\sfrac12}\delta_{q+1}^{\sfrac12}\lambda_{q}^{1+6\alpha}}{\lambda_{q+1}}\leq \bar{M}^2L^2 \delta_{q+2}\lambda_{q+1}^{-5\alpha}
			\leq \frac15 L^2  \delta_{q+2}  \ell_q^{\alpha} , 
			\endaligned
		\end{equation}
		where we used again $\bar{M}^2\lambda_{q+1}^{-\alpha}\ll1$ in the last inequality. 
	
		Let us now focus on the last line of \eqref{Eq+1}. By using $v_{q+1}=\overline{v}_q+w_{q+1}$ and performing a direct computation, we derive
		\begin{equation}\label{energy:addition}
			\aligned
			& \int_{0}^{t} \int_{\T^3} (v_{q+1}+z_{q+1}) \cdot \partial_t z_{q+1}  \dif x \dif s - \int_{0}^{t} \int_{\T^3} (\overline{v}_q+z_{\ell_q}) \cdot \partial_t z_{\ell_{q}}  \dif x \dif s  
			 \\ &\quad =  \int_{0}^{t} \int_{\T^3} \overline{v}_q \cdot (\partial_t z_{q+1} -\partial_t z_{\ell_q} )	\dif x \dif s  + \int_{0}^{t} \int_{\T^3} w_{q+1} \cdot \partial_t z_{q+1} \dif x \dif s
			 \\ &\qquad+\frac12  \int_{0}^{t} \left( \frac{\dif}{\dif t}  \int_{\T^3}  |z_{q+1}|^2 \dif x \right) \dif s - \frac12  \int_{0}^{t} \left( \frac{\dif}{\dif t}  \int_{\T^3}  |z_{\ell_q}|^2 \dif x \right)\dif s 
			 \\& \quad= \int_{0}^{t} \int_{\T^3} \overline{v}_q \cdot \partial_t( z_{q+1} - z_{q} )	\dif x \dif s  + \int_{0}^{t} \int_{\T^3} \overline{v}_q \cdot \partial_t( z_{q} - z_{\ell_q} )	\dif x \dif s  + \int_{0}^{t} \int_{\T^3} w_{q+1} \cdot \partial_t z_{q+1} \dif x \dif s
			 \\&\qquad+ \frac12 \left(\|z_{q+1}(t)\|^2_{L^2}-\|z_{\ell_q}(t)\|^2_{L^2}\right).
			 \endaligned
		\end{equation}
		We next control each term in \eqref{energy:addition} separately, starting with $	 \int_{0}^{t} \int_{\T^3} \overline{v}_q \cdot \partial_t( z_{q+1} - z_{q} )	\dif x \dif s$. Integrating by parts with respect to the temporal variable and substituting  $\partial_t \overline{v}_q$ by the equation \eqref{gluing euler q}, we obtain
		\begin{align*}
			 \int_{0}^{t} & \int_{\T^3} \overline{v}_q \cdot \partial_t( z_{q+1} - z_{q} )	\dif x \dif s  = \int_{\T^3} \overline{v}_q \cdot (z_{q+1}- z_{q}  )  \bigg|_0^t	\dif x 
			  -\int_{0}^{t} \int_{\T^3}  \partial_t \overline{v}_q \cdot ( z_{q+1} - z_{q} )	\dif x \dif s  
			 \\ &=  \int_{\T^3} \overline{v}_q \cdot (z_{q+1}- z_{q}  )  \bigg|_0^t	\dif x 
			-\int_{0}^{t} \int_{\T^3}  (\div \mathring{\overline{R}}_q-\div((\overline v_q+z_{\ell_q}) \otimes(\overline v_q+z_{\ell_q}))-\nabla \overline{p}_q)\cdot ( z_{q+1} - z_{q} )	\dif x \dif s  
			 \\ &= \int_{\T^3} \overline{v}_q (t) \cdot (z_{q+1}(t)  - z_{q} (t) ) 	\dif x 
			 + \int_{0}^{t} \int_{\T^3}   \big(\mathring{\overline{R}}_q - (\overline v_q+z_{\ell_q}) \otimes(\overline v_q+z_{\ell_q})\big): \nabla (z_{q+1}-z_q)^T  \dif x \dif s ,
		\end{align*}
		where the last line arises from integrating by parts with respect to the spatial variable. Applying \eqref{zq+1-zq}, \eqref{estimate:vellzell}, \eqref{e:vq:0} and \eqref{e:Rq:1} we have
	  \begin{equation}\label{energy:addition1}
	  	\aligned
	  	\bigg|  \int_{0}^{t} \int_{\T^3} \overline{v}_q \cdot \partial_t (z_{q+1} - z_{q} )	\dif x \dif s \bigg| & \lesssim L (\|\mathring{\overline{R}}_q\|_{C^0_{t,x}}+\|\overline{v}_q\|^2_{C^0_{t,x}}+\|z_{\ell_{q}}\|^2_{C^0_{t,x}}) \|z_{q+1}-z_q\|_{C^0_tC^1_x}
	  \\&	\leq \bar{M}L^4\lambda_q^{-2/3+4\alpha} \stackrel{\eqref{lmabda 2/3 deltaq+2}}{\leq}  \bar{M} L^4 \delta_{q+2}\lambda_{q+1}^{-3\alpha}\leq \frac{1}{20} L^2 \delta_{q+2}\ell_{q}^{\alpha},
	  \endaligned
	  \end{equation}
	  where we used $\bar{M}L^2 \lambda_{q+1}^{-\alpha}\ll 1$ in the last inequality. Moving to the second term on the right-hand side of \eqref{energy:addition}, we use \eqref{z ps}, \eqref{e:vq:0} and \eqref{estimate:molli2} to obtain
	  \begin{equation}\label{energy:addition2}
	  	\aligned
	  	\bigg|  \int_{0}^{t} \int_{\T^3} \overline{v}_q \cdot \partial_t( z_{q} - z_{\ell_q} )	\dif x \dif s  \bigg| & \lesssim L\|\overline{v}_q\|_{C^0_{t,x}}\| \partial_t z_q - \partial_t z_q * \varphi_{\ell_{q}}\|_{C^0_x} 
	  	\lesssim L \ell_{q}^2  \|\overline{v}_q\|_{C^0_{t,x}} \|B*\psi_{\iota_{q}}\|_{C^1_tC^2_x} 
	  	\\ &  \lesssim \bar{M} L^2 \ell_{q}^2 \iota_{q}^{-(\sfrac12+\alpha)} \lambda_{q}^{2\alpha}\|B\|_{C^{1/2-\alpha}_tC^2_x} \leq  \frac{ \bar{M} L^3\delta_{q+1}}{\lambda_{q}^{\sfrac43}\delta_q} \leq \frac{1}{20} L^2 \delta_{q+2}\ell_{q}^{\alpha},
	  	\endaligned
	  \end{equation}
	  where we used the relation $3b^2\beta+6\alpha<1$ to have $\lambda_{q}^{-2b\beta+2\beta-\sfrac43+4\alpha}\leq \lambda_{q}^{-2b^2\beta}$ in the last inequality.
	  
	  Let us focus on the third term on the right-hand side of \eqref{energy:addition}. By using the same computation as \eqref{curl wq+1} and the estimates \eqref{eq:Phi:i:bnd:b}, \eqref{estimate:a CN} and \eqref{eq:Mikado:bounds}, we obtain for any $t\in [0,\mathfrak{t}_L]$
		\begin{equation}\label{energy:addition3}
			\aligned
			  \left|\int_{\mathbb{T}^3} w_{q+1} \cdot \partial_t z_{q+1} \dif x  \right| 
		&	\lesssim  \sup_i \sum_{\xi \in \Lambda_i} \| a_{(\xi,i)} \, (\nabla \Phi_i)^T (V_{(\xi)}\circ \Phi_i)\| _{C_{t,x}^0}\| B*\psi_{\iota_{q+1}}\|_{C^1_tC^1_{x}}
			\\ & \lesssim  \frac{\bar{M}L \delta_{q+1}^{\sfrac12}}{\lambda_{q+1}} \iota_{q+1}^{-(\sfrac12+\alpha)} \|B\|_{C^{1/2-\alpha}_tC^1_x} \lesssim \frac{\bar{M}L^2 \delta_{q+1}^{\sfrac12}}{\lambda_{q+1}^{\sfrac13-2\alpha}}
			\\&  \leq \bar{M}L^2 \delta_{q+2}\lambda_{q+1}^{-5\alpha}
			\leq \frac{1}{20} L^2 \delta_{q+2}  \ell_q^{\alpha} , 
			\endaligned
		\end{equation}
		where the last line makes use of the relation $7\alpha<\frac13+\beta-2b\beta$ to have $\lambda_{q+1}^{-\sfrac13-\beta+7\alpha}\leq \lambda_{q+1}^{-2b\beta}$. 
		
		For tha last line of \eqref{energy:addition}, we use \eqref{z ps} and \eqref{estimate:zq-zl} to derive for any $t \in [0,\mathfrak{t}_L]$
			\begin{equation}\label{energy:addition4}
			\aligned
	\big|	\|z_{q+1}(t)\|^2_{L^2}-\|z_{\ell_q}(t)\|^2_{L^2} \big|  &\leq 	\|z_{q+1}(t)-z_{\ell_{q}}(t)\|_{C^0_x} (	\|z_{q+1}(t)\|_{L^2}+\|z_{\ell_q}(t)\|_{L^2})
			\\ & \lesssim L^2 (\ell_{q}+ \lambda_{q}^{-\sfrac23+2\alpha} ) \stackrel{\eqref{lmabda 2/3 deltaq+2}}{\leq}  \frac{1}{20} L^2 \delta_{q+2}\ell_{q}^{\alpha}.
			\endaligned
		\end{equation}
		We then collect the estimates \eqref{energy:addition1}, \eqref{energy:addition2}, \eqref{energy:addition3} and \eqref{energy:addition4} to obtain
		\begin{align}\label{energy5}
			 \left|   \int_{0}^{t} \int_{\T^3} (v_{q+1}+z_{q+1}) \cdot \partial_t z_{q+1}  \dif x \dif s - \int_{0}^{t} \int_{\T^3} (\overline{v}_q+z_{\ell_q}) \cdot \partial_t z_{\ell_{q}}  \dif x \dif s   \right| \leq  \frac15L^2 \delta_{q+2}\ell_{q}^{\alpha}.
		\end{align}
		Finally, by combining the estimates \eqref{energy1},  \eqref{energy2},  \eqref{energy3}, \eqref{curl wq+1} and \eqref{energy5}, we obtain \eqref{1/4}.
	\end{proof}
	
	With the estimate \eqref{1/4} in hand and by the fact $\ell_{q}^{\alpha}+\lambda_{q+1}^{-\alpha}\leq \frac12$, we have for any $t \in [0,\mathfrak{t}_L]$
	\begin{align*}
	L^2	\delta_{q+2}\lambda_{q+1}^{-\alpha} & \leq L^2\frac{\delta_{q+2}}{2}- L^2 \delta_{q+2}\ell_{q}^{\alpha}
		\\ &	\leq   e(t)- \|(v_{q+1}+{z_{q+1}})(t )\|_{L^2}^2  + 2\int_{0}^{t} \int_{\T^3} (v_{q+1}+z_{q+1}) \cdot \partial_t z_{q+1}  \dif x \dif s\leq L^2 \delta_{q+2}.
	\end{align*}
	This concludes the proof of Proposition~\ref{p:iteration}.
	
	\section{Construction of global-in-time solutions with prescribed initial condition}\label{cauchy problem}
	This section is dedicated to proving our second main result, Theorem~\ref{Onsager:theorem:cauchy:problem}.  Our goal is to establish the existence of non-unique, global-in-time, probabilistically strong and analytically weak solutions in $C_tC^{\bar{\beta}-}_x$ to the Euler system \eqref{eul1}, applicable for every given divergence-free initial condition $u^{in}\in C^{\bar{\beta}}_x$, $0<\bar{\beta}<1/3$. 
	To this end, we adjust the convex integration scheme developed in Sections~\ref{s:in}-\ref{sec:inductive estimates}, following the approach in \cite{KMY22}, to incorporate a convolution approximation of the initial data $u^{in}$ in the iterative equation \eqref{euler1:new}. This allows us to recover the prescribed initial data $u^{in}$ in the limit.
	To establish global existence and address constraints imposed by the stopping time, we employ a gluing procedure for convex integration solutions, as outlined in \cite{HZZ21markov}. Specifically, for any given initial data $u^{in}\in C^{\bar{\beta}}_x$, we first construct probabilistically strong solutions up to a suitable stopping time using convex integration. We then use the final value at this stopping time as a new initial condition to reapply the convex integration scheme, thereby constructing another strong solution that extends beyond the stopping time. By repeating these steps, we extend the convex integration solutions as probabilistically strong solutions defined over the entire time interval $[0,\infty)$.

	In this section, the notations generally follow the previous conventions, with any necessary modifications explained below. First, we fix a probability space $(\Omega, \mathcal{F} ,\mathbf{P})$ with a $GG^*$-Wiener process $B$. Let $u^{in}\in C^{\bar{\beta}}_x$ $\mathbf{P}$-a.s. be an arbitrary random initial condition to \eqref{eul1} independent of the given Wiener process $B$. We denote $(\mathcal{F}_{t})_{t\geq0}$ as the augmented joint canonical filtration on $(\Omega,\mathcal{F})$ generated by $B$ and $u^{in}$ (c.f. \cite[Section 2.1]{LR15}). Thus, $B$ is an $(\mathcal{F}_{t})_{t\geq0}$-Wiener process and $u^{in}$ is $\mathcal{F}_0$-measurable. 
	We still decompose the Euler system \eqref{eul1} into $u:=v+z$, where $z:=B$ with $B(0)=0$, but we incorporate initial data into the random PDE. More precisely, the difference $v:=u-z$ solves the following  random PDE equation with the same initial data:
	\begin{equation}\label{nonlinear:new}
		\aligned
		\partial_t v+\div((v+z)\otimes (v+z))+\nabla p&=0,
		\\\div v&=0,
		\\ v(0)&=u^{in}.
		\endaligned
	\end{equation}
	Here, $z$ is divergence-free due to the assumptions on the noise, and $p$ denotes the pressure term associated with $v$. As before, the iteration is indexed by a parameter $q\in \mathbb{N}$, and at each $q$ step, we construct a pair $(v_q,\mathring{R}_q)$ to solve the following system:
	\begin{equation}\label{euler1:new}
		\aligned
		\partial_t v_q +\div((v_q+z_q)\otimes (v_q+z_q))+\nabla p_q&=\div
		\mathring{R}_q,
		\\\div v_q&=0,
		\\ v_q(0)&=u^{in}*\varphi_{\ell_{q-1}},
		\endaligned
	\end{equation}
	where $z_q=\mathbb{P}_{\leq f(q)} z$ with $f(q)=\lambda_{q}^{\sfrac23}$ and $\mathbb{P}_{\leq f(q)} $ is the Fourier
	multiplier operator defined in Subsection~\ref{notation 1}. In the right-hand side of \eqref{euler1:new}, $\mathring{R}_q\in \mathcal{S}_0^{3\times3}$ and we place the trace part into the pressure term. The parameters $\lambda_{q},\delta_{q}$, and the mollification parameter $\ell_{q}$ retain the same structure as in Subsection~\ref{sec:parameters}, but the values of the determining parameters $a,b,\alpha$ may differ, as additional conditions must be satisfied. Details regarding the selection of these parameters are provided in Subsection~\ref{sec:mollification:new} below.
	
	Let $L>1$ sufficiently large be given and define the stopping time
	\begin{equation}\label{stopping time ps:new}
		\aligned
		\mathfrak{t}_L:=& \inf\{t\geq0,\|z(t)\|_{H^{5/2+\gamma}}\geq L/C_S\}
		\wedge  L.
		\endaligned
	\end{equation}
	By Sobolev embedding, the following estimates also hold on $[0,\mathfrak{t}_L]$
	\begin{equation}\label{z ps:new}
		\| z_q(t)\|_{L^\infty} \leq L, \qquad
		\| z_q(t)\|_{C^1_x} \leq L, \qquad 
		\|z_q(t)\|_{C^2_x}  \leq L\lambda_{q}^{\sfrac23}.
	\end{equation}
	Moreover, without loss of generality, we can suppose that
	\begin{align}\label{eq:u0} 
		\|u^{in}\|_{C_x^{\bar{\beta}}}\leq N,
		\quad
		\mathbf{P}-a.s. 
	\end{align}
	for some finite constant $N$. 
	Indeed, for a general initial condition $u^{in} \in {C^{\bar{\beta}}_x}$  $\mathbf{P}$-a.s.,  one defines $\Omega_N := \{ N-1 \leq \| u^{in} \|_{C^{\bar{\beta}}_x} < N\} \in \mathcal{F}_0$.
	Then, given the existence of infinitely many solutions $u^N$ on each $\Omega_N$ one can define $u := \sum_{N \in \N} u^N \mathbf{1}_{\Omega_N}$, solving the equation with initial condition $u^{in}$.
	We maintain this additional assumption on the initial condition throughout the convex integration step in Proposition~\ref{p:iteration1}. 
	We also denote 
	\begin{align}\label{def:varsigma}
		\varsigma_q=\delta_{q}, \ q\in \mathbb{N} \setminus \{2\}, \qquad \varsigma_2=K\delta_2,
	\end{align}
	where $K\geq 1$ is a large
	constant used in the proof of Theorem~\ref{Onsager:theorem:cauchy:problem} to distinguish different solutions. 
	
	Under the above assumptions, our main iteration reads as follows. 
	
	\begin{proposition}\label{p:iteration1} 
		Let $L,N\geq 1$ and assume \eqref{eq:u0}. Suppose that $\tr ((\mathrm{I}-\Delta)^{5/2+\gamma}GG^*)<\infty$ for some $\gamma>0$. Let $0<\bar{\beta}<1/3$, for any $0<\beta<\bar{\beta}$, there exists a choice of parameters $a,b,\alpha$ depending on $\beta,\bar{\beta}$ with the following properties:
		
		Let $(v_{q},\mathring{R}_{q})$ for some $q\in\N$ be an $(\mathcal{F}_{t})_{t\geq 0}$-adapted solution to \eqref{euler1:new} satisfying the following estimates for any $t\in [0,\mathfrak{t}_L]$:
		\begin{align}
			\|v_q(t)\|_{C^0_x} &\leq 3\tilde{M}M_L\lambda_{1}^{\sfrac{3\alpha}{2}}-\tilde{M}M_L\delta_q^{\sfrac12},\label{itera:a:new}
			\\  \|v_q(t)\|_{C^1_{x}} &\leq  \tilde{M}M_L  \lambda_{q}\delta_q^{\sfrac12},\label{itera:b:new}
			\\	\|\mathring{R}_q(t)\|_{C^0_x}  &\leq \tilde{M}M_L^2 \delta_{q+1}\lambda_{q}^{-3\alpha},\label{itera:c:new}
		\end{align}
		where $\tilde{M}$ is a universal constant given by \eqref{def:barM} and $M_{L}=(L+N)^2$. Then there exists an $(\mathcal{F}_t)_{t\geq 0}$-adapted process $(v_{q+1}, \mathring{R}_{q+1})$ which solves \eqref{euler1:new} and satisfies the inductive estimates \eqref{itera:a:new}-\eqref{itera:c:new} at the $q+1$ step and we have
		\begin{equation}\label{vq+1-vq:new}
			\|v_{q+1}(t)-v_q(t)\|_{C_x^0}\leq \tilde{M}M_L\delta_{q+1}^{\sfrac12}.
		\end{equation}
		Moreover, it holds
		\begin{align}\label{estimate:energy:new}
			\big|\|v_{q+1}(t)\|_{L^2}^2-\|v_q(t)\|_{L^2}^2-3M_L^2\varsigma_{q+1} \big|\leq M_L^2 \lambda_{1}^{\sfrac{3\alpha}{2}} \delta_{q+1}^{\sfrac12}.
		\end{align}
		for any $t\in[\te_1\wedge \mathfrak{t}_L,\mathfrak{t}_L]$, where $\te_1:=\tau_q$ is given as in \eqref{def tauq:new} below.
	\end{proposition}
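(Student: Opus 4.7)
The plan is to follow the same four-stage convex integration scheme as in Sections~\ref{sec:begin}--\ref{sec:inductive estimates} (mollification, gluing, perturbation, inductive estimates), but with two structural modifications dictated by the presence of the prescribed initial datum $u^{in}$ and by the new energy target \eqref{estimate:energy:new}. First, the temporal mollification of the Wiener process $B$ is replaced by the Littlewood--Paley truncation $z_q=\mathbb{P}_{\leq f(q)}z$ with $f(q)=\lambda_q^{\sfrac23}$, since we no longer need $z_q$ to be differentiable in time to produce an approximation of the stochastic integral; the noise is handled purely pathwise through \eqref{z ps:new} and the commutator error $R^{\textrm{com}}$. Second, the exact local solutions $\ve_i$ are now constructed starting from $\te_0=0$ with initial condition $v_{\ell_q}(0)=u^{in}*\varphi_{\ell_q}$, so that the initial data is automatically part of the iteration; the quantity $N$ enters through $M_L=(L+N)^2$ to absorb the size of $u^{in}$ in the same way $L$ absorbed the noise.

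For the gluing, I will introduce a modified partition of unity $\{\chi_i\}_{i\geq 0}$ with $\chi_0\equiv 1$ on $[0,\te_1/2]$ so that $\overline v_q=\ve_0$ near $t=0$, which forces $\overline v_q(0)=v_{\ell_q}(0)=u^{in}*\varphi_{\ell_q}$ and, more importantly, makes $\mathring{\overline R}_q$ vanish on a neighborhood of $t=0$. I then modify the squiggling cutoffs $\eta_i$ of Subsection~\ref{sec:cutoffs flow map} so that $\eta_i\equiv 0$ on a neighborhood of $\{t=0\}\times \T^3$ for every $i$; consequently $a_{(\xi,i)}\equiv 0$ near $t=0$, both $w_{q+1}^{(p)}$ and $w_{q+1}^{(c)}$ vanish near $t=0$, and the new perturbation preserves the mollified initial data, yielding $v_{q+1}(0)=\overline v_q(0)=u^{in}*\varphi_{\ell_q}$. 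Using that $\varphi_{\ell_q}$ converges to the identity, this will ensure $v_q(0)\to u^{in}$ in $C^\vartheta_x$ as $q\to\infty$, recovering the prescribed initial datum. The cost of this modification is that \eqref{estimate:energy:new} can only be asserted for $t\geq \te_1\wedge\mathfrak{t}_L$, which is exactly the statement of the proposition.

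To achieve the new energy iteration \eqref{estimate:energy:new}, I replace the energy gap \eqref{def:energy gap} by the constant target $\rho_q(t)\equiv M_L^2\varsigma_{q+1}$ on $[\te_1\wedge\mathfrak{t}_L,\mathfrak{t}_L]$ (smoothly interpolated to $0$ on $[0,\te_1\wedge\mathfrak{t}_L]$), decompose it as $\rho_{q,i}$ via \eqref{def:rho q,i}, and define the amplitudes $a_{(\xi,i)}$ exactly as in \eqref{def:amplitude a}. Taking traces in \eqref{eq:w:q+1:is:good} and integrating gives, for $t\in[\te_1\wedge\mathfrak{t}_L,\mathfrak{t}_L]$,
\begin{equation*}
\int_{\T^3}|w_{q+1}^{(p)}|^2\dif x=3\sum_i\int_{\T^3}\rho_{q,i}\dif x+\text{(high-frequency error)}=3M_L^2\varsigma_{q+1}+O(M_L^2\delta_{q+2}\ell_q^\alpha),
\end{equation*}
by the same stationary-phase bound \eqref{energy1}. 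The remaining terms $\|v_{q+1}\|_{L^2}^2-\|v_q\|_{L^2}^2-\int|w_{q+1}^{(p)}|^2\dif x$ are handled exactly as in \eqref{Eq+1}: the corrector contribution \eqref{energy3} and the cross term \eqref{curl wq+1} are both bounded by $M_L^2\delta_{q+2}\ell_q^\alpha$, while the mollification error $\|v_{\ell_q}\|_{L^2}^2-\|v_q\|_{L^2}^2$ is controlled via \eqref{estimate energy vq-vl} and the gluing error $\|\overline v_q\|_{L^2}^2-\|v_{\ell_q}\|_{L^2}^2$ via Proposition~\ref{estimate:energy2} (discarding the $z_{\ell_q}$ pieces, which are unneeded here). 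Summing these gives \eqref{estimate:energy:new} provided $\lambda_1^{3\alpha/2}\delta_{q+1}^{1/2}$ dominates $\delta_{q+2}\ell_q^\alpha$, which is immediate from our parameter inequalities.

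The inductive $C^0$, $C^1$, and stress estimates \eqref{itera:a:new}--\eqref{itera:c:new} and \eqref{vq+1-vq:new} will follow verbatim from the arguments in Subsections~\ref{Estimates on v q+1}--\ref{estimate on Rq+1}, with $L$ replaced by $M_L$ throughout. The only genuinely new ingredient is the commutator stress $R^{\textrm{com}}$, which here reads $R^{\textrm{com}}=v_{q+1}\mathring\otimes(z_{q+1}-z_q)+\cdots$ with $z_{q+1}-z_q=\mathbb{P}_{f(q)<|\cdot|\leq f(q+1)}z$; this is bounded in $C^0_x$ by $\lambda_q^{-\sfrac23}\|z\|_{H^{5/2+\gamma}}\lesssim M_L\lambda_q^{-\sfrac23}$ by Bernstein's inequality and Sobolev embedding, and the proof of Proposition~\ref{Estimates for commutator  error} goes through unchanged after an analog of \eqref{lmabda 2/3 deltaq+2} is enforced on the parameters. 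The main obstacle in the overall argument is the first point above: ensuring that the gluing and the squiggling cutoffs can be arranged so that the initial data is preserved exactly while simultaneously producing the correct quantitative energy increment from $\te_1$ onward; the restriction $t\geq\te_1\wedge\mathfrak{t}_L$ in \eqref{estimate:energy:new} is precisely what the modified cutoffs cost us, and carrying this compatibility through every step of the iteration requires verifying that neither the choice of the $\eta_i$ near $\{t=0\}$ nor the vanishing of $\rho_q$ there interferes with any of the H\"older estimates established in Propositions~\ref{prop:rho}--\ref{es:amplitude}.
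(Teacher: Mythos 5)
The overall plan — Littlewood--Paley truncation for $z_q$, gluing with $\ve_0$ anchored at $t=0$, straightened cutoffs near $t=0$, constant energy target $M_L^2\varsigma_{q+1}$, constraint to $t\geq\te_1\wedge\mathfrak{t}_L$ — is the correct architecture and matches the paper. However, there is a concrete gap at the heart of your gluing step. You write that the exact solutions are constructed ``with initial condition $v_{\ell_q}(0)=u^{in}*\varphi_{\ell_q}$, so that the initial data is automatically part of the iteration.'' This equality is false: since $v_q(0)=u^{in}*\varphi_{\ell_{q-1}}$ and $v_{\ell_q}=v_q*\varphi_{\ell_q}$, one has $v_{\ell_q}(0)=u^{in}*\varphi_{\ell_{q-1}}*\varphi_{\ell_q}$, which is a \emph{double} mollification of $u^{in}$. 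To produce $v_{q+1}(0)=u^{in}*\varphi_{\ell_q}$ the paper deliberately launches $\ve_0$ from the \emph{single} mollification $u^{in}*\varphi_{\ell_q}$ (see \eqref{eq:euler exact2}), which forces $\ve_0(0)\neq v_{\ell_q}(0)$. This mismatch is the whole technical content of the new gluing step: the stability bounds \eqref{vi- vl:new}--\eqref{e:b_diff_Dt:new} must be re-proved for $i=0$ because Gr\"onwall can no longer start from zero initial difference, and the smallness of $\|v_{\ell_q}(0)-\ve_0(0)\|_{C^{N+\alpha}_x}\lesssim M_L\ell_{q-1}^{\bar\beta-N-\alpha}$ relative to $M_L^2\tau_q\delta_{q+1}\ell_q^{-1-N+\alpha}$ requires the dedicated parameter constraints $b-1<(\bar\beta-\beta)/3$, $b-1<\bar\beta/\beta-1$, $20b\alpha<\bar\beta-b\beta$ in \eqref{choice:b:new}--\eqref{parameter:alpha1:new}. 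Without controlling this mismatch, the bound \eqref{e:Rq:1:new} on $\mathring{\overline R}_q$ over $I_0$ does not follow, and the iteration does not close. Your ``automatically'' is where the argument breaks.

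A secondary gap: you propose to ``smoothly interpolate $\rho_q$ to $0$ on $[0,\te_1\wedge\mathfrak{t}_L]$'' and then decompose via \eqref{def:rho q,i}. But near $t=0$ the denominator $\sum_j\int_{\T^3}\eta_j^2$ also tends to zero (your modified cutoffs vanish there), so the quotient $\rho_{q,i}$ is ill-defined, and worse, $\tilde R_{q,i}=\nabla\Phi_iR_{q,i}\nabla\Phi_i^T/\rho_{q,i}$ may fail to lie in $\overline{B_{1/2}}(\mathrm{Id})$ on $I_0$ where $\mathring{\overline R}_q\neq 0$. The paper sidesteps both problems by keeping the numerator $M_L^2\varsigma_{q+1}$ constant and inserting the auxiliary function $\zeta$ into the denominator in \eqref{def:rho:q,i}, so that $\tilde\eta_i^2/\tilde\rho_{q,i}$ stays bounded and $\tilde\rho_{q,i}$ vanishes near $t=0$ only through the numerator $\tilde\eta_i^2$. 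Finally, you should also note (per Remark~\ref{verify:vq0}) that the inductive bounds \eqref{itera:a:new}--\eqref{itera:b:new} must be \emph{verified} at $t=0$ for the explicitly specified $v_q(0)=u^{in}*\varphi_{\ell_{q-1}}$ — the estimate $\|u^{in}*\varphi_{\ell_{q-1}}\|_{C^1_x}\lesssim M_L\ell_{q-1}^{\bar\beta-1}\leq M_L\lambda_q\delta_q^{1/2}$ is where the extra parameter constraints are again used — which your proposal does not address.
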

	\begin{remark}\label{verify:vq0}
	\textit{Due to the specific definition of $v_q(0)$ in \eqref{euler1:new}, it is necessary to verify that the iterative estimates \eqref{itera:a:new} and \eqref{itera:b:new} hold for $v_q(0)$. This verification is carried out after selecting the parameters in Subsection~\ref{sec:mollification:new}; see estimates \eqref{vq0:C1} and \eqref{vq0:C0} below.} 
	\end{remark}
	
	The proof of Proposition~\ref{p:iteration1} is presented in the following subsections. Based on Proposition~\ref{p:iteration1}, we can proceed to establish Theorem~\ref{Onsager:theorem:cauchy:problem}.
	
	\subsection{Proof of Theorem~\ref{Onsager:theorem:cauchy:problem}}
	The proof is primarily similar to Theorem~\ref{Onsager:theorem} and \cite[Theorem 1.1]{HZZ21markov}.
	
	\emph{\underline{Step 1}.}		
	In this step, we check that the initial iteration is valid.
	Let $\mathfrak{t}_L$ be a stopping time defined by \eqref{stopping time ps:new}, which can be made arbitrarily large by choosing $L$ large. Assume that the additional assumption \eqref{eq:u0} holds for some $N\geq1$.
	To apply Proposition~\ref{p:iteration1} iteratively and obtain a sequence of solutions $(v_q,\mathring{R}_q)$ to the system \eqref{euler1:new}, we start the initial iteration with $(v_1,\mathring{R}_1):=(u^{in}*\varphi_{\ell_0},(v_1+z_1)  \mathring{\otimes}(v_1+z_1) )$.
	It is easy to check that they solve \eqref{euler1:new}. Given $\|u^{in}\|_{C^{\bar{\beta}}_x}\leq N\leq M_L$, it follows that for any $t\in [0,\mathfrak{t}_L]$
	\begin{align*}
		\|v_1\|_{C^0_{t,x}}\leq  M_L \leq 3\tilde{M}M_L \lambda_{1}^{\sfrac{3\alpha}{2}}-2\tilde{M}M_L \lambda_{1}^{\sfrac{3\alpha}{2}}=3\tilde{M}M_L \lambda_{1}^{\sfrac{3\alpha}{2}}-\tilde{M}M_L\delta_{1}^{\sfrac12},
	\end{align*}
	which gives \eqref{itera:a:new}.
	By \eqref{vq0:C1}, we have for any $t\in [0,\mathfrak{t}_L]$
	\begin{align*}
		\|v_1\|_{C^0_tC^1_{x}}\lesssim \ell_0^{\bar{\beta}-1} \|u^{in}\|_{C^{\bar{\beta}}_x} \leq M_L\ell_1^{\bar{\beta}-1}  \leq \tilde{M} M_L\lambda_{1}\delta_{1}^{\sfrac12},
	\end{align*}
	which gives \eqref{itera:b:new}. Finally, it follows from \eqref{z ps:new} that for any $t\in [0,\mathfrak{t}_L]$
	\begin{align*}
		\|\mathring{R}_1\|_{C^0_{t,x}}\leq \|v_1+z_1\|^2_{C^0_{t,x}}\leq (N+L)^2\leq \tilde{M} M_L\delta_2\lambda_{1}^{-3\alpha},
	\end{align*}
	which gives \eqref{itera:c:new}.
	
	\emph{\underline{Step 2}.}		
	Since the first iteration is established, Proposition~\ref{p:iteration1} yields a sequence $(v_q,\mathring{R}_q)$ satisfying \eqref{itera:a:new}-\eqref{estimate:energy:new}. We then use the same computations as in Subsection~\ref{Proof:The:1.3} together with \eqref{itera:b:new}, \eqref{vq+1-vq:new} and interpolation to deduce that $v_q$ converges in $C([0,\mathfrak{t}_L],C^{\bar{\vartheta}}(\T^3,\R^3))\cap C^{\bar{\vartheta}}([0,\mathfrak{t}_L],C(\T^3,\R^3))$ towards a limit $v$ with initial data $v(0)=u^{in}$ for any $\bar{\vartheta}<\bar{\beta}$. Since $v_q$ is $(\mathcal{F}_{t})_{t\geq 0}$-adapted for every $q\in \mathbb{N}_0$, the limit $v$ is $(\mathcal{F}_{t})_{t\geq 0}$-adapted as well.
	Furthermore, it follows from \eqref{itera:c:new} that $\lim_{q\to \infty} \mathring{R}_q=0$ in $C([0,\mathfrak{t}_L],C(\T^3,\R^3))$. Thus $v$ is an analytically weak solution to \eqref{nonlinear:new}. Hence letting $u=v+z$ we obtain an $(\mathcal{F}_{t})_{t\geq 0}$-adapted analytically weak solution to \eqref{eul1} of class $u\in C([0,\mathfrak{t}_L],C^{\bar{\vartheta}}(\T^3,\R^3))\cap C^{\bar{\vartheta}}([0,\mathfrak{t}_L],C(\T^3,\R^3))$ with initial data $u(0)=u^{in}$.
	
	Next, we prove the non-uniqueness of the constructed solutions. In view of \eqref{estimate:energy:new}, we have for any $t\in [\te_1\wedge \mathfrak{t}_L,\mathfrak{t}_L]$
	\begin{align*}
		\big|\|v(t)\|_{L^2}^2-\|v_1(t)\|_{L^2}^2-3KM_L^2\lambda_{1}^{3\alpha} \big|&=	\big|\|v(t)\|_{L^2}^2-\|v_1(t)\|_{L^2}^2-3M_L^2 \varsigma_{2} \big|
		\\ &\leq \left|\sum_{q=1}^\infty(\|v_{q+1}(t)\|_{L^2}^2-\|v_{q}(t)\|_{L^2}^2-3M_L^2\varsigma_{q+1})\right|+3 M_L^2  \sum_{q\geq2} \varsigma_{q+1}
		\\
		&\leq  M_L^2  \lambda_{1}^{\sfrac{3\alpha}{2}}\sum_{q=1}^{\infty}\delta_{q+1}^{\sfrac12}+3M_L^2  \sum_{q\geq 2}\varsigma_{q+1}=:M_L^2 c\lambda_{1}^{3\alpha},
	\end{align*}
	where $c$ is a constant independent of $K, a, b ,\beta, \alpha$. 
	By choosing different $K$ and $K'$ so that $3|K-K'|>2c$ we deduce that the corresponding solutions $v_K$ and $v_{K'}$ have different $L^2$-norms. Therefore, the solutions $u_K=v_K+z$ and $u_{K'}=v_{K'}+z$ are different in the pathwise sense as well.
	
	\emph{\underline{Step 3}.}		 Based on the previous discussions, we next briefly review the arguments in \cite[Theorem 1.1]{HZZ21markov} to construct global solutions. To this end, we first define $\hat{z}(t)=z(t+\mathfrak{t}_L)-e^{-t}z(\mathfrak{t}_L)$, $\hat{B}_t=B_{t+\mathfrak{t}_L}-B_{\mathfrak{t}_L}$, $\hat{\mathcal{F}}_t=\sigma(\hat{B}_s, s\leq t)\vee \sigma (u(\mathfrak{t}_L))$ and the stopping time $	\hat{\mathfrak{t}}_{L+1}:= \inf\{t\geq0,\|z(t)\|_{H^{5/2+\gamma}}\geq 2(L+1)/C_S\}\wedge (L+1)$,
	which satisfies $\mathfrak{t}_{L+1}-\mathfrak{t}_{L}\leq \hat{\mathfrak{t}}_{L+1}$.
	Then, we can use the value $u(\mathfrak{t}_L)$ as a new initial condition in \emph{Step 1} to construct a solution $\bar{u}_1\in C([0,\hat{\mathfrak{t}}_{L+1}], C^{\vartheta_1}(\T^3,\R^3))$ for any $\vartheta_1<\bar{\vartheta}$ to \eqref{eul1} with $B$ replaced by $\hat{B}$, which is adapted to $\hat{\mathcal{F}}_t$. 
	
	Following the arguments in \cite[Theorem 1.1]{HZZ21markov}, one can check $u_1(t)=u(t) \mathbf{1}_{\{t\leq \mathfrak{t}_L\}}+\bar{u}_1(t-\mathfrak{t}_L) \mathbf{1}_{\{t> \mathfrak{t}_L\}}$ satisfies the system \eqref{eul1} before $\mathfrak{t}_{L+1}$ and is adapted to the natural filtration $(\mathcal{F}_t)_{t\geq0}$, hence $u_1$ is a probabilistically strong solution to \eqref{eul1}. Moreover, 
	by the arbitrariness of $\vartheta_1<\bar{\vartheta}<\bar{\beta}$, we deduce that $u_1$ belongs to $C([0,\mathfrak{t}_{L+1}],C^\vartheta(\T^3,\R^3))$ for any $\vartheta<\bar{\beta}$. Now, we can iterate the above steps: starting from $u(\mathfrak{t}_{L+k})$ and constructing solutions $u_{k+1}$ before the stopping time $\mathfrak{t}_{L+k+1}$. We obtain $\bar{u}:=\sum_{k=1}^{\infty}u\mathbf{1}_{\{t\leq \mathfrak{t}_L\}}+ u_{k} \mathbf{1}_{\{\mathfrak{t}_{L+k-1} \leq t\leq \mathfrak{t}_{L+k}\}}$ is a probabilistically strong solution belonging to
	$C([0,\infty),C^\vartheta(\T^3,\R^3))$ for any $\vartheta<\bar{\beta}$, and the time regularity can also be recovered. Hence, we conclude the proof of Theorem~	\ref{Onsager:theorem:cauchy:problem}. \qed
	
	\vspace{1em} 
	
	In the following subsections, we prove Proposition~\ref{p:iteration1} following the same structure outlined in Sections~\ref{sec:begin}, \ref{sec:perturbation} and \ref{sec:inductive esti}. The main differences arise from the gluing and perturbation steps as we need to modify the initial data of the exact Euler system (as seen in \eqref{eq:euler exact2}) and redefine the cutoffs $\eta_i$ to ensure $v_{q+1}(0)=u^{in}*\varphi_{\ell_{q}}$.
	Since most of the assumptions and calculations remain unchanged, we will primarily focus on highlighting the differences in the subsequent subsections.
	
	\subsection{Choice of parameters and mollification step}\label{sec:mollification:new}
	During the proof of Proposition~\ref{p:iteration1}, we always assume $0<\beta<\bar{\beta}<1/3$, $b>1$ and close to $1$ such that 
	\begin{align}\label{choice:b:new}
		0<b-1<\min\left\{\frac{1-3\beta}{2\beta},\sqrt{\frac{1}{3\beta}}-1,\frac{\bar{\beta}-\beta}{3}, \frac{\bar{\beta}}{\beta}-1, \frac{1}{9} \right\}.
	\end{align}
	In addition, we require $\alpha>0$ to be sufficiently small in terms of $b,\beta$ satisfying 
	\begin{align}\label{parameter:alpha1:new}
		20b\alpha<\min \left\{(b-1)(1-2b\beta-\beta), \beta(b-1), \frac23-2b^2\beta, \bar{\beta}-b\beta \right\} .
	\end{align}
	Finally, we choose $a$ large enough to have $2\leq a^{(b-1)\beta}
	\leq  a^{(b-1)(1-\beta)}$ and $\tilde{M}  \leq a^{\sfrac{b\alpha}{2}}$.
	We point out that by choosing different $K$ we get different $a$.
	
	Before the mollification procedure, we need to verify that the inductive assumptions \eqref{itera:a:new} and \eqref{itera:b:new} hold for $t=0$ and all $q\geq1$.
	Indeed, by the mollification estimate \eqref{estimate:molli1} and the bound $\|u^{in}\|_{C^{\bar{\beta}}_x} \leq M_{L}$, we have for all $q\geq 1$:
	\begin{align}\label{vq0:C11}
		\|u^{in}*\varphi_{\ell_q}\|_{C^1_x}\lesssim \ell_{q}^{\bar{\beta}-1} \|u^{in}\|_{C^{\bar{\beta}}_x}\lesssim M_L\ell_{q}^{\bar{\beta}-1}\leq M_L\lambda_{q}\delta_{q}^{\sfrac12}.
	\end{align}
	More precisely, by the choice of the parameters \eqref{choice:b:new} and \eqref{parameter:alpha1:new}, we have $b-1<\bar{\beta}-\beta<1$ and $3\alpha<\beta(b-1)$.
	Combining this with \eqref{ell:lambdaq}, we have
	\begin{align*}
		\ell_{q}^{\bar{\beta}-\beta}\leq \lambda_q^{-(\bar{\beta}-\beta)} \le \lambda_q^{-3\beta (b-1)} \le \lambda_q^{-\beta(b-1) - 6\alpha } \lesssim \delta_{q+1}^{\sfrac12}\delta_q^{-\sfrac12}\lambda_q^{-6\alpha}=\lambda_{q}\ell_{q}\leq  (\lambda_q\ell_{q})^{1-\beta},
	\end{align*}
	which gives \eqref{vq0:C11}, as
	\begin{align*}
		\ell_{q}^{\bar{\beta}-1} = 	\ell_{q}^{\bar{\beta}-\beta } \ell_{q}^{-(1-\beta)}\lesssim \lambda_q^{1-\beta} \lesssim  \lambda_q\delta_q^{\sfrac12}.
	\end{align*}
	Since $\ell_{q}$ is decreasing, we also have
	\begin{align}\label{vq0:C1}
		\|v_q(0)\|_{C^1_x}= \|u^{in}*\varphi_{\ell_{q-1}} \|_{C^1_x} \lesssim \|u^{in}\|_{C^{\bar{\beta}}_x} \ell_{q-1}^{\bar{\beta}-1} \leq M_L\ell_{q}^{\bar{\beta}-1} \leq  M_L\lambda_{q}\delta_{q}^{\sfrac12},
	\end{align}
	which verifies \eqref{itera:b:new} at $t=0$. Similarly, using \eqref{estimate:molli1} and $\|u^{in}\|_{C^{\bar{\beta}}_x} \leq M_{L}$ again, it follows that for any $q\geq 1$
	\begin{align}\label{vq0:C0}
		\|v_q(0)\|_{C^0_x}\leq   M_L \leq 3\tilde{M}M_L \lambda_{1}^{\sfrac{3\alpha}{2}}-2\tilde{M}M_L \lambda_{1}^{\sfrac{3\alpha}{2}}=3\tilde{M}M_L \lambda_{1}^{\sfrac{3\alpha}{2}}-\tilde{M}M_L\delta_{1}^{\sfrac12}.
	\end{align}
	
	In order to guarantee smoothness throughout the construction, we replace $(v_q,z_q,\mathring{R}_q)$ by a mollified
	field $(v_{\ell_q},z_{\ell_q},\mathring{R}_{\ell_q})$, which is given by \eqref{equation:mollified} and satisfies on $t\in [0,\mathfrak{t}_L]$
	\begin{equation}\label{mollification:new}
		\aligned
		\partial_t v_{\ell_q} +\div((v_{\ell_q}+z_{\ell_q})\otimes (v_{\ell_q}+z_{\ell_q}))+\nabla p_{\ell_q}&=\div \mathring{R}_{\ell_q},
		\\ \div v_{\ell_q} &=0,
		\\ v_{\ell_q}(0)&=u^{in}*\varphi_{\ell_{q}}*\varphi_{\ell_{q-1}}.
		\endaligned
	\end{equation}
	Moreover, using standard mollification estimates, we derive the following proposition:
	\begin{proposition}\label{esti:mollification:new}
		For any $t\in [0,\mathfrak{t}_L]$ and $N\geq 0$, we have
		\begin{subequations}
			\begin{align}
				\|v_{\ell_q}(t)-v_q(t)\|_{C^0_{x}}&\lesssim \tilde{M} M_L \delta_{q+1}^{\sfrac12}\ell_{q}^{\alpha}\ ,\label{vq-vl:new}
				\\      \|\mathring{R}_{{\ell_q}}(t)\|_{C_x^{N+\alpha}}&\lesssim \tilde{M}M_L^2 \delta_{q+1}\ell_q^{-N+\alpha}\, ,\label{estimate Rl:new}
			\end{align}
		\end{subequations}
		where the implicit constant may depend on $N$ and $\alpha$.
	\end{proposition}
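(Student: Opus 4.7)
The plan is to follow the structure of the analogous Proposition~\ref{esti:mollification} essentially verbatim, since both bounds depend only on standard mollification estimates and the inductive bounds at step $q$, and the choice of parameters in Subsection~\ref{sec:mollification:new} preserves all the algebraic relations used there.

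For \eqref{vq-vl:new}, I would apply the mollification estimate \eqref{estimate:molli3} from Appendix~\ref{sec:appendix c} to get $\|(v_{\ell_q}-v_q)(t)\|_{C^0_x} \lesssim \ell_q \|v_q\|_{C^0_t C^1_x}$, then insert the inductive bound \eqref{itera:b:new} to obtain $\|v_q\|_{C^1_x} \leq \tilde{M} M_L \lambda_q \delta_q^{\sfrac12}$. Using the definition \eqref{def l} of $\ell_q$, namely $\ell_q \leq \delta_{q+1}^{\sfrac12}\delta_q^{-\sfrac12}\lambda_q^{-1-6\alpha}$, this yields $\tilde{M} M_L \ell_q \delta_q^{\sfrac12}\lambda_q \leq \tilde{M} M_L \delta_{q+1}^{\sfrac12}\ell_q^{\alpha}$, which is precisely \eqref{vq-vl:new}.

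For \eqref{estimate Rl:new}, I would decompose $\mathring{R}_{\ell_q}$ exactly as in \eqref{equation:mollified}, splitting it into the convolution $\mathring{R}_q * \varphi_{\ell_q}$ and the commutator term $((v_q+z_q)\mathring{\otimes}(v_q+z_q))*\varphi_{\ell_q}-(v_{\ell_q}+z_{\ell_q})\mathring{\otimes}(v_{\ell_q}+z_{\ell_q})$. The first piece is bounded by $\ell_q^{-N-\alpha}\|\mathring{R}_q\|_{C^0_{t,x}}$ via \eqref{estimate:molli1}, and the commutator is controlled using the mollification estimate \eqref{estimate:molli2} by $\ell_q^{2-N-\alpha}\|v_q+z_q\|_{C^0_t C^1_x}^2$. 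Inserting \eqref{itera:b:new} together with $\|z_q\|_{C^1_x}\leq L$ from \eqref{z ps:new}, and \eqref{itera:c:new}, and then using $\ell_q^{-2\alpha}\lambda_q^{-12\alpha}\leq 1$, $\lambda_q^{-3\alpha}\ell_q^{-2\alpha}\leq 1$, and choosing $a$ large enough so that $\tilde{M}\lambda_q^{-\alpha}\ll 1$ absorbs universal constants, reduces the combined bound to $\tilde{M} M_L^2 \delta_{q+1} \ell_q^{-N+\alpha}$, as required.

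There is no genuine obstacle here; the only thing to note is that in the present setting the noise bounds \eqref{z ps:new} are slightly weaker (only $\|z_q\|_{C^1_x}\leq L$ uniformly, with $\|z_q\|_{C^2_x}$ growing as $\lambda_q^{\sfrac23}$), but the proof uses only the $C^1_x$ bound on $z_q$, so this causes no loss. Accordingly, the argument goes through verbatim, and unlike in Proposition~\ref{esti:mollification}, no estimates of the form \eqref{estimate energy vq-vl} or \eqref{estimate energy vq-vl cdot zlq} need to be included in this subsection, because the new energy iteration \eqref{estimate:energy:new} in Proposition~\ref{p:iteration1} is formulated directly in terms of $\|v_q\|_{L^2}^2$ rather than $\|v_q+z_q\|_{L^2}^2$ and therefore does not demand a mollification-based energy comparison at this stage.
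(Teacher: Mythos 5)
Your proof is correct and follows the same route as the paper, which simply notes that the proof of Proposition~\ref{esti:mollification:new} carries over verbatim from Proposition~\ref{esti:mollification}. Your two additional observations—that the argument only uses the $C^1_x$ bound on $z_q$, so the weaker $C^2_x$ control in \eqref{z ps:new} is harmless, and that the energy comparisons \eqref{estimate energy vq-vl}, \eqref{estimate energy vq-vl cdot zlq} are unnecessary because \eqref{estimate:energy:new} is phrased in terms of $\|v_q\|_{L^2}^2$—are both accurate and correctly explain why the statement in Section~\ref{cauchy problem} is shorter than its counterpart.
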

	\begin{proof}
		The proof follows the same line as in Proposition~\ref{esti:mollification}.
	\end{proof}

	\subsection{Gluing step}\label{sec:esact solutions:new} 
The gluing procedure is similar to that in Subsection~\ref{sec:esact solutions} and Subsection~\ref{sec:gluing}. We  begin by constructing exact solutions $\ve_{i}$ to the Euler system and then combine these solutions to derive the glued solution $\overline{v}_q$. Notably, to propagate the initial data throughout the iteration, we adjust the initial value to $u^{in}*\varphi_{{\ell_q}}$ at $t=0$, which allows us to construct the next iteration $v_{q+1}(0)=u^{in}*\varphi_{{\ell_q}}$. This adjustment also necessitates additional estimates on $\ve_{i}-v_{\ell_{q}}$ compared to Subsection~\ref{sec:esact solutions}, and we will detail these differences below. 
	
	\subsubsection{Exact solutions}
	We first construct the exact solutions to the Euler system.
	Similar to \eqref{def tauq}, we define the parameter $\tau_q$ and initial times $\te_i \, (i\in   [-1,\infty) \cap \mathbb{Z})$ by
	\begin{align}\label{def tauq:new}
		\tau_q:=\frac{1}{M_L \lambda_{q}^{1+6\alpha}\delta_q^{\sfrac12}}\, ,\qquad \te_i:=i\tau_q\,(i\geq 0),\qquad \te_{-1}=0.
	\end{align}
	For $i\geq 1$, we define $(\ve_i, \pe_i)$ to be the smooth solutions to the Euler system \eqref{eq:euler exact1} on  $\big([\te_{i-1},\te_{i+1}]\cap [0,\mathfrak{t}_L]\big)\times \mathbb{T}^3$.
	For $ i=0 $, we define $(\ve_0,\pe_0)$ to be the smooth solution to the Euler system on $\big([\te_{-1},\te_{1}]\cap [0,\mathfrak{t}_L]\big)\times \mathbb{T}^3$ starting from $u^{in}*\varphi_{\ell_q}$:
	\begin{equation}\label{eq:euler exact2}
		\aligned
		\partial_t \ve_0+\div ( (\ve_0+z_{\ell_q}) \otimes (\ve_0+z_{\ell_q}) )+\nabla \pe_0& =0,
		\\ 	\div \ve_0& =0 ,
		\\  \ve_0(0,\cdot)& =u^{in}*\varphi_{\ell_q}.
		\endaligned
	\end{equation}
	To match the initial data $v_{q+1}(0)=u^{in}*\varphi_{\ell_{q}}$, we modify the value of $\ve_{0}(0)$ to be $u^{in}*\varphi_{\ell_{q}}$, which leads to $\ve_{0}(0)\neq v_{\ell_{q}}(0)$. This is the main distinction from Subsection~\ref{sec:esact solutions}. From the argument in Subsection~\ref{sec:esact solutions}, it follows that $\ve_i$ is well-defined on $\big([\te_{i-1},\te_{i+1}]\cap [0,\mathfrak{t}_L]\big)\times \mathbb{T}^3$ for $i\geq 1$, and the estimate \eqref{estimate:vi:N} also follows. We only need to justify the case $i=0$. By using \eqref{vq0:C1} along with standard mollification estimate \eqref{estimate:molli1} we have
	\begin{align*}
		(\te_{i+1}-\te_{i-1})\|u^{in}*\varphi_{\ell_q}\|_{C^{1+\alpha}_x}&\lesssim  \tilde{M} M_L \tau_q \lambda_{q}\delta_q^{\sfrac12} \ell_q^{-\alpha}  \leq \tilde{M} \lambda_{q}^{-\alpha} \leq \frac12 \, ,
	\end{align*}
	which ensures $\ve_{0}$ is well-defined on $\big([\te_{-1},\te_{1}]\cap [0,\mathfrak{t}_L]\big)\times \mathbb{T}^3$.
	Furthermore, we use \eqref{estimate: smooth u} and \eqref{vq0:C1} to deduce for any $t\in [\te_{-1},\te_{1}]\cap [0,\mathfrak{t}_L]$ and $N\geq2$
	\begin{equation*}
		\begin{aligned}
			\|\ve_0(t)\|_{C_x^{N+\alpha}} &\lesssim 	
			\|u^{in}*\varphi_{\ell_q}\|_{C^{N+\alpha}_x}+\tau_q \|z_{\ell_q}\|_{C_t^0C^{N+1+\alpha}_x}\big(\|u^{in}*\varphi_{\ell_q}\|_{
				C^0_tC^{1+\alpha}_x}+\|z_{\ell_q}\|_{C_t^0C^{2+\alpha}_x}\big)
			\\&\lesssim  M_L \ell_q^{1-N-\alpha}\lambda_{q}\delta_q^{\sfrac12}+M_L^2\tau_q \ell_q^{1-N-\alpha}\lambda_{q}^{\sfrac23}(\ell_{q}^{-\alpha}\lambda_{q}\delta_q^{\sfrac12}+\ell_{q}^{-\alpha}\lambda_{q}^{\sfrac23})\leq \tilde{M}  \tau_q^{-1} \ell_q^{1-N+\alpha} \, ,
		\end{aligned}
	\end{equation*}
	where the last line is justified by the same argument as \eqref{estimate:vi:N}. Using \eqref{estimate: smooth u} and \eqref{vq0:C1} again, we obtain for any $t\in [\te_{-1},\te_{1}]\cap [0,\mathfrak{t}_L]$
	\begin{align*}
		\|\ve_0(t)\|_{C^{1+\alpha }_x}  &\lesssim	\|u^{in}*\varphi_{\ell_q}\|_{C^{1+\alpha }_x} +\|z_{\ell_{q}}\|_{C_t^0C^{2+\alpha }_x}
		\lesssim  M_L \ell_q^{-\alpha}\lambda_{q}\delta_q^{\sfrac12}+M_L \ell_q^{-\alpha}\lambda_{q}^{\sfrac23}\leq \tau_q^{-1} \ell_q^{\alpha} \, .
	\end{align*}
	By the above discussion, we deduce that for any $i\geq 0$ and $t\in [\te_{i-1},\te_{i+1}]\cap [0,\mathfrak{t}_L]$, the exact solution $\ve_i$ satisfies the following bounds for any $N\geq 1$ 
	\begin{align}\label{estiamte:vi:new}
		\|\ve_i(t)\|_{C_x^{N+\alpha}}
		\leq \tilde{M}\tau_q^{-1} \ell_q^{1-N+\alpha}\, .
	\end{align}
	Since we have verified the same bounds for $v_{\ell_{q}}$ and $\ve_i$ as in Section~\ref{sec:begin}, we can deduce the same stability estimates as in Proposition~\ref{esti: v,p,D i-l} and Proposition~\ref{p:S_est}.
	\begin{proposition}\label{esti: v,p,D i-l:new}
		For any $t\in  [\te_{i-1},\te_{i+1}]\cap [0,\mathfrak{t}_L]$ and $N\geq 0$, we have
		\begin{subequations}
			\begin{align}
			 	\|\ve_i- v_{\ell_q}\|_{C^{N+\alpha}_x}&\lesssim \tilde{M}M_L^2 \tau_q \delta_{q+1}\ell_q^{-1-N+\alpha} \ ,\label{vi- vl:new}
			 	\\	\|(\partial_t+(v_{\ell_q}+z_{\ell_q})\cdot \nabla)(v_{\ell_q}-\ve_i)\|_{C^{N+\alpha}_x}&\lesssim  \tilde{M} M_L^2 \delta_{q+1}\ell_q^{-1-N+\alpha} \ , \label{Dtl v i- v l:new}
			 	 \end{align}
			 	\end{subequations}
			 	where the implicit constant may depend on $N$ and $\alpha$.
			 	Let $b_i= \mathcal{B} \ve_{i}$ be the vector potential defined as in Subsection~\ref{sec:esact solutions}. For any $t\in  [\te_{i},\te_{i+1}]\cap [0,\mathfrak{t}_L]$ and $N\geq 0$, we have
			 	\begin{subequations}
			 		\begin{align}
			 	\|b_i-b_{i+1}\|_{C^{N+\alpha}_x} &\lesssim  \tilde{M}M_L^2 \tau_q\delta_{q+1}\ell_q^{-N+\alpha}\,,   \label{e:b_diff:new} 
			 	\\ \| (\partial_t+(v_{\ell_q}+z_{\ell_q})\cdot \nabla)(b_i-b_{i+1})\|_{C^{N+\alpha}_x} &\lesssim  \tilde{M}M_L^2 \delta_{q+1}\ell_q^{-N+\alpha}\,, \label{e:b_diff_Dt:new}
			 \end{align}
		\end{subequations}
		where the implicit constant may depend on $N$ and $\alpha$.
	\end{proposition}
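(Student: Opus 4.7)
The plan is to mirror the proofs of Propositions~\ref{esti: v,p,D i-l} and \ref{p:S_est}, and indeed for every index $i \geq 1$ the argument goes through verbatim. Subtracting \eqref{mollification:new} from \eqref{eq:euler exact1} produces a transport equation for $v_{\ell_q} - \ve_i$ with source $(\ve_i - v_{\ell_q}) \cdot \nabla(\ve_i + z_{\ell_q}) - \nabla(p_{\ell_q} - \pe_i) + \div \mathring{R}_{\ell_q}$; the pressure difference is controlled by Schauder estimates applied to the induced Poisson problem; and Grönwall's inequality on the interval $[\te_{i-1}, t]$, starting from the matching condition $\ve_i(\te_{i-1}) = v_{\ell_q}(\te_{i-1})$, closes the loop. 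The bounds \eqref{estimate Rl:new}, \eqref{estiamte:vi:new} and the already-verified inductive estimates \eqref{z ps:new}, \eqref{itera:b:new} feed into the argument exactly as \eqref{estimate Rl}, \eqref{estiamte:vi}, \eqref{z ps}, \eqref{itera:b} did before. The higher derivatives $N \geq 1$ are then obtained by commuting $\partial^\theta$ with $D_{t,\ell_q}$ and interpolating, and the vector-potential estimates \eqref{e:b_diff:new}, \eqref{e:b_diff_Dt:new} follow from applying the same Grönwall scheme to $\tilde b_i := \mathcal{B}(\ve_i - v_{\ell_q})$, which vanishes at $\te_{i-1}$.

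The one genuinely new case is $i = 0$, because now $\ve_0(0) = u^{in} * \varphi_{\ell_q}$ while $v_{\ell_q}(0) = u^{in} * \varphi_{\ell_q} * \varphi_{\ell_{q-1}}$, so the initial difference no longer vanishes. The plan is to combine the mollification estimate \eqref{estimate:molli3} with $\|u^{in}\|_{C^{\bar\beta}_x} \leq M_L$ from \eqref{eq:u0} to get, for each $N \geq 0$,
\begin{align*}
\|\ve_0(0) - v_{\ell_q}(0)\|_{C^{N+\alpha}_x} \lesssim \ell_{q-1}^{\bar\beta} \|u^{in} * \varphi_{\ell_q}\|_{C^{N+\alpha}_x} \lesssim M_L \, \ell_{q-1}^{\bar\beta} \, \ell_q^{-N-\alpha}.
\end{align*}
Then Grönwall, applied on $[0, t]$ with $t \leq \tau_q$, yields exactly the same estimate as in Proposition~\ref{esti: v,p,D i-l:new} provided one can absorb $M_L \ell_{q-1}^{\bar\beta} \ell_q^{-N-\alpha}$ into the target bound $\tilde M M_L^2 \tau_q \delta_{q+1} \ell_q^{-1-N+\alpha}$. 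This reduces to the scalar inequality $\ell_q^{1 + \bar\beta - 2\alpha} \lesssim M_L \tau_q \delta_{q+1}$, which in terms of the frequencies becomes a polynomial inequality in $\lambda_q$. The constraints \eqref{choice:b:new} and \eqref{parameter:alpha1:new} are designed precisely so that this inequality holds: the key ingredients are $b - 1 < \tfrac{\bar\beta - \beta}{3}$ (which gives the necessary gap between the mollification scale $\ell_{q-1}$ and the amplitude $\delta_{q+1}^{1/2}$) and the smallness of $\alpha$.

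The vector-potential estimate at $i = 0$ is handled by the same device: one notes $\tilde b_0(0) = \mathcal{B}(\ve_0(0) - v_{\ell_q}(0))$ and uses that $\nabla \mathcal{B}$ is bounded on Hölder spaces, converting the initial bound above into a nonzero but controlled starting value for the Grönwall inequality governing $\tilde b_0$; the constraint $b - 1 < (\bar\beta - \beta)/3$ again ensures the initial term is dominated by the inhomogeneous $\mathring{R}_{\ell_q}$-driven term.

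The principal obstacle, and essentially the only non-bookkeeping point, is therefore the parameter matching: one must verify that the specific power loss $\ell_{q-1}^{\bar\beta}$ coming from $\varphi_{\ell_{q-1}}$ applied to a merely $C^{\bar\beta}$ datum is compensated by the gain $\tau_q \delta_{q+1} \ell_q^{-1}$ built into the right-hand sides. Everything else — the commutator, pressure, and interpolation estimates for higher $N$, as well as the material-derivative bounds — is an exact replay of Section~\ref{sec:esact solutions}, using the regularity of $\ve_0$ provided by \eqref{estiamte:vi:new} and the mollified noise bounds \eqref{z ps:new} in place of their Section~\ref{sec:begin} analogues.
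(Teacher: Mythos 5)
Your plan is correct in its broad strokes — you have correctly identified that the only genuinely new case compared with Propositions~\ref{esti: v,p,D i-l} and \ref{p:S_est} is $i=0$, where $\ve_0(0)=u^{in}*\varphi_{\ell_q}$ no longer matches $v_{\ell_q}(0)=u^{in}*\varphi_{\ell_q}*\varphi_{\ell_{q-1}}$, and that the fix is to bound the initial mismatch via mollification estimates, feed it into Gr\"onwall as a nonzero starting value, and absorb it using the parameter constraints \eqref{choice:b:new}, \eqref{parameter:alpha1:new}. That is exactly what the paper does for \eqref{vi- vl:new} and \eqref{Dtl v i- v l:new}. Two small slips there, neither fatal: the first display should read $\ell_{q-1}^{\bar\beta}\|u^{in}*\varphi_{\ell_q}\|_{C^{N+\alpha+\bar\beta}_x}$ before you invoke \eqref{estimate:molli1}; and the scalar inequality you would actually have to verify is $\ell_{q-1}^{\bar\beta}\,\ell_q^{1-2\alpha}\lesssim M_L\tau_q\delta_{q+1}$, not $\ell_q^{1+\bar\beta-2\alpha}\lesssim M_L\tau_q\delta_{q+1}$ — your bound carries an $\ell_{q-1}^{\bar\beta}$, which is larger than $\ell_q^{\bar\beta}$ since the mollification scales are decreasing. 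The stronger inequality does still hold under \eqref{choice:b:new}, so your conclusion for the velocity estimates stands. (Incidentally, for $N\ge1$ the paper does not telescope at all; it simply applies the triangle inequality and \eqref{estimate:molli1} to each term separately, which gives the slightly sharper factor $\ell_q^{\bar\beta-N-\alpha}$.)

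The genuine gap is in your treatment of the vector-potential estimate \eqref{e:b_diff:new} at $i=0$. You propose to ``convert the initial bound above'' via the boundedness of $\nabla\mathcal{B}$, i.e.\ to apply $\mathcal{B}$ to your already-established bound on $\ve_0(0)-v_{\ell_q}(0)$. This does not close. With only $\|\tilde b_0(0)\|_{C^\alpha_x}\lesssim \|\ve_0(0)-v_{\ell_q}(0)\|_{C^\alpha_x}\lesssim M_L\ell_{q-1}^{\bar\beta}\ell_q^{-\alpha}$, the required absorption $\ell_{q-1}^{\bar\beta}\ell_q^{-\alpha}\lesssim \tau_q\delta_{q+1}\ell_q^{\alpha}$ reduces, after passing to exponents, to $\bar\beta/b\ge 1+\beta(2b-1)$, which is impossible since $\bar\beta<1/3$. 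The paper's argument instead commutes $\mathcal{B}$ with both convolutions, writes $\tilde b_0(0)=(\mathcal{B}u^{in})*\varphi_{\ell_q}*\varphi_{\ell_{q-1}}-(\mathcal{B}u^{in})*\varphi_{\ell_q}$, and then applies \eqref{estimate:molli3} to $\mathcal{B}u^{in}$, which is one full derivative more regular than $u^{in}$ (precisely because $\nabla\mathcal{B}$ is order zero). This yields the crucial extra power $\ell_{q-1}^{1+\bar\beta-\alpha}$ rather than $\ell_{q-1}^{\bar\beta}\ell_q^{-\alpha}$, and that extra $\ell_{q-1}$ factor is what makes the absorption against $\tau_q\delta_{q+1}\ell_q^{\alpha}$ work under $b-1<(\bar\beta-\beta)/3$. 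Your proposal needs this commutation-and-regularity-gain step spelled out explicitly; without it, the vector-potential bound at $i=0$ does not follow from the parameter constraints.
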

	\begin{proof}
		The proof follows a similar approach to that of Proposition~\ref{esti: v,p,D i-l} and Proposition~\ref{p:S_est}, with the main distinction being the mismatch in initial values when $i=0$. Therefore, we only need to estimate this case. Let us first focus on \eqref{vi- vl:new}. By using the same calculations as in \eqref{estimate:vellzell}-\eqref{estimate: dtl alpha}, we deduce
		\begin{align*}
			\|(v_{\ell_q}-\ve_i)(t)\|_{C^\alpha_x}\lesssim 	\|(v_{\ell_q}-\ve_i)(\te_{i-1})\|_{C^\alpha_x}+ \tilde{M}M_L^2 \tau_q\delta_{q+1}\ell_q^{-1+\alpha}+\tau_q^{-1}\int_{\te_{i-1}}^{t}	\|(v_{\ell_q}-\ve_i)(s)\|_{C^\alpha_x} \dif s.
		\end{align*}
		Using $v_{\ell_q}(\te_{i-1})=\ve_i(\te_{i-1})$ and Gr\"onwall's inequality, we obtain \eqref{vi- vl:new} for the case $N=0$ and $i\neq 0$. 
		
		Note that the case $N=0$, $i=0$ requires a further estimate because $v_{\ell_q}(0)\neq \ve_0(0)$. By using mollification estimates and \eqref{vq0:C1}, we obtain
		\begin{equation}\label{bound:vlq0-vq0}
			\begin{aligned}
				\|v_{\ell_q}(0)- \ve_0(0)\|_{C^\alpha_x}= \|u^{in}*\varphi_{\ell_{q}}*\varphi_{\ell_{q-1}}-u^{in}*\varphi_{\ell_{q}}\|_{C^\alpha_x}
				\lesssim \ell^{\bar{\beta}-\alpha}_{q-1}\|u^{in}\|_{C^{\bar{\beta}}_x}
				\lesssim \tilde{M} M_L\ell^{\bar{\beta}-\alpha}_{q-1}.
			\end{aligned}
		\end{equation}
		Using $b-1< \frac{\bar{\beta} - \beta}{2}<1$, we have $-\bar{\beta}\leq - 4\beta(b-1)-\beta\leq -\beta(b-1)-b^2\beta$. Combining this with $10b\alpha<\beta(b-1)$ implies 
		\begin{align*}
			\ell_{q-1}^{\bar{\beta}-\alpha}\leq 	\lambda_{q-1}^{-\bar{\beta}+2\alpha}  \lesssim  \lambda_{q-1}^{-\beta b^2}\lambda_q^{2\alpha-\beta(b-1)/b} \le \lambda_{q-1}^{-\beta b^2} \lambda_q^{-8\alpha}  \leq   \lambda_{q+1}^{-\beta} \lambda_q^{-3\alpha} \leq M_L\tau_q \delta_{q+1}\ell_q^{-1+\alpha}.
		\end{align*}
		Therefore, we obtain
		\begin{align*}
			\|v_{\ell_q}(0)- \ve_0(0)\|_{C^\alpha_x}\leq \tilde{M}M_L^2 \tau_q \delta_{q+1}\ell_q^{-1+\alpha},
		\end{align*}
		and using again Gr\"{o}nwall's inequality implies that \eqref{vi- vl:new} also holds for $N=0$ and $i=0$. 
		
		Next, we consider $N\geq 1$, invoking once more the computations \eqref{Dt,lN}-\eqref{pl-iN} and \eqref{eq:Gronwall1} gives 
		\begin{align*}
			\|(v_{\ell_q}-\ve_i)(t)\|_{C^{N+\alpha}_x}&\lesssim 	\|(v_{\ell_q}-\ve_i)(\te_{i-1})\|_{C^{N+\alpha}_x}+ \tilde{M}M_L^2 \tau_q\delta_{q+1}\ell_q^{-1-N+\alpha}
			\\ &\quad+\tau_q^{-1}\int_{\te_{i-1}}^{t}	\|(v_{\ell_q}-\ve_i)(s)\|_{C^{N+\alpha}_x} \dif s.
		\end{align*}
		By using $v_{\ell_q}(\te_{i-1})=\ve_i(\te_{i-1})$ and applying Gr\"onwall's inequality, \eqref{vi- vl:new} is clearly established for $N\geq1$ and $i\neq 0$. 
		For the case $N\geq 1$ and $i=0$, we again require a further estimate similar to \eqref{bound:vlq0-vq0}. Utilizing mollification estimates, \eqref{vq0:C1}, $\ell_{q}\leq \lambda_{q}^{-1}$ and $3\alpha<\bar{\beta}-b\beta$, we deduce  
		\begin{align*}
			\|v_{\ell_{q}}(0)-\ve_{0}(0)\|_{C^{N+\alpha}_x}&\leq  \|u^{in}*\varphi_{{\ell_q}}*\varphi_{{\ell_{q-1}}}\|_{C^{N+\alpha}_x}+\|u^{in}*\varphi_{{\ell_q}}\|_{C^{N+\alpha}_x}
			\lesssim  \ell_q^{\bar{\beta}-N-\alpha}\|u^{in}\|_{C^{\bar{\beta}}_x}
			\\ &\lesssim \tilde{M} M_L\lambda_{q}^{-\bar{\beta}+3\alpha}\ell_{q}^{-N+\alpha}
			\lesssim \tilde{M} M_L\lambda_{q}^{-b\beta}\ell_{q}^{-N+\alpha}
			\lesssim \tilde{M} M_L^2\tau_q\delta_{q+1}\ell_{q}^{-1-N+\alpha}.
		\end{align*}
		Then,  by applying Gr\"{o}nwall's inequality, we conclude that \eqref{vi- vl:new} also holds for $N\geq 1$ and $i=0$. Following the same argument as in Proposition~\ref{esti: v,p,D i-l}, the estimate \eqref{Dtl v i- v l:new} directly obeys.
		
		Turning to the estimate \eqref{e:b_diff:new}, we first observe from \eqref{e:N>=1} and \eqref{vi- vl:new} that \eqref{e:b_diff:new} holds for $N\geq 1$ and $i\geq 0$. Next, we focus on the case $N=0$. As with Proposition~\ref{p:S_est}, we define $\tilde{b}_i:= \mathcal{B}(\ve_{i}-v_{\ell_{q}})$ and invoke the same computations as in \eqref{D(t,l )b N} to derive
		\begin{equation}\label{D(t,l )b N:new}
			\begin{aligned}
				\|\partial_t \tilde b_i+(v_{\ell_q}+z_{\ell_q})\cdot\nabla\tilde b_i\|_{C^{N+\alpha}_x}
				\lesssim \tau_q^{-1}\|\tilde b_i\|_{C^{N+\alpha}_x}+\tau_q^{-1}\ell_q^{-N}\|\tilde b_i\|_{C^{\alpha}_x}+\tilde{M}M_L^2\delta_{q+1}\ell_q^{-N+\alpha}.
			\end{aligned}
		\end{equation}
		Using the estimate  \eqref{eq:Gronwall1} for transport equations, we obtain for any $t\in [\te_{i-1},\te_{i+1}]\cap [0,\mathfrak{t}_L]$
		\begin{align}\label{tilde:bi}
			\|\tilde b_i(t)\|_{C^\alpha}\lesssim 	\|\tilde b_i(\te_{i-1})\|_{C^\alpha}+\tilde{M}M_L^2 \tau_q \delta_{q+1}\ell_q^{\alpha}+ \tau_q^{-1}\int_{\te_{i-1}}^{t} \|\tilde b_i(s)\|_{C^\alpha} \dif s.
		\end{align}
		For $i\neq0$, using $\tilde{b}_i(\te_{i-1})=0$ and applying Gr\"onwall's inequality we obtain
		\begin{align}\label{tilde b:0}
			\|\tilde b_i(t)\|_{C^\alpha}\lesssim \tilde{M}M_L^2\tau_q \delta_{q+1}\ell_q^{\alpha}.
		\end{align}
		
		The case $i=0$ is similar to \eqref{bound:vlq0-vq0} and requires to control the initial data $\tilde{b}_0(0)$.
		From \eqref{vq0:C1}, \eqref{estimate:molli1} and the boundedness of the zero-order operator $\nabla \mathcal{B}$ on H\"{o}lder spaces, it follows that
		\begin{equation*}
			\begin{aligned}
			\|\tilde{b}_0(0)\|_{C^\alpha_x}  =	\|\mathcal{B}v_{\ell_q}(0)- \mathcal{B}\ve_0(0)\|_{C^\alpha_x} 
				&= \|(\mathcal{B}u^{in})*\varphi_{\ell_{q}}*\varphi_{\ell_{q-1}}-\mathcal{B}u^{in}*\varphi_{\ell_{q}}\|_{C^\alpha_x}
				\\&\lesssim \ell^{1+\bar{\beta}-\alpha}_{q-1}\|\nabla \mathcal{B}u^{in}\|_{C^{\bar{\beta}}_x}
				\lesssim     \|u^{in}\|_{C^{\bar{\beta}}_x} \ell^{1+\bar{\beta}-\alpha}_{q-1} 
				\lesssim \tilde{M} M_L\ell^{1+\bar{\beta}-\alpha}_{q-1}.
			\end{aligned}
		\end{equation*}
		To match with \eqref{tilde b:0} for $i\neq 0$, we use $b-1<\frac{\bar{\beta}-\beta}{3}<1$, $(2b+1)\beta<1$ and $10\alpha<\frac{b-1}{b}$ to derive
		\begin{align*}
			-\bar{\beta}<-(b-1)(1+\beta+2b\beta)-10b\alpha-\beta<-b+1-b(2b-1)\beta-10b\alpha,
		\end{align*}
		which implies 
		\begin{align*}
			\lambda_{q-1}^{-\bar{\beta}}\leq \lambda_{q-1}\lambda_{q}^{-1-(2b-1)\beta-10\alpha}\leq \lambda_{q-1}\frac{\delta_{q+1}}{\lambda_{q}^{1+6\alpha}\delta_{q}^{\sfrac12}}\lambda_{q}^{-4\alpha} \leq  \lambda_{q-1} \tau_q\delta_{q+1}\ell_q^{\alpha}\ell_{q-1}^{\alpha}.
		\end{align*}
		This is enough since by \eqref{ell:lambdaq} we find
		\begin{align*}
			\|\tilde{b}_0(0)\|_{C^{\alpha}_x}\lesssim \tilde{M}M_L \ell_{q-1}^{1+\bar{\beta}-\alpha} \leq \tilde{M}M_L \lambda_{q-1}^{-1-\bar{\beta}}\ell_{q-1}^{-\alpha}\leq \tilde{M} M_L\tau_q\delta_{q+1}\ell_q^{\alpha}.
		\end{align*}
		Hence, by applying Gr\"onwall's inequality, we establish \eqref{tilde b:0} for the case $N=0$ and $i=0$. Furthermore, since $\tilde{b}_i-\tilde{b}_{i+1}=b_i-b_{i+1}$, we conclude that \eqref{e:b_diff:new} holds for any $N=0$ and $i\geq0$. Finally, using \eqref{e:b_diff:new} and \eqref{D(t,l )b N:new} once again, we obtain \eqref{e:b_diff_Dt:new}.
	\end{proof}
	
	\subsubsection{Gluing exact solutions}\label{gluing:exact:new}
	This section is exactly the same as Subsection~\ref{sec:gluing}, therefore, we omit the routine computations and proofs. We define the intervals $I_i, J_i$ $(i\geq 0)$ by $I_i:=[\te_i+\sfrac{\tau_q}{3},\te_i+\sfrac{2\tau_q}{3}]\cap[0,\mathfrak{t}_L]$, $J_i:=(\te_i-\sfrac{\tau_q}{3},\te_i+\sfrac{\tau_q}{3})\cap  [0,\mathfrak{t}_L]$ and let $\{\chi_i\}_{i\geq 0}$ be the partition of unity as defined in Subsection~\ref{sec:gluing}.
	We then define the glued velocity $\overline v_q$ by
	\begin{align}
		\label{eq:bar:v_q:def:new}
		\overline v_q(t,x):= \sum_i \chi_i(t) \ve_i(t,x)		\, .
	\end{align}
	Using the same computation as \eqref{computation:Rq}, we can show for all $(t,x)\in [0,\mathfrak{t}_L]\times \mathbb{T}^3$,
	\begin{equation}\label{gluing euler q:new}
		\aligned
		\partial_t \overline v_q+\div((\overline v_q+z_{\ell_q}) \otimes(\overline v_q+z_{\ell_q}))+\nabla \overline{p}_q&=\div \mathring{\overline{R}}_q,
		\\ \div \overline v_q &=0,
		\\ \overline{v}_q(0) &= u^{in}*\varphi_{\ell_{q}},
		\endaligned
	\end{equation}
	where the pressure $\overline{p}_q$ and stress $\mathring{\overline{R}}_q$ coincide with those in Subsection~\ref{sec:gluing}.
	
	Moreover, we have shown that the same estimates apply for $v_q$, $v_{\ell_{q}}$ and $\ve_i$ as in Section~\ref{sec:begin}. Therefore, the results for Proposition~\ref{p:vq:vell} and Proposition~\ref{p:Rq} remain valid, and the proof here requires no modifications. We summarize these estimates as follows.
	\begin{proposition}\label{p:vq:vell:new}
		For any $t\in[0,\mathfrak{t}_L]$, the glued velocity field $\overline v_q$ satisfies the following estimates
		\begin{subequations}
			 	\begin{align}
			 	\|\overline v_q - v_{{\ell_q}}\|_{C^\alpha_x} &\lesssim \tilde{M}M_L \delta_{q+1}^{\sfrac12}\ell_q^{\alpha} \label{e:vq:vell:new}, 
			 	\\	\|\overline v_q\|_{C^{1+N}_x} &\lesssim \tilde{M} \tau_q^{-1}\ell_q^{-N+\alpha},\label{e:vq:1+N:new} 
			 	\\ \|\overline{v}_q\|_{C^0_x}& \lesssim \tilde{M}M_L \lambda_{1}^{\sfrac{3\alpha}{2}},\label{e:vq:0:new}
			 \end{align}
		\end{subequations}
		for all $N \geq 0$. The new glued Reynolds stress $\mathring{\overline{R}}_q$ satisfies the estimates
	\begin{subequations}
			\begin{align}
			\|\mathring{\overline R}_q\|_{C^{N+\alpha}_x} &\lesssim \tilde{M}M_L^2 \delta_{q+1}\ell_q^{-N+\alpha} \label{e:Rq:1:new}\, ,
			\\ \|(\partial_t +( \overline v_q+z_{\ell_q})\cdot \nabla) \mathring{\overline R}_q\|_{C^{N+\alpha}_x} &\lesssim \tilde{M}M_L^2 \tau_q^{-1}\delta_{q+1}\ell_q^{-N+\alpha}\, , \label{e:Rq:Dt:new}
		\end{align}
	\end{subequations}
		for all $N \geq 0$, where the implicit constant may depend on $N$ and $\alpha$.
	\end{proposition}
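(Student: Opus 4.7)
The plan is to follow exactly the proofs of Proposition~\ref{p:vq:vell} and Proposition~\ref{p:Rq} verbatim, with the only subtlety being that the stability estimates of Proposition~\ref{esti: v,p,D i-l:new} now incorporate the adjusted initial condition $\ve_0(0)=u^{in}*\varphi_{\ell_q}\neq v_{\ell_q}(0)$. Since Proposition~\ref{esti: v,p,D i-l:new} already delivers the same pathwise bounds on $\ve_i-v_{\ell_q}$ and on $b_i-b_{i+1}$ as in Section~\ref{sec:begin} for all $i\geq 0$, the whole argument carries over and no new ingredient is needed beyond reorganising the proof around those inputs.

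For the velocity bounds, I would start from the identity $\overline v_q-v_{\ell_q}=\sum_i\chi_i(\ve_i-v_{\ell_q})$ and apply \eqref{vi- vl:new} with $N=0$ to obtain \eqref{e:vq:vell:new}, using $\tau_q\delta_{q+1}^{\sfrac12}\ell_q^{-1}\lesssim 1$ via the parameter choice in Subsection~\ref{sec:mollification:new}. Applying \eqref{vi- vl:new} with $N\geq 0$ and combining with \eqref{vq0:C1} and \eqref{estimate:molli1} (to bound $\|v_{\ell_q}\|_{C^{1+N}_x}$) yields \eqref{e:vq:1+N:new}. The $C^0_x$ estimate \eqref{e:vq:0:new} follows from \eqref{itera:a:new} together with \eqref{e:vq:vell:new} and the bound $\delta_{q+1}^{\sfrac12}\ell_q^{\alpha}\leq\lambda_1^{\sfrac{3\alpha}{2}}$.

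For the Reynolds stress, I would reproduce the proof of Proposition~\ref{p:Rq}: on each interval $I_i$, write $\mathring{\overline R}_q=\partial_t\chi_i(\mathcal R\curl)(b_i-b_{i+1})-\chi_i(1-\chi_i)(\ve_i-\ve_{i+1})\mathring\otimes(\ve_i-\ve_{i+1})$, and invoke \eqref{e:b_diff:new} and \eqref{vi- vl:new} together with the boundedness of $\mathcal R\curl$ on H\"older spaces to obtain \eqref{e:Rq:1:new}. For \eqref{e:Rq:Dt:new}, compute the material derivative $D_{t,q}$ explicitly as in Proposition~\ref{p:Rq}; this produces a sum of terms each controlled by \eqref{e:b_diff:new}, \eqref{e:b_diff_Dt:new}, \eqref{vi- vl:new}, \eqref{Dtl v i- v l:new}, the commutator estimate \eqref{estimate:molli2}, and the bounds on $v_{\ell_q}$ and $z_{\ell_q}$. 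The final passage from $D_{t,\ell_q}$ to the full transport derivative $\partial_t+(\overline v_q+z_{\ell_q})\cdot\nabla$ relies on \eqref{e:vq:vell:new}.

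The only genuinely delicate point\textemdash and the one that makes the statement nontrivial\textemdash is hidden in Proposition~\ref{esti: v,p,D i-l:new}, where the nonvanishing initial data $v_{\ell_q}(0)-\ve_0(0)$ at $i=0$ had to be absorbed by a careful parameter balance using $b-1<\frac{\bar\beta-\beta}{3}$, $(2b+1)\beta<1$ and $10b\alpha<\beta(b-1)$. Once that is in hand, the gluing estimates of the present proposition follow by the same algebraic manipulations as in Section~\ref{sec:begin} with the constant $\bar M$ replaced by $\tilde M$ and $L$ replaced by $M_L$, so I do not expect any further obstacle beyond bookkeeping the powers of $\lambda_q$ and $\ell_q$.
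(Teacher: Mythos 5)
Your proposal is correct and follows the same approach as the paper, which itself simply notes that the proofs of Proposition~\ref{p:vq:vell} and Proposition~\ref{p:Rq} carry over verbatim once Proposition~\ref{esti: v,p,D i-l:new} delivers the modified stability estimates absorbing the nonvanishing initial gap at $i=0$. One minor slip: in your justification of \eqref{e:vq:1+N:new}, the bound on $\|v_{\ell_q}(t)\|_{C^{1+N}_x}$ for general $t\in[0,\mathfrak{t}_L]$ should cite the inductive hypothesis \eqref{itera:b:new} together with the mollification estimate \eqref{estimate:molli1}, not \eqref{vq0:C1}, which only bounds $v_q$ at $t=0$; the rest of the argument is unaffected.
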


	\subsection{Perturbation step}\label{sec:perturbation:new}
	Compared to Section~\ref{sec:perturbation},  we need to reconstruct the perturbation $w_{q+1}$ such that $v_{q+1}(0)=\overline{v}_q(0)+ w_{q+1}(0) =u^{in}*\varphi_{\ell_{q}}$, which boils down to $w_{q+1}(0)=0$. To achieve this, we only modify the definition of the cutoffs $\eta_i$, while the building blocks remain the Mikado flows. In particular,
	we combine the ‘squiggling' space-time cutoffs used in Section~\ref{sec:perturbation} with the ‘straight' cutoffs from \cite{Ise18} to ensure $w_{q+1}(0)=0$. The construction of other components, such as the flow maps $\Phi_i$, the stress $\tilde{R}_{q,i}$ and the amplitudes $a_{(\xi,i)}$ follow the same procedure as in Section~\ref{sec:perturbation}. Therefore, we will primarily discuss the steps that require modification.
	\subsubsection{Cutoffs, energy decomposition, and amplitudes} Let us begin by defining a family of smooth, nonnegative cutoff functions $\{\tilde{\eta}_i\}_{i\geq 0}$ by
	\begin{align}
		\tilde{\eta}_i (t,x) \coloneq \begin{cases}
			\bar\eta_0(t) &   i =  0, \\
			\eta_i(t,x) &  i\ge 1,
		\end{cases} \label{e:defn-cutoff-total}
	\end{align}
	where  $\bar\eta_0$ is `straight' cutoffs defined as follows: let $\bar\eta_0\in C_c^\infty(J_0\cup I_0 \cup J_{1};[0,1])$, be identically $1$ on $I_0$, satisfy
	\begin{align*}
		\supp \bar\eta_0 = I_0 + \Big[  -\frac{\tau_q}6,\ \frac{\tau_q}6\Big]= \Big[\frac{\tau_q}{3}-\frac{\tau_q}6 , \frac{2\tau_q}{3}+\frac{\tau_q}6 \Big],
	\end{align*}
	and have the derivative estimates for $N\ge 0$:
	\[ \| \partial_t ^N \bar\eta_i\|_{C^0_t} \lesssim \tau_q^{-N}. \]
	For $i\geq 1$, $\eta_i$ are `squiggling' space-time cutoffs, which we used in Section~\ref{sec:perturbation}. In addition, the cutoffs $\tilde{\eta}_i$ still satisfy the properties \eqref{eq:eta:i:support1}-\eqref{eq:eta:i:derivative} outlined in Subsection~\ref{sec:cutoffs flow map} on the interval $[\te_1\wedge \mathfrak{t}_L,\mathfrak{t}_L]$.
	
	We next modify the energy decomposition in \eqref{def:rho q,i} by combining $\eta_i$ with $\varsigma_{q+1}$:
	\begin{equation}\label{def:rho:q,i}
		\tilde{\rho}_{q,i}(t,x):= \frac{\tilde{\eta}_i^2(t,x)}{\zeta(t)+\sum_{j\geq 1} \int_{\T^3} \tilde{\eta}_j^2(t,y)\,dy} M_L^2\varsigma_{q+1},
	\end{equation}
	where $\varsigma_q$ is defined in \eqref{def:varsigma} and $\zeta$ is a smooth function given by
	\begin{align*}
		\zeta(t)=\begin{cases}
			1, & t\leq  \te_1,\\
			\in (0,1), & t\in (\te_1, \te_1+\sfrac{\tau_q}{3}),\\
			0, & t\geq  \te_1+\sfrac{\tau_q}{3},
		\end{cases}
	\end{align*}
	with the derivative bounded by $\|\partial_t^{n}\zeta\|_{C^0_t}\lesssim_n \tau_q^{-n}$. Such function $\zeta$ ensures that $\rho_{q,i}$ is well-defined even for the times when $\sum_{j\geq 1} \int_{\T^3} \tilde{\eta}_j^2(t,y)\,dy=0$.
	From the definition \eqref{def:rho:q,i}, it follows $\sum_i \int_{\T^3} \tilde{\rho}_{q,i} =M_L^2\varsigma_{q+1}$ for all $t\in[\te_1\wedge \mathfrak{t}_L,\mathfrak{t}_L]$. By tracing back the properties \eqref{eq:eta:i:special} and \eqref{eq:eta:i:derivative}, along with $\|\partial_t \zeta \|_{C^0_t}\lesssim \tau_q^{-1}$, we obtain the following estimates for any $N\geq 0$:
	\begin{align}\label{rho q,i m}
		\|	\tilde{\rho}_{q,i}^{\sfrac12}\|_{C^N_x}\lesssim K^{\sfrac12} M_L \delta_{q+1}^{\sfrac12}.
	\end{align}
	
	Next, we introduce the localized versions of Reynolds stress as described in Subsection~\ref{sec:amplitude} 
	\begin{equation}\label{e:tildeR_def:new}
		\tilde{R}_{q,i} = \frac{\nabla\Phi_i R_{q,i} \nabla\Phi_i^T}{\tilde{\rho}_{q,i}}= \nabla\Phi_i \left( \Id -\frac{\tilde{\eta}_i^2 \mathring{\overline R}_q}{	\tilde{\rho}_{q,i}}   \right) \nabla\Phi_i^T,
	\end{equation}
	where $\Phi_i$ is the solution of the transport equation defined as in \eqref{eq:Phi:i:def}, with $\overline{v}_q$ and $z_{\ell_{q}}$ replaced by their counterparts in this section. Moreover, Proposition~\ref{Lemma:transport} also holds because we have the same bound for $\overline{v}_q$ and $z_{\ell_{q}}$. Using the same computations as in \eqref{es:Rq,i-Id}, we obtain
	\begin{align*}
		\|	\tilde R_{q,i}-\mathrm{Id}\|_{C^0_x} &\lesssim \|\nabla\Phi_i \nabla\Phi_i ^T-\Id\|_{C^0_x} +  \frac{\|\nabla\Phi_i \|_{C^0_x}^2 \|\mathring{\overline R}_q\|_{C^0_x}}{M_L^2\delta_{q+1}} \leq \tilde{M} \ell_q^{\alpha}
		\leq \frac12,
	\end{align*}
	which implies $\tilde R_{q,i}(t,x) \in \overline{B_{\sfrac 12}}(\mathrm{Id})$ for all $(t,x)$. Consequently, the amplitude functions $\tilde{a}_{(\xi,i)}$ also modified using the formula \eqref{def:amplitude a} with $\rho_{q,i}$ replaced by $\tilde{\rho}_{q,i}$, i.e.,
	\begin{align}\label{def:amplitude a:new}
		\tilde{a}_{(\xi,i)}(t,x)=\tilde{\rho}_{q,i}(t,x)^{\sfrac12} \gamma_{\xi}(\tilde R_{q,i}(t,x)),
	\end{align}
	where $\gamma_\xi \in C^\infty(\overline{B_{\sfrac 12}}(\mathrm{Id}))$ are the functions given in Lemma~\ref{l:linear_algebra}. 
	Moreover, the stress $\tilde R_{q,i}$ and amplitude functions $\tilde{a}_{(\xi,i)}$ inherit the same bounds as in Subsection~\ref{sec:amplitude}, and we have the following estimates:
	
	\begin{proposition}
		For any $t\in \tilde{I}_i$ and $N\geq0$, the stress $\tilde R_{q,i}$ satisfies the estimates
		\begin{subequations}
			\begin{align}
				\|\tilde R_{q,i}\|_{C^N_x}&\lesssim \ell_q^{-N},\label{es:Rq,i:new}
				\\ \| D_{t,q}\tilde R_{q,i}\|_{C^N_x}&\lesssim \tau_q^{-1} \ell_q^{-N},\label{es:Dt,q Rq,i:new}
			\end{align}
		\end{subequations}
		and the amplitude functions $\tilde{a}_{(\xi,i)}$ satisfy
		\begin{subequations}
			\begin{align}
				\| 	\tilde{a}_{(\xi,i)}\|_{C^N_x}&\lesssim \frac{MK^{\sfrac12}}{C_{\Lambda}}  M_L \delta_{q+1}^{\sfrac12}\ell_q^{-N}\, , \label{estimate:a CN:new}
				\\ \| D_{t,q} 	\tilde{a}_{(\xi,i)}\|_{C^N_x}&\lesssim  \frac{MK^{\sfrac12}}{C_{\Lambda}} M_L\tau_q^{-1}\delta_{q+1}^{\sfrac12} \ell_q^{-N}\, ,\label{estimate:Da CN:new}
			\end{align}
		\end{subequations}
		where $M$, $C_\Lambda$ are universal constants given in \eqref{eq:Onsager:M:def} and the implicit constant may depend on $N$.
	\end{proposition}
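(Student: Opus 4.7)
The plan is to mimic the proofs of Proposition~\ref{estimate:R:q,i} and Proposition~\ref{es:amplitude}, exploiting the fact that the modified cutoffs $\tilde{\eta}_i$, energy decomposition $\tilde{\rho}_{q,i}$, and amplitudes $\tilde{a}_{(\xi,i)}$ enjoy essentially the same pointwise bounds as their counterparts in Section~\ref{sec:perturbation}, but with the simplification that the ``weight'' factor $M_L^2\varsigma_{q+1}$ in \eqref{def:rho:q,i} is constant in time.

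First, for \eqref{es:Rq,i:new} I would observe that on $\supp(\tilde{\eta}_i)$ the ratio
\[
\frac{\tilde{\eta}_i^2}{\tilde{\rho}_{q,i}} = \frac{1}{M_L^2\varsigma_{q+1}}\Big(\zeta(t)+\sum_{j\geq 1}\int_{\T^3}\tilde{\eta}_j^2\,dy\Big)
\]
is purely temporal with $C^n$-norm controlled by $\tau_q^{-n}$, provided the denominator $\zeta+\sum_{j\geq 1}\int\tilde{\eta}_j^2$ admits a positive uniform lower bound on $[0,\mathfrak{t}_L]$. Together with \eqref{e:Rq:1:new} this yields the analog of \eqref{Rq,i/rhoq.i}, namely $\|R_{q,i}/\tilde{\rho}_{q,i}\|_{C^N_x}\lesssim 1+\tilde{M}M_L^{-2}\delta_{q+1}^{-1}\|\mathring{\overline{R}}_q\|_{C^N_x}\lesssim \ell_q^{-N}$, and then \eqref{es:Rq,i:new} follows from \eqref{e:tildeR_def:new} and \eqref{eq:Phi:i:bnd:b} exactly as in Proposition~\ref{estimate:R:q,i}. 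For the transport bound \eqref{es:Dt,q Rq,i:new}, differentiating gives
\[
D_{t,q}\big(\tilde{\rho}_{q,i}^{-1}R_{q,i}\big) = -\partial_t\Big(\tfrac{\zeta+\sum_{j\geq 1}\int\tilde{\eta}_j^2}{M_L^2\varsigma_{q+1}}\Big)\mathring{\overline{R}}_q - \Big(\tfrac{\zeta+\sum_{j\geq 1}\int\tilde{\eta}_j^2}{M_L^2\varsigma_{q+1}}\Big)D_{t,q}\mathring{\overline{R}}_q,
\]
which is strictly simpler than \eqref{Dt,q R} because no contribution of the form $\partial_t\rho_q/\rho_q^2$ appears: the factor $M_L^2\varsigma_{q+1}$ is constant and only $\partial_t\zeta$ and $\partial_t\tilde{\eta}_j$ (each bounded by $\tau_q^{-1}$) enter. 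Combining with \eqref{e:Rq:1:new}--\eqref{e:Rq:Dt:new} and then with \eqref{eq:Phi:i:bnd:b}--\eqref{eq:Phi:i:bnd:c} produces \eqref{es:Dt,q Rq,i:new}.

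For the amplitude estimates, I would apply Leibniz rule to $\tilde{a}_{(\xi,i)} = \tilde{\rho}_{q,i}^{\sfrac12}\gamma_\xi(\tilde{R}_{q,i})$ exactly as in \eqref{chain:a xi}. The composition estimate \cite[Proposition C.1]{BDLIS16} together with \eqref{es:Rq,i:new} and the bound $\|\gamma_\xi\|_{C^0}\leq M/C_\Lambda$ gives $\|\gamma_\xi(\tilde{R}_{q,i})\|_{C^N_x}\lesssim (M/C_\Lambda)\ell_q^{-N}$, and combined with \eqref{rho q,i m} this yields \eqref{estimate:a CN:new}, the extra $K^{\sfrac12}$ factor coming directly from $\varsigma_{q+1}^{\sfrac12}$. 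For \eqref{estimate:Da CN:new}, since $\tilde{\rho}_{q,i}^{\sfrac12}$ factors as $\tilde{\eta}_i\cdot(\zeta+\sum_j\int\tilde{\eta}_j^2)^{-\sfrac12}\cdot M_L\varsigma_{q+1}^{\sfrac12}$, the quantity $D_{t,q}\tilde{\rho}_{q,i}^{\sfrac12}$ splits into $(\partial_t+(\overline{v}_q+z_{\ell_q})\cdot\nabla)\tilde{\eta}_i$ times bounded factors plus a term involving $\partial_t\zeta$ and $\partial_t\int\tilde{\eta}_j^2$; using \eqref{estimate:vellzell}, \eqref{e:vq:1+N:new}, \eqref{e:vq:0:new} and the derivative bounds on $\zeta,\tilde{\eta}_j$, one obtains $\|D_{t,q}\tilde{\rho}_{q,i}^{\sfrac12}\|_{C^N_x}\lesssim K^{\sfrac12}M_L\tau_q^{-1}\delta_{q+1}^{\sfrac12}\ell_q^{-N}$ (this is in fact more direct than \eqref{Dt,q a} since no $|\partial_t\rho_q|$ enters). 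Applying the chain rule to $\gamma_\xi(\tilde{R}_{q,i})$ and invoking \eqref{es:Rq,i:new}--\eqref{es:Dt,q Rq,i:new} then closes \eqref{estimate:Da CN:new}.

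The main obstacle I anticipate is verifying that the denominator $\zeta(t)+\sum_{j\geq 1}\int_{\T^3}\tilde{\eta}_j^2(t,y)\,dy$ is uniformly bounded away from zero on the entire interval $[0,\mathfrak{t}_L]$ by a universal constant (independent of $q$): this is needed both for the smoothness of $\tilde{\rho}_{q,i}^{-1}\tilde{\eta}_i^2$ and for the condition $\tilde{R}_{q,i}\in \overline{B_{\sfrac12}}(\mathrm{Id})$. On $[0,\te_1]$ the term $\zeta\equiv 1$ furnishes the lower bound, whereas on $[\te_1+\tau_q/3,\mathfrak{t}_L]$ the partition-of-unity property \eqref{eq:eta:i:special} inherited by $\{\tilde{\eta}_j\}_{j\geq 1}$ yields the lower bound $c_\eta$; the only genuinely new point is to check that the transition zone $(\te_1,\te_1+\tau_q/3)$ is bridged continuously so that $\zeta+\sum_{j\geq 1}\int\tilde{\eta}_j^2\geq \min(1,c_\eta)>0$ holds throughout, which is guaranteed by the construction of $\zeta$ together with the support properties of $\tilde{\eta}_1$.
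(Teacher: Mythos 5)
Your proposal is correct and follows essentially the same route as the paper: exploit that $M_L^2\varsigma_{q+1}$ is constant in time so that $D_{t,q}(\tilde{\rho}_{q,i}^{-1}R_{q,i})$ contains no $\partial_t\rho_q/\rho_q^2$ term, and carry the estimates for $R_{q,i}/\tilde{\rho}_{q,i}$, $\tilde{R}_{q,i}$, and $\tilde{a}_{(\xi,i)}$ over from Propositions~\ref{estimate:R:q,i} and \ref{es:amplitude}, picking up a $K^{\sfrac12}$ from $\varsigma_{q+1}^{\sfrac12}$. Your elaboration on the uniform lower bound of $\zeta+\sum_{j\geq 1}\int_{\T^3}\tilde{\eta}_j^2$ is a useful point the paper leaves implicit, though the ``transition zone'' $(\te_1,\te_1+\tau_q/3)$ is already covered without further argument: the paper stipulates that properties~(i)--(iv) hold on all of $[\te_1\wedge\mathfrak{t}_L,\mathfrak{t}_L]$, and since $\bar\eta_0$ vanishes past $5\tau_q/6<\te_1$, property~(iii) gives $\sum_{j\geq 1}\int\tilde{\eta}_j^2\geq c_\eta$ on that whole interval, so the lower bound $\min(1,c_\eta)$ follows directly.
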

	\begin{proof}
		The proof closely follows that of Proposition~\ref{estimate:R:q,i} and  Proposition~\ref{es:amplitude}, as we have established identical bounds for $\overline{v}_q$, $z_{\ell_{q}}$, $\Phi_i$ and $\mathring{\overline{R}}_q$. The main distinction lies in the adjusted definition of $\tilde{\rho}_{q,i}$, which we will briefly explain below. Recalling to the definition of $\tilde{\rho}_{q,i}$, we apply \eqref{e:Rq:1:new} to obtain
		\begin{align*}
			\left\|  \frac{R_{q,i}}{\tilde{\rho}_{q,i}}\right\| _{C^N_x} \lesssim 1+ \frac{1}{M_L^2\varsigma_{q+1}} \|\mathring{\overline R}_q\|_{C^N_x} \lesssim \tilde{M} \ell_q^{-N+\alpha}\leq \ell_q^{-N},
		\end{align*}
		which requires $a$ large enough to have $\tilde{M} \ell_q^{\alpha}\ll 1$. Then, by applying the Leibniz rule for the derivative of the product and Proposition~\ref{Lemma:transport}, we obtain \eqref{es:Rq,i:new}.
		
		Next, we differentiate $\tilde{\rho}_{q,i}^{-1} R_{q,i}$ by the material derivative $D_{t,q}$ to obtain
		\begin{equation}
			D_{t,q} (\tilde{\rho}_{q,i}^{-1} R_{q,i}) = -\partial_t \left( \frac{\zeta + \sum_{j\geq 1}\int_{\T^3} \tilde{\eta}_j^2}{M_L^2\varsigma_{q+1}}\right) \mathring{\overline R}_q - \frac{\zeta+\sum_{j\geq 1}\int_{\T^3}\tilde{\eta}_j^2}{M_L^2\varsigma_{q+1}} D_{t,q} \mathring{\overline R}_q \, .
		\end{equation}
		Similar to \eqref{e:Dt_ratio}, by applying $|\partial_t\zeta|\leq \tau_q^{-1}$, along with \eqref{e:Rq:1:new} and \eqref{e:Rq:Dt:new}, we can deduce
		\begin{equation*}
			\|D_{t,q} (\tilde{\rho}_{q,i}^{-1}R_{q,i})\|_{C^N_x}
			\lesssim \frac{\delta_{q+1}^{-1} \tau_q^{-1} }{M_L^2}\|\mathring{\overline R}_q\|_{C^N_x}  + \frac{\delta_{q+1}^{-1}}{M_L^2} \|D_{t,q} \mathring{\overline R}_q\|_{C^N_x}
			\lesssim
			\tau_q^{-1} \ell_q^{-N}\, .
		\end{equation*}
		Then, by using the same routine computation as in Proposition~\ref{estimate:R:q,i}, we obtain \eqref{es:Dt,q Rq,i:new}.
		
		Let us proceed to the estimates for $\tilde{a}_{(\xi,i)}$. Similar to \eqref{chain:a xi} and \eqref{chain:a xi:2}, the bound \eqref{estimate:a CN:new} readily follows as a consequence of the chain rule. We next observe that 
		\begin{align*}
			D_{t,q} (\tilde{\rho}_{q,i}^{\sfrac12})=\left[  \partial_t \left( \frac{\tilde{\eta}_i}{(\zeta+\sum_{j\geq 1}\int_{\T^3} \tilde{\eta}_j^2)^\frac12} \right) +
			\frac{ (\overline{v}_q+z_{\ell_q}) \cdot \nabla \tilde{\eta}_i }{(\zeta+\sum_{j\geq 1}\int_{\T^3}\tilde{\eta}_j^2)^\frac12} \right] M_L \varsigma_{q+1}^{\sfrac12}.
		\end{align*}
		By combining \eqref{e:vq:1+N:new} with \eqref{e:vq:0:new} we can estimate for any $N\geq 1$
		\begin{align*}
			\|  D_{t,q} (\tilde{\rho}_{q,i}^{\sfrac12})\|_{C^N_x} &\lesssim  (\|\partial_t \tilde{\eta}_i\|_{C^N_x } + \|\partial_t \tilde{\eta}_i\|_{C^0_x } \| \tilde{\eta}_i\|_{C^N_x }+ \|\partial_t \zeta\|_{C^0_x } \| \tilde{\eta}_i\|_{C^N_x })M_L \varsigma_{q+1}^{\sfrac12}
			\\ &\quad + (\|\overline{v}_q +z_{\ell_q}\|_{C^0_x } \|\nabla \tilde{\eta}_i\|_{C^N_x }+  \|\overline{v}_q+z_{\ell_q} \|_{C^N_x } \|\nabla \tilde{\eta}_i\|_{C^0_x}) M_L \varsigma_{q+1}^{\sfrac12}
			\\ &\lesssim M_L \tau_q^{-1}\varsigma_{q+1}^{\sfrac12}+\tilde{M}M_L
			\tau_q^{-1}\ell_q^{-(N-1)+\alpha} \varsigma_{q+1}^{\sfrac12} \lesssim K^{\sfrac12} M_L\tau_q^{-1} \delta_{q+1}^{\sfrac12} \ell_{q}^{-N},
		\end{align*}
		where we used $\tilde{M}\ell_{q}^{\alpha}\ll 1$.
		Since $\|\overline v_q\|_{C^0_x} \lesssim \tilde{M} M_L \lambda_{1}^{\sfrac{3\alpha}{2}}$, we also have $ \|  D_{t,q} (\tilde{\rho}_{q,i}^{\sfrac12})\|_{C^0_x}\lesssim K^{\sfrac12} M_L \tau_q^{-1} \delta_{q+1}^{\sfrac12} $.
		Analogous to \eqref{Dt,q a}, applying the chain rule together with \eqref{rho q,i m}, \eqref{es:Rq,i:new} and \eqref{es:Dt,q Rq,i:new} yields  \eqref{estimate:Da CN:new}.
	\end{proof}
	
	\subsubsection{Definition of $v_{q+1}$ and $\mathring{R}_{q+1}$}
	The construction of the perturbation follows the approach outlined in Subsection~\ref{sec:principal w} with $a_{(\xi,i)}$ replaced by $\tilde{a}_{(\xi,i)}$, while the building blocks of the perturbation remain the Mikado flows. Specifically, the total perturbation is given by
	\begin{align*}
		\tilde w_{q+1} &= \curl \left( \sum_i \sum_{\xi \in \Lambda_i} \tilde{a}_{(\xi,i)} \, (\nabla \Phi_i)^T (V_{(\xi)}\circ \Phi_i) \right) 
		\\ &=\sum_{i} \sum_{\xi \in \Lambda_i} 	\tilde a_{(\xi,i)}(\nabla \Phi_i)^{-1} W_{(\xi)}(\Phi_i) 
		+ \sum_i \sum_{\xi \in \Lambda_i} \nabla 	\tilde a_{(\xi,i)} \times \left( (\nabla \Phi_i)^T (V_{(\xi)} (\Phi_i)  \right) =:	\tilde w_{q+1}^{(p)} +\tilde w_{q+1}^{(c)} ,
	\end{align*}
	so that it is divergence-free and has zero mean. Additionally, due to the construction of the cutoffs $\tilde{\eta}_i$, it follows that the support of $\tilde{w}_{q+1}$ is away from zero. Thus, the new velocity field $v_{q+1}:= \overline{v}_q+\tilde{w}_{q+1}$ satisfies
	\begin{align*}
		v_{q+1}(0)=\overline{v}_q(0)+\tilde{w}_{q+1}(0)= u^{in}*\varphi_{{\ell_q}}.
	\end{align*}
	The new stress $\mathring{R}_{q+1}$ is defined in the same spirit as in Subsection~\ref{sec: new Reynolds stress}, specifically by replacing the two correctors
	$w_{q+1}^{(p)}$ and $w_{q+1}^{(c)}$ with their respective modified counterparts $\tilde w_{q+1}^{(p)}$ and $\tilde w_{q+1}^{(c)}$ as shown in \eqref{Rno1}. We will omit the specific expressions for $\mathring{R}_{q+1}$.
	
	\subsection{Inductive estimates step}
	To conclude the proof of Proposition~\ref{p:iteration1}, we will show that $v_{q+1}$ and $\mathring{R}_{q+1}$ satisfy the inductive estimates \eqref{itera:a:new}-\eqref{estimate:energy:new}.
	
	We first consider $v_{q+1}$. Compared to Subsection~\ref{Estimates on v q+1}, the only minor difference is that the bound for the amplitudes
	$\tilde{a}_{(\xi,i)}$ now includes an additional $K^{\sfrac12}$. Therefore, we use a universal constant $\tilde{M}$ to control it. The proof follows the lines established in Subsection~\ref{Estimates on v q+1}, and the omitted computations can be found there. We utilize \eqref{esti:principal w 0} and \eqref{esti:corrector w 0} to derive
	\begin{align}\label{esti:w:0}
		\|\tilde{w}_{q+1}\|_{C^0_{t,x}}  \leq \frac{|\Lambda|MK^{\sfrac12}}{C_\Lambda }M_L \delta_{q+1}^{\sfrac12} +\frac{|\Lambda|MK^{\sfrac12}}{C_\Lambda \lambda_{q+1}} M_L\delta_{q+1}^{\sfrac12}\ell_q^{-1} \leq \frac{1}{2} \tilde{M}  M_L \delta_{q+1}^{\sfrac12},
	\end{align}
	where $ \tilde{M}$ is a universal constant satisfying
	\begin{align}\label{def:barM}
		100	|\Lambda|MK^{\sfrac12}\leq C_\Lambda \tilde{M}.
	\end{align}
	From \eqref{esti:principal w 1} and \eqref{esti:corrector w 1}, it follows that for any $t\in[0,\mathfrak{t}_L]$
	\begin{align}\label{esti:w:1}
		\|\tilde{w}_{q+1}(t)\|_{C^1_{x}} \leq \frac{|\Lambda|MK^{\sfrac12}  }{C_\Lambda } M_L\left( \delta_{q+1}^{\sfrac12} \lambda_{q+1} +	
		\delta_{q+1}^{\sfrac12} \ell_q^{-1} \left(1+\frac{\ell_q^{-1}}{\lambda_{q+1}}\right)\right) 
		\leq \frac12 \tilde{M} M_L \lambda_{q+1} \delta_{q+1}^{\sfrac12} .	\end{align} 
	Then, by applying the same computations as in Subsection~\ref{Estimates on v q+1}, we can deduce \eqref{itera:a:new}, \eqref{itera:b:new} and \eqref{vq+1-vq:new}.

	Let us now turn to the Reynolds stress
	$\mathring{R}_{q+1}$. Note that we have established the same estimates for $\overline{v}_q$, $z_{\ell_{q}}$ and $\Phi_i$ as in Section~\ref{sec:begin} and Section~\ref{sec:perturbation}. And the additional factor $K^{\sfrac12}$ in the bounds of $\tilde{a}_{(\xi,i)}$ can be absorbed into $\tilde{M}$, as previously discussed. Therefore, the proof for the estimate \eqref{itera:c:new} of $\mathring{R}_{q+1}$ follows exactly the same line as in Subsection~\ref{estimate on Rq+1}. Putting together Proposition~\ref{Estimates for transport error}, Proposition~\ref{Estimates for oscillation error}, Proposition~\ref{Estimates for Nash error}, Proposition~\ref{Estimates for corrector error} and Proposition~\ref{Estimates for commutator  error} from  Subsection~\ref{estimate on Rq+1}, we can establish \eqref{itera:c:new}.
	
	Finally, we control the energy similarly to that in Subsection~\ref{estimate on energy}. By definition, we find
	\begin{equation}\label{eq:deltaE ps}
		\aligned
		\big|\|v_{q+1}\|_{L^2}^2-\|v_q\|_{L^2}^2-3M_L^2\varsigma_{q+1}\big|
		&\leq \left| \int_{\mathbb{T}^3} \big(|\tilde{w}_{q+1}^{(p)}|^2 -3\varsigma_{q+1}M_L^2\big) \dif x \right| +2\left| \int_{\mathbb{T}^3}\tilde{w}_{q+1}^{(p)}\tilde{w}_{q+1}^{(c)}  \dif x \right|
		\\ &+  \int_{\mathbb{T}^3} | \tilde{w}_{q+1}^{(c)}|^2 \dif x  
		+ 2\left| \int_{\mathbb{T}^3}\overline v_q \tilde{w}_{q+1}\dif x \right|  
		+\left|\|\overline  v_q\|_{L^2}^2-\|v_q\|_{L^2}^2\right| .
		\endaligned
	\end{equation}
	The first term on the right-hand side of \eqref{eq:deltaE ps} can be made arbitrarily small using the same argument presented in Subsection~\ref{estimate on energy}. More precisely, by taking the trace of both sides of \eqref{eq:w:q+1:is:good} and using the fact that $ \mathring{\overline{R}}_q $ is traceless, along with \eqref{eq:Phi:i:bnd:b}, \eqref{estimate:a CN:new} and \eqref{esti:integral}, we deduce for $t\in [\te_1\wedge \mathfrak{t}_L,\mathfrak{t}_L]$
	\begin{equation*}
		\aligned
		\left| \int_{\mathbb{T}^3} |\tilde{w}_{q+1}^{(p)}|^2 -3M_L^2\varsigma_{q+1} \dif x \right| &
		 \leq \sum_i \sum_{\xi \in \Lambda_i} \left| \int_{\mathbb{T}^3} \tilde{a}_{(\xi,i)}^2 \tr \left[  (\nabla \Phi_i)^{-1} \xi \otimes \xi  (\nabla \Phi_i)^{-T}  \right]\left( \Big( \mathbb{P}_{\geq \sfrac{\lambda_{q+1}}{2}}(\phi_{(\xi)}^2) \Big) \circ\Phi_i \right)  \dif x\right| 
		\\& \lesssim \tilde{M}^2M_L^2 \delta_{q+1}\frac{\ell_q^{-1}}{\lambda_{q+1}}\leq \frac13 M_L^2 \lambda_{1}^{\sfrac{3\alpha}{2}} \delta_{q+1}^{\sfrac12},
		\endaligned
	\end{equation*}
	where we chose $a$ large enough to ensure $\tilde{M}^2 \ell_{q}^{-1}\lambda_{q+1}^{-1}\ll1$ in the last inequality. Returning to \eqref{eq:deltaE ps}, we control the remaining parts similarly as in Subsection~\ref{estimate on energy}. We utilize \eqref{e:vq:1+N:new}, \eqref{estimate:a CN:new}, \eqref{esti:w:0} and \eqref{esti:w:1} to obtain the following for $t\in [\te_1\wedge \mathfrak{t}_L,\mathfrak{t}_L]$
	\begin{align*}
		2&\left| \int_{\mathbb{T}^3} \tilde{w}_{q+1}^{(p)}\tilde{w}_{q+1}^{(c)}  \dif x \right|+ \int_{\mathbb{T}^3} | \tilde{w}_{q+1}^{(c)}|^2 \dif x +	2 \left| \int_{\mathbb{T}^3}\overline v_q\cdot \tilde{w}_{q+1} \dif x \right|
		\\ &\quad 
		\lesssim \tilde{M}^2M_L^2 \delta_{q+1}\frac{\ell_q^{-1}}{\lambda_{q+1}}+ \tilde{M}^2M_L^2 \delta_{q+1}\frac{\ell_q^{-2}}{\lambda_{q+1}^2}+\frac{\tilde{M}^2M_L^2 \delta_{q+1}^{\sfrac12}\tau_q^{-1}}{\lambda_{q+1}} \leq \frac13 M_L^2 \lambda_{1}^{\sfrac{3\alpha}{2}} \delta_{q+1}^{\sfrac12}.
	\end{align*} 
	For the last  term in \eqref{eq:deltaE ps}, applying \eqref{vq-vl:new}, \eqref{e:vq:vell:new} and \eqref{e:vq:0:new} implies for any $t\in[\te_1\wedge \mathfrak{t}_L, \mathfrak{t}_L]$
	\begin{align*}
		\left| \|\overline v_q(t)\|_{L^2}^2-\|v_q(t)\|_{L^2}^2\right| &\lesssim \|\overline v_q(t)-v_q(t)\|_{L^2}(\|\overline v_q(t)\|_{L^2}+\|v_q(t)\|_{L^2})
		\\ &\lesssim \tilde{M}M_L \lambda_{1}^{\sfrac{3\alpha}{2}}  (\|\overline v_q(t)-v_{\ell_q}(t)\|_{C^0_x}+\| v_q(t)-v_{\ell_q}(t)\|_{C^0_x})
		\\&	\lesssim \tilde{M}^2M_L^2\ell_q^{\alpha}  \lambda_{1}^{\sfrac{3\alpha}{2}} \delta_{q+1}^{\sfrac12} \leq \frac13 M_L^2 \lambda_{1}^{\sfrac{3\alpha}{2}} \delta_{q+1}^{\sfrac12}.
	\end{align*}
	By combining the above estimates, we obtain \eqref{estimate:energy:new}, thus concluding the proof of Proposition~\ref{p:iteration1}.
	
	\section{Proof of energy conservation for H\"older exponent beyond 1/3}\label{energy:conservation}
	In this section, we show that the energy balance is preserved when the H\"older regularity of the solution exceeds $1/3$.
	\begin{proof}[Proof of Theorem~\ref{rigid part}]
		Recall that $B=\sum_k \sqrt{c_k} \beta_k e_k$, as used in the proof of Theorem~\ref{Onsager:theorem}. As in \eqref{def:mollifiers}, let $\varphi_{\varepsilon}$ be a smooth radial mollifier in space of length $\varepsilon$. For any $\varepsilon>0$, we write:
		\begin{align*}
			u_{\varepsilon}=u*\varphi_{\varepsilon},\qquad (u\otimes u)_{\varepsilon}=(u\otimes u)*\varphi_{\varepsilon}, \qquad B_{\varepsilon}=B*\varphi_{\varepsilon}, \qquad e_k^{\varepsilon}=e_k*\varphi_{\varepsilon}.
		\end{align*}
		Observe from \eqref{eul1} that  $	\dif u_{\varepsilon}+\mathbb{P}\div[(u\otimes u)_{\varepsilon} ]\,\dif t=\dif B_{\varepsilon}$.
		Since it holds almost surely $u_{\varepsilon}\in L^2(\mathbb{T}^3)$ and $\mathbb{P}\div[(u\otimes u)_{\varepsilon} ] \in L^2(\mathbb{T}^3)$, we apply It\^o's formula (c.f. \cite[Theorem 6.1.1]{LR15}) to obtain for any $t\in [0,\infty)$:
		\begin{equation}\label{2}
			\aligned
			\|u_{\varepsilon}(t\wedge\mathfrak{s})\|_{L^2}^2-\|u_{\varepsilon}(0)\|_{L^2}^2=&  2 \int_{0}^{t\wedge\mathfrak{s}} \big\langle u_{\varepsilon}(s), \dif B_{\varepsilon}(s)  \big\rangle
			\\ &- 2 \int_{0}^{t\wedge\mathfrak{s}} \big\langle u_{\varepsilon}(s), \mathbb{P}\div[(u\otimes u)_{\varepsilon} (s)] \big\rangle \dif s +\big(\sum_k c_k \|e_k^{\varepsilon}\|_{L^2}^2\big)(t\wedge\mathfrak{s}).
			\endaligned
		\end{equation}
		
		We will control each term of \eqref{2} separately. Let us now focus on the first term on the right-hand side of \eqref{2}. By applying the mollification estimates \eqref{estimate:molli1} and \eqref{estimate:molli3}, we obtain
		\begin{align}\label{mollification bounds}
			\|e_k^{\varepsilon} -e_k\|_{L^2}\lesssim {\varepsilon} \|e_k\|_{H^1}, \quad \|u_{\varepsilon}(s)\|_{C^0_x} \lesssim  \|u(s)\|_{C^0_x},\quad \|u_{\varepsilon}(s)-u(s)\|_{C^0_x} \lesssim  \varepsilon^{\vartheta}\|u(s)\|_{C^{\vartheta}_x}.
		\end{align} 
		From \eqref{mollification bounds}, triangle inequality and It\^o's isometry, it follows that
		\begin{equation}
			\aligned
			&\E \bigg|  \int_{0}^{t\wedge\mathfrak{s}} \big\langle u_{\varepsilon}(s), \dif B_{\varepsilon}(s)  \big\rangle -  \int_{0}^{t\wedge\mathfrak{s}} \big\langle u(s), \dif B(s)  \big\rangle  \bigg|^2
			\\ & \quad \lesssim  	\E \bigg|  \int_{0}^{t\wedge\mathfrak{s}} \big\langle u_{\varepsilon}(s), \dif B_{\varepsilon}(s)  - \dif B(s)\big\rangle \bigg|^2 + \E \bigg| \int_{0}^{t\wedge\mathfrak{s}} \big\langle u_{\varepsilon}(s)-u(s), \dif B(s)  \big\rangle  \bigg|^2
			\\ &\quad \lesssim \sum_{k} c_k \E \int_{0}^{t\wedge\mathfrak{s}} \bigg|  \big\langle  u_{\varepsilon} (s), e_k^{\varepsilon} -e_k \big\rangle  \bigg|^2 \dif s+ \sum_{k} c_k \E \int_{0}^{t\wedge\mathfrak{s}} \bigg|  \big\langle  u_{\varepsilon} (s)-u(s) , e_k \big\rangle  \bigg|^2 \dif s \label{first term}
			\\ &\quad\lesssim  \varepsilon^2 \sum_{k} c_k \|e_k\|_{H^{1}}^2 \E \left( \int_{0}^{t\wedge\mathfrak{s}} \|u_{\varepsilon}(s)\|_{L^2}^2 \dif s \right) + \sum_{k} c_k \E\left( \int_{0}^{t\wedge\mathfrak{s}}  \|u_{\varepsilon} (s)-u (s)\|_{L^2}^2 \dif s \right)
			\\ & \quad\lesssim  \varepsilon^2 \tr \big((\mathrm{I}-\Delta)GG^*\big)   \E \left( \mathfrak{s} \|u\|_{C_{[0,\mathfrak{s}]}C^0_x}^2  \right) + \varepsilon^{2\vartheta} \tr(GG^*) \E\left( \mathfrak{s}  \| u\|^2_{C_{[0,\mathfrak{s}]}C^{\vartheta} _x} \right).
			\endaligned
		\end{equation}
		By the conditions $u\in L^{3q}(\Omega;C([0,\mathfrak{s}],C^{\vartheta}(\T^3,\R^3)))$,    $\E(\mathfrak{s}^p)<\infty$ for $\frac{1}{p}+\frac{1}{q}=1$ and H\"older inequality, we have 
		\begin{align}\label{bounded:moment}
			\E \left( \mathfrak{s}  \| u\|^2_{C_{[0,\mathfrak{s}]}C^{\vartheta} _x} \right) \leq 	 \E \left(   \| u\|^{2q}_{C_{[0,\mathfrak{s}]}C^{\vartheta} _x} \right)^{1/q} \E(\mathfrak{s}^p)^{1/p}\leq \E \left(   \| u\|^{3q}_{C_{[0,\mathfrak{s}]}C^{\vartheta} _x} \right)^{2/3q} \E(\mathfrak{s}^p)^{1/p}<\infty.
		\end{align}
		Hence, we deduce that \eqref{first term} converges to $0$, as $\varepsilon \to 0$. Moving on to estimate the second term on the right-hand side of \eqref{2}, by the symmetry of the Leray projector $\mathbb{P}$ and integration by parts, it follows that
		\begin{align*}
			-	\int_{\T^3} u_{\varepsilon} \cdot \mathbb{P}\div[(u\otimes u)_{\varepsilon}] \dif x = \int_{\T^3} \tr[(u\otimes u)_{\varepsilon} \nabla u_{\varepsilon}] \dif x,
		\end{align*}
		and through a direct computation, we derive:
		\begin{align*}
			\int_{\T^3} \tr [(u_{\varepsilon}\otimes u_{\varepsilon})\nabla u_{\varepsilon}] \dif x= \int_{\T^3}  u_{\varepsilon} \cdot (u_{\varepsilon} \cdot \nabla u_{\varepsilon}) \dif x =0.
		\end{align*}
		We apply the mollification estimates \eqref{estimate:molli3} and \eqref{estimate:molli2} to deduce for any $t\in [0,\mathfrak{s}]$
		\begin{align}\label{esti:com}
			\|(u\otimes u)_{\varepsilon}(t)-(u_{\varepsilon}\otimes u_{\varepsilon})(t)\|_{C^0_x}\lesssim \varepsilon^{2\vartheta} \|u\|_{C_{[0,\mathfrak{s}]}C^\vartheta_x}^2, \qquad \|\nabla u_{\varepsilon}(t)\|_{C^0_x}\lesssim \varepsilon^{\vartheta-1}\|u\|_{C_{[0,\mathfrak{s}]}C^{\vartheta}_x}.
		\end{align}
		By the same argument as \eqref{bounded:moment}, we derive
		\begin{align}\label{moment:for:u}
				\E \left( \mathfrak{s}  \| u\|^3_{C_{[0,\mathfrak{s}]}C^{\vartheta} _x} \right) \leq  \E \left(   \| u\|^{3q}_{C_{[0,\mathfrak{s}]}C^{\vartheta} _x} \right)^{1/q} \E(\mathfrak{s}^p)^{1/p}<\infty.
		\end{align}
		Combining \eqref{esti:com} with \eqref{moment:for:u}, the second term on the right-hand side of \eqref{2} can be bounded as
		\begin{equation}\label{second term}
			\aligned
			\E\bigg| \int_{0}^{t\wedge\mathfrak{s}} \langle u_{\varepsilon}(s), \mathbb{P}\div[(u\otimes u)_{\varepsilon}](s) \rangle \dif s  \bigg|
			&=  \E 
			\left|\int_{0}^{t \wedge \mathfrak{s}}\int_{\T^3} \tr [((u\otimes u)_{\varepsilon}-u_{\varepsilon}\otimes u_{\varepsilon})\nabla u_{\varepsilon}] \dif x \dif s  \right| 
			\\ &\lesssim \varepsilon^{3\vartheta-1}   \E \big(  \mathfrak{s} \|u\|_{C_{[0,\mathfrak{s}]}C^\vartheta_x}^3 \big) , 
			\endaligned
		\end{equation}
		which converges to zero as $\varepsilon \to 0$. For the last term on the  right-hand side of \eqref{2}, by applying the mollification estimate \eqref{mollification bounds} and the dominated convergence theorem, we deduce:
		\begin{align}\label{third term}
			\lim_{\varepsilon \to 0} \sum_k c_k \|e_k^{\varepsilon}\|_{L^2}^2 =\sum_k c_k \lim_{\varepsilon \to 0}\|e_k^{\varepsilon}\|_{L^2}^2  = \sum_k c_k \|e_k\|_{L^2}^2= \tr (GG^*).
		\end{align}
		Therefore, by combining \eqref{2}, \eqref{first term}, \eqref{second term} and \eqref{third term}, we obtain for any $t\in [0,\infty)$:
		\begin{align*}
			&\E \bigg| 	\|u(t\wedge\mathfrak{s})\|_{L^2}^2-\|u(0)\|_{L^2}^2 -2 \int_{0}^{t\wedge\mathfrak{s}} \big\langle u(s), \dif B(s )  \big\rangle -\tr\big(GG^*\big) (t\wedge\mathfrak{s}) \bigg|
			\\ & \quad \leq \lim_{\varepsilon \to 0} \E \bigg| 	\|u(t\wedge\mathfrak{s})\|_{L^2}^2- 	\|u_{\varepsilon}(t\wedge\mathfrak{s})\|_{L^2}^2 \bigg| + \lim_{\varepsilon \to 0} \E \bigg|  \|u_{\varepsilon}(0)\|_{L^2}^2-	\|u(0)\|_{L^2}^2 \bigg| 
			\\ &\quad + \lim_{\varepsilon \to 0} \E\bigg|2 \int_{0}^{t\wedge\mathfrak{s}} \big\langle u_{\varepsilon}(s), \mathbb{P}\div[(u\otimes u)_{\varepsilon}(s)] \big\rangle \dif s  \bigg|
			+ \lim_{\varepsilon \to 0} \E \bigg| \big(\sum_k c_k \|e_k^{\varepsilon}\|_{L^2}^2\big)(t\wedge\mathfrak{s}) - \tr(GG^*) (t\wedge\mathfrak{s})  \bigg|
			\\ &  \quad +   \lim_{\varepsilon \to 0} \E \bigg| 2 \int_{0}^{t\wedge\mathfrak{s}} \big\langle u_{\varepsilon}(s), \dif B_{\varepsilon}(s)  \big\rangle - 2  \int_{0}^{t\wedge\mathfrak{s}} \big\langle u(s), \dif B(s)  \big\rangle  \bigg|=0.
		\end{align*}
		This estimate, combined with the continuity argument, implies \eqref{energy:conserve} holds $\mathbf{P}$-a.s. 	for any $t\in [0,\infty)$.	
	\end{proof}

	\appendix
	\renewcommand{\appendixname}{Appendix~\Alph{section}}
	\renewcommand{\theequation}{A.\arabic{equation}}

	\renewcommand{\theequation}{A.\arabic{equation}}
	\section{Some technical tools of convex integration}\label{sec:appendix c}
	In this section, we recall some critical tools used during the iteration.\\
	
	\noindent$\bullet$ {\bf Mollification estimates}
	
	We first recall the following quadratic commutator estimate from \cite[Lemma 1]{CDS12}.
	\begin{proposition}\label{commutator esti}
		Let $f,g\in C^{\infty}( \mathbb{T}^3,\R^3)$ and $\varphi_{\varepsilon}$ be a space standard mollifier as defined in \eqref{def:mollifiers}. Then for any $r,s\geq0$
		\begin{align}
			\|f*\varphi_{\varepsilon}\|_{C^{r+s}_{x}} &\lesssim \varepsilon^{-s}\|f\|_{C^r_{x}}\, ,\label{estimate:molli1}
			\\	\|f-f*\varphi_{\varepsilon}\|_{C^{r}_{x}} & \lesssim \varepsilon^{s}\|f\|_{C^{r+s}_{x}}\, ,\label{estimate:molli3}
		\end{align}
		and for any $r,s\in(0,1]$, $l\geq 0$
		\begin{align}\label{estimate:molli2}
			\|(f*\varphi_\varepsilon)(g*\varphi_{\varepsilon})-(fg)*\varphi_{\varepsilon}\|_{C^{l}_{x}} \lesssim \varepsilon^{r+s-l}\|f\|_{C^r_x}\|g\|_{C^s_x} \, .
		\end{align}
	\end{proposition}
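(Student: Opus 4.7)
The plan is to reduce all three inequalities to two elementary facts: (a) for any multi-index $\alpha$, one has $\|D^\alpha \varphi_\varepsilon\|_{L^1(\mathbb{T}^3)} \lesssim \varepsilon^{-|\alpha|}$, which follows from the rescaling $\varphi_\varepsilon(x)=\varepsilon^{-3}\varphi(x/\varepsilon)$ and a change of variables; and (b) Young's convolution inequality $\|f*g\|_{L^\infty}\leq \|f\|_{L^\infty}\|g\|_{L^1}$. The subtle point is tracking fractional derivatives and combining derivative losses with H\"older-regularity gains.

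For \eqref{estimate:molli1}, I would split $r+s = N+\sigma$ with $N\in\mathbb{N}_0$, $\sigma\in[0,1)$, and distribute $N$ derivatives between $f$ and $\varphi_\varepsilon$. Placing $\min(N,\lfloor r\rfloor)$ derivatives on $f$ and the remainder on $\varphi_\varepsilon$, applying (a) and (b) yields the bound on $\|D^N(f*\varphi_\varepsilon)\|_{L^\infty}$; the fractional $C^\sigma$-seminorm is handled by taking an extra difference quotient and using $|D^\alpha\varphi_\varepsilon(x)-D^\alpha\varphi_\varepsilon(x-h)|\lesssim |h|^\sigma \varepsilon^{-3-|\alpha|-\sigma}$ on the support. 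For \eqref{estimate:molli3}, the base identity is
\begin{equation*}
(f-f*\varphi_\varepsilon)(x) = \int \varphi_\varepsilon(y)(f(x)-f(x-y))\,\dif y.
\end{equation*}
For $s\in(0,1]$ the bound $|f(x)-f(x-y)|\leq |y|^s[f]_{C^s}$ together with $|y|\leq \varepsilon$ on $\supp \varphi_\varepsilon$ gives $\|f-f*\varphi_\varepsilon\|_{C^0}\lesssim \varepsilon^s\|f\|_{C^s}$; for larger $s$, Taylor expansion or iteration of the $s\leq 1$ case combined with \eqref{estimate:molli1} applied to $\nabla f$ gives the result. Higher $r$ is obtained by first applying $D^\alpha$ to both sides and repeating the argument on $D^\alpha f$.

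For \eqref{estimate:molli2}, the key is the Constantin--E--Titi double commutator identity
\begin{equation*}
(f*\varphi_\varepsilon)(g*\varphi_\varepsilon)-(fg)*\varphi_\varepsilon = -\int \varphi_\varepsilon(y)\,\delta_y f(x)\,\delta_y g(x)\,\dif y + (f-f*\varphi_\varepsilon)(g-g*\varphi_\varepsilon),
\end{equation*}
where $\delta_y h(x):=h(x-y)-h(x)$. At the $C^0$ level the first term is bounded by $\varepsilon^{r+s}\|f\|_{C^r}\|g\|_{C^s}$ using $|\delta_y f|\leq |y|^r[f]_{C^r}$ and $|\delta_y g|\leq |y|^s[g]_{C^s}$ on $\supp\varphi_\varepsilon$, while the second term is controlled by two applications of \eqref{estimate:molli3}.

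The main obstacle, as I expect it, is the upgrade from $l=0$ to general $l\geq 0$: differentiating the commutator produces $l$ derivatives that must be placed somewhere without spoiling the gain of $\varepsilon^{r+s}$. The plan is to move all $l$ derivatives onto the mollifier inside the integral representation, writing schematically
\begin{equation*}
D^\alpha\bigl[(f*\varphi_\varepsilon)(g*\varphi_\varepsilon)-(fg)*\varphi_\varepsilon\bigr](x) = -\int (D^\alpha\varphi_\varepsilon)(y)\,\delta_y f(x)\,\delta_y g(x)\,\dif y + \text{lower-order terms},
\end{equation*}
and then bounding the integrand by $|y|^{r+s}|D^\alpha\varphi_\varepsilon(y)|$, which after integration yields $\varepsilon^{r+s-l}$ using fact (a) applied to $|y|^{r+s}|D^\alpha\varphi_\varepsilon(y)|\,\dif y$ (which has $L^1$-norm of size $\varepsilon^{r+s-|\alpha|}$ by a change of variables). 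The lower-order terms, arising when some derivatives fall on the $\delta_y f$ or $\delta_y g$ factors (via a Leibniz argument on $\varphi_\varepsilon(y)f(x-y)$), are handled analogously by lowering $r$ or $s$ accordingly and redistributing derivatives. Combining these contributions delivers the claimed bound.
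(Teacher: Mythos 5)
The paper does not prove this proposition; it cites it verbatim from \cite[Lemma 1]{CDS12}, so there is no in-paper argument to compare against. Your high-level plan --- the rescaling bound $\|D^\alpha\varphi_\varepsilon\|_{L^1}\lesssim\varepsilon^{-|\alpha|}$, Young's inequality, and the Constantin--E--Titi decomposition
\begin{equation*}
(f*\varphi_\varepsilon)(g*\varphi_\varepsilon)-(fg)*\varphi_\varepsilon = (f-f*\varphi_\varepsilon)(g-g*\varphi_\varepsilon) - \int\varphi_\varepsilon(y)\,\delta_y f\,\delta_y g\,\dif y
\end{equation*}
--- is exactly the standard argument behind that reference, and \eqref{estimate:molli1}, \eqref{estimate:molli3}, and \eqref{estimate:molli2} at level $l=0$ all go through as you describe.

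The gap is in passing from $l=0$ to $l>0$ for \eqref{estimate:molli2}. You propose to differentiate the CET identity term by term, saying derivatives that ``fall on the $\delta_y f$ or $\delta_y g$ factors'' are handled ``by lowering $r$ or $s$.'' But $D^\beta_x\delta_y f(x)=D^\beta f(x-y)-D^\beta f(x)$ contains a bare $D^\beta f(x)$ that cannot be moved onto $\varphi_\varepsilon$ by integrating by parts in $y$, and $\|D^\beta f\|_{C^0}$ is not controlled by $\|f\|_{C^r}$ once $|\beta|>r$ (here $r\leq 1$). The same issue appears in $D^\alpha\big[(f-f_\varepsilon)(g-g_\varepsilon)\big]$: Leibniz produces $D^\beta(f-f_\varepsilon)=D^\beta f-f*D^\beta\varphi_\varepsilon$, again with an uncontrolled bare $D^\beta f$. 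These bad contributions cancel identically when summed --- the CET formula is an identity, after all --- but your decomposition does not exhibit the cancellation, so bounding each piece separately cannot succeed. The fix is to apply Leibniz directly to $f_\varepsilon g_\varepsilon-(fg)_\varepsilon$, where every factor is a convolution and all derivatives land on $\varphi_\varepsilon$:
\begin{equation*}
D^\alpha\big(f_\varepsilon g_\varepsilon\big)-D^\alpha(fg)_\varepsilon = \sum_{\beta\leq\alpha}\binom{\alpha}{\beta}(f*D^\beta\varphi_\varepsilon)(g*D^{\alpha-\beta}\varphi_\varepsilon)-(fg)*D^\alpha\varphi_\varepsilon\, .
\end{equation*}
For $0<\beta<\alpha$, exploit $\int D^\beta\varphi_\varepsilon=0$ to write $f*D^\beta\varphi_\varepsilon(x)=\int D^\beta\varphi_\varepsilon(y)\,\delta_y f(x)\,\dif y$, giving $\|f*D^\beta\varphi_\varepsilon\|_{C^0}\lesssim\varepsilon^{r-|\beta|}[f]_{C^r}$ and likewise for $g$, so each cross term contributes $\varepsilon^{r+s-|\alpha|}\|f\|_{C^r}\|g\|_{C^s}$. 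For the two endpoint terms $\beta\in\{0,\alpha\}$, combine them with $-(fg)*D^\alpha\varphi_\varepsilon$ and run the CET expansion with the mean-zero kernel $D^\alpha\varphi_\varepsilon$ in place of $\varphi_\varepsilon$; this gives the same $\varepsilon^{r+s-|\alpha|}$ factor. With this rearrangement your argument closes.
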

	In the proof of Proposition~\ref{p:Rq}, we find the following commutator estimate from \cite[Proposition D.1]{BDLSV19} is useful. 
	\begin{proposition} \label{CZ_comm}
		Suppose $\kappa \in (0,1)$ and $N\in \mathbb{N}_0$. Let $\mathcal{T}$ be a Calder\'on-Zygmund operator and $g \in C^{N+1+\kappa}(\T^3,\R^3)$ be a divergence-free vector field. Then, we have
		\begin{align*}
			\|[ g \cdot \nabla, \mathcal{T}] f \|_{C^{N+\kappa}_x} \lesssim \|g\|_{C^{1+\kappa}_x} \|f\|_{C^{N+\kappa}_x} + \|g\|_{C^{N+1+\kappa}_x} \|f\|_{C^\kappa_x}
		\end{align*}
		for any $f \in C^{N+\kappa}(\T^3,\R^3)$, where the implicit constants only depend on $\kappa,N$.
	\end{proposition}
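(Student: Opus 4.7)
The natural approach is Littlewood--Paley/paraproduct analysis. I would start by expanding $[g\cdot \nabla, \mathcal{T}]f = g\cdot \nabla \mathcal{T}f - \mathcal{T}(g\cdot \nabla f)$ and applying Bony's decomposition $g\,h = T_g h + T_h g + R(g,h)$ to both products, where $T_g h := \sum_j S_{j-1}g \,\Delta_j h$ is the low-high paraproduct and $R(g,h) := \sum_{|j-j'|\leq 1} \Delta_j g \,\Delta_{j'} h$ the resonance term. The commutator then splits into a ``main'' paraproduct-commutator contribution $\sum_j [S_{j-1}g\cdot \nabla,\,\mathcal{T}]\Delta_j f$, together with high--low and remainder contributions of the form $T_{\nabla\mathcal{T}f}g - \mathcal{T}(T_{\nabla f}g)$ and $R(g,\nabla\mathcal{T}f) - \mathcal{T}R(g,\nabla f)$.

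For the main contribution the crucial point is that in each dyadic summand, $S_{j-1}g$ lives at frequencies $\lesssim 2^{j-1}$ while $\Delta_j f$ is localized at frequencies $\sim 2^j$. Representing $\mathcal{T}\Delta_j$ through its convolution kernel $K_j$, which satisfies $|K_j(x)|\lesssim 2^{3j}(1+2^j|x|)^{-M}$ for every $M$, and using the first-order Taylor expansion $S_{j-1}g(x)-S_{j-1}g(y) = O(|x-y|\,\|\nabla S_{j-1}g\|_{L^\infty})$, one obtains the key ``commutator gain''
\[
\|[S_{j-1}g\cdot \nabla,\mathcal{T}]\Delta_j f\|_{L^\infty} \lesssim \|\nabla g\|_{L^\infty}\|\Delta_j f\|_{L^\infty},
\]
which costs no derivative on $f$. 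Multiplying by $2^{j(N+\kappa)}$, taking $\sup_j$, and invoking the Besov--Hölder equivalence $\|u\|_{C^s_x}\approx \sup_j 2^{js}\|\Delta_j u\|_{L^\infty}$ produces the first contribution $\|g\|_{C^{1+\kappa}_x}\|f\|_{C^{N+\kappa}_x}$.

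The high--low paraproducts $T_{\nabla\mathcal{T}f}g$, $\mathcal{T}(T_{\nabla f}g)$ and the remainders $R(g,\nabla\mathcal{T}f)$, $\mathcal{T}R(g,\nabla f)$ are handled by the boundedness of paraproducts on Hölder spaces and of $\mathcal{T}$ as a zero-order Calderón--Zygmund operator; in these pieces the roles of $g$ and $f$ are effectively reversed, so they are bounded by $\|g\|_{C^{N+1+\kappa}_x}\|f\|_{C^{\kappa}_x}$, producing the second contribution. To treat general $N\geq 1$ I would commute $\partial^\alpha$ with $[g\cdot\nabla,\mathcal{T}]$ for each multi-index $|\alpha|\leq N$ and redistribute using Leibniz: any derivative falling on $g$ upgrades its Hölder norm while relaxing the requirement on $f$, so no extra frequency assumptions are needed.

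The main obstacle I expect is establishing the commutator gain cleanly in the periodic setting: the kernel $K_j$ is not exactly the dyadic piece of a homogeneous Euclidean kernel, so one either periodizes the $\R^3$ kernel and controls the error using the rapid decay $(1+2^j|x|)^{-M}$, or works directly at the Fourier-series level using Mikhlin-type estimates on the multiplier symbol of $\mathcal{T}$. The divergence-free hypothesis on $g$ is not strictly necessary for the estimate itself; it appears because in the applications (e.g.\ Proposition~\ref{p:Rq}), $g$ plays the role of a transport velocity and the operator $\mathcal{R}\mathrm{curl}$ is of zero order.
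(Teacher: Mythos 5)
First, be aware that the paper does not prove Proposition~\ref{CZ_comm} at all: it is quoted verbatim from Proposition~D.1 of \cite{BDLSV19} and used as a black box, so there is no internal proof to compare against. The argument of \cite{BDLSV19} proceeds directly from the singular-kernel representation of $[g\cdot\nabla,\mathcal T]$: one writes the commutator as a principal-value integral against the difference $g(x)-g(y)$, integrates by parts using $\div g=0$ to transfer the derivative off $f$, and then estimates the resulting Schauder-type integrals by hand, including a finite-difference argument for the $C^\kappa$ seminorm. Your Littlewood--Paley/paraproduct approach is a genuinely different route. It replaces the single kernel computation with three dyadically localized pieces, and trades the integration by parts (which is where \cite{BDLSV19} leans on divergence-free) for a scaling cancellation: after grouping, the main piece reads $\int K_j(x-y)\bigl(S_{j-1}g(x)-S_{j-1}g(y)\bigr)\cdot\nabla\Delta_j f(y)\,\dif y$, and the derivative loss $2^{j}$ from $\nabla\Delta_j f$ is exactly offset by the Taylor gain $2^{-j}$ coming from $\int |K_j(y)|\,|y|\,\dif y\lesssim 2^{-j}$. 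So your observation that divergence-free is dispensable for the estimate itself is correct and your proof makes it transparent, whereas in \cite{BDLSV19} it is baked into the integration by parts. The price is that you rely on the Besov--H\"older equivalence, paraproduct boundedness, and a periodic Littlewood--Paley calculus, while \cite{BDLSV19}'s argument is more self-contained.

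Two places to tighten, neither fatal. First, the kernel bound you invoke should be the $L^1$-weighted bound $\int |K_j(y)|\,|y|\,\dif y\lesssim 2^{-j}$, not merely the pointwise decay, and the fact that $\mathcal T\Delta_j$ admits such a kernel uniformly in $j$ needs a one-line justification (the symbol of $\mathcal T$ is smooth and bounded on the annulus $|\xi|\sim 2^j$, so $\mathcal T\Delta_j$ is a smooth cutoff multiplier whose kernel inherits the rescaled rapid decay). Second, for $N\geq 1$ it is cleaner to avoid commuting $\partial^\alpha$ through $[g\cdot\nabla,\mathcal T]$ and redistributing by Leibniz: your dyadic bounds already give $2^{k(N+\kappa)}\|\Delta_k[g\cdot\nabla,\mathcal T]f\|_{L^\infty}$ directly for each $k$, and since $N+\kappa$ is non-integer the Besov characterization of $C^{N+\kappa}_x$ closes the argument without ever differentiating the commutator. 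This also sidesteps the nuisance that $\partial^\alpha$ does not commute with $g\cdot\nabla$.
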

	
	\noindent$\bullet$ {\bf ‘Squiggling’ space-time cutoffs $\eta_i$}
	
	We recall the construction of the ‘squiggling’ cutoffs in \cite[Section 4.4]{KMY22}, which is an adaptation from \cite[Lemma 5.3]{BDLSV19}. Let $\varepsilon \in (0,\frac13)$,  $\varepsilon_0\ll 1$ and define for $1\leq i$ the sets
	\begin{align*}
		\tilde{S}_{i}&:=  \Big[\te_i+\frac{\varepsilon\tau_q}3, \te_i+ \frac{(3-\varepsilon)\tau_q}3\Big] \subset \R ,\\
		\tilde{\Omega}_i&:= \Big\{\Big(t+\frac{2\varepsilon\tau_q}3\sin (2\pi x_1),x \Big) : t \in \tilde{S}_{i} , \ x \in\mathbb T^3 \Big\}\subset  \R\times \T^3 .
	\end{align*}
	Then, using the mollifiers $\varphi_{\varepsilon_0}$ and $\psi_{\varepsilon_0\tau_q} $ in \eqref{def:mollifiers} to mollify $\mathbf{1}_{\tilde{\Omega}_i} $ in space and time as follows:
	\begin{align*}
		\eta_i (t,x)&:= \mathbf{1}_{\tilde{\Omega}_i}*_t\psi_{\varepsilon_0\tau_q} *_x\varphi_{\varepsilon_0} = \frac1{\varepsilon_0 \tau_q} \frac1{\varepsilon_0^3} \iint \mathbf{1}_{\tilde{\Omega}_i} (s,y) \psi \big(\frac{t-s}{\varepsilon_0\tau_q} \big)\varphi\Big( \frac{x-y}{\varepsilon_0}\Big) \dif s \dif y. 
	\end{align*}
	One may check the propoties \eqref{eq:eta:i:support1}-\eqref{eq:eta:i:derivative} about the cutoffs $\eta_i$ follow by taking $\varepsilon\in (0,\frac13)$ and $\varepsilon_0\ll 1$.

	\noindent$\bullet$ {\bf An inverse-divergence operator}
	
	We use the inverse-divergence operator $\mathcal{R}$ from \cite{DelSze13} which acts on vector fields $v$ with $\int_{\mathbb{T}^3}v\dif x=0$ as
	\begin{equation*}
		(\mathcal{R}v)^{kl}=(\partial_k\Delta^{-1}v^l+\partial_l\Delta^{-1}v^k)-\frac{1}{2}(\delta_{kl}+\partial_k\partial_l\Delta^{-1})\div\Delta^{-1}v,
	\end{equation*}
	for $k,l\in\{1,2,3\}$. The above inverse-divergence operator has the property that  $\mathcal{R}v(x)$ is a symmetric trace-free matrix for each $x\in\mathbb{T}^3$, and $\mathcal{R}$ is a right inverse of the div operator, i.e. $\div(\mathcal{R} v)=v$. For general $f$, we overload notation and denote $\mathcal{R}f= \mathcal{R}(f-\int_{\mathbb{T}^3} f)$. The main properties of $\mathcal{R}$ are proven in \cite[Section 4]{DelSze13}.
	
	\noindent$\bullet$ {\bf A stationary phase lemma}
	
	We also recall the following stationary phase lemma adapted to our setting (see for example \cite[Lemma 5.7]{BV19} and \cite [Lemma 2.2]{DS17}) which makes rigorous the fact that the inverse-divergence $\mathcal{R}$ obeys the same elliptic regularity estimates as $|\nabla|^{-1}$. We refer the reader to \cite{DS17} for the proof of the following stationary phase lemma. 
	
	\begin{proposition}\label{prop.inv.div}
		Let $\alpha \in (0,1)$ and $N \geq 1$. Let $a \in C^\infty(\mathbb{T}^3)$, $\Phi \in C^\infty(\mathbb{T}^3,\mathbb{R}^3)$ be smooth functions and assume that there exists a constant $\hat C$ such that
		\[\hat C^{-1} \leq |\nabla \Phi| \leq \hat C\]
		holds on $\mathbb{T}^3$. Then
		\begin{align}\label{esti:integral}
			\left| \int_{\mathbb{T}^3} a(x)e^{i\lambda \xi \cdot \Phi(x)} \dif x\right| \lesssim \frac{\|a\|_{C^N}+\|a\|_{C^0} \|\nabla \Phi\|_{C^N}}{\lambda^N},
		\end{align}
		and for the operator $\mathcal{R}$ recalled above, we have
		\begin{equation}
			\left\|\mathcal{R}\left(a(x)e^{i\lambda \xi\cdot \Phi(x)}\right)\right\|_{C^\alpha}\lesssim \frac{\|a\|_{C^0}}{\lambda^{1-\alpha}}+\frac{\|a\|_{C^{N+\alpha}}+\|a\|_{C^0}\|\nabla \Phi\|_{C^{N+\alpha}}}{{\lambda^{N-\alpha}}},
		\end{equation}
		where the implicit constants depend on $\hat C$, $\alpha$ and $N$, but not on the frequency $\lambda$.
	\end{proposition}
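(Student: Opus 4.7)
The plan is to derive both estimates from a single integration-by-parts scheme, exploiting the non-degeneracy $|\nabla\Phi|\geq\hat C^{-1}$ to design a differential operator that preserves $e^{i\lambda\xi\cdot\Phi}$. Set $\psi(x):=\xi\cdot\Phi(x)$, a scalar phase with $\nabla\psi=(\nabla\Phi)^{T}\xi$. Since $|\xi|$ is bounded (absorbed into the implicit constant) and $|\nabla\Phi|\geq\hat C^{-1}$, one has $|\nabla\psi|\geq c>0$, so $V:=\nabla\psi/|\nabla\psi|^{2}$ is a well-defined smooth vector field satisfying the defining identity $V\cdot\nabla e^{i\lambda\psi}=i\lambda\,e^{i\lambda\psi}$.

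For the first estimate, integration by parts on $\mathbb{T}^{3}$ (no boundary terms) yields
\begin{equation*}
  \int_{\mathbb T^3} a\,e^{i\lambda\psi}\,dx \;=\; -\tfrac1{i\lambda}\int_{\mathbb T^3}\div(aV)\,e^{i\lambda\psi}\,dx \;=\; \int_{\mathbb T^3}Q(a)\,e^{i\lambda\psi}\,dx,
\end{equation*}
where $Q(a):=-\tfrac{1}{i\lambda}\div(aV)$. Iterating $N$ times gives $\int ae^{i\lambda\psi}\,dx=\int Q^{N}(a)\,e^{i\lambda\psi}\,dx$, so the bound reduces to controlling $\|Q^{N}(a)\|_{L^{\infty}}$. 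Each application of $Q$ costs a factor $\lambda^{-1}$ and distributes a derivative onto either $a$ or $V$; the Leibniz rule together with Fa\`a di Bruno applied to the smooth nonlinearity $M\mapsto(M^{T}\xi)/|M^{T}\xi|^{2}$ (legitimate thanks to the lower bound on $|\nabla\Phi|$) and Gagliardo--Nirenberg interpolation absorbs intermediate derivative counts, yielding
\begin{equation*}
  \|Q^{N}(a)\|_{C^{0}}\;\lesssim\;\lambda^{-N}\bigl(\|a\|_{C^{N}}+\|a\|_{C^{0}}\|\nabla\Phi\|_{C^{N}}\bigr),
\end{equation*}
which is precisely the first claim.

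For the second estimate I would not integrate but recast the same identity at the function level: writing $\mathrm{Div}$ for the row-wise divergence of a matrix,
\begin{equation*}
  a\,e^{i\lambda\psi} \;=\; \frac{1}{i\lambda}\,\mathrm{Div}\bigl((a\otimes V)\,e^{i\lambda\psi}\bigr)\;-\;\frac{1}{i\lambda}\,\mathrm{Div}(a\otimes V)\,e^{i\lambda\psi}.
\end{equation*}
Apply $\mathcal{R}$. For the first piece, use that $\mathcal{R}\,\mathrm{Div}$ is a zero-order Calder\'on--Zygmund operator, hence bounded on $C^{\alpha}$; combined with $\|e^{i\lambda\psi}\|_{C^{\alpha}}\lesssim \lambda^{\alpha}$ and the product rule, this produces a contribution of order $\|a\|_{C^{0}}/\lambda^{1-\alpha}$ (plus terms with the same decay involving $\|\nabla\Phi\|_{C^{\alpha}}$, absorbed into the implicit constant via the uniform bound $\hat C^{-1}\le|\nabla\Phi|\le\hat C$). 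The second piece has the same structure as the original expression, with $a$ replaced by $\mathrm{Div}(a\otimes V)$ and an extra factor $\lambda^{-1}$; iterating this substitution $N-1$ further times converts all the oscillation into derivatives on $a$ and $V$, and a final trivial bound $\|\mathcal{R}g\|_{C^{\alpha}}\lesssim\|g\|_{C^{\alpha}}$ yields the residual of order $\lambda^{-(N-\alpha)}(\|a\|_{C^{N+\alpha}}+\|a\|_{C^{0}}\|\nabla\Phi\|_{C^{N+\alpha}})$. The zero-mean prerequisite for $\mathcal{R}$ is handled by subtracting the mean, which is itself controlled by the first part of the proposition.

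The principal difficulty is the combinatorial bookkeeping in the iteration: each application of $Q$ (or of its matrix variant) generates many terms through Leibniz, and each derivative landing on $V$ must be re-expressed in terms of derivatives of $\nabla\Phi$ via Fa\`a di Bruno. Obtaining the clean final bound with only $\|\nabla\Phi\|_{C^{N}}$ rather than spurious powers such as $\|\nabla\Phi\|_{C^{1}}^{N}$ requires a careful Gagliardo--Nirenberg-type interpolation at every intermediate step; this is the most delicate, though purely routine, portion of the argument, and is also the place where uniformity in $\lambda$ of the implicit constants must be verified explicitly.
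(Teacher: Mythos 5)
The paper does not actually prove this proposition: it is imported as a technical tool, with the reader referred to \cite{DS17} (see also \cite{BV19}) for the proof. Your argument is essentially that standard proof: the non-stationary-phase identity $V\cdot\nabla e^{i\lambda\psi}=i\lambda e^{i\lambda\psi}$ with $V=\nabla\psi/|\nabla\psi|^2$, iterated $N$ times for \eqref{esti:integral}, and the same identity recast as a divergence so that $\mathcal{R}\,\mathrm{Div}$ (a zero-order Calder\'on--Zygmund operator, bounded on $C^\alpha$) extracts the gain $\lambda^{\alpha-1}$ for the $\mathcal{R}$-estimate, with the residual after $N$ steps bounded trivially. The structure, the role of the Leibniz/Fa\`a di Bruno bookkeeping, and the interpolation needed to reach the clean norms $\|a\|_{C^N}+\|a\|_{C^0}\|\nabla\Phi\|_{C^N}$ all match the cited source, and the mean-subtraction point for $\mathcal{R}$ is handled correctly.

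Two caveats. First, in the second estimate your parenthetical claim that the terms involving $\|a\|_{C^\alpha}$ and $\|\nabla\Phi\|_{C^\alpha}$ coming from $\|(a\otimes V)e^{i\lambda\psi}\|_{C^\alpha}$ are ``absorbed into the implicit constant via $\hat C^{-1}\le|\nabla\Phi|\le\hat C$'' is wrong as stated: $\hat C$ bounds only $\|\nabla\Phi\|_{C^0}$, not its H\"older seminorm, and $\|a\|_{C^\alpha}$ is not $\|a\|_{C^0}$. These contributions carry only a factor $\lambda^{-1}$, not $\lambda^{\alpha-1}$, and must instead be interpolated between the two displayed terms (e.g. $\lambda^{-1}\|a\|_{C^\alpha}\lesssim(\lambda^{\alpha-1}\|a\|_{C^0})^{1-\theta}(\lambda^{\alpha-N}\|a\|_{C^{N+\alpha}})^{\theta}$ with $\theta=\alpha/(N+\alpha)$, and likewise for $\|\nabla\Phi\|_{C^\alpha}$ using $\|\nabla\Phi\|_{C^0}\le\hat C$); this is exactly the Gagliardo--Nirenberg/Young device you already invoke elsewhere, so the gap is cosmetic but the justification you wrote would not pass as is. Second, the lower bound $|\nabla\psi|=|(\nabla\Phi)^T\xi|\ge c$ does not follow from a lower bound on a matrix norm of $\nabla\Phi$ alone; the hypothesis has to be read (as it is in \cite{DS17} and in the way the lemma is applied here, cf. \eqref{eq:Phi:i:bnd:a}--\eqref{eq:Phi:i:bnd:b}, where $\nabla\Phi_i$ is close to the identity) as a two-sided bound on $\nabla\Phi$ as a linear map, so that $|(\nabla\Phi)^T\xi|\gtrsim|\xi|$. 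This is an imprecision of the statement itself rather than of your proof, but it should be made explicit since your vector field $V$ is only well defined under that reading.
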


	\renewcommand{\theequation}{B.\arabic{equation}}
	\section{Mikado flows}\label{sec:Mikado}
	In this part, we recall the construction and the main properties of the Mikado flows from~\cite{BV19} which is adapted to the convex integration scheme in Proposition~\ref{p:iteration}.
	We point out that the construction is entirely deterministic, meaning that none of the functions below depends on $\omega$. Let us begin with the following geometric lemma which can be found in \cite[Lemma 6.6]{BV19}.
	\begin{lemma}\label{l:linear_algebra}
		Denote by $\overline{B_{\sfrac 12}}(\mathrm{Id})$ the closed ball of radius $1/2$ around the identity matrix $\Id$, in the space of symmetric $3\times 3$ matrices.
		There exist mutually disjoint sets  $\{\Lambda_i\}_{i=0,1}  \subset\mathbb S^2\cap \mathbb Q^3$ such that for each $\xi \in \Lambda_i$ there exists a $C^\infty$-smooth  functions $\gamma_{\xi}: \overline{B_{\sfrac 12}}(\mathrm{Id})\rightarrow \mathbb R$ such that
		\begin{align*}
			R=\sum_{\xi\in \Lambda_{i}} \gamma^2_{\xi}(R)(\xi\otimes \xi)
		\end{align*} 
		for every symmetric matrix $R$ satisfying $|R-\mathrm{Id}| \leq \sfrac12$, and for each $i \in \{0,1\}$. 
		Moreover, for $i \in \{0,1\}$, and each $\xi\in \Lambda_i $, let use define $A_{\xi}\in\mathbb S^2\cap \mathbb Q^3$ to be an orthogonal vector to $\xi$. Then for each $\xi\in \Lambda_i$, we have that $\{\xi,A_{\xi},\xi\times A_{\xi}\}\subset \mathbb S^2\cap \mathbb Q^3$ form an orthonormal basis for $\mathbb R^3$. Furthermore, we label by $n_*$  the smallest natural such that 
		\begin{equation*}
			\left\{n_*\, \xi,~n_* \, A_{\xi},~n_* \, \xi\times A_{\xi}\right\}\subset  \mathbb{Z}^3\,
		\end{equation*}
		for every $\xi \in \Lambda_i$ and for every $i \in \{0,1\}$. For a sufficiently large constant $C_\Lambda \geq 1$ to be chosen in the sequel, it is convenient to denote $M$ geometric constant such that
		\begin{align}	\label{eq:Onsager:M:def}
			M \geq C_\Lambda \sup_{\xi \in \Lambda_0 \cup \Lambda_1} ( \|\gamma_\xi\|_{C^0} +  \sum_{j\leq N}\| D^j  \gamma_\xi\|_{C^0} ) \, ,
		\end{align}
		holds for $n$ large enough. This parameter is universal.
	\end{lemma}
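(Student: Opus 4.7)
\textbf{Proof plan for Lemma~\ref{l:linear_algebra}.} The heart of the claim is a linear-algebraic decomposition of symmetric matrices near the identity into a non-negative combination of rank-one tensors $\xi\otimes\xi$ with rational, unit-length $\xi$. My plan is to first produce a single set $\Lambda\subset \mathbb{S}^2\cap\mathbb{Q}^3$ with the required properties, then split it into two disjoint sets $\Lambda_0,\Lambda_1$ and extract the smooth coefficients $\gamma_\xi$.

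First I would fix a finite set $\Lambda\subset \mathbb{S}^2\cap\mathbb{Q}^3$ that is invariant under a finite subgroup $G\subset SO(3)$ acting irreducibly on $\mathbb{R}^3$ (for example, the rotational symmetry group of a Platonic solid, whose vertices can be rotated so as to have rational coordinates after a small perturbation within the sphere, using density of $\mathbb{Q}^3\cap\mathbb{S}^2$). Since $G$ acts irreducibly and the matrix $S:=\sum_{\xi\in \Lambda}\xi\otimes\xi$ is $G$-invariant, Schur's lemma forces $S=c\,\mathrm{Id}$ for some constant $c>0$. By possibly enlarging $\Lambda$, I can also arrange that $\{\xi\otimes\xi\}_{\xi\in\Lambda}$ spans the six-dimensional space $\mathcal{S}^{3\times 3}$ of symmetric matrices. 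Then the linear map $T:\mathbb{R}^{|\Lambda|}\to\mathcal{S}^{3\times 3}$, $(c_\xi)\mapsto \sum c_\xi\,\xi\otimes\xi$, is surjective and admits a smooth (in fact linear) right inverse $T^\dagger$; the coefficient $c^0_\xi:=1/c>0$ yields $T(c^0)=\mathrm{Id}$, so $c^0$ lies in the open positive cone.

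Next, for any $R\in \overline{B_{1/2}}(\mathrm{Id})$ I would define $c_\xi(R):=c^0_\xi+(T^\dagger(R-\mathrm{Id}))_\xi$, which is linear (hence $C^\infty$) in $R$ and satisfies $\sum_\xi c_\xi(R)\,\xi\otimes\xi=R$. Choosing $\Lambda$ large enough and $T^\dagger$ of minimum norm, by continuity $c_\xi(R)>0$ throughout the closed ball of radius $1/2$ around $\mathrm{Id}$; this is the one step where positivity and the specific radius $1/2$ must be respected, and may require enlarging $\Lambda$ (which is allowed since we never required minimality). Setting $\gamma_\xi(R):=\sqrt{c_\xi(R)}$ gives the desired smooth square-root. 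Splitting $\Lambda=\Lambda_0\sqcup\Lambda_1$ into two disjoint subsets, each of which is itself a $G$-orbit (or a union thereof) satisfying the same spanning and positivity property, is then possible by taking two disjoint $G$-invariant building blocks of the original construction, or simply by duplicating the construction with two distinct group orbits.

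For the remaining rational/orthonormal-basis properties, I would use that every $\xi\in\mathbb{S}^2\cap\mathbb{Q}^3$ admits a rational orthogonal unit partner $A_\xi\in\mathbb{S}^2\cap\mathbb{Q}^3$: given $\xi=(p,q,r)/\|\cdot\|$ with $p,q,r\in\mathbb{Z}$, an explicit rational vector orthogonal to $\xi$ can be written down (e.g., $(q,-p,0)$ normalized, or a suitable variant if that degenerates), and its norm lies in $\sqrt{\mathbb{Q}}$; by perturbing within $\mathbb{S}^2\cap\mathbb{Q}^3$ I can always arrange $A_\xi\in\mathbb{S}^2\cap\mathbb{Q}^3$. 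Then $\xi\times A_\xi\in\mathbb{S}^2\cap\mathbb{Q}^3$ automatically, and a common denominator $n_*\in\mathbb{N}$ makes $n_*\xi,n_*A_\xi,n_*(\xi\times A_\xi)\in\mathbb{Z}^3$ uniformly over the finite set $\Lambda_0\cup\Lambda_1$. Finally, the bound on $M$ is immediate since $\gamma_\xi$ is $C^\infty$ on the compact set $\overline{B_{1/2}}(\mathrm{Id})$ and the set $\Lambda_0\cup\Lambda_1$ is finite, so taking the supremum in \eqref{eq:Onsager:M:def} yields a finite geometric constant.

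The main obstacle I anticipate is the delicate interplay between three constraints on $\Lambda_i$: rational entries, irreducible group invariance (to force $\sum\xi\otimes\xi\propto\mathrm{Id}$), and strict positivity of the coefficients $c_\xi(R)$ throughout the \emph{whole} ball $\overline{B_{1/2}}(\mathrm{Id})$ (not just at $\mathrm{Id}$). Rather than optimize the radius, I would simply enlarge $\Lambda_i$ (which preserves all the geometric properties while increasing the minimal value of $c^0_\xi$) until the positivity is stable under perturbations of norm $1/2$. The only other subtlety is the strict disjointness of $\Lambda_0$ and $\Lambda_1$; this is handled by taking two $G$-orbits in general position, which generically do not intersect.
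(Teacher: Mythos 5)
The paper does not actually prove this lemma; it cites \cite[Lemma~6.6]{BV19}, whose proof rests on a completely explicit finite set of rational directions with hand-checked positivity of the coefficients (going back to \cite{DS17}). Your route --- building $\Lambda$ as an orbit of a finite irreducible subgroup $G\subset SO(3)$ and invoking Schur's lemma to get $\sum_{\xi\in\Lambda}\xi\otimes\xi\propto\mathrm{Id}$ --- is a genuinely different, more conceptual approach, and the skeleton (linear right inverse of $(c_\xi)\mapsto\sum c_\xi\,\xi\otimes\xi$, then $\gamma_\xi=\sqrt{c_\xi}$, then two disjoint copies, then compactness for \eqref{eq:Onsager:M:def}) is the right one. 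But two steps do not go through as written.

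First, perturbing the vertices of a Platonic solid to nearby rational points destroys the $G$-invariance of $\Lambda$, so $S=\sum_\xi\xi\otimes\xi$ is no longer $G$-equivariant and Schur's lemma does not apply; the fix is to start from a subgroup $G\subset SO(3,\mathbb Q)$ of rational rotations (e.g.\ the rotational octahedral group generated by signed coordinate permutations) and take orbits of rational unit vectors, which remain inside $\mathbb S^2\cap\mathbb Q^3$ --- and even then the octahedron orbit alone gives only three distinct tensors $\xi\otimes\xi$, far short of spanning the six-dimensional $\mathcal S^{3\times3}$, so the spanning claim needs an actual argument. Second, and more seriously, the positivity of $c_\xi(R)=c^0_\xi+(T^\dagger(R-\mathrm{Id}))_\xi$ on the \emph{entire} ball $\overline{B_{1/2}}(\mathrm{Id})$ is asserted via ``by continuity\dots may require enlarging $\Lambda$'', but enlarging $\Lambda$ does not help: taking traces gives $c^0_\xi=3/|\Lambda|$, which shrinks with $|\Lambda|$, and the $\ell^\infty$-operator norm of the minimum-norm pseudoinverse $T^\dagger=T^*(TT^*)^{-1}$ shrinks at the same rate $\sim1/|\Lambda|$, so the radius of guaranteed positivity is governed by the \emph{conditioning} of $\{\xi\otimes\xi\}_{\xi\in\Lambda}$, not by its cardinality, and you never estimate that constant against the fixed threshold $1/2$. (A minor further issue: $(q,-p,0)/\sqrt{p^2+q^2}$ is usually irrational; the existence of $A_\xi\in\mathbb S^2\cap\mathbb Q^3$ orthogonal to $\xi$ is true but is a nontrivial fact about rational points on conics, or is sidestepped by choosing $\xi$ and $A_\xi$ together explicitly, as in the standard construction.)
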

	
	Next, we recall the construction and properties of the Mikado flows from \cite[Section 6.4]{BV19}. Let $\Psi:\mathbb R^2\rightarrow \mathbb R$ be a $C^\infty$ smooth function with support contained in a ball of radius 1 around the origin. We normalize $\Psi$  such that $\phi=- \Delta  \Psi$  obeys
	\begin{equation*}
		\int_{\R^2} \phi^2(x_1,x_2)\,\dif x_1 \dif x_2= 4 \pi^2\,.
	\end{equation*}
	By definition we know $\int_{\R^2}\phi \,\dif x=0$.
	Moreover, since $\supp \Psi, \phi \subset  \T^2$, we abuse notation and still denote by $\Psi, \phi$ the $\T^2$-periodized versions of $\Psi$ and $\phi$.   
	Then, for any large  $\lambda \in \N$ and every $\xi \in \Lambda_i$, we introduce the functions
	\begin{subequations}
		\label{eq:phi:rotate}
		\begin{align}
			\Psi_{(\xi)}(x)&:=\Psi_{\xi, \lambda}(x):=\Psi(n_* \lambda (x-\alpha_{\xi})\cdot A_\xi,n_* \lambda (x-\alpha_\xi) \cdot (\xi\times A_{\xi}))\, ,\\
			\phi_{(\xi)}(x)&:=\phi_{\xi,\lambda}(x):=\phi(n_* \lambda (x-\alpha_{\xi})\cdot A_\xi,n_* \lambda (x-\alpha_\xi)\cdot (\xi\times A_{\xi}))\,, 
		\end{align}
	\end{subequations}
	where $\alpha_\xi \in \R^3$ are {\em shifts} to ensure that the functions $\{\phi_{(\xi)} \}_{\xi \in \Lambda_i}$ and $\{\Psi_{(\xi)} \}_{\xi \in \Lambda_i}$ have mutually disjoint support. In addition, we choose the constant $C_\Lambda$ in \eqref{eq:Onsager:M:def} as 
	\begin{align}\label{def:CLambda}
		C_\Lambda = 32n_*|\Lambda|(\|\phi\|_{C^1}+\|\Psi\|_{C^2}),
	\end{align}
	where $|\Lambda|$ is the cardinality of the set $\Lambda_0 \cup \Lambda_1$.
	
	Note that since $n_*  A_\xi$ and $n_* \xi \times A_\xi \in \mathbb{Z}^3$, and $\lambda \in \N$, the functions $\Psi_{(\xi)}$ and $\phi_{(\xi)}$ are $(\sfrac{\T}{\lambda})^3$-periodic. By construction we have that $\{\xi, A_\xi,\xi \times A_\xi\}$ are an orthonormal basis or $\R^3$, and hence $\xi \cdot \nabla \Psi_{(\xi)}(x) = \xi \cdot \nabla \phi_{(\xi)}(x) = 0$.  From the normalization of $\phi$ we have that $\int_{\T^3} \phi_{(\xi)}^2 \dif x = 1$ and $\phi_{(\xi)}$ has zero mean on $(\sfrac{\T}{\lambda})^3$.  Since $\phi=-\Delta  \Psi$  we have that $(n_*\lambda)^2 \phi_{(\xi)} = - \Delta \Psi_{(\xi)}$. 
	
	With this notation, the {\em Mikado flows} $W_{(\xi)} \colon \T^3 \to \R^3$ are defined as
	\begin{align}
		W_{(\xi)}(x) := W_{\xi,\lambda}(x) := \xi \, \phi_{(\xi)}(x) \, .
		\label{eq:Mikado:def}
	\end{align}
	Moreover, by the choice of $\alpha_\xi$ we have that 
	\begin{align}
		W_{(\xi)} \otimes W_{(\xi')} \equiv 0, \quad \mbox{for} \quad \xi \neq \xi' \in   \Lambda_i \,,
		\label{eq:Mikado:2}
	\end{align}
	for $i\in \{0,1\}$, and by normalization of $\phi_{(\xi)}$ we obtain
	\begin{align*}
		\int_{\T^3} W_{(\xi)}(x) \otimes W_{(\xi)}(x)\, \dif x=\xi\otimes \xi\,. 
	\end{align*}
	Lastly, these facts combined with Lemma~\ref{l:linear_algebra} impliy that
	\begin{align}
		\sum_{\xi \in \Lambda_i} \gamma_{\xi}^2(R) \int_{\T^3} W_{(\xi)}(x) \otimes W_{(\xi)}(x) \dif x = R \, ,
		\label{eq:Mikado:4}
	\end{align}
	for every $i \in \{0,1\}$ and any symmetric matrix $R \in \overline B_{\sfrac 12}(\mathrm{Id})$.
	
	To conclude this section we note that $W_{(\xi)}$ may be written as the $\curl$ of a vector field, a fact which is useful in defining the incompressibility corrector in Section~\ref{sec:principal w}. Indeed, by $\xi \cdot \nabla \Phi_{(\xi)} = 0$, $-\frac{1}{(n_* \lambda)^2} \Delta \Phi_{(\xi)} = \phi_{(\xi)}$ and the identity $\curl \curl =\nabla \div - \Delta $ we obtain
	\begin{align}
		W_{(\xi)}=   - \xi \left( \frac{1}{(n_* \lambda)^2} \Delta \Psi_{(\xi)} \right) = \curl\left( \frac{1}{(n_* \lambda)^2} \curl (\xi \Psi_{(\xi)}) \right) = \curl\left( \frac{1}{(n_* \lambda)^2} \nabla \Psi_{(\xi)} \times \xi \right)   \,.
		\label{eq:Mikado:curl}
	\end{align}
	For notational simplicity, we   define 
	\begin{align}
		V_{(\xi)} =  \frac{1}{(n_* \lambda)^2} \nabla \Psi_{(\xi)} \times \xi 
		\label{eq:Mikado:curl:2}
	\end{align}
	so that $\curl V_{(\xi)} = W_{(\xi)}$. With this notation we have the bounds 
	\begin{align}
		\|W_{(\xi)}\|_{C^N} + \lambda \|V_{(\xi)}\|_{C^N} \lesssim \lambda^{N}
		\label{eq:Mikado:bounds}
	\end{align}
	for $N\geq 0$.

	\renewcommand{\theequation}{C.\arabic{equation}}
	\section{Estimates for transport equations}\label{sec:esti:transport}
	We first recall some standard estimates for solutions to the transport equation on $[t_0,T]$:
	\begin{equation}\label{eq:phi}
		\aligned
		(\partial_t+v\cdot\nabla)f&=g ,
		\\ f(t_0,x)&=f_0.
		\endaligned
	\end{equation}
	The following proposition is given in \cite[Appendix B]{BDLSV19} and the proof follows by interpolation from the corresponding result in \cite[Appendix D]{BDLIS16}.
	\begin{proposition}
		Assume $(t-t_0)\|v\|_{C^0_{[t_0,t]}C^1_x}\leq 1$. Then, any solution f of \eqref{eq:phi} satisfies
		\begin{equation}\label{eq:Gronwall1}
			\|f(t)\|_{C_x^\alpha}\leq e^{\alpha}\left( \|f_0\|_{C_x^\alpha}+\int_{t_0}^{t}\|{g(\tau)}\|_{C_x^\alpha} \dif \tau \right),
		\end{equation}
		for all $\alpha\in[0,1)$. More generally, for any $N\geq1$ and $\alpha\in[0,1)$
		\begin{equation}\label{eq:GronwallN}
			\aligned
			\|f(t)\|_{C_x^{N+\alpha}}& \lesssim \|f_0\|_{C_x^{N+\alpha}}+(t-t_0)\|{v}\|_{C^0_{[t_0,t]}C^{N+\alpha}_x} \|f_0\|_{C_x^1} 
			\\ &\quad +\int_{t_0}^{t} \left( \|g(\tau)\|_{C_x^{N+\alpha}}+(t-\tau)\|{v}\|_{C^0_{[t_0,t]}C^{N+\alpha}_x} \|g(\tau)\|_{C_x^1} \right) \dif \tau,
			\endaligned
		\end{equation}
		where the implicit constant depends on $N$ and $\alpha$.
		Define $\Phi$ to be the solution of \eqref{eq:phi} with $g=0$ and $\Phi(t_0,x)=x$. Under the same assumptions as above, we have
		\begin{align}
			\| \nabla \Phi(t)-\mathrm{Id}\|_{C^0_x} & \lesssim  e^{(t-t_0)\|v\|_{C^0_{[t_0,t]}C^1_x}}-1\lesssim(t-t_0)\|v\|_{C^0_{[t_0,t]}C^1_x} \, , \label{eq:B5}
			\\ \|  \Phi(t)\|_{C^N_x} & \lesssim (t-t_0)\|v\|_{C^0_{[t_0,t]}C^N_x}, \, N\geq 2 .\label{eq:B6}
		\end{align}
	\end{proposition}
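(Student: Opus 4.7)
The plan is to reduce everything to ODE estimates along the characteristics of $v$. First I would establish the flow-map bounds \eqref{eq:B5} and \eqref{eq:B6}, and then derive the transport equation bounds \eqref{eq:Gronwall1} and \eqref{eq:GronwallN} by solving \eqref{eq:phi} in Lagrangian coordinates along $\Phi$. The smallness hypothesis $(t-t_0)\|v\|_{C^0_{[t_0,t]}C^1_x}\leq 1$ is exactly what keeps all Gr\"onwall factors $O(1)$ throughout.

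For the flow map, differentiating the defining ODE $\partial_\tau \Phi = v(\tau,\Phi)$ in $x$ gives
\begin{equation*}
\partial_\tau \nabla \Phi = (\nabla v)(\tau,\Phi)\,\nabla\Phi, \qquad \nabla \Phi(t_0,x)=\mathrm{Id}.
\end{equation*}
Gr\"onwall's inequality immediately yields $\|\nabla\Phi(t)-\mathrm{Id}\|_{C^0_x}\leq e^{(t-t_0)\|v\|_{C^0_{[t_0,t]}C^1_x}}-1$, from which \eqref{eq:B5} follows by the smallness assumption and the elementary estimate $e^s-1\lesssim s$ for $s\leq 1$. For \eqref{eq:B6} I would iteratively differentiate the flow ODE: each multi-index derivative $\partial^\theta\Phi$ with $|\theta|=N\geq 2$ satisfies a linear ODE whose source is a sum of terms of the form $(D^j v)(\tau,\Phi)$ times products of strictly lower-order derivatives of $\Phi$. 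Induction on $N$, combined with the already established $O(1)$ control on $\nabla\Phi$, reduces the top-order contribution to $\int_{t_0}^t\|v(\tau)\|_{C^N_x}\,d\tau$, which is bounded by $(t-t_0)\|v\|_{C^0_{[t_0,t]}C^N_x}$.

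For the transport equation I would introduce the Lagrangian quantities $F(t,x):=f(t,\Phi(t,x))$ and $G(t,x):=g(t,\Phi(t,x))$, which satisfy $\partial_t F = G$, so
\begin{equation*}
F(t,x) = f_0(x) + \int_{t_0}^t G(\tau,x)\,d\tau.
\end{equation*}
Since $\Phi(t,\cdot)$ is a bi-Lipschitz diffeomorphism close to the identity by \eqref{eq:B5}, composing back yields $f(t,y) = f_0(\Phi^{-1}(t,y)) + \int_{t_0}^t g(\tau,\Phi(\tau,\Phi^{-1}(t,y)))\,d\tau$. Taking $C^\alpha$ norms on both sides and using that composition with a bi-Lipschitz map close to $\mathrm{Id}$ distorts the $C^\alpha_x$-seminorm only by a multiplicative constant bounded by $e^\alpha$ yields \eqref{eq:Gronwall1}. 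For \eqref{eq:GronwallN}, one applies the Fa\`a di Bruno chain rule to $\nabla^N$ of the above composition: it decomposes into a principal term, in which all $N$ derivatives fall on $f_0$ or on $g(\tau,\cdot)$, plus mixed terms in which strictly lower-order derivatives of $f_0$ or $g$ are multiplied by high-order derivatives of $\Phi^{\pm 1}$.

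The main obstacle will be the bookkeeping in the Fa\`a di Bruno expansion at noninteger regularity $N+\alpha$, which must be organized so as to isolate exactly the two contributions appearing in \eqref{eq:GronwallN}: the principal terms give $\|f_0\|_{C^{N+\alpha}_x}$ and $\int_{t_0}^t\|g(\tau)\|_{C^{N+\alpha}_x}\,d\tau$, while all the mixed terms must be absorbed, via interpolation between $C^1_x$ and $C^{N+\alpha}_x$ together with \eqref{eq:B6}, into the composite factors $(t-t_0)\|v\|_{C^0_{[t_0,t]}C^{N+\alpha}_x}\|f_0\|_{C^1_x}$ and $\int_{t_0}^t(t-\tau)\|v\|_{C^0_{[t_0,t]}C^{N+\alpha}_x}\|g(\tau)\|_{C^1_x}\,d\tau$. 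A cleaner way to avoid the noninteger bookkeeping, which is the route taken in \cite{BDLIS16}, is to first prove the estimate for integer $N$ via the characteristic method above and then invoke real interpolation between $C^N_x$ and $C^{N+1}_x$ to recover the $C^{N+\alpha}_x$ bound.
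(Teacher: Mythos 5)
Your plan is the standard Lagrangian argument and is in substance the same route the paper takes: the paper does not prove this proposition itself but cites \cite[Appendix B]{BDLSV19} and \cite[Appendix D]{BDLIS16}, and you correctly identify (and name) the integer-order-plus-interpolation shortcut used there to handle the $C^{N+\alpha}_x$ bound.

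One point to tighten: in the proposition, $\Phi$ is defined as the solution to the \emph{transport PDE} $(\partial_t+v\cdot\nabla)\Phi=0$, $\Phi(t_0,\cdot)=\mathrm{Id}$, i.e.\ the \emph{inverse} flow map, whereas your computation starts from ``the defining ODE $\partial_\tau\Phi=v(\tau,\Phi)$'', which is the \emph{forward} flow $X$. These satisfy $\Phi(t,\cdot)=X(t;t_0,\cdot)^{-1}$, and the estimates are symmetric, so the conclusion is unaffected; but in your Lagrangian representation $F(t,x):=f(t,\Phi(t,x))$, $\partial_t F=G$ only holds if $\Phi$ there is the forward flow, while \eqref{eq:B5}--\eqref{eq:B6} are asserted for the inverse flow. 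Cleaner is to differentiate the transport PDE itself: $\nabla\Phi$ solves $(\partial_t+v\cdot\nabla)\nabla\Phi=-(\nabla v)\nabla\Phi$ with $\nabla\Phi(t_0)=\mathrm{Id}$, which along characteristics yields \eqref{eq:B5} directly, and similarly for higher derivatives; then write $f(t,y)=f_0(\Phi(t,y))+\int_{t_0}^t g(\tau,X(\tau;t_0,\Phi(t,y)))\,\dif\tau$ and proceed as you describe, noting that the inner composition $X(\tau;t_0,\cdot)\circ\Phi(t,\cdot)$ is the flow from time $t$ to $\tau$, with gradient bounded by $e^{(t-\tau)\|v\|_{C^0_{[\tau,t]}C^1_x}}\le e$ under the smallness hypothesis.
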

	
	\begin{proof}[Proof of Proposition~\ref{Lemma:transport}]
		First, \eqref{eq:Phi:i:bnd:a} is a direct consequence from \eqref{e:vq:1+N}, \eqref{eq:B5} and the fact that for any $t\in [\te_{i-1},\te_{i+1}]\cap [0,\mathfrak{t}_L]$
		\begin{align*}
			\tau_q\|{\overline{v}_q+z_{\ell_q}\|_{C^0_{[\te_{i-1},t]}C_x^1}}\leq 	\tau_q \bar{M} \tau_q^{-1}\ell_{q}^{\alpha}= \bar{M}\ell_{q}^{\alpha}\ll1.
		\end{align*}
		Using \eqref{estimate:vellzell}, \eqref{e:vq:1+N} and \eqref{eq:B6} we obtain for any $N\geq1$ and $t\in [\te_{i-1},\te_{i+1}]\cap [0,\mathfrak{t}_L]$
		\begin{align}\label{eq:B7}
			\|  \nabla  \Phi_i(t)\|_{C^N_x}\lesssim \tau_q\|{\overline{v}_q+z_{\ell_q}\|_{C^0_{[\te_{i-1},t]}C_x^{N+1}}}\lesssim \tau_q  \bar{M}  \tau_q^{-1}\ell_q^{-N+\alpha} \leq \ell_q^{-N},
		\end{align}
		where the last inequality is justified by $\bar{M}\ell_{q}^{\alpha}\ll 1$.
		We also observe from \eqref{eq:Phi:i:bnd:a} that $(\nabla \Phi_i)^{-1}$ is well-defined on $[\te_{i-1},\te_{i+1}]$ and we have  $\|(\nabla \Phi_i)^{-1}\|_{C^0_x}\lesssim1$. Moreover, differentiating both sides of $\nabla \Phi_i (\nabla \Phi_i)^{-1}=\Id$ we obtain for any $t\in [\te_{i-1},\te_{i+1}]\cap [0,\mathfrak{t}_L]$ and $N\geq 0$
		\begin{align}\label{eq:B8}
			\|(\nabla \Phi_i)^{-1}\|_{C^{N+1}_x}\lesssim  \|(\nabla \Phi_i)^{-1}\|_{C^{N}_x}   \|  \nabla  \Phi_i\|_{C^1_x}+\|(\nabla \Phi_i)^{-1}\|_{C^{0}_x} \|  \nabla  \Phi_i\|_{C^{N+1}_x}.
		\end{align}
		Substituting \eqref{eq:B7} into \eqref{eq:B8}, we then obtain \eqref{eq:Phi:i:bnd:b}.
		In order to establish \eqref{eq:Phi:i:bnd:c}, applying a gradient to \eqref{eq:Phi:i:def} we observe that
		\begin{align*}
			D_{t,q} \nabla \Phi_i = - \nabla \Phi_i \, D (\overline v_q+z_{\ell_q})  \, .
		\end{align*}
		Therefore, we use \eqref{e:vq:1+N} and \eqref{eq:Phi:i:bnd:b} to deduce for any $t\in [\te_{i-1},\te_{i+1}]\cap [0,\mathfrak{t}_L]$
		\begin{align*}
			\|D_{t,q} \nabla \Phi_i \|_{C^N_x} &\lesssim \| \nabla \Phi_i\|_{C^0_x}\|\overline v_q+z_{\ell_q}\|_{C^{N+1}_x}+\| \nabla \Phi_i\|_{C^N_x}\|\overline v_q+z_{\ell_q}\|_{C^1_x} \lesssim \bar{M} \tau_q^{-1} \ell_q^{-N+\alpha}\leq \tau_q^{-1} \ell_q^{-N} \, ,
		\end{align*}
		which gives \eqref{eq:Phi:i:bnd:c}.
	\end{proof}

	\renewcommand{\theequation}{D.\arabic{equation}}
	\section{Proof of Lemma~\ref{lem:local_existence}}\label{appendix:proof:lemma 4.2}
	For completeness, we give the proof of Lemma~\ref{lem:local_existence} in this appendix. The idea follows from \cite[Lemma 4.1]{BHP23}, but the details of calculations are different due to the appearance of $Z$ in the advective term. We now consider the Euler system
	\begin{equation}\label{euler system}
		\aligned
		\begin{cases}
			\partial_t v+ (v+Z)\cdot \nabla (v+Z)+\nabla p=0,
			\\ \div v =0,
			\\ v(0)=v_0,
		\end{cases}
		\endaligned
	\end{equation}
	where $v_0$ is a divergence-free initial condition and $Z\in C([0,T], C^{\infty}(\T^3,\R^3))$ for some $T>0$.
	
	It is well-known (c.f. \cite[Corollary 3.2]{BM02}) that given a divergence-free initial data $v_0\in H^m$ for some $m>\frac{d}{2}+1$, there exists a maximal time $T^*$ (depending on $v_0$ and $Z$) and a unique solution $v\in C([0,T^*),H^m)$ to the Euler system \eqref{euler system}. Moreover, for $m>\frac{d}{2}+1$, if $\|v(t)\|_{H^m}+\|Z(t)\|_{H^m}<\infty$ on $[0,T)$ for some $T\in (0,\infty)$, then $T^*>T$. It follows that if $\|v(t)\|_{C^m_x}+\|Z(t)\|_{C^m_x}<\infty$ on $[0,T)$, then $T^*>T$. Therefore, to establish the well-posedness of the solution $v$ stated in Lemma~\ref{lem:local_existence}, it suffices to show that for the given $v_0$, $Z$ and $ \tau$ in Lemma~\ref{lem:local_existence}, the maximal time $T^*>\tau$. To achieve this, we first present the following lemma.
	
	\begin{lemma}\label{lemma:e1}
		Given a small $\kappa\in (0,1)$ and $\tilde{T}>T>0$, let $v_0\in C^{\infty}(\T^3,\R^3)$ be a divergence-free initial condition and $Z\in C([0,\tilde{T}], C^{\infty}(\T^3,\R^3))$. Suppose that $v\in C([0,T),C^{\infty}(\T^3,\R^3))$ is a solution to \eqref{euler system} with $\|v(t)\|_{C_x^{1+\kappa}}<\infty$ on $[0,T)$, then the maximal time $T^*>T$. 
	\end{lemma}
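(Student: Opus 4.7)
\textbf{Proof plan for Lemma~\ref{lemma:e1}.} The statement is a blow-up criterion in the spirit of Beale--Kato--Majda: uniform control of $\|v(t)\|_{C^{1+\kappa}_x}$ on $[0,T)$ should propagate to uniform control of $\|v(t)\|_{C^m_x}$ for every $m$, whence the continuation criterion recalled just before the lemma (namely, that finiteness of $\|v(t)\|_{C^m_x}+\|Z(t)\|_{C^m_x}$ on $[0,T)$ forces $T^*>T$) finishes the proof. Accordingly, I would set $M:=\sup_{t\in[0,T)}\|v(t)\|_{C^{1+\kappa}_x}$ (this is the quantity assumed finite in the statement) and $C_k:=\|Z\|_{C([0,\tilde T];C^k_x)}$ for each $k\in\N$, and show by induction on $m$ that $\sup_{t\in[0,T)}\|v(t)\|_{H^m}<\infty$, from which Sobolev embedding concludes.

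The workhorse is the rewritten equation with the transport derivative $D_t:=\partial_t+(v+Z)\cdot\nabla$, namely
\begin{equation*}
D_t v=-(v+Z)\cdot\nabla Z-\nabla p,\qquad -\Delta p=\sum_{j,k}\partial_j\partial_k\bigl((v_j+Z_j)(v_k+Z_k)\bigr).
\end{equation*}
For a multi-index $\alpha$ with $|\alpha|\le m$, I would apply $\partial^\alpha$, take $L^2$-inner product with $\partial^\alpha v$, sum, and use that the skew-symmetric transport term integrates by parts to something controlled by $\|\div(v+Z)\|_{L^\infty}\|v\|_{H^m}^2\lesssim (1+\|Z\|_{C^1_x})\|v\|_{H^m}^2$ (recall $v$ is divergence-free, but $Z$ need not be). The commutator $[\partial^\alpha,(v+Z)\cdot\nabla]v$ is handled by the standard Kato--Ponce estimate
\begin{equation*}
\bigl\|[\partial^\alpha,(v+Z)\cdot\nabla]v\bigr\|_{L^2}\lesssim \|\nabla(v+Z)\|_{L^\infty}\|v\|_{H^m}+\|v+Z\|_{H^m}\|\nabla v\|_{L^\infty},
\end{equation*}
where $\|\nabla v\|_{L^\infty}\lesssim\|v\|_{C^{1+\kappa}_x}\le M$ by hypothesis and $\|Z\|_{H^m}+\|Z\|_{C^1_x}\le C_m$. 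The remaining source terms $\partial^\alpha((v+Z)\cdot\nabla Z)$ and $\partial^\alpha\nabla p$ are estimated by Moser-type product inequalities together with the Calder\'on--Zygmund bound $\|\partial^\alpha\nabla p\|_{L^2}\lesssim \|v+Z\|_{L^\infty}\|v+Z\|_{H^m}+\text{l.o.t.}$, which one reads off directly from the Poisson equation for $p$.

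Putting these pieces together yields a Gr\"onwall-type differential inequality
\begin{equation*}
\frac{\dif}{\dif t}\|v(t)\|_{H^m}^2\le A_m\bigl(1+\|v(t)\|_{H^m}^2\bigr),
\end{equation*}
with $A_m=A_m(M,C_{m+2})$ \emph{independent of }$t$ on $[0,T)$. Hence $\sup_{t\in[0,T)}\|v(t)\|_{H^m}\le e^{A_mT/2}\bigl(\|v_0\|_{H^m}^2+1\bigr)^{1/2}<\infty$ for every $m$. Sobolev embedding then gives $\sup_{t\in[0,T)}\|v(t)\|_{C^k_x}<\infty$ for every $k$, and since $Z\in C([0,\tilde T];C^\infty_x)$ ensures $\sup_{t\in[0,T]}\|Z(t)\|_{C^k_x}<\infty$, the blow-up criterion recalled above forces $T^*>T$.

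The main obstacle I anticipate is not any single estimate but the bookkeeping of getting the commutator and pressure bounds to depend only on $\|\nabla v\|_{L^\infty}$ (which is controlled by $M$) multiplied against $\|v\|_{H^m}$, rather than against $\|v\|_{H^{m+1}}$; one has to peel the top derivative off the advection using the commutator, and peel the top derivative off the quadratic nonlinearity in $p$ using the fact that the Riesz-type projector is $L^2$-bounded so the two derivatives in $-\Delta p=\partial_j\partial_k(\cdot)$ are absorbed. With these standard manipulations, the argument closes.
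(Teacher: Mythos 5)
Your proof is correct, but it takes a genuinely different route from the paper's. The paper works entirely in H\"older spaces: it differentiates the equation by a multi-index $D^\theta$ of order $N$, regards the result as a transport equation for $D^\theta v$ along $v+Z$, estimates the source terms (product rule on the advection, Schauder for $\nabla p$) in $C^\kappa_x$, and then invokes the $C^\alpha_x$ estimate for transport equations \eqref{eq:Gronwall1} together with Gr\"onwall, obtaining the integral inequality \eqref{estimate: uN} and hence $\sup_{[0,T)}\|v(t)\|_{C^{N+\kappa}_x}<\infty$ for every $N$. You instead pass to $L^2$-based Sobolev spaces $H^m$, run a classical energy estimate, absorb the top-order advection via integration by parts, control the commutator $[\partial^\alpha,(v+Z)\cdot\nabla]v$ with Kato--Ponce, and bound $\nabla p$ via Calder\'on--Zygmund on the Poisson equation. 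Both are standard BKM-type continuation arguments and both close, since in each case the nonlinear coefficient is controlled solely by the assumed bound $M=\sup_{[0,T)}\|v(t)\|_{C^{1+\kappa}_x}$ (which dominates $\|\nabla v\|_{L^\infty}$) together with the $C^k_x$ norms of $Z$. What your route buys: it is perhaps the more widely familiar energy-method formulation, and it can conclude directly from the $H^m$ version of the continuation criterion recalled just before the lemma, without passing through Sobolev embedding. What the paper's route buys: it stays entirely inside the H\"older framework used throughout the rest of the construction, reuses the transport lemma \eqref{eq:Gronwall1} already needed elsewhere (cf.\ Propositions~\ref{esti: v,p,D i-l} and \ref{p:S_est}), and avoids invoking Kato--Ponce. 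One small remark applicable to both: the pressure equation $-\Delta p=\tr[\nabla(v+Z)\nabla(v+Z)]$ uses $\div(v+Z)=0$; your estimate correctly allows for $\div Z\neq 0$ via an extra $\|\div(v+Z)\|_{L^\infty}$ term, whereas the paper implicitly assumes $Z$ divergence-free (true in the intended application $Z=z_{\ell_q}$).
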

	\begin{proof}
		To obtain this result, we first estimate the $C_x^{N+\kappa}$-norm for $v$. Let $\theta$ be a multi-index with $|\theta|=N \in \mathbb{N}$, we have
		\begin{align*}
			( \partial_t+(v+Z)\cdot \nabla ) D^{\theta}v= -D^{\theta}[(v+Z) \cdot \nabla Z] -\sum_{\substack{0\leq |\theta_2|\leq N-1, \\ |\theta_1|+|\theta_2| =N} 
			}D^{\theta_1}(v+Z)\cdot \nabla D^{\theta_2}v-\nabla D^{\theta} p.
		\end{align*}
		We use interpolation to estimate the first term on the right-hand side as
		\begin{align}\label{estimate for u1}
			\|D^{\theta}[(v+Z) \cdot \nabla Z]\|_{C^{\kappa}_x} &\lesssim \|v+Z\|_{C^{N+\kappa}_x}\|Z\|_{C^{1+\kappa}_x}+\|v+Z\|_{C^{\kappa}_x}\|Z\|_{C^{N+1+\kappa}_x}.
		\end{align}
		Similarly, the $C_x^{\kappa}$-norm for the second term can be bounded by 
		\begin{align}\label{estimate for u2}
			\|v+Z\|_{C^{N+\kappa}_x}\|v\|_{C^{1+\kappa}_x}+\|v+Z\|_{C^{1+\kappa}_x}\|v\|_{C^{N+\kappa}_x}.
		\end{align}
		Using the equation for the pressure $-\Delta p=\tr[\nabla(v+Z)\nabla(v+Z)]$ and Schauder estimates, we obtain
		\begin{align}\label{estimate for u3}
			\|	\nabla D^{\theta} p\|_{C^{\kappa}_x}&\lesssim \|\tr[\nabla(v+Z)\nabla(v+Z)]\|_{C^{N-1+\kappa}_x} \lesssim \|v+Z\|_{C^{N+\kappa}_x}\|v+Z\|_{C^{1+\kappa}_x}.
		\end{align}
		By combining the estimates \eqref{estimate for u1}, \eqref{estimate for u2} and \eqref{estimate for u3} above, and using the estimate \eqref{eq:Gronwall1} for the transport equations, we obtain for any $t\in[0,T)$ and $N\in \mathbb{N}$
		\begin{equation}\label{estimate: uN}
			\aligned
			\|&v(t)\|_{C^{N+\kappa}_x}\lesssim 	\|v_0\|_{C^{N+\kappa}_x} + \int_{0}^{t} \|v(s)\|_{C^{N+\kappa}_x}\big(\|v(s)\|_{C^{1+\kappa}_x}+\|Z(s)\|_{C^{1+\kappa}_x}\big) \dif s
			\\ &+\int_{0}^{t}  \left(\|Z(s)\|_{C^{N+\kappa}_x}\big(\|v(s)\|_{C^{1+\kappa}_x}+\|Z(s)\|_{C^{1+\kappa}_x}\big) + \|Z(s)\|_{C^{N+1+\kappa}_x}\big(\|v(s)\|_{C^{\kappa}_x}+\|Z(s)\|_{C^{\kappa}_x}\big)\right)  \dif s
			\\ &\qquad \lesssim \|v_0\|_{C^{N+\kappa}_x}+ T\big(\|v\|_{C_{T-}C^{1+\kappa}_x}+\|Z\|_{C_{T}C^{{1+\kappa}}_x}\big)\|Z\|_{C_{T}C^{N+1+\kappa}_x}
			 \\ &\qquad \qquad  +\big(\|v\|_{C_{T-}C^{1+\kappa}_x}+\|Z\|_{C_{T}C^{1+\kappa}_x}\big) \int_{0}^{t} \|v(s)\|_{C^{N+\kappa}_x} \dif s .
			\endaligned
		\end{equation}
	  Here, we denote $\|v\|_{C_{T-}C^{1+\kappa}_x}:=\sup_{t\in[0,T)} \|v(t)\|_{C^{1+\kappa}_x} <\infty $, which differs from the notation $\|\cdot\|_{C_T}$ in Subsection~\ref{notation 1} and denote $\|Z\|_{C_TC^{j+\kappa}_x}:=\sup_{t\in[0,T]} \|Z(t)\|_{C^{j+\kappa}_x}<\infty $ for any $j\in \mathbb{N}$. Then, it follows from \eqref{estimate: uN} and Gr\"onwall's inequality that for $N> \frac{d}{2}+1$ and any $t\in[0,T)$ 
		\begin{align*}
			\|v(t)\|_{C^{N+\kappa}_x}	\lesssim 	\big(\|v_0\|_{C^{N+\kappa}_x} +T\big( &\|v\|_{C_{T-}C^{1+\kappa}_x}+\|Z\|_{C_{T}C^{{1+\kappa}}_x}\big)\|Z\|_{C_{T}C^{N+1+\kappa}_x}\big)e^{T\big(\|v\|_{C_{T-}C^{1+\kappa}_x}+\|Z\|_{C_{T}C^{1+\kappa}_x}\big)}<\infty .
		\end{align*}		
		Therefore, $T^*>T$.
	\end{proof}
	We next recall the following nonlinear Gr\"onwall's inequality, which serves as a technical tool for the proof of Lemma~\ref{lem:local_existence}.
	\begin{lemma}\label{gronwall lemma}
		Assume that $A,C\geq 0$ are two constants and $f$ is a continuous non-negative function such
		that
		\begin{align*}
			f(t)\leq A+\int_{0}^{t} (f(s)+C)^2 \dif s.
		\end{align*}
		Then for any $t\in\left( 0,\frac{1}{2(A+C)}\right) $ we have 
		\begin{align*}
			f(t)\leq A+ \frac{A+C}{1-(A+C)t}-(A+C) \leq 2A+C.
		\end{align*}
	\end{lemma}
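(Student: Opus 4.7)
\medskip

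\noindent\textbf{Proof proposal for Lemma~\ref{gronwall lemma}.} The plan is the standard reduction to a Riccati-type ODE comparison. First I would change variables by setting $g(t):=f(t)+C$, so that the hypothesis becomes
\begin{equation*}
g(t) \le A+C + \int_0^t g(s)^2\,\dif s.
\end{equation*}
Introducing the right-hand side as $h(t):=A+C+\int_0^t g(s)^2\,\dif s$, continuity of $f$ makes $h$ of class $C^1$ with $h(0)=A+C$ and $h'(t)=g(t)^2$, and by construction $g(t)\le h(t)$ for every $t$ where the inequality is non-vacuous. Consequently $h$ satisfies the differential inequality
\begin{equation*}
h'(t) \le h(t)^2, \qquad h(0)=A+C.
\end{equation*}

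Next I would integrate this inequality explicitly. Since $h\ge A+C>0$ on $[0,t]$ (assuming $A+C>0$; the degenerate case $A=C=0$ forces $f\equiv0$ and is trivial), I can rewrite it as $-\frac{d}{ds}(1/h(s)) \le 1$, and integrating from $0$ to $t$ yields
\begin{equation*}
\frac{1}{h(t)} \ge \frac{1}{A+C} - t = \frac{1-(A+C)t}{A+C}.
\end{equation*}
On the interval $t\in(0,1/(A+C))$ the right-hand side is strictly positive, so we may invert to obtain $h(t)\le \frac{A+C}{1-(A+C)t}$. Since $f(t)=g(t)-C\le h(t)-C$, this gives
\begin{equation*}
f(t) \le \frac{A+C}{1-(A+C)t} - C = A + \frac{A+C}{1-(A+C)t} - (A+C),
\end{equation*}
which is the first claimed bound.

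For the second bound, I would simply restrict to $t\in\bigl(0,\tfrac{1}{2(A+C)}\bigr)$. On this range $1-(A+C)t>\tfrac12$, hence $\frac{A+C}{1-(A+C)t}<2(A+C)$, and therefore
\begin{equation*}
f(t) \le 2(A+C) - C = 2A+C,
\end{equation*}
as claimed. There is no real obstacle in the argument; the only mild subtlety is justifying the ODE step rigorously, which is handled by noting that $h$ is automatically absolutely continuous (indeed $C^1$) from the integral definition, so that the chain rule applied to $1/h$ is valid and does not require any a priori smoothness of $f$ beyond continuity. The degenerate case $A+C=0$ is handled separately by observing that $f\ge 0$ and $f(t)\le\int_0^t f(s)^2\,\dif s$ force $f\equiv 0$ by a standard linear Grönwall bootstrap on any interval where $f$ is bounded.
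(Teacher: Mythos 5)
Your proposal is correct and follows essentially the same route as the paper: both arguments reduce the hypothesis to the Riccati differential inequality $h'\le h^2$ for the (automatically $C^1$) integral quantity, integrate $-\frac{\dif}{\dif s}(1/h)\le 1$, invert on $t<1/(A+C)$, and then restrict to $t<\tfrac{1}{2(A+C)}$ to get the bound $2A+C$; your substitution $g=f+C$ and your separate treatment of the degenerate case $A+C=0$ are only cosmetic refinements of the paper's computation with $F(t)=\int_0^t(f+C)^2\,\dif s$.
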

	\begin{proof}
		Let $F(t)=\int_{0}^{t} (f(s)+C)^2 \dif s$. Then $F'(t)\leq (A+C+F(t))^2$, or equivalently 
		\begin{align*}
			\frac{\dif}{\dif t}\left( 	-\frac{1}{A+C+F(t)}\right) \leq 1,
		\end{align*}
		which implies $-(A+C+F(t))^{-1}+(A+C)^{-1}\leq t $. Then for any $t\in\left( 0,\frac{1}{2(A+C)}\right) $, we have
		\begin{align*}
			F(t)\leq \left( (A+C)^{-1} -t\right)^{-1}- (A+C)\leq  2(A+C)- (A+C)\leq A+C.
		\end{align*}
		Hence, we derive $f(t)\leq A+F(t)\leq 2A+C$.
	\end{proof}
	\begin{proof}[Proof of Lemma~\ref{lem:local_existence}]
		With the above two lemmas at hand, we now proceed the proof of Lemma~\ref{lem:local_existence}. 
		Let $T>0$ and $\alpha \in (0,1)$ be given as in Lemma~\ref{lem:local_existence}. For 
		\begin{align*}
			\tau = \min \left\{ \frac14 \bigg(\|v_0\|_{C^{1+\alpha}_x}+ \|Z\|_{C_{ T}C^{2+\alpha}_x} \bigg)^{-1} , T\right\},
		\end{align*}
		and suppose that $v\in C([0,\tau), C^{\infty}(\T^3,\mR^3))$ is the unique solution to \eqref{euler system}. To obtain the well-posedness of $v$ on $[0,\tau]$, it suffices to verify $T^*>\tau$ by using Lemma~\ref{lemma:e1}. To this end,
		we take $N=1$ in \eqref{estimate: uN} and derive the following for any $t\in [0,\tau)$
		\begin{align*}
			\|v(t)\|_{C^{1+\alpha}_x}
			& \lesssim_{\alpha} 	\|v_0\|_{C^{1+\alpha}_x}+\int_{0}^{t}  \big(\|v(s)\|_{C^{1+\alpha}_x}+\|Z(s)\|_{C^{1+\alpha}_x}\big)^2  \dif s
			\\ &\qquad  \qquad \qquad + \int_{0}^{t}  \|Z(s)\|_{C^{2+\alpha}_x}\big(\|v(s)\|_{C^{\alpha}_x}+\|Z(s)\|_{C^{\alpha}_x}\big)  \dif s
			\\&\lesssim_{\alpha} 	\|v_0\|_{C^{1+\alpha}_x}+ \int_{0}^{t} \big(\|v(s)\|_{C^{1+\alpha}_x}+\|Z\|_{C_{ T}C^{2+\alpha}_x}\big)^2 \dif s .
		\end{align*} 
		Then, we use Lemma~\ref{gronwall lemma} to derive for any $t\in [0,\tau)$
		\begin{align}\label{estimate: u1}
			\|v(t)\|_{C^{1+\alpha}_x}  \lesssim_{\alpha}   \|v_0\|_{C^{1+\alpha}_x}+\|Z\|_{C_{ T }C^{2+\alpha}_x}<\infty.
		\end{align} 
		By Lemma~\ref{lemma:e1}, we deduce $T^*>\tau$ and $v\in C([0,\tau ],C^{\infty}(\T^3,\R^3))$. Thus, Lemma~\ref{lem:local_existence} is proven for $N=1$.
		
		For $N\geq 2$, substituting \eqref{estimate: u1} into \eqref{estimate: uN} and using Gr\"onwall's inequality we obtain for any $t\in [0,\tau ]$
		\begin{equation*}
			\aligned
			\|v(t)\|_{C^{N+\alpha}_x}&\lesssim_{N,\alpha}  	\|v_0\|_{C^{N+\alpha}_x} +  \tau  \|Z\|_{C_{ T}C^{N+1+\alpha}_x}\big(\|v_0\|_{C^{1+\alpha}_x}+\|Z\|_{C_{ T}C^{2+\alpha}_x}\big)\\ &\qquad \qquad \qquad +\big(\|v_0\|_{C^{1+\alpha}_x}+\|Z\|_{C_{ T}C^{2+\alpha}_x}\big) \int_{0}^{t}  \|v(s)\|_{C^{N+\alpha}_x} \dif s
			\\ &\lesssim_{N,\alpha}   \|v_0\|_{C^{N+\alpha}_x} +  \tau  \|Z\|_{C_{ T}C^{N+1+\alpha}_x}\big(\|v_0\|_{C^{1+\alpha}_x}+\|Z\|_{C_{ T }C^{2+\alpha}_x}\big).
			\endaligned
		\end{equation*}
		Hence, Lemma~\ref{lem:local_existence} is proven.
	\end{proof}

	\section*{Acknowledgement} 
The authors would like to thank Martina Hofmanov\'a and Xiangchan Zhu for their helpful comments on an earlier version of this paper. This research has received financial supports from the National Key R\&D Program of China (No. 2022YFA1006300) and the NSFC grant of China (No. 12426205, No. 12271030).  The financial support by the DFG through the CRC 1283 "Taming uncertainty and profiting from randomness and low regularity in analysis, stochastics and their applications" is greatly acknowledged.
		
	\section*{Declarations}
	Data sharing not applicable to this article as no datasets were generated or analyzed during the	current study. The authors have no competing interests to declare that are relevant to the content	of this article.

	\def\cprime{$'$} \def\ocirc#1{\ifmmode\setbox0=\hbox{$#1$}\dimen0=\ht0
		\advance\dimen0 by1pt\rlap{\hbox to\wd0{\hss\raise\dimen0
				\hbox{\hskip.2em$\scriptscriptstyle\circ$}\hss}}#1\else {\accent"17 #1}\fi}

\end{document}